\setlist[enumerate]{leftmargin=0pt,itemindent=32pt,nosep}
\setlist[itemize]{leftmargin=32pt,nosep}
\newcommand{\first}{\setlength{\itemindent}{25pt}}
\renewcommand{\labelitemi}{\textup{\raise0.7pt\hbox{\scriptsize\textbullet\,}}}
\newtheorem{theorem}{Theorem}[section]
\newtheorem{proposition}[theorem]{Proposition}
\newtheorem{lemma}[theorem]{Lemma}
\newtheorem{corollary}[theorem]{Corollary}
\newtheorem{conjecture}[theorem]{Conjecture}
\newtheorem{conjectureAlph}{Conjecture}
\theoremstyle{definition}
\newtheorem{definition}[theorem]{Definition}
\newtheorem{algorithm}[theorem]{Algorithm}
\theoremstyle{remark}
\newtheorem{remark}[theorem]{Remark}
\newtheorem{example}[theorem]{Example}
\newtheorem{convention}[theorem]{Convention}
\newcommand{\bA}{\mathbf{A}}
\newcommand{\bC}{\mathbf{C}}
\newcommand{\bN}{\mathbf{N}}
\newcommand{\bP}{\mathbf{P}}
\newcommand{\bQ}{\mathbf{Q}}
\newcommand{\bR}{\mathbf{R}}
\newcommand{\bZ}{\mathbf{Z}}
\newcommand{\cD}{\mathcal{D}}
\newcommand{\cF}{\mathcal{F}}
\newcommand{\cS}{\mathcal{S}}
\newcommand{\cT}{\mathcal{T}}
\newcommand{\fa}{\mathfrak{a}}
\newcommand{\fb}{\mathfrak{b}}
\newcommand{\fc}{\mathfrak{c}}
\newcommand{\fm}{\mathfrak{m}}
\newcommand{\fn}{\mathfrak{n}}
\newcommand{\fp}{\mathfrak{p}}
\newcommand{\sI}{\mathscr{I}}
\newcommand{\sO}{\mathscr{O}}
\newcommand{\sP}{\mathscr{P}}
\newcommand{\sfa}{\mathsf{a}}
\newcommand{\sfb}{\mathsf{b}}
\newcommand{\rd}[1]{{\lfloor{#1}\rfloor}}
\DeclareMathOperator{\id}{id}
\DeclareMathOperator{\mld}{mld}
\DeclareMathOperator{\ord}{ord}
\DeclareMathOperator{\Proj}{Proj}
\DeclareMathOperator{\Spec}{Spec}
\DeclareMathOperator{\wt}{wt}
\begin{document}
\title[On equivalent conjectures on smooth threefolds]{On equivalent conjectures for minimal log discrepancies on smooth threefolds}

\author{Masayuki Kawakita}
\address{Research Institute for Mathematical Sciences, Kyoto University, Kyoto 606-8502, Japan}
\email{masayuki@kurims.kyoto-u.ac.jp}
\thanks{Partially supported by JSPS Grant-in-Aid for Scientific Research (C) 16K05099.}

\begin{abstract}
On smooth threefolds, the ACC for minimal log discrepancies is equivalent to the boundedness of the log discrepancy of some divisor which computes the minimal log discrepancy. We reduce it to the case when the boundary is the product of a canonical part and the maximal ideal to some power. We prove the reduced assertion when the log canonical threshold of the maximal ideal is either at most one-half or at least one.
\end{abstract}

\maketitle

\section{Introduction}
Let $P\in X$ be the germ of a smooth variety and $\fa=\prod_{j=1}^e\fa_j^{r_j}$ be an $\bR$-ideal on $X$. We write $\mld_P(X,\fa)$ for the minimal log discrepancy of the pair $(X,\fa)$ at $P$. For a subset $I$ of the positive real numbers, we mean by $\fa\in I$ that the exponents $r_j$ in $\fa$ belong to $I$. ACC stands for the ascending chain condition while DCC stands for the descending chain condition. This paper discusses the ACC conjecture for minimal log discrepancies on smooth threefolds, which was conjectured by Shokurov \cite{BS10}, \cite{Sh88} for arbitrary lc pairs.

\begin{conjectureAlph}\label{cnj:acc}
Fix a subset $I$ of the positive real numbers which satisfies the DCC. Then the set
\begin{align*}
\{\mld_P(X,\fa)\mid\textup{$P\in X$ a smooth threefold},\ \textup{$\fa$ an $\bR$-ideal},\ \fa\in I\}
\end{align*}
satisfies the ACC.
\end{conjectureAlph}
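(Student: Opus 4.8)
The plan is to derive Conjecture~\ref{cnj:acc} from the boundedness of a computing divisor, these two being equivalent in the present setting. Concretely, I would aim to prove that there is a constant $N=N(I)$ such that for every smooth threefold germ $P\in X$ and every $\bR$-ideal $\fa\in I$ with $(X,\fa)$ log canonical at $P$, the minimal log discrepancy $\mld_P(X,\fa)$ is computed by a prime divisor $E$ over $P\in X$ with $a_E(X)\le N$. That this boundedness forces the ACC is the routine half: writing $\mld_P(X,\fa)=a_E(X)-\sum_j r_j\ord_E(\fa_j)$ with $a_E(X)$ a positive integer at most $N$ and each $\ord_E(\fa_j)$ a nonnegative integer, the nonnegativity of the mld together with the positive infimum of the DCC set $I$ bounds the number of nonzero terms, so the possible values of the mld form a finite union of sets of the shape $\{\,c-\sum_k m_k r_k\mid r_k\in I\,\}$ with $c\in\bZ$ and $m_k\in\bZ_{>0}$, and each such set satisfies the ACC; the pairs that are not log canonical contribute only $-\infty$. (The converse implication, which secures the equivalence, comes from a generic-limit argument on the ideals $\fa$, but is not needed for the deduction of the conjecture.)

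Next I would reduce the boundedness statement to boundaries of the special form $\fc\cdot\fm_P^{\,t}$, where $(X,\fc)$ is canonical and $t$ ranges in a DCC set built from $I$: one peels off from $\fa$ its canonical core $\fc$ and absorbs the genuinely singular remainder into a power of the maximal ideal $\fm_P$, checking that this does not spoil a computing divisor or enlarge its log discrepancy. Put $t_0:=\operatorname{lct}_P(X,\fc;\fm_P)$, so only $t\le t_0$ is relevant. If $t_0\ge1$, the pair is strongly constrained — on a smooth threefold with $\fc$ canonical, $(X,\fc\cdot\fm_P)$ log canonical already forces $\fc$ to have small order at $P$ — so that $\mld_P(X,\fc\cdot\fm_P^{\,t})\le\mld_P(X,\fm_P)=2$ and one can bound $\ord_E\fc$ and $\ord_E\fm_P$, hence $a_E(X)$, for a computing divisor by an elementary estimate. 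If $t_0\le\tfrac12$, the factor $\fm_P^{\,t}$ is a mild perturbation of the canonical pair $(X,\fc)$, and one combines boundedness of a computing divisor in the purely canonical case with a perturbation estimate for the growth of $a_E(X)$ when a small power of $\fm_P$ is adjoined. Both of these ranges can be treated directly, and are the part the paper carries out.

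The remaining range $\tfrac12<t_0<1$ is the genuine obstacle, and the reason Conjecture~\ref{cnj:acc} is still open: here $\fm_P^{\,t}$ is large enough to drag the divisor computing the mld away from anything computing $\mld_P(X,\fc)$, yet too small to force a monomial valuation, and there is no a priori bound on $a_E(X)$. My first line of attack would be through the piecewise-linear decreasing function $t\mapsto\mld_P(X,\fc\cdot\fm_P^{\,t})$: for $t$ just below $t_0$ the mld is computed by a divisor $E_0$ realising the log canonical threshold, so it would suffice to bound $a_{E_0}(X)$ and then to control the finitely many breakpoints as $t$ decreases to $0$; to bound $a_{E_0}(X)$ I would extract $E_0$, pass to adjunction on this exceptional surface, and try to land on a two-dimensional ACC statement, where the analogous facts are available. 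The trouble is precisely that a divisor computing an lct over a smooth threefold need not have bounded log discrepancy in this regime, so the induced surface pair may be of uncontrolled complexity. An alternative would be an inductive mechanism — a birational modification of $X$ replacing $\fc\cdot\fm_P^{\,t}$ by a boundary whose maximal-ideal threshold has been pushed into the already-settled ranges $\le\tfrac12$ or $\ge1$ — with the obstacle there being to perform the modification while keeping all exponents in a DCC set and not increasing the log discrepancy of the computing divisor. Either way, an a priori bound on the log discrepancy of the divisor attached to the log canonical threshold of the perturbed pair when $\tfrac12<t_0<1$ is the crux on which everything hinges.
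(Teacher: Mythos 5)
You have not proved the statement, but neither does the paper: Conjecture \ref{cnj:acc} is an open conjecture, and the paper's contribution consists of the equivalences and partial results that your outline reproduces. Your plan tracks the paper's route closely: the equivalence of the ACC with the boundedness of a computing divisor is Theorem \ref{thm:equiv}; the reduction to boundaries of the form $\fa^q\fm^s$ with $(X,\fa^q)$ canonical is Theorem \ref{thm:first}, carried out through Algorithm \ref{alg:canonical}, the weighted-blow-up extraction of Theorem \ref{thm:wbu} and the meta theorem \ref{thm:meta}; and the two ranges you claim to settle --- lc threshold of $\fm$ at most one-half or at least one --- are exactly Theorem \ref{thm:second}. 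Your identification of the range $\tfrac12<t_0<1$ as the crux is precisely where the paper stops, offering there only the rough classification of Theorem \ref{thm:crepant}. So there is no gap in your proposal relative to the paper; the gap is the state of the art.

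Two caveats on the parts you describe as settled. First, your sketches of the two extreme ranges are much thinner than what is actually required: the paper does not obtain them by elementary order estimates or by perturbing off the case $s=0$, but by passing to a generic limit, reducing to the case of a one-dimensional smallest lc centre, extracting a crepant divisor over it by a weighted blow-up, and applying precise inversion of adjunction on the exceptional surface (Lemma \ref{lem:half}, Proposition \ref{prp:half}); in the range $t_0\le\tfrac12$ the output is the exact formula $\mld_P(X,\fa^q\fm^s)=1-2s$, not merely a bound, and in the range $t_0\ge1$ the key point is that the computing divisor satisfies $\ord_E\fm=1$. Second, in your ``routine half'' (boundedness implies ACC) you should justify that only boundedly many factors $\fa_j$ contribute to $\ord_E\fa$: this holds because a DCC set of positive reals has a positive minimum and $\ord_E\fa\le a_E(X)\le N$. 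With that, your counting argument is sound; note that the paper instead routes this implication through the ACC for lc thresholds and the discreteness statement of Theorem \ref{thm:discrete}, following Musta\c{t}\u{a} and Nakamura, so your version is a mild (and legitimate) variant.
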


We approach it with the theory of the generic limit of ideals introduced by de Fernex and Musta\c{t}\u{a} \cite{dFM09}. Our earlier work \cite{K14} shows the finiteness of the set of $\mld_P(X,\fa)$ in which the germ $P\in X$ of a klt variety and the exponents in $\fa$ are fixed. Instead, if the exponents in $\fa$ move in an infinite set satisfying the DCC, then we require the stability of minimal log discrepancies for generic limits. This stability connects Conjecture \ref{cnj:acc} to the following important conjectures equivalently, as it was indicated essentially by Musta\c{t}\u{a} and Nakamura \cite{MN16}.

The first is the ACC for $a$-lc thresholds, a generalisation of lc thresholds.

\begin{conjectureAlph}\label{cnj:alc}
Fix a non-negative real number $a$ and a subset $I$ of the positive real numbers which satisfies the DCC. Then the set
\begin{align*}
\{t\in\bR_{\ge0}\mid\textup{$P\in X$ a smooth threefold},\ \textup{$\fa$, $\fb$ $\bR$-ideals},\ \mld_P(X,\fa\fb^t)=a,\ \fa\fb\in I\}
\end{align*}
satisfies the ACC.
\end{conjectureAlph}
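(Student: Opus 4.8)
The plan is to deduce Conjecture~\ref{cnj:alc} from Conjecture~\ref{cnj:acc} — the two are in fact equivalent, as was essentially observed by Musta\c{t}\u{a} and Nakamura \cite{MN16} — and to reduce both to a single geometric input: that on a smooth threefold the minimal log discrepancy of a pair is always computed by a divisor of bounded log discrepancy. Suppose $t_1<t_2<\cdots$ were a strictly increasing sequence of $a$-lc thresholds, witnessed by germs $P_i\in X_i$ of smooth threefolds and $\bR$-ideals $\fa_i,\fb_i$ with $\fa_i\fb_i\in I$ and $\mld_{P_i}(X_i,\fa_i\fb_i^{t_i})=a$; we may take $X_i=\bA^3$ with $P_i$ the origin. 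If a divisor $E_i$ over $\bA^3$ computes this minimal log discrepancy and its log discrepancy $a_{E_i}(\bA^3)$ is bounded independently of $i$, then, since $a_{E_i}(\bA^3)$ is a positive integer, the relation
\[
a=a_{E_i}(\bA^3)-\ord_{E_i}(\fa_i)-t_i\,\ord_{E_i}(\fb_i)
\]
bounds $\ord_{E_i}(\fa_i)$ and $\ord_{E_i}(\fb_i)$ as well (after discarding finitely many terms so that $t_i\ge t_1>0$), and hence, in the relevant case $\ord_{E_i}(\fb_i)\neq0$, exhibits each $t_i$ as a quotient of bounded combinations of elements of $I$ with bounded integer coefficients; such quotients form a set satisfying the ACC by the standard argument from \cite{MN16}, a contradiction.

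To produce such a uniformly bounded computing divisor I would invoke the theory of generic limits \cite{dFM09}: along a subsequence, take generic limits $\fa_\infty,\fb_\infty$ of the $\fa_i,\fb_i$ over a large field, with $t_\infty=\lim t_i$ and limiting exponents (using the DCC of $I$). The technical core is the stability of minimal log discrepancies under generic limits — this is what lets the finiteness result of \cite{K14}, which fixes the exponents, be brought to bear when the exponents move: it should give $\mld_{P_\infty}(X_\infty,\fa_\infty\fb_\infty^{t_\infty})=a$ together with a single divisor $E$ over $X_\infty$ computing it. Descending $E$ to all but finitely many members of the sequence returns divisors $E_i$ over $\bA^3$ computing $\mld_{P_i}$ with $a_{E_i}(\bA^3)=a_E(X_\infty)$, which is the uniform bound needed above. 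Thus Conjecture~\ref{cnj:alc} and Conjecture~\ref{cnj:acc} both follow once one knows: \emph{on a smooth threefold, the minimal log discrepancy of a pair whose boundary has exponents in $I$ is always computed by a divisor of log discrepancy bounded in terms of that minimal log discrepancy and $I$.}

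For this boundedness I would normalise the boundary. Running the generic-limit reduction once more and splitting off the part of the boundary along which the pair is canonical reduces to a boundary of the form $\fc\cdot\fm^s$, with $(X,\fc)$ canonical at $P$, $\fm$ the maximal ideal of $P$, and $s>0$; the log canonical threshold $c:=\mathrm{lct}_P(X,\fc;\fm)$ then governs the picture. When $c\le\tfrac{1}{2}$ or $c\ge1$ one can exhibit a computing divisor by a single weighted blow-up — morally because the relevant toric model of $\fm$ over $(X,\fc)$ has a vertex whose log discrepancy is pinned down by $c$ and $s$ — and bound its log discrepancy explicitly; this is the reduced assertion established in the present paper. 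The main obstacle is the intermediate range $\tfrac{1}{2}<c<1$: there a single weighted blow-up need not compute $\mld_P(X,\fc\fm^s)$, a genuinely non-toric divisor can be forced to be the only computing one, and controlling its log discrepancy appears to require a finer analysis of the exceptional configurations produced by successive blow-ups over a canonical threefold pair. Settling that case, together with the reductions above, would complete the proof of Conjecture~\ref{cnj:alc}.
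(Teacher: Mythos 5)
You have not proved Conjecture~\ref{cnj:alc}, and you say so yourself in your final sentence: the case $\tfrac12<c<1$ for the lc threshold $c$ of $\fm$ with respect to the canonical pair $(X,\fc^q)$ is left open. That is a genuine gap, not a deferred detail --- but it is exactly the gap the paper itself leaves. Conjecture~\ref{cnj:alc} is stated here as a conjecture and is not proved in the paper either; what the paper establishes is (a) the equivalence of Conjectures~\ref{cnj:acc}--\ref{cnj:nakamura} with Conjecture~\ref{cnj:product} (Theorem~\ref{thm:first}), and (b) Conjecture~\ref{cnj:product} when the lc threshold of $\fm$ is at most $\tfrac12$ or at least one (Theorem~\ref{thm:second}), plus the case $s\le 1/n$ (Corollary~\ref{crl:main}). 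Your outline reproduces this architecture faithfully: the derivation of the ACC for $a$-lc thresholds from the boundedness of a computing divisor, via bounding $\ord_{E_i}\fa_i$ and $\ord_{E_i}\fb_i$ and expressing $t_i$ as a quotient controlled by the DCC of $I$, is the content of Steps~1 and~3 of the proof of Theorem~\ref{thm:equiv} together with Lemma~\ref{lem:DCC}; the production of a uniformly bounded computing divisor by descending a divisor from the generic limit is Lemma~\ref{lem:limtonak} combined with the stability statement (Conjecture~\ref{cnj:equiv}(\ref{itm:limit})); and the normalisation of the boundary to $\fc^q\fm^s$ with $(X,\fc^q)$ canonical is Theorems~\ref{thm:canonical} and~\ref{thm:meta}. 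So as a survey of the reduction your proposal is accurate and matches the paper's route; as a proof of the statement it is incomplete in precisely the way the problem is currently open.

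Two smaller caveats on the parts you do sketch. First, the stability of minimal log discrepancies under generic limits is itself equivalent to the conjectures being proved (Theorem~\ref{thm:equiv}), so it cannot be ``invoked'' as an input to the argument without circularity; the paper only has it unconditionally when the limit pair is klt, when $\mld_{\hat P}(\hat X,\sfa)\le 0$, or when the smallest lc centre has dimension two (Theorem~\ref{thm:grthan1}), and the whole difficulty is concentrated in the one-dimensional-centre case. Second, your handling of varying exponents by ``limiting exponents using the DCC of $I$'' is vaguer than what is actually needed: the paper first pins the exponents down to a finite subset via the Cascini--McKernan statement (\ref{itm:CM}) before forming generic limits, and reduces to rational exponents via Lemma~\ref{lem:rational}. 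Neither point changes the verdict, but both would have to be made precise, and the case $\tfrac12<c<1$ would have to be settled, before this becomes a proof.
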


The second is a uniform version of the $\fm$-adic semi-continuity, which was proposed originally by Musta\c{t}\u{a}.

\begin{conjectureAlph}\label{cnj:madic}
Fix a finite subset $I$ of the positive real numbers. Then there exists a positive integer $l$ depending only on $I$ such that if $P\in X$ is the germ of a smooth threefold and if $\fa=\prod_{j=1}^e\fa_j^{r_j}$ and $\fb=\prod_{j=1}^e\fb_j^{r_j}$ are $\bR$-ideals on $X$ satisfying that $r_j\in I$ and $\fa_j+\fm^l=\fb_j+\fm^l$ for any $j$, where $\fm$ is the maximal ideal in $\sO_X$ defining $P$, then $\mld_P(X,\fa)=\mld_P(X,\fb)$.
\end{conjectureAlph}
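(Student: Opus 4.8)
The plan is to argue by contradiction through the theory of generic limits of ideals of de~Fernex and Musta\c{t}\u{a}. Suppose Conjecture~\ref{cnj:madic} fails for some finite set $I$; then for every $l\in\bZ_{>0}$ there exist a smooth threefold germ $P_l\in X_l$ and $\bR$-ideals $\fa_l=\prod_{j=1}^e\fa_{l,j}^{r_j}$ and $\fb_l=\prod_{j=1}^e\fb_{l,j}^{r_j}$ with $r_j\in I$ --- passing to a subsequence, the number $e$ of factors and the exponents $r_j$ may be taken independent of $l$ --- such that $\fa_{l,j}+\fm_l^l=\fb_{l,j}+\fm_l^l$ for all $j$, where $\fm_l\subset\sO_{X_l,P_l}$ is the maximal ideal, yet $\mld_{P_l}(X_l,\fa_l)\ne\mld_{P_l}(X_l,\fb_l)$. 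After completing and choosing coordinates I would identify each $\sO_{X_l,P_l}$ with $k_l[[x_1,x_2,x_3]]$.

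Next I would pass to the generic limit: along a suitable infinite $S\subseteq\bZ_{>0}$ the tuples $(\fa_{l,1},\dots,\fa_{l,e})_{l\in S}$ and $(\fb_{l,1},\dots,\fb_{l,e})_{l\in S}$ produce tuples of ideals $(\fa_{\infty,1},\dots,\fa_{\infty,e})$ and $(\fb_{\infty,1},\dots,\fb_{\infty,e})$ in a power series ring $K[[x_1,x_2,x_3]]$ over a field $K$ of characteristic zero. The crucial observation is that, since $\fa_{l,j}$ and $\fb_{l,j}$ agree modulo $\fm_l^l$ and $l\to\infty$, their truncations to each fixed order eventually coincide, so the two generic limits are \emph{equal}: $\fa_{\infty,j}=\fb_{\infty,j}$ for every $j$, and hence the generic-limit $\bR$-ideals $\fa_\infty:=\prod_j\fa_{\infty,j}^{r_j}$ and $\fb_\infty:=\prod_j\fb_{\infty,j}^{r_j}$ coincide.

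The decisive step is then the stability of the minimal log discrepancy under the generic limit: writing $\mld_0(\fa_\infty)$ and $\mld_0(\fb_\infty)$ for the minimal log discrepancies of the generic-limit pairs at the closed point, one wants $\mld_0(\fa_\infty)=\mld_{P_l}(X_l,\fa_l)$ and $\mld_0(\fb_\infty)=\mld_{P_l}(X_l,\fb_l)$ for all $l$ in a further infinite subset of $S$. Granting this, for such $l$ one obtains $\mld_{P_l}(X_l,\fa_l)=\mld_0(\fa_\infty)=\mld_0(\fb_\infty)=\mld_{P_l}(X_l,\fb_l)$, contradicting the choice of the sequences and proving Conjecture~\ref{cnj:madic}.

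The hard part is exactly this stability, and it is precisely where Conjecture~\ref{cnj:madic} meets Conjectures~\ref{cnj:acc} and~\ref{cnj:alc}: a divisor computing $\mld_0(\fa_\infty)$ does not obviously descend to divisors over the $X_l$ of controlled log discrepancy unless one knows in advance a uniform bound on the log discrepancy of \emph{some} divisor computing $\mld_{P_l}(X_l,\fa_l)$ --- the boundedness statement that this paper shows to be equivalent to Conjecture~\ref{cnj:acc} --- while conversely such a bound is what lets the generic-limit valuation be approximated by valuations over the $X_l$, yielding the required equality. To run the argument unconditionally on threefolds one first reduces, as in this paper, to boundaries of the shape (a canonical part)$\,\cdot\,\fm^s$, and then distinguishes cases by the log canonical threshold $c$ of $\fm$ against that canonical part: when $c\ge1$ the pair is close to terminal and a computing divisor can be pinned down by explicit weighted blow-up and valuation estimates, while when $c\le\tfrac12$ the $\fm$-adic part of the boundary is mild enough that a fixed truncation of the $\fa_{l,j}$ already determines $\mld_{P_l}(X_l,\fa_l)$; the intermediate range $\tfrac12<c<1$ is the genuinely open part.
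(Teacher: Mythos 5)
The statement you are asked to prove is Conjecture~\ref{cnj:madic}, which in this paper is an \emph{open conjecture}: the paper never proves it, but only (i) shows it is equivalent to Conjectures~\ref{cnj:acc}, \ref{cnj:alc}, \ref{cnj:nakamura} and to the reduced Conjecture~\ref{cnj:product} (Theorem~\ref{thm:first}, via Theorem~\ref{thm:equiv} and the Musta\c{t}\u{a}--Nakamura equivalences), and (ii) proves the reduced conjecture in the special cases of Theorem~\ref{thm:second} and Corollary~\ref{crl:main}. Your proposal faithfully reproduces this structure: the passage to a common generic limit $\sfa=\sfb$ from $\fa_{l,j}+\fm^l=\fb_{l,j}+\fm^l$ is correct (this is exactly the implication from Conjecture~\ref{cnj:equiv}(\ref{itm:limit}) to (\ref{itm:madic}) used in Step~2 of the proof of Theorem~\ref{thm:equiv}), and you correctly identify that everything hinges on the stability $\mld_{\hat P}(\hat X,\sfa)=\mld_P(X,\fa_i)$. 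But that stability is precisely Conjecture~\ref{cnj:equiv}(\ref{itm:limit}), which by Remark~\ref{rmk:limit} and Theorem~\ref{thm:grthan1} is known only outside the case where $(\hat X,\sfa)$ has a one-dimensional smallest lc centre, and you yourself concede that the range $\tfrac12<c<1$ of the lc threshold of $\fm$ is ``the genuinely open part.'' So the proposal is a (correct) reduction of the conjecture to an equivalent open statement, not a proof; there is no way to close it with the results available in the paper, and you should present it as such rather than as a proof.

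Two smaller points. First, the generic limit is constructed for a sequence of ideals on a \emph{fixed} germ, whereas in Conjecture~\ref{cnj:madic} the germs $P_l\in X_l$ vary; the paper handles this by reducing to $\fm$-primary ideals and descending them along an \'etale morphism to the fixed germ $o\in\bA^3$ (Lemma~\ref{lem:regular}, Remark~\ref{rmk:independent}), a step you should make explicit since ``identifying each $\sO_{X_l,P_l}$ with $k_l[[x_1,x_2,x_3]]$'' over possibly varying fields $k_l$ is not by itself enough to feed the sequence into Definition~\ref{dfn:approx}. Second, the direction you would actually need for the contradiction is only the inequality $\mld_{\hat P}(\hat X,\sfa)\le\mld_P(X,\fa_i)$, since the opposite inequality always holds by Remark~\ref{rmk:limit}(\ref{itm:limitineq}); stating this would sharpen your account of exactly what remains open.
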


The last is the boundedness of the log discrepancy of some divisor which computes the minimal log discrepancy, proposed by Nakamura.

\begin{conjectureAlph}\label{cnj:nakamura}
Fix a finite subset $I$ of the positive real numbers. Then there exists a positive integer $l$ depending only on $I$ such that if $P\in X$ is the germ of a smooth threefold and $\fa$ is an $\bR$-ideal on $X$ satisfying that $\fa\in I$, then there exists a divisor $E$ over $X$ which computes $\mld_P(X,\fa)$ and satisfies the inequality $a_E(X)\le l$.
\end{conjectureAlph}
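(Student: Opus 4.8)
The plan is to prove Conjecture~\ref{cnj:nakamura} by establishing, and then exploiting, the equivalence --- essentially due to Musta\c{t}\u{a} and Nakamura \cite{MN16} --- between it and the \emph{stability of the minimal log discrepancy along a generic limit of ideals}. In the framework of \cite{dFM09}: if $\fa_i\in I$ is a sequence of $\bR$-ideals on germs $P_i\in X_i$ of smooth threefolds and $\fa_\infty$ is a generic limit of a subsequence of them, an $\bR$-ideal with fixed exponents on a germ $P_\infty\in X_\infty$ of a smooth threefold over a large field, then $\mld_{P_i}(X_i,\fa_i)=\mld_{P_\infty}(X_\infty,\fa_\infty)$ for $i\gg0$. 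Granting this, Conjecture~\ref{cnj:nakamura} follows: if it failed for some finite $I$, a sequence $\fa_i\in I$ admitting no computing divisor of log discrepancy $\le i$ would have a generic limit $\fa_\infty$, and a divisor $E_\infty$ computing $\mld_{P_\infty}(X_\infty,\fa_\infty)$ would spread out to divisors over the $X_i$ that, by stability, compute $\mld_{P_i}(X_i,\fa_i)$ for $i\gg0$ with the single finite log discrepancy $a_{E_\infty}(X_\infty)$, contradicting the choice of the sequence once $i>a_{E_\infty}(X_\infty)$. Since \cite{K14} already handles the case of fixed exponents, proving this stability --- which by the equivalence also yields the corresponding cases of Conjectures~\ref{cnj:acc}, \ref{cnj:alc} and \ref{cnj:madic} --- is the whole task, and it is enough to prove it for one special shape of boundary.

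I would then reduce the stability statement to the case in which the boundary is $\fc\,\fm^r$ with $(X,\fc)$ canonical and $r$ a single exponent. A divisor computing $\mld_P(X,\fa)$ has center on $X$ either a curve through $P$ or the point $P$ itself. In the curve case one cuts by a general hyperplane through $P$ and invokes precise inversion of adjunction, reducing to minimal log discrepancies of smooth \emph{surfaces}, for which the boundedness of a computing divisor is classical; so one may assume the center is $P$ and $\fa$ is $\fm$-primary. One then isolates the non-canonical part of $(X,\fa)$, which turns out to be absorbed into a single factor $\fm^r$, leaving a canonical residual boundary $\fc$. The upshot is: it suffices to bound a divisor computing $\mld_P(X,\fc\fm^r)$ when $(X,\fc)$ is canonical.

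Finally I would prove this reduced assertion when the log canonical threshold $c$ of the maximal ideal $\fm$ satisfies $c\le\tfrac12$ or $c\ge1$. A divisor $E$ computing $\mld_P(X,\fc\fm^r)$ is reached by a chain of weighted blow-ups and satisfies $\mld_P(X,\fc\fm^r)=a_E(X)-\ord_E\fc-r\,\ord_E\fm$, so the task is to bound the length of such a chain, which in turn bounds $a_E(X)$. I would run a descending induction on the length: at each stage, extract a divisor, write down the inequalities among $a_E(X)$, $\ord_E\fc$ and $\ord_E\fm$ forced by canonicity of $\fc$ and by the size of $c$, and check that what remains is a problem on a strictly simpler germ. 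The numerology closes up --- so the induction terminates with an explicit bound --- precisely when $c\le\tfrac12$, where the power of $\fm$ is small enough that essentially a single weighted blow-up controls everything, and when $c\ge1$, where canonicity is robust enough to be inherited down the chain.

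The step I expect to be the main obstacle is the intermediate range $\tfrac12<c<1$. There the descending induction does not close: the inequality bounding the weights produced at one stage is too weak to force a genuine drop in complexity at the next, so the chain of weighted blow-ups computing $\mld_P(X,\fc\fm^r)$ can a priori be arbitrarily long, and accordingly the stability of minimal log discrepancies along generic limits whose $\fm$-adic part has intermediate log canonical threshold stays out of reach of this combinatorial machinery. Closing this range is the one remaining gap between the approach above and the full Conjecture~\ref{cnj:nakamura}.
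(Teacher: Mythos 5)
First, note that the statement you are addressing is a conjecture: the paper does not prove it, but reduces it (Theorem \ref{thm:first}) to Conjecture \ref{cnj:product} and settles the latter only when the lc threshold of $\fm$ is at most one-half or at least one (Theorem \ref{thm:second}). Your outline reproduces exactly this architecture --- equivalence with stability of minimal log discrepancies along generic limits, reduction to a boundary of the form (canonical part)$\cdot\fm^r$, the two tractable ranges of the lc threshold, and the open middle range $\tfrac12<c<1$ --- so your global plan and your identification of the remaining gap agree with the paper.

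However, the two substantive legs of the plan rest on mechanisms that differ from the paper's and, as you describe them, have genuine gaps. In the reduction step, the claim that the non-canonical part of $(X,\fa)$ ``turns out to be absorbed into a single factor $\fm^r$'' is the whole difficulty, not an observation: the paper needs (i) Stepanov's ACC for canonical thresholds \cite{St11} together with the classification of threefold divisorial contractions (Theorem \ref{thm:divisorial}) to run a terminating algorithm with bounded discrepancies (Theorem \ref{thm:canonical}) whose output has a canonical weak transform and a residual boundary that is an $\fm$-primary ideal $\fb$ of bounded colength --- not yet a power of $\fm$; and (ii) to replace $\fb$ by $\fm^{b'}$ it passes to a generic limit whose smallest lc centre is a curve $\hat C$, extracts a divisor computing $\mld_{\eta_{\hat C}}$ by a weighted blow-up extending to the closed point (Theorem \ref{thm:wbu}), and applies precise inversion of adjunction (Corollary \ref{crl:pia}) via $\sfb\sO_{\hat C}=\hat\fm^{b'}\sO_{\hat C}$ (Theorem \ref{thm:meta}). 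None of this is visible in your sketch, and without it the absorption claim does not follow. In the boundary cases, your ``descending induction on the length of a chain of weighted blow-ups'' is not how the paper argues, and it is not clear it closes even for $c\le\tfrac12$: the paper instead restricts to the exceptional surface of a single weighted blow-up over the one-dimensional lc centre of the generic limit and shows by intersection theory there (Lemma \ref{lem:half}) that the threshold is then exactly one-half and $\mld_P(X,\fa^q\fm^s)=1-2s$, while for $c\ge1$ it shows $\ord_E\fm=1$ for a computing divisor descended from the limit and concludes by convexity (Lemma \ref{lem:mld}). So your road map is the right one, but each of its two legs still needs the specific input the paper supplies.
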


The first main result of this paper is to reduce these conjectures to the case when the boundary is the product of a canonical part and the maximal ideal to some power.

\begin{theorem}\label{thm:first}
Conjectures \textup{\ref{cnj:acc}}, \textup{\ref{cnj:alc}}, \textup{\ref{cnj:madic}} and \textup{\ref{cnj:nakamura}} are equivalent to Conjecture \textup{\ref{cnj:product}}.
\end{theorem}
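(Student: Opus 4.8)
The plan is to prove the theorem in two stages: first that Conjectures \ref{cnj:acc}, \ref{cnj:alc}, \ref{cnj:madic} and \ref{cnj:nakamura} are mutually equivalent, and then that this common statement is equivalent to Conjecture \ref{cnj:product}. For the first stage I would follow the scheme of implications of Musta\c{t}\u{a} and Nakamura \cite{MN16}, adapted to the smooth threefold setting by means of the generic limits of \cite{dFM09} and the finiteness theorem of \cite{K14}. Concretely, Conjecture \ref{cnj:nakamura} implies Conjecture \ref{cnj:madic} because on a smooth variety the log discrepancy $a_E(X)$ of a divisor $E$ bounds $\ord_E(\fm)$ from above, so that for a computing divisor with $a_E(X)\le l$ the quantities $a_E(X)$ and $\ord_E(\fa_j)$ depend on $\fa_j$ only through $\fa_j+\fm^{l'}$ for an $l'$ determined by $l$; Conjecture \ref{cnj:madic} implies Conjectures \ref{cnj:acc} and \ref{cnj:alc} by taking a generic limit of a hypothetical strictly increasing sequence and using the uniform $\fm$-adic stability to see that the minimal log discrepancy of the generic limit is already attained on a fixed truncation, contradicting strict monotonicity; and Conjecture \ref{cnj:acc}, together with the finiteness of \cite{K14}, feeds back into Conjecture \ref{cnj:nakamura} by extracting in a bounded family a divisor computing the minimal log discrepancy. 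The equivalence of Conjectures \ref{cnj:acc} and \ref{cnj:alc} is the usual dictionary between the ACC for minimal log discrepancies and the ACC for $a$-lc thresholds.

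For the second stage, Conjecture \ref{cnj:product} is, by its formulation, the restriction of the above conjectures to $\bR$-ideals of the shape $\fc\fm^r$ with $(X,\fc)$ canonical, so the implication from the general conjectures to Conjecture \ref{cnj:product} is immediate; the content is the converse. I would establish a reduction lemma: given the germ $P\in X$ of a smooth threefold and an $\bR$-ideal $\fa$, there exist an $\bR$-ideal $\fc$ with $(X,\fc)$ canonical and a real number $r\ge0$ such that $\mld_P(X,\fa)=\mld_P(X,\fc\fm^r)$, such that from a divisor computing $\mld_P(X,\fc\fm^r)$ one recovers, with a bounded change of log discrepancy, a divisor computing $\mld_P(X,\fa)$, and such that the exponents occurring in $\fc$ and the number $r$ are controlled by the exponents of $\fa$, lying in a DCC set whenever $\fa\in I$ with $I$ of DCC type and in a finite set whenever $I$ is finite. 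To produce $\fc$ I would pass to a partial log resolution $f\colon Y\to X$ of $(X,\fa)$ that is canonical over $X\setminus\{P\}$, absorb the part of $\fa\sO_Y$ not supported over $P$ into a canonical $\bR$-ideal on $X$, and dominate the remaining contribution over $P$ by a suitable power of $\fm$ after a further comparison on $Y$; the divisors computing the minimal log discrepancy are tracked through this modification using the explicit structure, special to dimension three, of such divisors over a smooth threefold. Feeding this lemma into the generic-limit argument of the first stage then transfers the ACC and boundedness statements from the product case to the general case.

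The main obstacle is precisely the bookkeeping in the reduction lemma: constructing the canonical $\bR$-ideal $\fc$ and the exponent $r$ while keeping their coefficients inside a DCC, respectively finite, set depending only on $I$, uniformly over the germ $P\in X$, and while guaranteeing an honest equality $\mld_P(X,\fa)=\mld_P(X,\fc\fm^r)$ rather than merely an inequality in one direction. Closely tied to this is the compatibility of the construction with generic limits, namely that the canonical part of a generic limit agrees, up to the stability of minimal log discrepancies already available, with the generic limit of the canonical parts; this is what allows the $\fm$-adic and the ACC formulations to be linked after the reduction. I expect that making the passage to the partial resolution $f$ and the control of its exceptional divisors simultaneously precise enough to keep all coefficients in the prescribed set is where the bulk of the work lies.
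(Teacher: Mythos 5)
Your first stage is essentially the paper's Theorem \ref{thm:equiv} and is fine as a plan; the gap is in the second stage. The reduction lemma you propose --- an honest equality $\mld_P(X,\fa)=\mld_P(X,\fc\fm^r)$ with $(X,\fc)$ canonical, produced on the fixed germ for each $\fa$ --- is not what the paper establishes, and your proposed route (a partial log resolution of $(X,\fa)$, absorbing and dominating parts of $\fa\sO_Y$) cannot deliver it: a log resolution of $(X,\fa)$ carries no uniform bound on the discrepancies of its exceptional divisors as $\fa$ varies, and keeping such a bound is the whole point. The paper instead builds the modification $Y\to X$ by an algorithm of crepant divisorial contractions (Algorithm \ref{alg:canonical}, Theorem \ref{thm:canonical}), whose termination and bounded discrepancies rest on two inputs absent from your proposal: Stepanov's ACC for canonical thresholds on smooth threefolds (Theorem \ref{thm:stepanov}) and the classification of threefold divisorial contractions to points (Theorem \ref{thm:divisorial}). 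Moreover the output is not of the form $\fc\fm^r$ but $(\fa')^q\fb^q$ with $(Y,(\fa')^q)$ canonical and $\fb$ merely an $\fm$-primary ideal of bounded colength.

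The second missing idea is how that bounded-colength ideal $\fb$ gets replaced by a power of the maximal ideal. This is not done germ by germ; it is done only on the generic limit (Theorem \ref{thm:meta}). There one reduces to the case where $(\hat X,\sfa^q)$ has smallest lc centre $\hat C$ of dimension one with $\mld_{\hat P}(\hat X,\sfa^q)=1$, extracts by Theorem \ref{thm:wbu} a divisor computing $\mld_{\eta_{\hat C}}(\hat X,\sfa^q)$ by a weighted blow-up that extends to the closed point, and applies precise inversion of adjunction (Corollary \ref{crl:pia}) to conclude that $\mld_{\hat P}(\hat X,\sfa^q\sfb^s)$ depends on $\sfb$ only through $\sfb\sO_{\hat C}$, which is a power of $\hat\fm\sO_{\hat C}$. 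Your concern about the canonical part of a generic limit agreeing with the generic limit of the canonical parts is not where the difficulty sits; the difficulty is that the equality between the $\fb$-version and the $\fm$-version of the minimal log discrepancy only becomes available after passing to the limit and exploiting the one-dimensional lc centre. Without these two mechanisms the reduction to Conjecture \ref{cnj:product} does not go through.
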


\begin{conjecture}\label{cnj:product}
Let $P\in X$ be the germ of a smooth threefold and $\fm$ be the maximal ideal in $\sO_X$ defining $P$. Fix a positive rational number $q$ and a non-negative rational number $s$. Then there exists a positive integer $l$ depending only on $q$ and $s$ such that if $\fa$ is an ideal on $X$ satisfying that $(X,\fa^q)$ is canonical, then there exists a divisor $E$ over $X$ which computes $\mld_P(X,\fa^q\fm^s)$ and satisfies the inequality $a_E(X)\le l$.
\end{conjecture}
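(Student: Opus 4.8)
The plan is to argue by contradiction via generic limits, reducing the uniform bound to a statement about one formal power series ideal, and then to treat that ideal according to the size of the log canonical threshold $c=c(\fa)$ of the maximal ideal, i.e.\ the largest real $t$ with $(X,\fa^q\fm^t)$ log canonical at $P$; since $(X,\fa^q)$ is canonical and $\fm$ alone has threshold $3$ on a smooth threefold we have $0<c\le 3$, and the argument will go through in the extreme ranges $c\ge 1$ and $c\le 1/2$. Suppose Conjecture \ref{cnj:product} fails for some $q,s$; then there are ideals $\fa_i$ on the germ $P\in X$ with $(X,\fa_i^q)$ canonical such that no divisor $E$ computing $\mld_P(X,\fa_i^q\fm^s)$ satisfies $a_E(X)\le i$. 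Choosing truncations $\fa_i+\fm^{N_i}$ with $N_i\to\infty$ and applying the theory of generic limits \cite{dFM09}, \cite{K14}, after passing to a subsequence we obtain an ideal $\fa$ in $K[[x,y,z]]$ over an uncountable field $K$ such that $(X,\fa^q)$ is canonical, the constraint on $c$ (namely $\ge 1$, respectively $\le 1/2$) is preserved, and $\mld_P(X,\fa^q\fm^s)=\lim_i\mld_P(X,\fa_i^q\fm^s)$, with the feature that any divisor over $X$ computing $\mld_P(X,\fa^q\fm^s)$ also computes $\mld_P(X,\fa_i^q\fm^s)$ for $i\gg 0$. It therefore suffices to exhibit, for the single ideal $\fa$, one divisor computing $\mld_P(X,\fa^q\fm^s)$; its log discrepancy then contradicts the assumed divergence. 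Recall also that for fixed $\fa$ the function $s\mapsto\mld_P(X,\fa^q\fm^s)$ is decreasing, concave, and on each of finitely many linear pieces computed by a single divisor, so the task is to identify the divisor governing the piece containing our fixed $s$.

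When $c\ge 1$, the pair $(X,\fa^q\fm)$ is log canonical; combined with the canonicity of $(X,\fa^q)$ this yields $\ord_E\fm\le a_E(X,\fa^q)$ for every divisor $E$ over $X$, hence $a_E(X,\fa^q\fm^s)\ge a_E(X,\fa^q)(1-s/c)$, and the minimal log discrepancy is realised among divisors of controlled behaviour. I would single out a log canonical place $E_0$ of $(X,\fa^q\fm^c)$; its center $Z$ is a curve or the point $P$. If $Z$ is a curve, inversion of adjunction along $Z$ reduces the computation of $\mld_P(X,\fa^q\fm^s)$ to a two-dimensional problem on a transverse slice, where the analogue of Conjecture \ref{cnj:product} is accessible through the minimal-resolution and toric description of surface pairs, as in \cite{MN16}. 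If $Z=\{P\}$, then $E_0$ is an explicit divisor over $P$, and extracting it produces a mild model $X'\to X$ on which the boundary again has product shape $\fa'^{\,q}\fm'^{\,s'}$ but with strictly simpler data, so an induction on a suitable complexity of $\fa$ (for instance $\ord_{E_0}\fm$, or the colength of a saturation of $\fa$) terminates and outputs a bounded computing divisor. In the subcase $s>c$ one has $\mld_P(X,\fa^q\fm^s)=-\infty$, computed by $E_0$ itself, so only $a_{E_0}(X)$ must be bounded, which $c\ge 1$ supplies.

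When $c\le 1/2$ the ideal $\fa$ is so singular that its behaviour at $P$ is governed by a monomial part: there is a divisor $E_0$ with $a_{E_0}(X,\fa^q)\le\tfrac12\ord_{E_0}\fm$, and more structurally, after a formal coordinate change the computation of $\mld_P(X,\fa^q\fm^s)$ localises to the toric geometry of a Newton polyhedron of $\fa$, because the small value of $c$ forces the monomial terms of $\fa$ to dominate and renders the remaining terms irrelevant to which valuation is optimal. The optimal valuation is then monomial, say with primitive weight vector $(a_1,a_2,a_3)$, and it is constrained by the equality $a_1+a_2+a_3-q\,\ord_E\fa-s\min_j a_j=\mld_P(X,\fa^q\fm^s)$, by the canonicity of $(X,\fa^q)$, and by $c\le 1/2$; these conditions cut out a bounded polyhedral region, so $a_E(X)=a_1+a_2+a_3$ is bounded in terms of $q$ and $s$ only, which gives the required divisor.

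I expect the main obstacle to be the intermediate range $1/2<c<1$: there $(X,\fa^q\fm^{\min(s,1)})$ is in general neither log canonical—so there is no log canonical place on which to run inversion of adjunction—nor monomially dominated, and one would need to control directly which, possibly non-toric, divisors over the point $P$ can compute $\mld_P(X,\fa^q\fm^s)$ on a smooth threefold. This is essentially the core difficulty of Conjecture \ref{cnj:acc} itself, and I would expect progress to require either a new finiteness theorem for divisors computing minimal log discrepancies of canonical threefold pairs or a sharper structure theory for the generic-limit ideal $\fa$ in this regime.
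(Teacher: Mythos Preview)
First, note that Conjecture~\ref{cnj:product} is stated in the paper as an \emph{open} conjecture: the paper does not prove it, but establishes it only in the partial ranges $c\le 1/2$ and $c\ge 1$ (Theorem~\ref{thm:second}), where $c$ is the lc threshold of $\fm$ with respect to $(X,\fa^q)$. You have correctly identified that the intermediate range $1/2<c<1$ is the obstruction, so your proposal is really a sketch toward Theorem~\ref{thm:second}, not a proof of the conjecture.

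There is, however, a genuine circularity in your generic-limit step. You assert that for the limit ideal $\sfa$ one has $\mld_{\hat P}(\hat X,\sfa^q\hat\fm^s)=\lim_i\mld_P(X,\fa_i^q\fm^s)$ and that a divisor computing the former descends to one computing the latter. But this equality of minimal log discrepancies is precisely Conjecture~\ref{cnj:equiv}(\ref{itm:limit}), which the paper shows (Theorem~\ref{thm:equiv}) is \emph{equivalent} to the boundedness you are trying to establish. Only the inequality $\mld_{\hat P}\ge\mld_P$ is automatic (Remark~\ref{rmk:limit}(\ref{itm:limitineq})); the paper's actual work in each case is to prove the reverse inequality by hand, using the structure theorem on the generic limit (Theorem~\ref{thm:grthan1}) to force $\mld_{\hat P}(\hat X,\sfa^q)=1$ with a one-dimensional lc centre, and then arguing case by case.

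Your treatment of the two tractable ranges also diverges substantially from the paper and is not justified as written. For $c\le 1/2$ you claim that ``the monomial terms of $\fa$ dominate'' so that the computing valuation is monomial and bounded by a polyhedral argument; this is not established anywhere and is not the paper's route. The paper instead invokes Theorem~\ref{thm:wbu} to extract a divisor $\hat E$ over $\hat X$ with $a_{\hat E}(\hat X,\sfa^q)=0$ by a weighted blow-up along the lc centre curve, applies precise inversion of adjunction (Proposition~\ref{prp:piaE}) to reduce to the ruled surface $\hat E$, and then proves the key surface Lemma~\ref{lem:half}: under the hypotheses the threshold $t$ must equal exactly $1/2$ and $\mld_{\hat P}(\hat X,\sfa^q\hat\fm^s)=1-2s$, which combined with convexity (Lemma~\ref{lem:mld}) yields the required inequality. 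For $c\ge 1$, rather than your case split on the centre of an lc place and an unspecified induction, the paper simply takes a divisor $\hat E$ computing $\mld_{\hat P}(\hat X,\sfa^q)=1$, descends it to $E_i$, and observes that the log canonicity of $(X,\fa_i^q\fm)$ forces $\ord_{E_i}\fm=1$, so $E_i$ computes both $\mld_P(X,\fa_i^q)=1$ and $\mld_P(X,\fa_i^q\fm)=0$ and hence, by convexity, $\mld_P(X,\fa_i^q\fm^s)$ for all $0\le s\le 1$.
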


Our earlier work derives this boundedness when $(X,\fa^q)$ is terminal or $s$ is zero.

\begin{theorem}\label{thm:terminal}
Let $P\in X$ be the germ of a smooth threefold and $\fm$ be the maximal ideal in $\sO_X$ defining $P$.
\begin{enumerate}
\item\label{itm:terminal}
Fix a positive rational number $q$ and a non-negative rational number $s$. Then there exists a positive integer $l$ depending only on $q$ and $s$ such that if $\fa$ is an ideal on $X$ satisfying that $(X,\fa^q)$ is terminal, then there exists a divisor $E$ over $X$ which computes $\mld_P(X,\fa^q\fm^s)$ and satisfies the inequality $a_E(X)\le l$.
\item\label{itm:zero}
Fix a positive rational number $q$. Then there exists a positive integer $l$ depending only on $q$ such that if $\fa$ is an ideal on $X$ satisfying that $(X,\fa^q)$ is canonical, then there exists a divisor $E$ over $X$ which computes $\mld_P(X,\fa^q)$ and satisfies the inequality $a_E(X)\le l$.
\end{enumerate}
\end{theorem}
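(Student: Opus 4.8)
The plan is to deduce both parts by the method of generic limits of \cite{dFM09}, \cite{MN16}, using the discreteness theorem of \cite{K14} and, crucially, the hypotheses ``$(X,\fa^q)$ terminal'' in (i) and ``$s=0$'' in (ii), which are exactly what confine the divisors computing the minimal log discrepancy to a bounded family. Assume the desired bound fails, and fix $q$ (and $s$). Then I would produce germs $P_m\in X_m$ of smooth threefolds with ideals $\fa_m$ such that $(X_m,\fa_m^q)$ is terminal (resp.\ canonical with $s=0$) while every divisor over $X_m$ computing $a_m:=\mld_{P_m}(X_m,\fa_m^q\fm_m^s)$ has log discrepancy exceeding $m$; in particular $(X_m,\fa_m^q\fm_m^s)$ is lc and $a_m\in[0,3]$. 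Using that these minimal log discrepancies form a discrete set in the present restricted situation, I pass to a subsequence along which $a_m$ equals a constant $a$. Identifying $X_m$ with $\Spec\bC[[x_1,x_2,x_3]]$ and forming a generic limit $\fa_\infty$ of the $\fa_m$ over a suitable field $K\supseteq\bC$, the stability of log canonicity, of terminality (resp.\ canonicity), and of the minimal log discrepancy under this passage yields that $(X_\infty,\fa_\infty^q)$ is terminal (resp.\ canonical), that $(X_\infty,\fa_\infty^q\fm_\infty^s)$ is lc, and that $\mld_{P_\infty}(X_\infty,\fa_\infty^q\fm_\infty^s)=a$.

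Over the now-fixed germ $P_\infty\in X_\infty$ the minimal log discrepancy is computed by some divisor $E$, with $a_E(X_\infty)=a+q\,\ord_E(\fa_\infty)+s\,\ord_E(\fm_\infty)$, and the heart of the matter is to bound $a_E(X_\infty)$ by a constant depending only on $q$ (and $s$). Here terminality enters. Since $(X_\infty,\fa_\infty^q)$ is canonical, $\fa_\infty$ has order at most $2/q$ at every point of the germ (blow up that point), so a blow-up adapted to $\fa_\infty$ can be taken with weights bounded in terms of $q$. I would then run a descending induction on $a_E(X_\infty)$: choose $E$ of least log discrepancy among divisors computing the mld, realise it by such a weighted blow-up, and, if the centre of $E$ there still carries a divisor of strictly smaller log discrepancy computing the same value, pass to the terminal threefold point so produced --- again of bounded type --- and repeat; the recursion terminates after a number of steps bounded by $a\le 3$. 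In (i) the terminality of $(X_\infty,\fa_\infty^q)$ keeps $\ord_E(\fa_\infty)$ controlled against $a_E(X_\infty)$ through $a_E(X_\infty)-q\,\ord_E(\fa_\infty)\ge\mld_{P_\infty}(X_\infty,\fa_\infty^q)>1$; in (ii) the vanishing of $s$ deletes the term $s\,\ord_E(\fm_\infty)$, which otherwise obstructs the reduction.

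Finally, since $E$ over $\Spec K[[x_1,x_2,x_3]]$ is obtained from finitely many (weighted) blow-ups governed by finitely many jets of $\fa_\infty$, it spreads out to divisors $E_m$ over $X_m$ for $m\gg0$ with $a_{E_m}(X_m)=a_E(X_\infty)$ and with the same orders along $\fa_m$ and $\fm_m$; hence $E_m$ computes $a_m=a$ with uniformly bounded log discrepancy, contradicting $a_{E_m}(X_m)>m$. The step I expect to be the main obstacle is the descending induction bounding $a_E(X_\infty)$, which is also what blocks Conjecture~\ref{cnj:product} in general: dropping terminality forfeits the control of $\ord_E(\fa_\infty)$ in terms of $a_E(X_\infty)$, and allowing $s>0$ means the weighted blow-up intended to simplify the pair may leave it no simpler, so the recursion need not terminate in a bounded number of steps.
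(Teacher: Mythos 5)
There is a genuine gap, and it sits at the exact point where you write that ``the stability \dots of the minimal log discrepancy under this passage yields \dots that $\mld_{P_\infty}(X_\infty,\fa_\infty^q\fm_\infty^s)=a$.'' That stability is not a known property of generic limits: it is Conjecture \ref{cnj:equiv}(\ref{itm:limit}), which Theorem \ref{thm:equiv} shows to be \emph{equivalent} to the boundedness statement you are trying to prove. Only the inequality $\mld_{\hat P}(\hat X,\sfa)\ge\mld_P(X,\fa_i)$ comes for free (Remark \ref{rmk:limit}(\ref{itm:limitineq})); the reverse inequality is the whole content of the theorem, so invoking it as an input makes the argument circular. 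The actual proof derives the stability in these two special cases from Theorem \ref{thm:grthan1} (i.e.\ from \cite{K15}): stability is known except when $(\hat X,\sfa^q\hat\fm^s)$ has a one-dimensional smallest lc centre, and in that exceptional case $\mld_{\hat P}(\hat X,\sfa^q)\le1$. Terminality of the $(X,\fa_i^q)$ forces $\mld_{\hat P}(\hat X,\sfa^q)>1$ via the easy inequality, excluding that case outright; and for (\ref{itm:zero}) the canonicity gives $\mld_P(X,\fa_i^q)\ge1$, which squeezed against $\mld_{\hat P}(\hat X,\sfa^q)\le1$ and the easy inequality yields the equality even in the exceptional case. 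None of this appears in your proposal, and without it the hypotheses ``terminal'' and ``$s=0$'' are never actually used where they matter.

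A secondary misdiagnosis: what you call ``the heart of the matter,'' namely bounding $a_E(X_\infty)$ uniformly in $q$ and $s$, is not needed at all. Once the stability equality is established, any single divisor $\hat E$ computing $\mld_{\hat P}(\hat X,\sfa^q\hat\fm^s)$ has some finite log discrepancy $a_{\hat E}(\hat X)$, and descending it (Remark \ref{rmk:descend}, Lemma \ref{lem:limtonak}) produces divisors $E_i$ over $X$ computing $\mld_P(X,\fa_i^q\fm^s)$ with the constant log discrepancy $a_{\hat E}(\hat X)$ along the subsequence, which already contradicts $a_{E_i}(X)>i$. So the descending induction you sketch in the middle paragraph (and flag as the main obstacle) is superfluous in this framework, and in any case is not carried out; the step you should have flagged as the obstacle is the stability itself.
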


The second main result is to prove Conjecture \ref{cnj:product} when the lc threshold of the maximal ideal is either at most one-half or at least one.

\begin{theorem}\label{thm:second}
Let $P\in X$ be the germ of a smooth threefold and $\fm$ be the maximal ideal in $\sO_X$ defining $P$. Fix a positive rational number $q$ and a non-negative rational number $s$.
\begin{enumerate}
\item\label{itm:half}
There exists a positive integer $l$ depending only on $q$ and $s$ such that if $\fa$ is an ideal on $X$ satisfying that $(X,\fa^q)$ is canonical and that $\mld_P(X,\fa^q\fm^{1/2})$ is not positive, then there exists a divisor $E$ over $X$ which computes $\mld_P(X,\fa^q\fm^s)$ and satisfies the inequality $a_E(X)\le l$.
\item\label{itm:one}
There exists a positive integer $l$ depending only on $q$ and $s$ such that if $\fa$ is an ideal on $X$ satisfying that $(X,\fa^q)$ is canonical and that $(X,\fa^q\fm)$ is lc, then there exists a divisor $E$ over $X$ which computes $\mld_P(X,\fa^q\fm^s)$ and satisfies the inequality $a_E(X)\le l$.
\end{enumerate}
\end{theorem}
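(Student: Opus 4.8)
\emph{Proof proposal.} Fix $q$ and $s$ and let $d$ be a common denominator, so that $\fa^q\fm^t=(\fa^{qd}\fm^{td})^{1/d}$ as $\bR$-ideals whenever $td\in\bZ$; this is the device that feeds our pairs into Theorem \ref{thm:terminal}, with exponent $1/d$ depending only on $q$ and $s$. I would study the function $\phi(t)=\mld_P(X,\fa^q\fm^t)$ for $t\ge0$: since $(X,\fa^q)$ is canonical, $\phi(0)=\mld_P(X,\fa^q)\in[1,3]$, and $\phi$ is non-increasing and concave where finite, with $\phi$ continuous on $\{t:\phi(t)>-\infty\}$. Put $\bar t=\sup\{t:\phi(t)\ge1\}$ and $t_0=\sup\{t:\phi(t)\ge0\}=\operatorname{lct}_P(X,\fa^q;\fm)$, so that $\phi(\bar t)=1$, $\phi(t_0)=0$, and $0<\bar t<t_0$. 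The first point is that the range $s\le\bar t$ needs neither hypothesis: there $(X,\fa^q\fm^s)$ is globally canonical, because it coincides with the canonical pair $(X,\fa^q)$ outside $P$, any divisor whose centre properly contains $P$ has order $0$ along $\fm$, and canonicity at $P$ is just $\phi(s)\ge1$; so Theorem \ref{thm:terminal}\ref{itm:zero}, applied to the ideal $\fa^{qd}\fm^{sd}$ with exponent $1/d$, already produces a divisor $E$ computing $\mld_P(X,\fa^q\fm^s)$ with $a_E(X)$ bounded in terms of $q$ and $s$.

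So it remains to treat $s>\bar t$, i.e. $\phi(s)<1$. If $\bar t<s\le t_0$ I must bound a divisor computing $\phi(s)\in[0,1)$; and if $s>t_0$ I need a divisor $F$ with $a_F(X,\fa^q\fm^s)<0$ and $a_F(X)$ bounded, for which it suffices to bound a divisor $F$ realising the threshold, $a_F(X,\fa^q)=t_0\operatorname{ord}_F(\fm)$ and hence $\phi(t_0)=0$, because then $a_F(X,\fa^q\fm^s)=(t_0-s)\operatorname{ord}_F(\fm)<0$ with the same $F$. Thus the whole theorem reduces to the core assertion: \emph{under the hypothesis of part \ref{itm:half} or of part \ref{itm:one}, any divisor computing $\mld_P(X,\fa^q\fm^t)$ for a value $t$ with $0\le\mld_P(X,\fa^q\fm^t)<1$ can be replaced by one whose log discrepancy is bounded in terms of $q$ and $s$}. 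Here I would use freely that $(X,\fa^q)$ is canonical, so that on the blowup $f:Y\to X$ of $P$ one has $\fm\sO_Y=\sO_Y(-E_0)$, $K_Y=f^*K_X+2E_0$ and $\operatorname{ord}_{E_0}(\fa)\le2/q$, and that $\mld_P(X,\fm)=2$ on a smooth threefold, so $\operatorname{ord}_E(\fm)\le a_E(X)-2$ for every $E$ with centre $P$.

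For part \ref{itm:one}, where $t_0\ge1$, the hypothesis leaves a full unit of discrepancy to spare, enough to carry an extra power of $\fm$. I would transfer the computation of $\mld_P(X,\fa^q\fm^t)$ through $f$: it becomes the minimal log discrepancy along $E_0$ of a pair on the smooth threefold $Y$ of the form $(Y,\fa_Y^q\,\sO_Y(-(q\operatorname{ord}_{E_0}(\fa)+t-2)E_0))$ with $\fa_Y$ the birational transform, and the log canonicity of $(X,\fa^q\fm)$ together with $\operatorname{ord}_{E_0}(\fa)\le2/q$ confines the $E_0$-coefficient to a bounded range; iterating (each step blows up a point or a smooth curve on a smooth threefold, and the canonical/lc budget bounds the number of steps) one reaches a terminal pair, where Theorem \ref{thm:terminal}\ref{itm:terminal} applies, and the accumulated loss in log discrepancy is controlled by the finitely many weights that occurred. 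For part \ref{itm:half}, where $t_0\le\tfrac12$, the threshold is so small that the non-log-canonical behaviour must already be visible low down: a threshold divisor $F$ satisfies $a_F(X,\fa^q)=t_0\operatorname{ord}_F(\fm)\le\tfrac12\operatorname{ord}_F(\fm)$, while $a_F(X,\fa^q)\ge1$ and $a_F(X)\ge\operatorname{ord}_F(\fm)+2$, and combined with $\operatorname{ord}_{E_0}(\fa)\le2/q$ these inequalities pin the centre of $F$ on $Y$ down to a point of $E_0$ or to $E_0$ itself and bound the weights of the weighted blowup that extracts it, so that $F$ — and likewise any divisor computing $\phi(t)\in[0,1)$ — lies in a bounded family; propagating these divisors back through concavity of $\phi$ then yields the bound for the given $s$.

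The hard part is exactly this boundedness of computing divisors in the \emph{strictly} log canonical (and non-log-canonical) range $\phi\in[0,1)$, where Theorem \ref{thm:terminal} does not apply on the nose: one must show that, thanks to the multiplicity bound $\operatorname{ord}_{E_0}(\fa)\le2/q$ and the extremal structure of the birational morphism extracting the divisor, only a bounded family of divisors can occur on a smooth threefold, and then that the hypothesis closes the induction. The hypotheses $t_0\ge1$ and $t_0\le\tfrac12$ are precisely the two regimes in which this control is available — the former supplying the spare discrepancy needed to invoke the terminal case, the latter forcing the relevant valuation to sit very close to $E_0$ — while the gap $t_0\in(\tfrac12,1)$, where neither mechanism functions, is where Conjecture \ref{cnj:product} remains open.
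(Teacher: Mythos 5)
Your reductions at the outset are sound but carry almost no weight: rewriting $\fa^q\fm^s$ as $(\fa^{qd}\fm^{sd})^{1/d}$ and disposing of the range where $(X,\fa^q\fm^s)$ is canonical via Theorem \ref{thm:terminal}(\ref{itm:zero}) is fine, and your observation that for $s>t_0$ it suffices to bound a divisor realising the threshold is also fine. The problem is that what you call the ``core assertion'' --- boundedness of a computing divisor in the range $\mld_P(X,\fa^q\fm^s)<1$ --- is the entire content of the theorem, and your arguments for it are assertions, not proofs. For part (\ref{itm:half}), the inequalities $a_F(X,\fa^q)=t_0\ord_F\fm\le\tfrac12\ord_F\fm$, $a_F(X,\fa^q)\ge1$, $a_F(X)\ge\ord_F\fm+2$ and $\ord_{E_0}\fa\le 2/q$ only yield $\ord_F\fm\ge2$ and $a_F(X)\ge4$; they in no way ``pin the centre of $F$ down'' or place $F$ in a bounded family --- divisors over a smooth threefold satisfying all of these form an unbounded set. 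The paper's actual mechanism is entirely different: within the generic-limit framework it extracts, by Theorem \ref{thm:wbu}, a weighted blow-up computing $\mld_{\eta_{\hat C}}(\hat X,\sfa^q)=0$ along the one-dimensional lc centre, restricts to its exceptional divisor by precise inversion of adjunction (Proposition \ref{prp:piaE}), and runs an intersection-theoretic computation on that surface (Lemma \ref{lem:half}) to conclude the exact identity $\mld_P(X,\fa^q\fm^s)=1-2s$; the bounded divisor then comes from Lemma \ref{lem:limtonak}. Nothing in your sketch substitutes for this.

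For part (\ref{itm:one}), the proposed iteration of blow-ups is not shown to terminate after a number of steps depending only on $q$ and $s$, and the ``canonical/lc budget'' is never quantified; as written this is not an argument. You also miss that part (\ref{itm:one}) admits a genuinely short proof from results you already invoke: by Theorem \ref{thm:terminal}(\ref{itm:zero}) take $E$ computing $\mld_P(X,\fa^q)$ with $a_E(X)\le l$; one may assume $\mld_P(X,\fa^q)=1$ (else Theorem \ref{thm:terminal}(\ref{itm:terminal}) applies), and then the log canonicity of $(X,\fa^q\fm)$ forces $1=a_E(X,\fa^q)\ge\ord_E\fm\ge1$, so $\ord_E\fm=1$ and $E$ computes both $\mld_P(X,\fa^q)=1$ and $\mld_P(X,\fa^q\fm)=0$, hence $\mld_P(X,\fa^q\fm^s)$ for $0\le s\le1$ by Lemma \ref{lem:mld}(\ref{itm:mldequal}), and trivially for $s>1$ where the mld is $-\infty$. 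This is essentially the paper's argument transplanted out of the generic-limit setting, but it is not the argument you gave.
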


Once Theorem \ref{thm:second}(\ref{itm:half}) is established, it is relatively simple to obtain Conjecture \ref{cnj:product} when $s$ is close to zero in terms of a scale determined by $q$.

\begin{corollary}\label{crl:main}
Conjecture \textup{\ref{cnj:product}} holds when $s$ is at most $1/n$ for some integer $n$ greater than one such that $nq$ is integral.
\end{corollary}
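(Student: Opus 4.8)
The plan is to dispatch most cases by the results already available and to isolate one residual case.

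Set $c=\sup\{t\in\bR_{\ge0}\mid(X,\fa^q\fm^t)\text{ is lc}\}$. Since $(X,\fa^q)$ is canonical and $\fm^t$ changes the log discrepancy only of divisors whose centre is $P$, we have $c\le1/2$ exactly when $\mld_P(X,\fa^q\fm^{1/2})$ is not positive, in which case Theorem \ref{thm:second}(\ref{itm:half}) applies, and $c\ge1$ exactly when $(X,\fa^q\fm)$ is lc, in which case Theorem \ref{thm:second}(\ref{itm:one}) applies; so I may assume $1/2<c<1$. For the same reason, $(X,\fa^q\fm^s)$ is canonical precisely when $\mld_P(X,\fa^q\fm^s)\ge1$; if so, write $q/s=a/b$ with coprime positive integers $a,b$ (the case $s=0$ being Theorem \ref{thm:terminal}(\ref{itm:zero})), put $\fc=\fa^a\fm^b$ and $p=q/a$, and note that $\fc$ is an ideal with $\fc^p=\fa^q\fm^s$, that $(X,\fc^p)$ is canonical, and that $p$ depends only on $q$ and $s$; then Theorem \ref{thm:terminal}(\ref{itm:zero}) applied to $\fc$ and $p$ produces a divisor computing $\mld_P(X,\fa^q\fm^s)$ with log discrepancy bounded in terms of $q$ and $s$. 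Hence I may further assume $\mld_P(X,\fa^q\fm^s)<1$, and that $(X,\fa^q)$ is not terminal, for otherwise Theorem \ref{thm:terminal}(\ref{itm:terminal}) applies.

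In the residual case, then, $(X,\fa^q)$ is canonical but not terminal, $1/2<c<1$, and $0<\mld_P(X,\fa^q\fm^s)<1$, with $0<s\le1/n\le1/2$. Let $F$ compute $\mld_P(X,\fa^q\fm^s)$; its centre is $P$, so $\ord_F(\fm)\ge1$, and since its centre is $P$ we have $a_F(X,\fa^q)-\tfrac12\ord_F(\fm)\ge\mld_P(X,\fa^q\fm^{1/2})>0$ while $a_F(X,\fa^q)-s\ord_F(\fm)=\mld_P(X,\fa^q\fm^s)<1$. Subtracting yields $(\tfrac12-s)\ord_F(\fm)<1$, so for $s<1/2$ both $\ord_F(\fm)<\tfrac1{1/2-s}$ and $a_F(X,\fa^q)<\tfrac1{1-2s}$ are bounded in terms of $s$ alone. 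What is still needed is to bound $a_F(X)=a_F(X,\fa^q)+q\ord_F(\fa)$, equivalently $\ord_F(\fa)$, or at least to exhibit among the divisors computing $\mld_P(X,\fa^q\fm^s)$ one with bounded log discrepancy over $X$.

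This last point is the main obstacle, and it is where the threefold hypothesis has to be used: bounded data of the above type is not realised in general by divisors of bounded log discrepancy over $X$, so one cannot conclude formally. My plan for it is to exploit the inequality $a_E(X)\ge\ord_E(\fm)+2$, valid for every divisor $E$ over the smooth threefold $X$ with centre $P$, together with the bound on $\ord_F(\fm)$ just obtained: this lets one pass to the blowup of $P$, along whose exceptional divisor $F$ has bounded multiplicity, and run an induction that at each stage carries a canonical boundary inherited from $\fa^q$ up to a correction of bounded coefficient supported on the exceptional divisor, invoking Theorem \ref{thm:terminal}(\ref{itm:zero}) at the base. Making this uniform in $q$ and $s$—and in particular treating the endpoint $s=1/n$ in the borderline case $n=2$, where the naive bound on $\ord_F(\fm)$ degenerates—is the heart of the matter.
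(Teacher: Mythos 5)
Your opening reductions are sound and partly coincide with the paper's: Theorem \ref{thm:terminal}(\ref{itm:terminal}) and Theorem \ref{thm:second}(\ref{itm:half}) dispose of the terminal case and of the case where $\mld_P(X,\fa^q\fm^{1/2})$ is not positive, and your extra sub-cases (rewriting $\fa^q\fm^s$ as $\fc^p$ when $\mld_P(X,\fa^q\fm^s)\ge1$, invoking Theorem \ref{thm:second}(\ref{itm:one}) when $(X,\fa^q\fm)$ is lc) are correct, if unnecessary. But the argument does not close. What remains after the reductions is exactly the case $\mld_P(X,\fa^q)=1$ and $\mld_P(X,\fa^q\fm^{1/2})>0$, and there your proposal stops at a plan: you bound $\ord_F\fm$ and $a_F(X,\fa^q)$ for a divisor $F$ computing $\mld_P(X,\fa^q\fm^s)$, concede that this does not bound $a_F(X)$, and sketch an induction on blow-ups that is not carried out and that, as you note yourself, degenerates at the endpoint $s=1/n$ with $n=2$. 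That residual case is the entire content of the corollary. Tellingly, your argument never uses the hypothesis that $nq$ is integral, which is precisely what makes the residual case tractable.

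The paper closes it in a few lines with no new geometry. By Lemma \ref{lem:mld}(\ref{itm:mldconvex}), $\mld_P(X,\fa^q\fm^{1/n})\ge(1-2/n)\mld_P(X,\fa^q)+(2/n)\mld_P(X,\fa^q\fm^{1/2})>1-2/n$. Since $nq$ is integral, every log discrepancy $a_E(X,\fa^q\fm^{1/n})=a_E(X)-q\ord_E\fa-n^{-1}\ord_E\fm$ lies in $n^{-1}\bZ$, so the minimal log discrepancy is at least $1-1/n$. On the other hand, any $E$ computing $\mld_P(X,\fa^q)=1$ satisfies $a_E(X,\fa^q\fm^{1/n})=1-n^{-1}\ord_E\fm\le1-1/n$. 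Hence $E$ computes both $\mld_P(X,\fa^q)$ and $\mld_P(X,\fa^q\fm^{1/n})=1-1/n$, so by Lemma \ref{lem:mld}(\ref{itm:mldequal}) it computes $\mld_P(X,\fa^q\fm^s)$ for every $0\le s\le1/n$, and the boundedness of some such $E$ is exactly Theorem \ref{thm:terminal}(\ref{itm:zero}). In short, the missing idea is that under the corollary's hypotheses one never needs to bound a divisor adapted to $\fm^s$ at all: the divisors computing $\mld_P(X,\fa^q)$ already do the job, because discreteness in $n^{-1}\bZ$ pins $\mld_P(X,\fa^q\fm^{1/n})$ to the single admissible value $1-1/n$.
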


We shall explain the outline of our research. Fix the germ $P\in X$ of a smooth threefold and a positive rational number $q$. For a sequence $\{\fa_i\}_{i\in\bN}$ of ideals on $X$, its generic limit $\sfa$ is defined on the spectrum $\hat P\in \hat X$ of the completion of the local ring $\sO_{X,P}\otimes_kK$, where $K$ is an extension of the ground field $k$. The stability of minimal log discrepancies means the equality
\begin{align*}
\mld_{\hat P}(\hat X,\sfa^q)=\mld_P(X,\fa_i^q)
\end{align*}
for infinitely many $i$, to which any of Conjectures \ref{cnj:acc} to \ref{cnj:nakamura} is equivalent (Theorem \ref{thm:equiv}). The strategy employed in this paper is to pursue Conjecture \ref{cnj:nakamura}.

Our previous work \cite{K15} derived the above stability except for the case when $(\hat X,\sfa^q)$ has the smallest lc centre of dimension one, which implies that $\mld_{\hat P}(\hat X,\sfa^q)$ is at most one. By this result, in order to prove Conjecture \ref{cnj:nakamura}, one has only to consider those ideals $\fa$ which have $\mld_P(X,\fa^q)$ less than one. We begin with the ACC for $1$-lc thresholds \cite{St11}. Using it together with the classification of divisorial contractions \cite{K01}, \cite{Km96}, we construct a birational morphism $Y\to X$ with bounded log discrepancies by which $(X,\fa)$ can be replaced with a pair $(Y,(\fa')^q\fb^q)$ satisfying that $(Y,(\fa')^q)$ is canonical and that $\fb$ has bounded colength (Theorem \ref{thm:canonical}).

We study the generic limit $\sfa$ of a sequence of ideals $\fa_i$ on $X$ such that $(X,\fa_i^q)$ is canonical. We may assume that $(\hat X,\sfa^q)$ has the smallest lc centre $\hat C$ of dimension one. Then the $\mld_{\hat P}(\hat X,\sfa^q)$ equals one by the canonicity of $(X,\fa_i^q)$, and so does the $\mld_P(X,\fa_i^q)$. By our result \cite{K17} in dimension two, there exists a divisor $\hat E$ over $\hat X$ computing $\mld_{\eta_{\hat C}}(\hat X,\sfa^q)=0$ which is obtained at the generic point $\eta_{\hat C}$ of $\hat C$ by a weighted blow-up. On our extra condition that $\mld_{\hat P}(\hat X,\sfa^q)=1$, we find $\hat E$ for which the weighted blow-up at $\eta_{\hat C}$ is extended to the closed point $\hat P$ (Theorem \ref{thm:wbu}).

We associate the minimal log discrepancy on $\hat X$ with that on $\hat E$ by precise inversion of adjunction (Section \ref{sct:reduction}). The generic limit $\sfb$ of a sequence of ideals of bounded colength satisfies that $\sfb\sO_{\hat C}=\hat\fm^b\sO_{\hat C}$ for some integer $b$, where $\hat\fm$ is the maximal ideal in $\sO_{\hat X}$. Then Conjecture \ref{cnj:nakamura} is reduced to the case when $\fb$ is the maximal ideal $\fm$ to the power of $b$, which completes Theorem \ref{thm:first}.

Suppose that the lc threshold of $\fm$ with respect to $(X,\fa^q)$ is at most one-half. Under our assumptions on the generic limit $\sfa$ involved, we derive a special conclusion that $\mld_P(X,\fa^q\fm^s)=1-2s$, which includes the boundedness stated in Theorem \ref{thm:second}(\ref{itm:half}). A similar argument is applied to the case of lc threshold at least one, Theorem \ref{thm:second}(\ref{itm:one}).

Conjecture \ref{cnj:product} remains open when $\mld_P(X,\fa^q)$ equals one and $\mld_P(X,\fa^q\fm^{1/2})$ is positive. In this case, every divisor $E$ over $X$ computing $\mld_P(X,\fa^q)$ satisfies that $\ord_E\fm$ equals one. We supply a classification of the centre of $E$ on a certain weighted blow-up of $X$ (Theorem \ref{thm:crepant}).

\section{Preliminaries}
We shall fix the notation and review the basics of singularities in birational geometry. Refer to \cite{Ko13} for details.

We work over an algebraically closed field $k$ of characteristic zero. We omit to write the bases of tensor products over $k$ and of products over $\Spec k$ when it is clear. A \textit{variety} is an integral separated scheme of finite type over $\Spec k$. The \textit{dimension} of a scheme means the Krull dimension. A variety of dimension one (resp.\ two) is called a \textit{curve} (resp.\ a \textit{surface}).

The \textit{germ} is considered at a closed point algebraically. When we work on the spectrum of a noetherian ring, we identify an ideal in the ring with its coherent ideal sheaf. For an irreducible closed subset $Z$ of a scheme, we write $\eta_Z$ for the generic point of $Z$.

The \textit{round-down} $\rd{r}$ of a real number $r$ is the greatest integer at most $r$. The \textit{natural number} starts from zero.

\medskip
\textit{Orders}.
Let $X$ be a noetherian scheme and $Z$ be an irreducible closed subset of $X$. The \textit{order} of a coherent ideal sheaf $\fa$ on $X$ along $Z$ is the maximal $\nu\in\bN\cup\{+\infty\}$ satisfying that $\fa\sO_{X,\eta_Z}\subset\sI^\nu\sO_{X,\eta_Z}$ for the ideal sheaf $\sI$ of $Z$, and it is denoted by $\ord_Z\fa$. If $Y\to X$ is a birational morphism from a noetherian normal scheme, then we set $\ord_E\fa=\ord_E\fa\sO_Y$ for a prime divisor $E$ on $Y$. The $\ord_Zf$ for a function $f$ in $\sO_X$ stands for $\ord_Z(f\sO_X)$.

Suppose that $X$ is normal. For an effective $\bQ$-Cartier divisor $D$ on $X$, we set $\ord_ZD=r^{-1}\ord_Z\sO_X(-rD)$ for a positive integer $r$ such that $rD$ is Cartier, which is independent of the choice of $r$. The notion of $\ord_ZD$ is extended to $\bR$-Cartier $\bR$-divisors by linearity.

\medskip
\textit{$\bR$-ideals}.
An $\bR$-\textit{ideal} on a noetherian scheme $X$ is a formal product $\fa=\prod_j\fa_j^{r_j}$ of finitely many coherent ideal sheaves $\fa_j$ on $X$ with positive real exponents $r_j$. For a positive real number $t$, the $\fa$ to the \textit{power} of $t$ is $\fa^t=\prod_j\fa_j^{tr_j}$. The \textit{cosupport} of $\fa$ is the union of the supports of $\sO_X/\fa_j$ for all $j$. The \textit{order} of $\fa$ along an irreducible closed subset $Z$ of $X$ is $\ord_Z\fa=\sum_jr_j\ord_Z\fa_j$. The $\fa$ is said to be \textit{invertible} if all $\fa_j$ are invertible. The \textit{pull-back} of $\fa$ by a morphism $Y\to X$ is $\fa\sO_Y=\prod_j(\fa_j\sO_Y)^{r_j}$. For a subset $I$ of the positive real numbers, we mean by $\fa\in I$ that all exponents $r_j$ belong to $I$.

If $\fa$ is invertible, then the $\bR$-divisor $A=\sum_jr_jA_j$ for which $\fa_j=\sO_X(-A_j)$ is called the $\bR$-divisor \textit{defined by} $\fa$. When we work on the germ $P\in X$, the $\bR$-divisor \textit{defined by a general member in} $\fa$ means an $\bR$-divisor $\sum_jr_j(f_j)$ on $X$ with a general member $f_j$ in $\fa_j$. The $\fa$ is said to be \textit{$\fm$-primary} if all $\fa_j$ are $\fm$-primary, where $\fm$ is the maximal ideal in $\sO_X$ defining $P$.

\begin{convention}
It is sometimes convenient to allow an exponent in an $\bR$-ideal to be zero. We define a coherent ideal sheaf to the power of zero as the structure sheaf.
\end{convention}

\medskip
\textit{The minimal log discrepancy}.
A \textit{subtriple} $(X,\Delta,\fa)$ consists of a normal variety $X$, an $\bR$-divisor $\Delta$ on $X$ such that $K_X+\Delta$ is $\bR$-Cartier, and an $\bR$-ideal $\fa$ on $X$. The $(X,\Delta,\fa)$ is called a \textit{triple} if $\Delta$ is effective. We omit to write $\fa$ or $\Delta$ and call $(X,\Delta)$ or $(X,\fa)$ a (\textit{sub})\textit{pair} when $\fa=\sO_X$ or $\Delta=0$. The $\Delta$ or $\fa$ is called the \textit{boundary} when $(X,\Delta)$ or $(X,\fa)$ is a pair.

A prime divisor $E$ on a normal variety $Y$ equipped with a birational morphism $\pi\colon Y\to X$ is called a divisor \textit{over} $X$, and the closure of the image $\pi(E)$ is called the \textit{centre} of $E$ on $X$ and denoted by $c_X(E)$. We write $\cD_X$ for the set of all divisors over $X$. Two elements in $\cD_X$ are usually identified when they define the same valuation on the function field of $X$. The \textit{log discrepancy} of $E$ with respect to $(X,\Delta,\fa)$ is
\begin{align*}
a_E(X,\Delta,\fa)=1+\ord_EK_{Y/(X,\Delta)}-\ord_E\fa,
\end{align*}
where $K_{Y/(X,\Delta)}=K_Y-\pi^*(K_X+\Delta)$.

Let $Z$ be a closed subvariety of $X$. The \textit{minimal log discrepancy} of $(X,\Delta,\fa)$ at the generic point $\eta_Z$ is
\begin{align*}
\mld_{\eta_Z}(X,\Delta,\fa)=\inf\{a_E(X,\Delta,\fa)\mid E\in\cD_X,\ c_X(E)=Z\},
\end{align*}
It is either a non-negative real number or minus infinity. We say that $E\in\cD_X$ \textit{computes} $\mld_{\eta_Z}(X,\Delta,\fa)$ if $c_X(E)=Z$ and $a_E(X,\Delta,\fa)=\mld_{\eta_Z}(X,\Delta,\fa)$ (or is negative when $\mld_{\eta_Z}(X,\Delta,\fa)=-\infty$). It is often enough to study the case when $Z$ is a closed point by the relation $\mld_{\eta_Z}(X,\Delta,\fa)=\mld_P(X,\Delta,\fa)-\dim Z$ for a general closed point $P$ in $Z$. It is sometimes convenient to use the \textit{minimal log discrepancy} of $(X,\Delta,\fa)$ in a closed subset $W$ of $X$ which is defined by
\begin{align*}
\mld_W(X,\Delta,\fa)=\inf\{a_E(X,\Delta,\fa)\mid E\in\cD_X,\ c_X(E)\subset W\}.
\end{align*}

\medskip
\textit{Singularities}.
The subtriple $(X,\Delta,\fa)$ is said to be \textit{log canonical} (\textit{lc}) (resp.\ \textit{Kawamata log terminal} (\textit{klt})) if $a_E(X,\Delta,\fa)\ge0$ (resp.\ $>0$) for all $E\in\cD_X$. It is said to be \textit{purely log terminal} (\textit{plt}) (resp.\ \textit{canonical}, \textit{terminal}) if $a_E(X,\Delta,\fa)>0$ (resp.\ $\ge1$, $>1$) for all $E\in\cD_X$ exceptional over $X$. For a closed point $P$ in $X$, $(X,\Delta,\fa)$ is lc about $P$ iff $\mld_P(X,\Delta,\fa)$ is not minus infinity. When $(X,\Delta,\fa)$ is lc, the \textit{lc threshold} with respect to $(X,\Delta,\fa)$ of a non-trivial $\bR$-ideal $\fb$ on $X$ is the maximal real number $t$ such that $(X,\Delta,\fa\fb^t)$ is lc.

Let $Y$ be a normal variety birational to $X$. A centre $c_Y(E)$ on $Y$ of $E\in\cD_Y$ such that $a_E(X,\Delta,\fa)\le0$ is called a \textit{non-klt centre} on $Y$ of $(X,\Delta,\fa)$. The union of all non-klt centres on $Y$ is called the \textit{non-klt locus} on $Y$ of $(X,\Delta,\fa)$. When we just say a non-klt centre or the non-klt locus, we mean that it is on $X$.

When $(X,\Delta,\fa)$ is lc, a non-klt centre of $(X,\Delta,\fa)$ is often called an \textit{lc centre}. When we work on the germ of a variety, an lc centre contained in every lc centre is called the \textit{smallest lc centre}. The smallest lc centre exists and it is normal \cite[Theorem 9.1]{F11}.

The \textit{index} of a normal $\bQ$-Gorenstein singularity $P\in X$ is the least positive integer $r$ such that $rK_X$ is Cartier at $P$.

\medskip
\textit{Birational transformations}.
A reduced divisor $D$ on a smooth variety $X$ is said to be \textit{simple normal crossing} (\textit{snc}) if $D$ is defined at every closed point $P$ in $X$ by the product of a part of a regular system of parameters in $\sO_{X,P}$. A \textit{stratum} of $D=\sum_{i\in I}D_i$ is an irreducible component of $\bigcap_{i\in I'}D_i$ for a subset $I'$ of $I$. For a smooth morphism $X\to S$, the $D$ said to be snc \textit{relative to} $S$ if every stratum of $D$ is smooth over $S$.

A \textit{log resolution} of a subtriple $(X,\Delta,\fa)$ is a projective birational morphism from a smooth variety $Y$ to $X$ such that
\begin{itemize}
\item
the exceptional locus is a divisor and $\fa\sO_Y$ is invertible,
\item
the union of the exceptional locus, the support of the strict transform of $\Delta$, and the cosupport of $\fa\sO_Y$ is snc, and
\item
it is isomorphic on the maximal open locus $U$ in $X$ such that $U$ is smooth, $\fa\sO_U$ is invertible, and the union of the support of $\Delta|_U$ and the cosupport of $\fa\sO_U$ is snc.
\end{itemize}

Let $(X,\Delta,\fa)$ be a subtriple, where $\fa=\prod_j\fa_j^{r_j}$, and $Y$ be a normal variety birational to $X$. A subtriple $(Y,\Gamma,\fb)$ is said to be \textit{crepant} to $(X,\Delta,\fa)$ if $a_E(X,\Delta,\fa)=a_E(Y,\Gamma,\fb)$ for any divisor $E$ over $X$ and $Y$. Suppose that $Y$ is smooth and has a birational morphism to $X$ whose exceptional locus is a divisor $\sum_iE_i$. The \textit{weak transform} on $Y$ of $\fa$ is the $\bR$-ideal $\fa_Y=\prod_j(\fa_{jY})^{r_j}$ defined by
\begin{align*}
\fa_{jY}=\fa_j\sO_Y(\textstyle\sum_i(\ord_{E_i}\fa_j)E_i).
\end{align*}
Remark that this notion is different from that of the strict transform, the $j$-th ideal of which is $\sum_{f\in\fa_j}f\sO_Y(\sum_i(\ord_{E_i}f)E_i)$ (see \cite[III Definition 5]{H64}). The definition of the weak transform $\fa_Y$ is extended to the case when $Y$ is normal as far as $\sum_i(\ord_{E_i}\fa_j)E_i$ is Cartier for any $j$. We introduce

\begin{definition}
The \textit{pull-back} of $(X,\Delta,\fa)$ by $Y\to X$ is the subtriple $(Y,\Delta_Y,\fa_Y)$ in which $\Delta_Y=-K_{Y/(X,\Delta)}+\sum_{ij}(r_j\ord_{E_i}\fa_j)E_i$.
\end{definition}

The pull-back $(Y,\Delta_Y,\fa_Y)$ is crepant to $(X,\Delta,\fa)$.

\medskip
\textit{Weighted blow-ups}.
Let $P\in X$ be the germ of a smooth variety. Let $x_1,\ldots,x_c$ be a part of a regular system of parameters in $\sO_{X,P}$ and $w_1,\ldots,w_c$ be positive integers. For $w\in\bN$, let $\sI_w$ be the ideal in $\sO_X$ generated by all monomials $x_1^{s_1}\cdots x_c^{s_c}$ such that $\sum_{i=1}^cs_iw_i\ge w$. The \textit{weighted blow-up} of $X$ with $\wt(x_1,\ldots,x_c)=(w_1,\ldots,w_c)$ is $\Proj_X(\bigoplus_{w\in\bN}\sI_w)$.

\begin{remark}\label{rmk:wbu}
If $x'_1,\ldots,x'_c$ is a part of another regular system of parameters such that $x'_i\in\sI_{w_i}\setminus\sI_{w_i+1}$ for any $i$, then the weighted blow-up of $X$ with $\wt(x'_1,\ldots,x'_c)=(w_1,\ldots,w_c)$ is the same that is obtained by $\wt(x_1,\ldots,x_c)=(w_1,\ldots,w_c)$.
\end{remark}

Its explicit description is reduced to the case of the affine space by an \'etale morphism. Let $o\in\bA^d$ be the germ at origin of the affine space with coordinates $x_1,\cdots,x_d$ and $Y$ be the weighted blow-up of $\bA^d$ with $\wt(x_1,\ldots,x_d)=(w_1,\ldots,w_d)$. One may assume that $w_1,\ldots,w_d$ have no common divisors. Then $Y$ is covered by the affine charts $U_i=\bA^d/\bZ_{w_i}(w_1,\ldots,w_{i-1},-1,w_{i+1},\ldots,w_d)$ for $1\le i\le d$, and the exceptional divisor is isomorphic to the weighted projective space $\bP(w_1,\ldots,w_d)$ (see \cite[6.38]{KSC04} for details).

Here the notation $\bA^d/\bZ_r(a_1,\ldots,a_d)$ stands for the quotient of $\bA^d$ by the cyclic group $\bZ_r$ of order $r$ whose generator sends the $i$-th coordinate $x_i$ of $\bA^d$ to $\zeta^{a_i}x_i$, where $\zeta$ is a primitive $r$-th root of unity. The $x_1,\ldots,x_d$ on this quotient are called \textit{orbifold coordinates}. An isolated \textit{cyclic quotient singularity} means that the spectrum of the completion of its local ring coincides with the regular base change of some $\bA^d/\bZ_r(a_1,\ldots,a_d)$, in which it is said to be of \textit{type} $\frac{1}{r}(a_1,\ldots,a_d)$.

In terms of toric geometry following the notation in \cite{I14}, by setting $N=\bZ^d+\bZ v$ where $v=\frac{1}{r}(a_1,\ldots,a_d)$, the quotient $\bA^d/\bZ_r(a_1,\ldots,a_d)$ is the toric variety $T_N(\Delta)$ which corresponds to the cone $\Delta$ spanned by the standard basis $e_1,\ldots,e_d$ of $\bZ^d$. For $e=\frac{1}{r}(w_1,\ldots,w_d)\in N\cap\Delta$, the \textit{weighted blow-up} of $\bA^d/\bZ_r(a_1,\ldots,a_d)$ with respect to $\wt(x_1,\ldots,x_d)=\frac{1}{r}(w_1,\ldots,w_d)$ is defined by adding the ray generated by $e$.

\medskip
\textit{Adjunction}.
Let $X$ be a normal variety and $S+B$ be an effective $\bR$-divisor on $X$ such that $S$ is reduced and has no common components with the support of $B$. Suppose that they form a pair $(X,S+B)$. Then one has the \textit{adjunction}
\begin{align*}
\nu^*(K_X+S+B|_S)=K_{S^\nu}+B_{S^\nu}
\end{align*}
on the normalisation $\nu\colon S^\nu\to S$ of $S$, in which $B_{S^\nu}$ is an effective $\bR$-divisor called the \textit{different} on $S^\nu$ of $B$ (see \cite[Chapter 16]{Ko92} or \cite[Section 3]{Sh93}).

\begin{example}\label{exl:different}
Let $X=\bA^2/\bZ_r(1,w)$ with orbifold coordinates $x_1,x_2$ such that $w$ is coprime to $r$. Let $S$ be the curve on $X$ defined by $x_1$ and $P$ be the origin of $X$. Then $K_X+S|_S=K_S+(1-r^{-1})P$.
\end{example}

The singularity on $X$ is associated with that on $S^\nu$ by

\begin{theorem}[Inversion of adjunction]\label{thm:ia}
Notation as above.
\begin{enumerate}
\item
\textup{(\cite[Theorem 17.6]{Ko92})}\;
$(X,S+B)$ is plt about $S$ iff $(S^\nu,B_{S^\nu})$ is klt. In this case, $S$ is normal.
\item
\textup{(\cite{K07})}\;
$(X,S+B)$ is lc about $S$ iff $(S^\nu,B_{S^\nu})$ is lc.
\end{enumerate}
\end{theorem}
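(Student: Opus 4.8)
The plan is to turn both statements into a comparison of log discrepancies on a common log resolution, and to bridge the residual gap with the connectedness theorem of Koll\'ar--Shokurov \cite{Ko13} together with Kawamata--Viehweg vanishing. First one reduces to the case where $B$ is a $\bQ$-divisor: for part (i) the klt and plt conditions are open in the coefficients of $B$, and for part (ii) one writes $B$ as a convex combination of $\bQ$-divisors $B_\lambda$ with each $B_{\lambda,S^\nu}$ still lc and uses that log canonicity is preserved under convex combinations of boundaries. After shrinking $X$ we may assume it is affine and work in a neighbourhood of $S$.

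Fix a log resolution $f\colon Y\to X$ of $(X,S+B)$ which is an isomorphism over $\eta_S$, let $S'\subset Y$ be the strict transform of $S$, and arrange that the induced proper birational morphism $g\colon S'\to S^\nu$ is a log resolution of $(S^\nu,B_{S^\nu})$. Writing $K_Y+S'+\Gamma=f^*(K_X+S+B)$, the coefficient of a prime divisor $E\neq S'$ on $Y$ in $\Gamma$ equals $1-a_E(X,S+B)$; since $(Y,S'+\Gamma)$ is log smooth, adjunction reads $K_{S'}+\Gamma|_{S'}=(f|_{S'})^*(K_X+S+B|_S)$, and compatibility of the Poincar\'e residue with the different makes $g$ crepant from $(S',\Gamma|_{S'})$ to $(S^\nu,B_{S^\nu})$. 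Hence, for a prime divisor $E$ over $X$ realised on $Y$ with $E\cap S'$ a reduced irreducible divisor on $S'$, one has $a_{E\cap S'}(S^\nu,B_{S^\nu})=a_E(X,S+B)$, and every divisor over $S^\nu$ arises this way from some such $E$, necessarily with $c_X(E)\cap S\neq\emptyset$. This dictionary already yields the restriction implications: if $(X,S+B)$ is lc (resp.\ plt) about $S$ then all such $E$ satisfy $a_E(X,S+B)\ge0$ (resp.\ $>0$), whence $(S^\nu,B_{S^\nu})$ is lc (resp.\ klt); normality of $S$ in the plt case is the standard consequence of relative Kawamata--Viehweg vanishing on $Y$, as in \cite[Theorem 17.6]{Ko92}.

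The content is the converse. For (i), assume $(S^\nu,B_{S^\nu})$ is klt and suppose, for contradiction, that $(X,S+B)$ is not plt about $S$: there is an exceptional prime divisor $E\neq S'$ over $X$ with $a_E(X,S+B)\le0$ and $c_X(E)\cap S\neq\emptyset$. Realise $E$ on a higher log resolution, which leaves the dictionary intact, and apply the connectedness theorem to the $f$-numerically trivial divisor $-(K_Y+S'+\Gamma)$: the non-klt locus of $(Y,S'+\Gamma)$, which contains both $S'$ and the realisation of $E$, is connected over any point $x\in c_X(E)\cap S$. Being strictly larger than $S'$ over $x$, this locus contains a non-klt prime divisor other than $S'$ that meets $S'$, and a component of that intersection is a non-klt centre of $(S',\Gamma|_{S'})$, hence of $(S^\nu,B_{S^\nu})$ by crepancy of $g$ --- contradicting klt-ness. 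This proves (i).

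Part (ii) is the deep one, and its difficulty is exactly where the argument above breaks down: a non-klt centre of $(X,S+B)$ with log discrepancy $0$ that meets $S$ transversally and is not contained in $S$ need not, on any fixed resolution, lie in a component of the non-klt locus linking up with $S'$, so it is invisible to the connectedness argument; this is the gap closed in \cite{K07}. My plan for (ii) would be to reduce it to (i) by perturbation and the minimal model program --- lowering the coefficient of $S$ (and those of the components of $B$) slightly makes the restricted pair klt, so (i) applies to the perturbed pair; one then recovers the lc places of $(X,S+B)$ on a dlt modification, removes them one at a time, and controls the non-klt data created at each step by a relative MMP over $X$, keeping the connectedness and vanishing arguments available throughout. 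The main obstacle is precisely the bookkeeping of those lc centres of $(X,S+B)$ which meet but are not contained in $S$: one must show that the different on $S^\nu$ records enough of their behaviour to transport log canonicity from $S^\nu$ back to $(X,S+B)$, which is the crux of inversion of adjunction on log canonicity.
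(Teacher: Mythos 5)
This statement is not proved in the paper at all: it is recalled in the Preliminaries with attributions to \cite[Theorem 17.6]{Ko92} for (i) and to \cite{K07} for (ii), so there is no ``paper's own proof'' to compare against. Judged on its own terms, your write-up of part (i) is essentially the standard Shokurov--Koll\'ar argument: the crepancy dictionary between $(Y,S'+\Gamma)$, $(S',\Gamma|_{S'})$ and $(S^\nu,B_{S^\nu})$ gives the easy (restriction) direction, and the connectedness theorem applied to the $f$-numerically trivial (hence $f$-nef and $f$-big, since $f$ is birational) divisor $-(K_Y+S'+\Gamma)$ forces a non-klt divisor other than $S'$ to meet $S'$, contradicting klt-ness of the different. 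That is correct in outline and is exactly the proof in \cite{Ko92}; the only points worth tightening are that the dictionary $a_{E\cap S'}(S^\nu,B_{S^\nu})=a_E(X,S+B)$ should be stated only for components of $E\cap S'$ (the intersection need not be irreducible) and that the reduction to $\bQ$-coefficients in (i) needs the klt hypothesis on $(S^\nu,B_{S^\nu})$ to persist under a small increase of the coefficients, which holds because klt is an open condition.

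Part (ii), however, is a genuine gap, and you say so yourself: ``the main obstacle is precisely \ldots the crux of inversion of adjunction on log canonicity.'' What you offer is a programme (perturb the coefficients, pass to a dlt modification, run a relative MMP and track the lc places), not an argument; the step where log canonicity of $(S^\nu,B_{S^\nu})$ is transported back to lc centres of $(X,S+B)$ that meet $S$ without being contained in it is exactly the statement to be proved, and no mechanism for it is supplied. For the record, this is a hard theorem: Kawakita's proof in \cite{K07} proceeds by a direct and rather delicate analysis on a resolution and does not follow the MMP-based route you sketch (MMP-based proofs do exist, but they postdate and rely on \cite{BCHM10} and the existence of dlt modifications, and still require a non-trivial argument at the step you flag). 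Since the paper only cites the result, the appropriate conclusion is that your part (i) reconstructs the known proof correctly, while part (ii) is not established by your proposal.
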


\medskip
\textit{$R$-varieties}.
The notions explained above make sense over the ring $R$ of formal power series over a field of characteristic zero, which has been discussed by de Fernex, Ein and Musta\c{t}\u{a} \cite{dFEM11}, \cite{dFM09}. We mean by an \textit{$R$-variety} an integral separated scheme of finite type over $\Spec R$. We consider regular $R$-varieties instead of smooth $R$-varieties.

The canonical divisor $K_X$ on a normal $R$-variety $X$ is defined by the \textit{sheaf of special differentials} in \cite{dFEM11}. Let $Y\to X$ be a birational morphism between regular $R$-varieties. The relative canonical divisor $K_{Y/X}$ is the effective divisor defined by the zeroth Fitting ideal of $\Omega_{Y/X}$ \cite[Remark A.12]{dFEM11}. In particular, $K_{Y/X}$ is independent of the structure of $X$ as an $R$-variety.

\begin{remark}\label{rmk:regular}
Let $P\in X$ be the germ of a normal $\bQ$-Gorenstein $R$-variety and $\fa$ be an $\bR$-ideal on $X$. Let $X'$ be either
\begin{itemize}
\item
the spectrum of the completion of the local ring $\sO_{X,P}$, or
\item
$X\times_{\Spec R}\Spec R'$, where $R'$ is the completion of $R\otimes_KK'$ for a field extension $K'$ of $K$,
\end{itemize}
which has a regular morphism $\pi\colon X'\to X$. Then $K_{X'}=\pi^*K_X$, by which one has that $a_{E'}(X',\fa\sO_{X'})=a_E(X,\fa)$ and $\mld_{P'}(X',\fa\sO_{X'})=\mld_P(X,\fa)$ for any components $E'$ of $E\times_XX'$ and $P'$ of $P\times_XX'$.
\end{remark}

\begin{lemma}\label{lem:regular}
Let $P\in X$ be the germ of an $R$-variety. Let $\hat X$ be the spectrum of the completion of the local ring $\sO_{X,P}$ and $\hat P$ be its closed point.
\begin{enumerate}
\item\label{itm:idealbij}
Let $\fm$ be the maximal ideal in $\sO_X$ defining $P$ and $\hat\fm$ be the maximal ideal in $\sO_{\hat X}$. Then the pull-back defines a bijective map from the set of $\fm$-primary $\bR$-ideals on $X$ to the set of $\hat\fm$-primary $\bR$-ideals on $\hat X$.
\item\label{itm:divisorbij}
Suppose that $X$ is normal. Then the base change defines a bijective map from the set of divisors over $X$ with centre $P$ to the set of divisors over $\hat X$ with centre $\hat P$.
\end{enumerate}
\end{lemma}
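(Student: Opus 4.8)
The plan is to treat the two statements separately, deriving each from the standard flatness and faithful-flatness properties of completion. For part (\ref{itm:idealbij}), I would first recall that the completion map $\sO_{X,P}\to\sO_{\hat X,\hat P}=\widehat{\sO_{X,P}}$ is faithfully flat, and that if $\fm\subset\sO_{X,P}$ is the maximal ideal then $\hat\fm=\fm\cdot\sO_{\hat X}$ is the maximal ideal of the completion. The key classical fact is that for an $\fm$-primary ideal $\fa$ (equivalently, an ideal containing some power $\fm^N$) one has $\fa\sO_{\hat X}\cap\sO_X=\fa$ and, conversely, every $\hat\fm$-primary ideal $\hat\fa$ arises as $\fa\sO_{\hat X}$ with $\fa=\hat\fa\cap\sO_X$; this follows because $\sO_X/\fm^N\xrightarrow{\sim}\sO_{\hat X}/\hat\fm^N$ for every $N$, so $\fm$-primary ideals on either side are in bijection via the induced bijection on ideals of the common Artinian quotient $\sO_X/\fm^N$. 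Extending from a single ideal to an $\bR$-ideal $\fa=\prod_j\fa_j^{r_j}$ is then formal: the pull-back acts component-wise on the finitely many ideal sheaves $\fa_j$ and preserves the exponents $r_j$, so the bijection on $\fm$-primary ideals induces a bijection on $\fm$-primary $\bR$-ideals, with inverse sending $\prod_j\hat\fa_j^{r_j}$ to $\prod_j(\hat\fa_j\cap\sO_X)^{r_j}$.

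For part (\ref{itm:divisorbij}), assume $X$ normal. I would invoke Remark \ref{rmk:regular}: the morphism $\pi\colon\hat X\to X$ is regular, hence $\hat X$ is again normal, and $K_{\hat X}=\pi^*K_X$, so log discrepancies are preserved under base change, $a_{E'}(\hat X,\cdot)=a_E(X,\cdot)$ for any component $E'$ of $E\times_X\hat X$. The task is therefore purely about matching valuations, not about discrepancies. Given a divisor $E$ over $X$ with $c_X(E)=P$, realize it on some proper birational $Y\to X$ with $\pi(E)=P$; then $\hat Y=Y\times_X\hat X\to\hat X$ is proper birational from a normal $R$-variety (using that $\pi$ is regular so normality and irreducibility of the fibre are controlled — here the fibre over $P$ together with the generic fibre being integral gives that $\hat Y$ is integral, after possibly passing to the relevant component), and $E\times_X\hat X$ is irreducible: this is the crucial point, that $E\times_X\hat X$ stays prime. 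That irreducibility holds because $E$ has function field a finitely generated extension of $k(P)$ and $\hat X$ is the spectrum of a complete local ring with the same residue field, so the fibre of $E$ over $\hat P$ is geometrically connected and, by the regularity of $\pi$ restricted to $E$, geometrically integral. The centre of this unique divisor $\hat E:=E\times_X\hat X$ on $\hat X$ is $\pi^{-1}(c_X(E))=\hat P$. Conversely, a divisor $\hat E$ over $\hat X$ with centre $\hat P$ is realized on some proper birational $\hat Y\to\hat X$; by Artin approximation (or by spreading out, since everything is of finite type over $\Spec R$) one descends $\hat Y\to\hat X$ to a proper birational $Y\to X$ and $\hat E$ to a prime divisor $E$ over $X$ with $E\times_X\hat X=\hat E$ and $c_X(E)=P$. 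One checks these two constructions are mutually inverse on the level of valuations, which is immediate since the valuation of $\hat E$ on $k(\hat X)\supset k(X)$ restricts to that of $E$.

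The main obstacle is the irreducibility (equivalently primality) of $E\times_X\hat X$ in part (\ref{itm:divisorbij}), together with the descent in the converse direction: a priori the base change of a prime divisor along $\pi$ could split into several components, and one must use that $\pi|_E$ is a regular morphism with geometrically integral fibre over the (closed) point $\hat P$ to rule this out; the descent step requires either Artin approximation for the complete local ring $\sO_{X,P}$ or a direct limit argument expressing $\sO_{\hat X}$ as a filtered colimit of étale $\sO_X$-algebras. Both are standard in the setting of \cite{dFEM11}, \cite{dFM09}, so once the framework of $R$-varieties and regular morphisms is in place the argument is routine; I would cite those references for the approximation input rather than reprove it. Part (\ref{itm:idealbij}) carries no real difficulty beyond the bookkeeping of reducing from $\bR$-ideals to individual $\fm$-primary ideals.
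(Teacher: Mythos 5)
Part (\ref{itm:idealbij}) is fine and is exactly the paper's argument: the isomorphisms $\sO_X/\fm^l\simeq\sO_{\hat X}/\hat\fm^l$ give the bijection on $\fm$-primary ideals, and extending to $\bR$-ideals is bookkeeping.

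For part (\ref{itm:divisorbij}) your forward direction works, though it is more complicated than necessary: since $c_X(E)=P$ is a closed point, $E$ lies entirely in the fibre of $Y\to X$ over $P$, and completion does not change the residue field at $P$, so $E\times_X\hat X$ is literally isomorphic to $E$; no discussion of geometric integrality of fibres is needed. The genuine gap is in your converse (descent) direction. The completion $\sO_{\hat X}$ is \emph{not} a filtered colimit of \'etale $\sO_X$-algebras --- that description is valid for the henselization, not the completion --- so the ``direct limit argument'' you propose does not exist. Artin approximation likewise only produces a model over the henselization agreeing with $\hat Y$ modulo a high power of $\hat\fm$, and you would still have to argue that the approximated model extracts the same divisorial valuation; as written this step is not justified. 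The route the paper intends, and which you miss, avoids all of this: a divisor $\hat E$ over $\hat X$ with centre $\hat P$ is realised on the normalized blow-up of the $\hat\fm$-primary valuation ideal $\{f\in\sO_{\hat X}\mid\ord_{\hat E}f\ge l\}$ for $l\gg0$; by part (\ref{itm:idealbij}) this ideal is the pull-back of an $\fm$-primary ideal on $X$, and blowing up (and normalization) commutes with the flat regular base change $\hat X\to X$, so $\hat E$ descends. The same mechanism handles the forward direction and makes the two maps visibly inverse to each other. I recommend replacing the approximation/colimit argument with this reduction to part (\ref{itm:idealbij}).
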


\begin{proof}
The (\ref{itm:idealbij}) follows from the isomorphisms $\sO_X/\fm^l\simeq\sO_{\hat X}/\hat\fm^l$, while (\ref{itm:divisorbij}) follows from the property that blowing-up commutes with flat base changes.
\end{proof}

By Lemma \ref{lem:regular}, in order to study the minimal log discrepancy at the closed point of the germ $P\in X$, one may often replace $P\in X$ with a germ the completion of the local ring of which is isomorphic to that of $\sO_{X,P}$.

\section{The generic limit of ideals}\label{sct:limit}
We recall the generic limit of ideals on a fixed germ. It was introduced by de Fernex and Musta\c{t}\u{a} \cite{dFM09} and simplified by Koll\'ar \cite{Ko08}. We follow our style of the definition in \cite{K15}.

Let $P\in X$ be the germ of a scheme of finite type over $k$ and $\fm$ be the maximal ideal in $\sO_X$ defining $P$. Let $\cS=\{(\fa_{i1},\ldots,\fa_{ie})\}_{i\in\bN}$ be an infinite sequence of $e$-tuples of ideals in $\sO_X$. For every positive integer $l$, the ideal $(\fa_{ij}+\fm^l)/\fm^l$ in $\sO_X/\fm^l$ for $i\in\bN$ and $1\le j\le e$ corresponds to a closed point $P_{ij}(l)$ in the Hilbert scheme $H_l$ parametrising ideals in $\sO_X/\fm^l$. The $H_l$ is a scheme of finite type over $k$ and there exists a natural rational map $H_{l+1}\to H_l$. Take the Zariski closure $Z_j(l)$ of the subset $\{P_{ij}(l)\}_{i\in\bN}$ in $H_l$. By finding a locally closed irreducible subset $Z_l$ of $Z_1(l)\times\cdots\times Z_e(l)$ inductively, one obtains a family of approximations of $\cS$ defined below.

\begin{definition}\label{dfn:approx}
A \textit{family} $\cF=(Z_l,(\fa_j(l))_j,N_l,s_l,t_l)_{l\ge l_0}$ \textit{of approximations} of $\cS$ consists of a fixed positive integer $l_0$ and for every $l\ge l_0$,
\begin{itemize}
\item
a variety $Z_l$,
\item
an ideal sheaf $\fa_j(l)$ on $X\times Z_l$ for every $1\le j\le e$ which is flat over $Z_l$ and contains $\fm^l\sO_{X\times Z_l}$,
\item
an infinite subset $N_l$ of $\bN$ and a map $s_l\colon N_l\to Z_l(k)$, where $Z_l(k)$ denotes the set of the $k$-points in $Z_l$, and
\item
a dominant morphism $t_l\colon Z_{l+1}\to Z_l$,
\end{itemize}
such that
\begin{itemize}
\item
$\fa_j(l)\sO_{X\times Z_{l+1}}=\fa_j(l+1)+\fm^l\sO_{X\times Z_{l+1}}$ by $\id_X\times t_l$,
\item
$\fa_j(l)_i=\fa_{ij}+\fm^l$ for $i\in N_l$, where $\fa_j(l)_i=\fa_j(l)\otimes_{\sO_{Z_l}}k$ is the ideal in $\sO_X$ given by the closed point $s_l(i)\in Z_l$,
\item
the image of $N_l$ by $s_l$ is dense in $Z_l$, and
\item
$N_{l+1}$ is contained in $N_l$ and $t_l\circ s_{l+1}=s_l|_{N_{l+1}}$.
\end{itemize}
\end{definition}

For the above $\cF$, let $K=\varinjlim_lK(Z_l)$ be the union of the function fields $K(Z_l)$ of $Z_l$ by the inclusions $t_l^*\colon K(Z_l)\to K(Z_{l+1})$. Let $\hat X$ be the spectrum of the completion of the local ring $\sO_{X,P}\otimes_kK$. Let $\hat P$ be the closed point of $\hat X$ and $\hat\fm$ be the maximal ideal in $\sO_{\hat X}$.

\begin{definition}
The \textit{generic limit} of $\cS$ with respect to $\cF$ is the $e$-tuple $(\sfa_1,\ldots,\sfa_e)$ of ideals in $\sO_{\hat X}$ defined by
\begin{align*}
\sfa_j=\varprojlim_l\fa_j(l)_K,
\end{align*}
where $\fa_j(l)_K=\fa_j(l)\otimes_{\sO_{Z_l}}K$ is the ideal in $\sO_X\otimes_kK$ given by the natural $K$-point $\Spec K\to Z_l$.
\end{definition}

\begin{remark}
Let $R$ be the completion of the local ring $\sO_{X,P}$. In the literature, the generic limit is defined for a sequence of $e$-tuples of ideals in $R$. When $\fa_{ij}$ are ideals in $R$, the generic limit is defined in the same way just by replacing the condition $\fa_j(l)_i=\fa_{ij}+\fm^l$ in Definition \ref{dfn:approx} with $\fa_j(l)_i=(\fa_{ij}+\fm^lR)\cap\sO_X$.
\end{remark}

By the very definition, one has

\begin{lemma}
Let $(\sfa,\sfb)$ be a generic limit of a sequence $\{(\fa_i,\fb_i)\}_{i\in\bN}$ of pairs of ideals in $\sO_X$.
\begin{enumerate}
\item
If $\fa_i\subset\fb_i$ for any $i$, then $\sfa\subset\sfb$.
\item
The $\sfa+\sfb$ and $\sfa\sfb$ are the generic limits of $\{\fa_i+\fb_i\}_{i\in\bN}$ and $\{\fa_i\fb_i\}_{i\in\bN}$.
\end{enumerate}
\end{lemma}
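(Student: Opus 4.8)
The plan is to exploit a family of approximations $\cF=(Z_l,(\fa(l),\fb(l)),N_l,s_l,t_l)_{l\ge l_0}$ of $\{(\fa_i,\fb_i)\}_{i\in\bN}$ realising $(\sfa,\sfb)$, so that $\sfa=\varprojlim_l\fa(l)_K$ and $\sfb=\varprojlim_l\fb(l)_K$. Two facts should be read off directly from the definitions. Since $\sO_{X\times Z_l}/\fa(l)$ is flat over $Z_l$, pushing forward along the affine projection $X\times Z_l\to Z_l$ exhibits $\fa(l)$ as a locally free $\sO_{Z_l}$-submodule, with locally free quotient, of the finite free sheaf obtained from $\sO_X/\fm^l$; the same holds for $\fb(l)$. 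Moreover the transition maps $\fa(l+1)_K/\fm^{l+1}\to\fa(l)_K/\fm^l$ are surjective — this is precisely the compatibility $\fa(l)_K=\fa(l+1)_K+\fm^l$ at the canonical $K$-point — so the image of $\sfa$ in $\sO_{\hat X}/\fm^l$ coincides with that of $\fa(l)_K$, and similarly for $\sfb$.

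For (i): from $\fa_i\subset\fb_i$ one gets $\fa(l)_i=\fa_i+\fm^l\subset\fb_i+\fm^l=\fb(l)_i$ for all $i\in N_l$, so the subset of $Z_l$ where the fibre of $\fa(l)$ lies in that of $\fb(l)$ contains the dense subset $s_l(N_l)$. This subset is closed, being the zero locus of the induced morphism from the locally free sheaf $\fa(l)$ to the quotient of $\sO_X/\fm^l$ by $\fb(l)$, hence it is all of $Z_l$. Evaluating at the canonical $K$-point, which factors through the generic point of $Z_l$, gives $\fa(l)_K\subset\fb(l)_K$ for every $l$, and passing to the inverse limit yields $\sfa\subset\sfb$.

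For (ii): I would produce families of approximations of $\{\fa_i+\fb_i\}_{i\in\bN}$ and $\{\fa_i\fb_i\}_{i\in\bN}$ over shrinkings of the same bases, using the ideal sheaves $\fa(l)+\fb(l)$ and $\fa(l)\fb(l)+\fm^l\sO_{X\times Z_l}$. These need not be flat over $Z_l$, so by generic flatness one replaces each $Z_l$, inductively and compatibly with the maps $t_l$, by a dense open subvariety over which both are flat; this alters neither the field $K$ nor the generic limit $(\sfa,\sfb)$. The conditions of Definition \ref{dfn:approx} are then routine: the fibre identities $(\fa(l)+\fb(l))_i=\fa_i+\fb_i+\fm^l$ and $(\fa(l)\fb(l)+\fm^l)_i=\fa_i\fb_i+\fm^l$ over $i\in N_l$ follow from flatness, which lets products of ideals be computed fibrewise, and the remaining transition compatibilities are immediate from those of $\cF$. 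Because the surjective ring homomorphism $\sO_{\hat X}\to\sO_{\hat X}/\fm^l$ carries the sum and product ideals $\sfa+\sfb$ and $\sfa\sfb$ onto the sum and product of the images of $\fa(l)_K$ and $\fb(l)_K$, which by the second observation above are the images of $\sfa$ and $\sfb$, and because every ideal of the complete local ring $\sO_{\hat X}$ is $\fm$-adically closed, the generic limits of these two new families are exactly $\sfa+\sfb$ and $\sfa\sfb$.

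I expect the only genuinely delicate point to be the simultaneous shrinking of the bases in part (ii): one carries it out by induction on $l$, at each stage intersecting the flat locus of the two new sheaves on $Z_l$ with the $t_{l-1}$-preimage of the open already chosen in $Z_{l-1}$, and one must check that $t_{l-1}$ stays dominant and that $s_{l-1}$ still maps an infinite subset of $N_{l-1}$ densely into the shrunk base, so that the structure maps of the family survive. Everything else is a direct unwinding of the definitions, consistent with the text's assertion that the statement holds "by the very definition."
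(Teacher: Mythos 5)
Your argument is correct: the paper offers no proof beyond the remark that the lemma holds ``by the very definition,'' and your write-up is exactly the careful unwinding of Definition \ref{dfn:approx} (closedness of the containment locus for flat families of ideals, generic flatness plus passage to a subfamily as in Convention \ref{cnv:retain}, and $\fm$-adic closedness of ideals in the complete local ring) that this remark presupposes. No gaps; this is the same approach, just made explicit.
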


The generic limit depends on the choice of $\cF$ but remains the same after the replacement of $\cF$ with a subfamily.

\begin{definition}
A family $\cF'=(Z'_l,(\fa'_j(l))_j,N'_l,s'_l,t'_l)_{l\ge l'_0}$ of approximations of $\cS$ is called a \textit{subfamily} of $\cF$ if $l'_0$ is at least $l_0$ and if there exists an open immersion $i_l\colon Z'_l\to Z_l$ for every $l\ge l'_0$ such that
\begin{itemize}
\item
$t_l\circ i_{l+1}=i_l\circ t'_l$,
\item
$\fa_j(l)\sO_{X\times Z'_l}=\fa'_j(l)$ by $\id_X\times i_l$, and
\item
$N'_l$ is a subset of $N_l$ and $i_l\circ s'_l=s_l|_{N'_l}$.
\end{itemize}
\end{definition}

\begin{convention}\label{cnv:retain}
Later we shall often replace $\cF$ with a subfamily, but we retain the same notation $\cF=(Z_l,(\fa_j(l))_j,N_l,s_l,t_l)_{l\ge l_0}$ to avoid intricacy.
\end{convention}

The theory of the generic limit of ideals was developed for the study of the singularities on the germ $P\in X$. When $X$ is klt, the singularities on $\hat X$ are associated with those on $X$ (see \cite{dFEM11}). The existence of log resolutions supplies

\begin{lemma}\label{lem:resolution}
Notation as above and assume that $X$ is klt. Then $\hat X$ is klt, and after replacing $\cF$ with a subfamily but using the same notation,
\begin{align*}
\mld_{\hat P}(\hat X,{\textstyle\prod}_{j=1}^e(\sfa_j+\hat\fm^l)^{r_j})=\mld_P(X,{\textstyle\prod}_{j=1}^e\fa_j(l)_i^{r_j})
\end{align*}
for any positive real numbers $r_1,\ldots,r_e$ and for any $i\in N_l$ and $l\ge l_0$.
\end{lemma}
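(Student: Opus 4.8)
The plan is to reduce the statement to a single log resolution that works uniformly across the family, and then to chase the minimal log discrepancy through flat base change. First I would note that $\hat X$ is klt: the morphism $\hat X\to X$ obtained from the completion of $\sO_{X,P}\otimes_k K$ factors as a completion followed by a base change by the field extension $K/k$, so by Remark \ref{rmk:regular} (applied to the trivial boundary) one has $K_{\hat X}=\pi^*K_X$ and the log discrepancies of corresponding divisors agree; since $X$ is klt, so is $\hat X$. The real content is the displayed equality of minimal log discrepancies, and here the strategy is: choose $l$ large; produce, over an open dense subset of $Z_l$, a simultaneous log resolution of the pair whose fibre over each relevant point $s_l(i)$ computes the same discrepancy data; pass to the generic point to get a log resolution of $(\hat X,\prod_j(\sfa_j+\hat\fm^l)^{r_j})$ with the same numerology; and read off equality of mld's from the combinatorics of the snc configuration, which is constant in the family.

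The key steps, in order, are as follows. (1) Fix $l_0$ and work with $l\ge l_0$. Over $Z_l$ we have the flat ideal sheaves $\fa_j(l)$ on $X\times Z_l$ with $\fa_j(l)+\fm^l=\fa_j(l)$ componentwise; take a log resolution $W\to X\times Z_l$ of the pair $(X\times Z_l,\prod_j\fa_j(l)^{r_j})$ relative to $Z_l$. By generic flatness and generic smoothness, after shrinking $Z_l$ to a dense open subset (and correspondingly shrinking $N_l$, a move permitted by Convention \ref{cnv:retain}) we may assume that $W\to Z_l$ is smooth, that the total transform divisor is snc relative to $Z_l$, and that formation of $W$, of $K_{W/(X\times Z_l)}$, and of the pullback ideals commutes with base change to every fibre. (2) For each $i\in N_l$, restricting to the fibre over $s_l(i)\in Z_l$ gives a log resolution of $(X,\prod_j(\fa_{ij}+\fm^l)^{r_j})=(X,\prod_j\fa_j(l)_i^{r_j})$ whose exceptional divisors, relative canonical coefficients, and ideal-orders are exactly those of the generic fibre; hence $\mld_P$ computed on this fibre — an infimum over divisors extracted from this fixed snc resolution, a purely combinatorial quantity depending only on the coefficients — is independent of $i$. (3) Base change $W\to Z_l$ along $\Spec K\to Z_l$ and complete: by Lemma \ref{lem:regular}(\ref{itm:divisorbij}) and Remark \ref{rmk:regular} this yields a log resolution of $(\hat X,\prod_j(\sfa_j\sO_{\hat X}+\hat\fm^l)^{r_j})$ with the same snc combinatorics, so $\mld_{\hat P}$ of that pair equals the common fibre value. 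Since $\sfa_j+\hat\fm^l$ is precisely the image of $\fa_j(l)_K$ under completion — the defining limit $\sfa_j=\varprojlim_l\fa_j(l)_K$ gives $\sfa_j+\hat\fm^l=\fa_j(l)_K\sO_{\hat X}+\hat\fm^l$ — this is the asserted pair, and combining with step (2) gives the displayed equality. (4) Finally, to have a single subfamily that works for \emph{all} $l\ge l_0$ simultaneously rather than one per $l$: the shrinkings in step (1) are nested compatibly with the $t_l$ (since $\fa_j(l)$ pulls back to $\fa_j(l+1)+\fm^l$, a log resolution at level $l+1$ refines one at level $l$ after the base change $t_l$), so one may pass to a subfamily once and for all.

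I expect the main obstacle to be step (1): arranging a \emph{relative} log resolution that behaves well under restriction to closed fibres. One must ensure not merely that each fibre is individually resolved but that the exceptional divisors, the coefficients of $K_{W/(X\times Z_l)}$, and the orders $\ord_{E}\fa_j(l)$ are all constant along $Z_l$ after shrinking — this is where generic smoothness of the strata (the snc-relative-to-$Z_l$ condition) and generic flatness of the $\fa_j(l)$, together with the fact that the zeroth Fitting ideal defining $K_{W/(X\times Z_l)}$ commutes with base change on a dense open, do the work; one also needs that $\fm^l\sO_{X\times Z_l}\subset\fa_j(l)$ is preserved on fibres, which is immediate. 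A secondary subtlety is keeping the images $s_l(N_l)$ dense after shrinking, but this is exactly what Convention \ref{cnv:retain} licenses, since $\{s_l(i)\}_{i\in N_l}$ is dense in $Z_l$ and any nonempty open meets it in an infinite set.
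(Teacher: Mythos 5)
Your proposal is correct and follows essentially the same route the paper intends: the paper justifies this lemma only by the phrase ``the existence of log resolutions,'' meaning precisely the relative log resolution over a shrunken $Z_l$, restriction to closed fibres and to the generic point, and the observation that the mld is then a combinatorial quantity constant in the family. Your additional care about compatibility across the levels $l$ and the identification $\sfa_j+\hat\fm^l=\fa_j(l)_K\sO_{\hat X}$ fills in details the paper leaves implicit, but does not change the argument.
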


\begin{remark}\label{rmk:descend}
\begin{enumerate}
{\first\item\label{itm:descendE}}
Let $\hat E$ be a divisor over $\hat X$ with centre $\hat P$. Then replacing $\cF$ with a subfamily (but using the same notation as in Convention \ref{cnv:retain}), one can descend $\hat E$ to a divisor $E_l$ over $X\times Z_l$ for any $l\ge l_0$, that is, $E_{l'}=E_l\times_{Z_l}Z_{l'}$ when $l\le l'$, and $\hat E=E_l\times_{X\times Z_l}\hat X$. Let $E_i$ be any connected component of the fibre of $E_l$ at $s_l(i)\in Z_l$, which is independent of $l$ as far as $i\in N_l$. Replacing $\cF$ with a subfamily again, for any $i\in N_l$ and $1\le j\le e$, $E_i$ is a divisor over $X$ and satisfies that
\begin{align*}
\ord_{\hat E}\sfa_j=\ord_{\hat E}(\sfa_j+\hat\fm^l)&=\ord_{E_i}(\fa_{ij}+\fm^l)=\ord_{E_i}\fa_{ij}<l,\\
a_{\hat E}(\hat X,\sfa)&=a_{E_i}(X,\fa_i).
\end{align*}
\item
Let $\hat\pi\colon\hat Y\to\hat X$ be a projective birational morphism isomorphic outside $\hat P$. Then $\hat\pi$ is descendible as stated in \cite[Proposition A.7]{K15}, that is, after replacing $\cF$ with a subfamily, there exist projective morphisms $\pi_l\colon Y_l\to X\times Z_l$ such that $\pi_{l'}=\pi_l\times_{Z_l}Z_{l'}$ when $l\le l'$ and such that $\hat\pi=\pi_l\times_{X\times Z_l}\hat X$.
\end{enumerate}
\end{remark}

\begin{remark}
The $E_l$ is treated in \cite{K15} as if it has connected fibres, which should have been corrected appropriately.
\end{remark}

Now we fix positive real numbers $r_1,\ldots,r_e$ and consider the $\bR$-ideals $\fa_i=\prod_{j=1}^e\fa_{ij}^{r_j}$ and $\sfa=\prod_{j=1}^e\sfa_j^{r_j}$. The $\sfa$ is called the \textit{generic limit} of the sequence $\{\fa_i\}_{i\in\bN}$ of $\bR$-ideals on $X$ with respect to $\cF$. The most important achievement at present is the following theorem due to de Fernex, Ein and Musta\c{t}\u{a}. Indeed, as an application, they proved first the ACC for lc thresholds restricted on smooth varieties.

\begin{theorem}[\cite{dFEM10}, \cite{dFEM11}]\label{thm:lct}
Notation as above and assume that $X$ is klt. If $(\hat X,\sfa)$ is lc, then so is $(X,\fa_i)$ for any $i\in N_{l_0}$ after replacing $\cF$ with a subfamily which depends on $r_1,\ldots,r_e$.
\end{theorem}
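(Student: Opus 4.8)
The plan is to reduce the log canonicity of $(X,\fa_i)$ to the invertibility of the relevant ideals after a single fixed log resolution, which will then let the semicontinuity of log discrepancies in families do the work. First I would fix a log resolution $\hat\pi\colon\hat Y\to\hat X$ of $(\hat X,\sfa)$; since $X$ is klt, $\hat X$ is klt by Lemma \ref{lem:resolution}, so this pair has a genuine log resolution in the $R$-variety sense of \cite{dFEM11}. Because $\hat\pi$ is an isomorphism outside the closed point $\hat P$ (one may arrange this, as $\sfa$ is $\hat\fm$-primary or at least its cosupport is concentrated at $\hat P$ after the usual bookkeeping), Remark \ref{rmk:descend} lets me descend $\hat\pi$ to a family of projective morphisms $\pi_l\colon Y_l\to X\times Z_l$ after replacing $\cF$ by a subfamily, with $\hat\pi=\pi_l\times_{X\times Z_l}\hat X$. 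On this $\hat Y$ the pulled-back $\bR$-ideal $\sfa\sO_{\hat Y}$ is invertible, say equal to $\sO_{\hat Y}(-F)$ for an $\bR$-divisor $F$, and $(\hat X,\sfa)$ being lc says exactly that $K_{\hat Y/\hat X}-F\ge -\sum E_k$ over the exceptional divisors $E_k$ of $\hat\pi$, i.e.\ $\ord_{E_k}\sfa\le 1+\ord_{E_k}K_{\hat Y/\hat X}=a_{E_k}(\hat X)$ for each exceptional $E_k$.

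Next I would transport this numerical inequality down to the fibres. The relative canonical divisor $K_{Y_l/(X\times Z_l)}$ restricts on the fibre over a point $s_l(i)\in Z_l$ to the relative canonical divisor $K_{Y_i/X}$ of the induced birational morphism $\pi_i\colon Y_i\to X$ — this is a standard base-change property of Fitting ideals, valid because after shrinking $Z_l$ (another subfamily) we may assume $Y_l\to Z_l$ is flat with smooth fibres and $K_{Y_l/(X\times Z_l)}$ is flat over $Z_l$. Similarly, for each $j$, after possibly replacing $\cF$ by a further subfamily depending on $r_1,\dots,r_e$ (via the $\ord_{\hat E}\sfa_j=\ord_{E_i}\fa_{ij}$ matching of Remark \ref{rmk:descend}(\ref{itm:descendE})), the order of $\fa_{ij}$ along each component $E_{k,i}$ of the fibre of $E_k$ agrees with $\ord_{E_k}\sfa_j$, and hence $\ord_{E_{k,i}}\fa_i=\ord_{E_k}\sfa$. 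Combining, $a_{E_{k,i}}(X,\fa_i)=a_{E_k}(\hat X,\sfa)\ge 0$ for every exceptional $E_k$.

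It remains to see that $Y_i\to X$ is itself a log resolution of $(X,\fa_i)$, or at least resolves it enough that non-negativity of log discrepancies on the exceptional divisors of $Y_i$ forces $(X,\fa_i)$ to be lc. On the generic fibre $\hat Y\to\hat X$ we have that $\sfa\sO_{\hat Y}$ is invertible and its cosupport together with the exceptional locus is snc; both conditions are open, so after shrinking $Z_l$ they hold on $Y_l\to X\times Z_l$ relative to $Z_l$, hence on $Y_i\to X$ for every $i$ in the (shrunken) $N_l$. Thus $Y_i\to X$ is a log resolution of $(X,\fa_i)$, and since log canonicity of a pair can be tested on a single log resolution by the exceptional log discrepancies there, the inequalities $a_{E_{k,i}}(X,\fa_i)\ge 0$ give that $(X,\fa_i)$ is lc for all $i$ in the final subfamily, in particular for $i\in N_{l_0}$ after relabeling. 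The main obstacle I expect is the last point — verifying that a log resolution of the generic-limit pair genuinely descends to a log resolution of the members $(X,\fa_i)$ uniformly — because the openness of the snc and invertibility conditions must be checked relative to $Z_l$ and must survive all the successive passages to subfamilies; this is precisely where the dependence of the final subfamily on the exponents $r_1,\dots,r_e$ enters, through the need to match $\ord_{E_{k,i}}\fa_{ij}$ with $\ord_{E_k}\sfa_j$ for each $j$ simultaneously, and it is the technical heart of \cite{dFEM10}, \cite{dFEM11}.
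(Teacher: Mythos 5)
The paper does not actually prove this theorem; it quotes it from \cite{dFEM10}, \cite{dFEM11} and notes immediately afterwards that it is a corollary of the \emph{effective} $\fm$-adic semi-continuity of lc thresholds. Your proposal never invokes that ingredient, and this is where it breaks. The fatal step is the claim that $Y_i\to X$ is a log resolution of $(X,\fa_i)$. The only objects that exist in the family over $Z_l$ are the truncations $\fa_j(l)$, whose fibres at $s_l(i)$ are $\fa_{ij}+\fm^l$, not $\fa_{ij}$; the ideal $\sfa_j=\varprojlim_l\fa_j(l)_K$ does not spread out over any single $Z_l$. Hence any openness argument relative to $Z_l$ can at best show that $Y_i$ principalizes $(\fa_{ij}+\fm^l)\sO_{Y_i}$ with snc cosupport, i.e.\ that $Y_i$ is a log resolution of the truncated pair $(X,\prod_j(\fa_{ij}+\fm^l)^{r_j})$. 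Since $\fa_{ij}\subset\fa_{ij}+\fm^l$ and the $\fa_{ij}$ are not uniformly $\fm$-primary of bounded level, $\fa_{ij}\sO_{Y_i}$ has no reason to be invertible, and log canonicity of the truncated pair does not imply log canonicity of $(X,\fa_i)$ — the inclusion goes the wrong way. Your second paragraph does correctly obtain $a_{E_{k,i}}(X,\fa_i)=a_{E_k}(\hat X,\sfa)\ge0$ for the finitely many exceptional divisors of the fixed $\hat\pi$ (this is Remark \ref{rmk:descend}(\ref{itm:descendE})), but non-negativity of log discrepancies on the exceptional divisors of a model that is \emph{not} a log resolution of the pair says nothing about divisors extracted on higher models, so lc does not follow.

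What closes this gap in \cite{dFEM10} is their Theorem 1.4 (used in this paper in the proof of Theorem \ref{thm:relative}): if $E$ computes $\mld=0$ for a pair, then replacing each $\fa_j$ by any ideal congruent to it modulo the valuation ideal $\fp_j=\{f\mid\ord_Ef>\ord_E\fa_j\}$ — in particular modulo $\fm^l$ for $l>\ord_E\fa_j$ — does not change the minimal log discrepancy. The actual argument descends a divisor $\hat E$ computing the lc threshold of $\sfa$ (or the mld $0$ at the relevant lc centre, globalised as in Theorem \ref{thm:relative} and Corollary \ref{crl:relative} when that centre is not the closed point) to divisors $E_i$ over $X$, uses Lemma \ref{lem:resolution} to see that the truncated pairs on $X$ are lc with mld computed by $E_i$, and then applies the effective semi-continuity to pass from $\fa_{ij}+\fm^l$ down to $\fa_{ij}$. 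A secondary issue: to descend $\hat\pi$ via Remark \ref{rmk:descend}(ii) you need it to be an isomorphism outside $\hat P$, which fails whenever the cosupport of $\sfa$ is positive-dimensional — exactly the hard case in this paper — so even the setup of your first paragraph cannot be arranged in general.
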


Theorem \ref{thm:lct} is a corollary to the effective $\fm$-adic semi-continuity of lc thresholds, which was globalised in \cite[Theorem 4.11]{K15}. We prove its relative version.

\begin{theorem}\label{thm:relative}
Let $X$ be a klt variety and $X\to T$ be a morphism to a variety. Suppose that every closed fibre of $X\to T$ is klt. Let $\fa=\prod_j\fa_j^{r_j}$ be an $\bR$-ideal on $X$ and $Z$ be an irreducible closed subset of $X$ which dominates $T$. Suppose that $\mld_{\eta_Z}(X,\fa)=0$ and it is computed by a divisor $E$ over $X$. Then after replacing $X$ and $T$ with their dense open subsets, the following hold for any $t\in T$.
\begin{itemize}
\item
The fibre of $E$ at $t$ is non-empty, and its arbitrary connected component $E_t$ is a divisor over a component $X_t$ of the fibre of $X$ at $t$.
\item
The centre $Z_t$ on $X_t$ of $E_t$ is smooth.
\item
If an $\bR$-ideal $\fb=\prod_j\fb_j^{r_j}$ on $X_t$ satisfies that $\fa_j\sO_{X_t}+\fp_j=\fb_j+\fp_j$ for any $j$, where $\fp_j=\{f\in\sO_{X_t}\mid\ord_{E_t}f>\ord_E\fa_j\}$, then $(X_t,\fb)$ is lc about $Z_t$ and $\mld_{\eta_{Z_t}}(X_t,\fb)=0$.
\end{itemize}
\end{theorem}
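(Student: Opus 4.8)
The plan is to deduce Theorem \ref{thm:relative} from the absolute effective $\fm$-adic semi-continuity of log canonical thresholds (the mechanism behind Theorem \ref{thm:lct}, globalised in \cite[Theorem 4.11]{K15}) by spreading the relevant data out over $T$ and then applying that result fibrewise. First I would pass to a log resolution $\mu\colon Y\to X$ of $(X,\fa)$ that extracts the divisor $E$ computing $\mld_{\eta_Z}(X,\fa)=0$, arranged so that $E$ appears as an exceptional prime divisor on $Y$ and the whole configuration (exceptional locus, cosupport of $\fa\sO_Y$, strict transform of $Z$) is snc. By generic flatness and generic smoothness applied to $\mu$, to the divisors $E$ and $K_{Y/X}$, to the ideals $\fa_j\sO_Y$, and to the strict transform of $Z$, all simultaneously over $T$, I may shrink $T$ and $X$ so that $X\to T$ is flat with klt fibres, every closed fibre $X_t$ is irreducible, $Y\to T$ is smooth, the fibre $E_t$ is a well-defined smooth divisor over $X_t$ with smooth centre $Z_t$, and all the snc and ``defines a log resolution of the fibre'' conditions hold fibrewise (this uses that snc relative to $T$ specialises to snc on fibres, and that $K_{Y_t/X_t}=K_{Y/X}|_{Y_t}$ by the Fitting-ideal description recalled before Remark \ref{rmk:regular}). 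This gives the first two bulleted conclusions and, crucially, the identity $a_{E_t}(X_t,\fa\sO_{X_t})=a_E(X,\fa)=0$ for every $t$, and more generally that $(X_t,\fa\sO_{X_t})$ is lc about $Z_t$ with minimal log discrepancy there equal to $0$.

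For the third bullet, the point is that the ideal $\fp_j=\{f\in\sO_{X_t}\mid \ord_{E_t}f>\ord_E\fa_j\}$ is exactly an $\ord_{E_t}$-adic truncation: if $\fb_j$ agrees with $\fa_j\sO_{X_t}$ modulo $\fp_j$, then $\ord_{E_t}\fb_j=\ord_{E_t}(\fa_j\sO_{X_t})$ and, after the weak/strict transform on $Y_t$, the ideals $\fb_j\sO_{Y_t}$ and $(\fa_j\sO_{X_t})\sO_{Y_t}$ have the same image along the chain of divisors relevant to computing discrepancies near $Z_t$. Concretely I would argue that replacing $\fa_j\sO_{X_t}$ by $\fb_j$ changes neither $\ord_{E_t}$ nor the log canonicity of the pair in a neighbourhood of $Z_t$: since $E_t$ computes $\mld_{\eta_{Z_t}}=0$, one has $a_{E_t}(X_t,\fb)=a_{E_t}(X_t,\fa\sO_{X_t})=0$, so the minimal log discrepancy in question is $\le 0$; and the reverse inequality, i.e.\ that $(X_t,\fb)$ is actually lc about $Z_t$, is where I invoke the effective $\fm$-adic semi-continuity of lc thresholds at the generic point $\eta_{Z_t}$ — the truncation parameter built into $\fp_j$ is precisely the one furnished by that theorem, uniformly in $t$ after the shrinking already performed. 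Here one should work at $\eta_{Z_t}$, use the relation $\mld_{\eta_{Z_t}}(X_t,-)=\mld_{P'}(X_t,-)-\dim Z_t$ for a general point $P'\in Z_t$, and apply the absolute statement there.

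The main obstacle I anticipate is the uniformity of the truncation level $\fp_j$ across all fibres: the absolute effective semi-continuity gives, for a fixed germ and fixed exponents, a finite $\fm$-adic level controlling lc-ness, but here the germ is $\eta_{Z_t}\in X_t$ and varies with $t$. The resolution is that after spreading out $\mu\colon Y\to X$ over $T$ the combinatorial data (the discrepancy $\ord_{E_t}K_{Y_t/X_t}$ along $E_t$, and the numbers $\ord_{E_t}\fa_j$) are locally constant, indeed constant on the connected $T$ we have arranged, so a single level works; making this precise amounts to checking that the effective bound in \cite[Theorem 4.11]{K15} depends only on these invariants and the $r_j$, which is how that theorem is formulated. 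A secondary technical point is ensuring the connected components $E_t$ behave well — a priori the fibre of $E$ over $t$ could be disconnected — but by the generic smoothness of $E\to T$ each component is a prime divisor over $X_t$, and since we only need the assertion for \emph{arbitrary} such component, and each one has the same $\ord$-data by specialisation, no further shrinking beyond a dense open $T$ is needed.
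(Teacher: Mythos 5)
Your set-up (spreading out a log resolution of $(X,\fa)$ that extracts $E$, shrinking $T$ and $X$ so that the relevant configuration is snc relative to $T$, and then invoking the effective semi-continuity of \cite{dFEM10}) matches the paper's, and the first two bullets do come out of it. The gap is in the third bullet. You propose to verify log canonicity of $(X_t,\fb)$ by working at $\eta_{Z_t}$ through a \emph{general} closed point $P'\in Z_t$ and the relation $\mld_{\eta_{Z_t}}(X_t,-)=\mld_{P'}(X_t,-)-\dim Z_t$. That only yields $\mld_{\eta_{Z_t}}(X_t,\fb)=0$ together with log canonicity of $(X_t,\fb)$ on a dense open subset of $Z_t$; it does not yield that $(X_t,\fb)$ is lc \emph{about} $Z_t$, i.e.\ in a neighbourhood of every point of $Z_t$, which is what the statement asserts and what the noetherian induction in Corollary \ref{crl:relative} actually consumes. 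Nor can you recover this by further shrinking: the locus of closed points of $Z_t$ where your argument is silent varies with $t$ and its union over $t$ may be dense in $Z$, so no single dense open subset of $X$ removes it for all $t$ simultaneously.

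The paper closes exactly this gap by reducing to \emph{every} closed point $z\in Z_t$. It arranges the shrinking so that every stratum of the relative snc divisor $F$ on $Y$ that maps into $Z$ is smooth \emph{over $Z$} (not merely over $T$). Then for each $z\in Z_t$ one has, in suitable coordinates at any $y\in\pi^{-1}(z)$, the description $\fm_z\sO_{Y_t,y}=(w_1,\ldots,w_n,\prod_lv_l^{m_l})\sO_{Y_t,y}$ with $n=\dim Z-\dim T$, from which one deduces that $\mld_z(X_t,\fm_z^n\cdot\fa\sO_{X_t})=0$ and that it is computed by the divisor $G_z$ obtained by blowing up $Y_t$ along a component of $E_t\cap\pi^{-1}(z)$. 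Since $\ord_{G_z}\fa_j\sO_{X_t}=\ord_E\fa_j$ while $\ord_{G_z}f\ge\ord_{E_t}f$ for all $f\in\sO_{X_t}$, the hypothesis $\fa_j\sO_{X_t}+\fp_j=\fb_j+\fp_j$ transfers to the valuation ideal of $G_z$, and \cite[Theorem 1.4]{dFEM10} applied at the closed point $z$ gives $\mld_z(X_t,\fm_z^n\fb)=0$ for every $z\in Z_t$, hence log canonicity about all of $Z_t$ (and then $\mld_{\eta_{Z_t}}(X_t,\fb)=0$ from $a_{E_t}(X_t,\fb)=0$). This reduction to closed points via the ideal $\fm_z^n$ and the auxiliary divisors $G_z$ is the idea missing from your proposal; the semi-continuity statements you cite live at closed points, and applying them only at a general point of $Z_t$ is not enough.
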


\begin{proof}
Take a log resolution $\pi\colon Y\to X$ of $(X,\fa\sI_Z)$, where $\sI_Z$ is the ideal sheaf of $Z$, such that $E$ is realised as a divisor on $Y$. We may shrink $T$ so that $T$ and $Y\to T$ are smooth and so that the union $F$ of the exceptional locus of $\pi$ and the cosupport of $\fa\sI_Z\sO_Y$ is an snc divisor relative to $T$. Replace $X$ with an open subset $X'$ containing $\eta_Z$ such that $Z'=Z|_{X'}$ is smooth over $T$ and such that if the restriction $S'=S|_{\pi^{-1}(X')}$ of a stratum $S$ of $F$ satisfies that $S'\neq\emptyset$ and $\pi(S')\subset Z'$, then $S'\to Z'$ is smooth.

Set $n=\dim Z-\dim T$. Then for any $t\in T$ and $z\in Z_t$,
\begin{align*}
\mld_z(X_t,\fm_z^n\cdot\fa\sO_{X_t})=0
\end{align*}
for the maximal ideal sheaf $\fm_z$ on $X_t$ defining $z$, and it is computed by the divisor $G_z$ obtained by the blow-up of $Y_t=Y\times_XX_t$ along a component of $E_t\cap\pi^{-1}(z)$. This is verified from the local description at each closed point $y$ in $\pi^{-1}(z)$. Indeed, let $v_1,\ldots,v_s$ be a part of a regular system of parameters in $\sO_{Y,y}$ such that $F$ is defined at $y$ by $\prod_{l=1}^sv_l$. Since every stratum of $F$ mapped into $Z$ is smooth over $Z$, they are extended to a part $v_1,\ldots,v_s,w_1,\ldots,w_n$ of a regular system of parameters in $\sO_{Y,y}$ such that their images form a part of a regular system of parameters in $\sO_{Y_t,y}$ and such that
\begin{align*}
\fm_z\sO_{Y_t,y}=(w_1,\ldots,w_n,\textstyle\prod_{l=1}^sv_l^{m_l})\sO_{Y_t,y},
\end{align*}
where $m_l$ is the order of $\sI_Z$ along the divisor defined by $v_l$. (Note that the corresponding expression in the proof of \cite[Theorem 4.11]{K15} is incorrect).

Since $\ord_{G_z}\fa_j\sO_{X_t}=\ord_E\fa_j$ and $\ord_{G_z}f\ge\ord_{E_t}f$ for any $f\in\sO_{X_t}$, by \cite[Theorem 1.4]{dFEM10} we conclude that $\mld_z(X_t,\fm_z^n\fb)=0$ for the $\fb$ in the statement. Hence $(X_t,\fb)$ is lc about $Z_t$, and $\mld_{\eta_{Z_t}}(X_t,\fb)=0$ by $a_{E_t}(X_t,\fb)=0$.
\end{proof}

\begin{corollary}\label{crl:relative}
Let $X$ be a klt variety and $X\to T$ be a morphism to a variety. Suppose that the fibre $X_t$ at every closed point $t$ in $T$ is klt. Let $\fa=\prod_j\fa_j^{r_j}$ be an $\bR$-ideal on $X$ and $Z$ be a closed subset of $X$ such that $(X,\fa)$ is lc about $Z$. Set $Z_t=Z\times_XX_t$ and let $\sI_t$ denote the ideal sheaf of $Z_t$ on $X_t$. Then there exists a positive integer $l$ such that after replacing $T$ with its dense open subset, for any $t\in T$ if an $\bR$-ideal $\fb=\prod_j\fb_j^{r_j}$ on $X_t$ satisfies that $\fa_j\sO_{X_t}+\sI_t^l=\fb_j+\sI_t^l$ for any $j$, then $(X_t,\fb)$ is lc about $Z_t$.
\end{corollary}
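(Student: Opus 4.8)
The plan is to deduce this from Theorem \ref{thm:relative} by a noetherian-induction argument on the closed subset $Z$, stratifying $Z$ into pieces over which a divisor computing the minimal log discrepancy behaves in a controlled way. First I would reduce to the case where $(X,\fa)$ is lc but not klt about $Z$, and more precisely to the case where $Z$ is irreducible and dominates $T$: indeed one may replace $Z$ by each of its irreducible components, and a component not dominating $T$ is handled after shrinking $T$ so that it becomes empty; the klt case is trivial since klt is an open condition on the $\fp_j$-truncation. So assume $Z$ is irreducible, dominates $T$, and $\mld_{\eta_Z}(X,\fa)=0$ (the case where the infimum is attained at a higher-dimensional centre contained in $Z$ is absorbed by induction on dimension). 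Choose a divisor $E$ over $X$ computing $\mld_{\eta_Z}(X,\fa)=0$.

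Next I would apply Theorem \ref{thm:relative} to this $E$: after shrinking $X$ and $T$, for every $t\in T$ the connected components $E_t$ of the fibre are divisors over $X_t$, their centres $Z_t$ on $X_t$ are smooth, and the local-at-$\eta_{Z_t}$ truncation condition $\fa_j\sO_{X_t}+\fp_j=\fb_j+\fp_j$ with $\fp_j=\{f:\ord_{E_t}f>\ord_E\fa_j\}$ forces $(X_t,\fb)$ to be lc about $Z_t$. The point is to convert this valuation-theoretic truncation into an $\sI_t$-adic truncation. Since $E_t$ is a divisor over $X_t$ dominating the component $(Z_t)_0$ of $Z\times_X X_t$ that is the centre, and since $\ord_{E_t}$ restricted to the generic point of $(Z_t)_0$ is a divisorial valuation over the regular local ring $\sO_{X_t,\eta_{(Z_t)_0}}$, there is an integer $c$ (bounded in the family because $E$ descends to a relative log resolution over $T$, cf.\ Remark \ref{rmk:descend}) with $\sI_t^{l}\sO_{X_t,\eta_{(Z_t)_0}}\subset \fp_j\sO_{X_t,\eta_{(Z_t)_0}}$ once $l\ge c\cdot(\ord_E\fa_j+1)$, uniformly in $j$ and in $t$; enlarging $l$ to clear denominators among finitely many $r_j$ makes this work for all $j$ simultaneously. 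Then $\fa_j\sO_{X_t}+\sI_t^l=\fb_j+\sI_t^l$ implies the germ-wise equality $\fa_j\sO_{X_t,\eta_{(Z_t)_0}}+\fp_j=\fb_j\sO_{X_t,\eta_{(Z_t)_0}}+\fp_j$, hence $(X_t,\fb)$ is lc at $\eta_{(Z_t)_0}$, i.e.\ about the centre of $E_t$. Running over the finitely many components of $Z_t$ and the finitely many $E_t$, and then applying noetherian induction to $Z\setminus c_X(E)$ (with its own divisor), we cover all of $Z$ after finitely many shrinkings of $T$; taking the maximum of the finitely many integers $l$ produced gives the desired uniform $l$.

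The main obstacle is the uniform passage from the valuative truncation $\fp_j$ in Theorem \ref{thm:relative} to the $\sI_t$-adic truncation $\sI_t^l$, uniformly in $t\in T$. This requires knowing that the "distortion" between $\ord_{E_t}$ and the $\sI_t$-adic filtration at the centre is bounded independently of $t$; the natural way to see this is to realise $E$ on a fixed relative log resolution $Y\to X$ over $T$ (as in the proof of Theorem \ref{thm:relative}, where $F$ is snc relative to $T$), read off $\ord_{E_t}f$ and $\ord_{E_t}$ of the generators of $\sI_t$ from the snc coordinates $v_1,\dots,v_s,w_1,\dots,w_n$ there, and note that all the relevant integers $m_l$ and the coefficient of $E$ in $K_{Y/X}$ are locally constant on $T$ after shrinking. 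A secondary subtlety is that $Z\times_X X_t$ may be reducible or non-reduced while only one component is $c_{X_t}(E_t)$; this is handled by the noetherian induction, since $\mld$ on the other components is governed by other divisors, and each step only shrinks $T$ finitely.
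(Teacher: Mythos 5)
Your overall skeleton is the same as the paper's: noetherian induction on $Z$, an application of Theorem \ref{thm:relative} to an irreducible component $Z_0$ dominating $T$, and a conversion of the valuative truncation by $\fp_j$ into an $\sI_t$-adic truncation (which, incidentally, is easier than you make it: since $c_{X_t}(E_t)\subset Z_t$ one has $\ord_{E_t}\sI_t\ge1$, so $\sI_t^l\subset\fp_j$ as soon as $l>\ord_E\fa_j$, a bound that is already uniform in $t$ without invoking a relative log resolution).

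The genuine gap is your treatment of the case $\mld_{\eta_{Z_0}}(X,\fa)>0$. Theorem \ref{thm:relative} only applies when $\mld_{\eta_{Z_0}}(X,\fa)=0$, and you dispose of the remaining case by calling it ``trivial'' or ``absorbed by induction on dimension.'' Neither works. The statement to be proved in the klt-at-$\eta_{Z_0}$ case is exactly the same kind of uniform assertion as in the mld-zero case: one must produce a single $l$ such that every $\sI_t^l$-perturbation $\fb$ on every fibre remains lc along a neighbourhood of the generic points of $Z_0\times_XX_t$; nothing about this is automatic, since for small $l$ the perturbation can destroy log canonicity there. Noetherian induction cannot rescue this either: each inductive step only removes a proper closed subset of $Z_0$ and never addresses a dense open part of $Z_0$ itself, so the generic stratum of $Z_0$ must be handled head-on. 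The paper's proof closes this gap with one short but essential move that your argument is missing: choose the non-negative real number $r$ with $\mld_{\eta_{Z_0}}(X,\fa\sI_{Z_0}^r)=0$ and apply Theorem \ref{thm:relative} to the augmented $\bR$-ideal $\fa\sI_{Z_0}^r$ rather than to $\fa$; since discarding the effective factor $(\sI_{Z_0}\sO_{X_t})^r$ only increases log discrepancies, log canonicity of the perturbed augmented pair implies that of $(X_t,\fb)$ about $Z_0\times_XX'_t$. With that insertion your argument becomes essentially the paper's proof.
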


\begin{proof}
We shall prove it by noetherian induction on $Z$. Let $Z_0$ be an irreducible component of $Z$, which may be assumed to dominate $T$. Let $\sI_{Z_0}$ be the ideal sheaf of $Z_0$ and $r$ be the non-negative real number such that $\mld_{\eta_{Z_0}}(X,\fa\sI_{Z_0}^r)$ equals zero. Applying Theorem \ref{thm:relative}, after shrinking $T$ there exist open subset $X'$ of $X$ containing $\eta_{Z_0}$ and a positive integer $l_0$ such that for any $t\in T$, if an $\bR$-ideal $\fb=\prod_j\fb_j^{r_j}$ on $X_t$ satisfies that $\fa_j\sO_{X'_t}+\sI_{Z_0}^{l_0}\sO_{X'_t}=\fb_j\sO_{X'_t}+\sI_{Z_0}^{l_0}\sO_{X'_t}$ for any $j$ on $X'_t=X'\times_XX_t$, then $(X'_t,\fb\sO_{X'_t})$ is lc about $Z_0\times_XX'_t$. Thus the assertion is reduced to that for the closure of $Z\setminus(Z_0\cap X')$, which follows from the hypothesis of induction.
\end{proof}

\section{Singularities on a fixed variety}
In this section, we fix the germ $P\in X$ of a klt variety and review an approach to the study of $\mld_P(X,\fa)$ for $\bR$-ideals $\fa$ which uses the generic limit of ideals on $X$. Our earlier work shows the discreteness for log discrepancies $a_E(X,\fa)$.

\begin{theorem}[\cite{K14}]\label{thm:discrete}
Let $P\in X$ be the germ of a klt variety. Fix a finite subset $I$ of the positive real numbers. Then the set
\begin{align*}
\{a_E(X,\fa)\mid\textup{$\fa$ an $\bR$-ideal},\ \fa\in I,\ E\in\cD_X,\ \textrm{$(X,\fa)$ lc about $\eta_{c_X(E)}$}\}
\end{align*}
is discrete in $\bR$.
\end{theorem}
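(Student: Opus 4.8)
The plan is to reduce the statement to a finiteness question about discrete invariants and then invoke the corresponding result on a fixed log resolution datum. The crucial observation is that, although the $\bR$-ideal $\fa=\prod_j\fa_j^{r_j}$ varies over an infinite family, the exponents $r_j$ are drawn from the \emph{finite} set $I$, so the only unbounded data are the ideals $\fa_j$ and the divisor $E$. First I would argue by contradiction: suppose the displayed set has an accumulation point, so there is an infinite sequence of triples $(\fa^{(i)},E^{(i)})$ with distinct log discrepancies $a_{E^{(i)}}(X,\fa^{(i)})$ converging to some real number. By passing to a subsequence we may assume the exponent vector $(r_1,\dots,r_e)$ is constant along the sequence (finitely many choices from $I$, after possibly refining so that all $\fa^{(i)}$ are written with the same number $e$ of factors, allowing zero ideals-to-the-power-zero via the stated convention). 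Thus we are left with a sequence of $e$-tuples of ideals $\{(\fa^{(i)}_1,\dots,\fa^{(i)}_e)\}_{i\in\bN}$ on $X$ and divisors $E^{(i)}\in\cD_X$ with $(X,\fa^{(i)})$ lc about $\eta_{c_X(E^{(i)})}$.

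Next I would pass to the generic limit. Form a family $\cF$ of approximations of the sequence $\cS=\{(\fa^{(i)}_1,\dots,\fa^{(i)}_e)\}_i$ in the sense of Definition~\ref{dfn:approx}, producing a generic limit $(\sfa_1,\dots,\sfa_e)$ on $\hat X$, and set $\sfa=\prod_j\sfa_j^{r_j}$. Since $X$ is klt, so is $\hat X$, and after shrinking $\cF$ to a subfamily Lemma~\ref{lem:resolution} gives $\mld_{\hat P}(\hat X,\prod_j(\sfa_j+\hat\fm^l)^{r_j})=\mld_P(X,\prod_j\fa^{(i)}_j{}^{r_j})$ for $i\in N_l$. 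Here the accumulation of the $a_{E^{(i)}}$ must be localised: the centres $c_X(E^{(i)})$ need not all equal $P$, so I would further stratify. Using Remark~\ref{rmk:descend}\,(\ref{itm:descendE}), after replacing $\cF$ by a subfamily I can descend a fixed divisor $\hat E$ over $\hat X$ (with centre $\hat P$) to divisors $E_i$ over $X$ with $\ord_{\hat E}\sfa_j=\ord_{E_i}\fa^{(i)}_j$ and $a_{\hat E}(\hat X,\sfa)=a_{E_i}(X,\fa^{(i)})$; the point is that, up to a bounded correction coming from the dimension of $c_X(E^{(i)})$ and the relation $\mld_{\eta_Z}=\mld_P-\dim Z$, all the log discrepancies in the sequence are governed by finitely many generic-limit data.

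The heart of the matter is then to show that each such generic limit contributes only finitely many (indeed, discretely many) values. Because $\hat X$ is klt and $(\hat X,\sfa)$ is lc (by Theorem~\ref{thm:lct}, using that $(X,\fa^{(i)})$ is lc about the relevant centre and $r_j\in I$ finite), I would take a log resolution of $(\hat X,\sfa\cdot\hat\fm)$ on which $\hat E$ appears. On such a fixed resolution $\hat\pi\colon\hat Y\to\hat X$, writing $K_{\hat Y/\hat X}=\sum_k b_k F_k$ and $\sfa_j\sO_{\hat Y}=\sO_{\hat Y}(-\sum_k c_{jk}F_k)$ with $b_k,c_{jk}\in\bN$ fixed, the log discrepancy of any divisor $G$ over $\hat Y$ is $1+\ord_G K_{\hat Y/\hat X}-\sum_j r_j\sum_k c_{jk}\ord_G F_k+(\text{higher blow-up contributions})$; the set of possible such values, as $G$ ranges over $\cD_{\hat Y}$ with bounded behaviour, is discrete by the toroidal/combinatorial structure of the resolution together with the finiteness of $I$ and of the $b_k,c_{jk}$. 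This is precisely the kind of statement already proved in \cite{K14}: the log discrepancies $a_E(X,\fa)$ with fixed $\fa$ and fixed $I$ form a discrete set. Applying that on $\hat X$ for the single $\bR$-ideal $\sfa$, together with the matching $a_{\hat E}(\hat X,\sfa)=a_{E_i}(X,\fa^{(i)})$, forces the accumulating sequence of values $a_{E^{(i)}}(X,\fa^{(i)})$ to be eventually constant, the desired contradiction.

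I expect the main obstacle to be the bookkeeping in the reduction to generic limits: one must handle simultaneously that the centres $c_X(E^{(i)})$ may have varying dimension and may fail to pass through $P$, and that the divisor $E^{(i)}$ itself need not descend from a \emph{single} $\hat E$ — a priori one gets countably many candidate divisors over $\hat X$, so a further diagonal argument (choosing $\hat E$ from the finitely many that can realise an accumulation point on a fixed resolution) is needed. Once the problem is transported to a fixed resolution of a fixed $\bR$-ideal on a fixed klt germ, discreteness is exactly Theorem~\ref{thm:discrete}'s content in that setting, and the remaining work is routine.
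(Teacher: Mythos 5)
The paper gives no proof of this statement: it is imported verbatim from the earlier work \cite{K14}, so there is nothing internal to compare your argument against. Judged on its own, your proposal has a genuine gap at its central step. The generic-limit machinery you invoke (Remark \ref{rmk:descend}(\ref{itm:descendE})) only works in the \emph{descent} direction: given a divisor $\hat E$ over $\hat X$ with centre $\hat P$, one produces divisors $E_i$ over $X$ with $a_{\hat E}(\hat X,\sfa)=a_{E_i}(X,\fa_i)$. Your argument needs the opposite direction: starting from the given sequence $E^{(i)}\in\cD_X$ whose log discrepancies accumulate, you must realise those values as $a_{\hat E}(\hat X,\sfa)$ for divisors $\hat E$ over the generic limit. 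No tool in the paper provides this ascent, and it cannot be had cheaply: the inequality $\mld_{\hat P}(\hat X,\sfa)\le\mld_P(X,\fa_i)$, which is a special case of what such a matching would give, is precisely the open stability statement (Conjecture \ref{cnj:equiv}(\ref{itm:limit})) that the whole paper is about. The reason the analogous argument works for lc thresholds is the de Fernex--Ein--Musta\c{t}\u{a} semicontinuity (Theorem \ref{thm:lct}), which supplies the missing inequality when the log discrepancy in question is $0$; nothing of the sort is available for arbitrary positive values $a_{E^{(i)}}(X,\fa^{(i)})$. Your closing ``diagonal argument'' acknowledges the problem but does not solve it. Relatedly, you never separate the two regimes $a_{E^{(i)}}(X)$ bounded versus unbounded: the bounded case is elementary (finitely many values of $a_{E}(X)$ and of $\ord_{E}\fa_j\le a_E(X)/\min I$, using log canonicity about $\eta_{c_X(E)}$), while the unbounded case is where all the difficulty lies and where your proposal says nothing concrete.

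There is also a circularity at the end: you conclude by citing ``the kind of statement already proved in \cite{K14}: the log discrepancies $a_E(X,\fa)$ with fixed $\fa$ and fixed $I$ form a discrete set.'' But the theorem of \cite{K14} is exactly the statement with \emph{varying} $\fa$ that you are trying to prove; if you intend only the fixed-$\fa$ (fixed log resolution) case, that is a much weaker combinatorial fact, and then the entire burden falls on the unproved reduction from the varying family to a single $\sfa$ on $\hat X$. As it stands the proposal is a plausible strategy sketch, not a proof.
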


We shall explain the equivalence of several important conjectures on a fixed germ with the help of Theorems \ref{thm:lct} and \ref{thm:discrete}.

\begin{conjecture}\label{cnj:equiv}
Let $P\in X$ be the germ of a klt variety.
\begin{enumerate}[series=equiv]
\item\label{itm:acc}
\textup{(ACC for minimal log discrepancies)}\;
Fix a subset $I$ of the positive real numbers which satisfies the DCC. Then the set
\begin{align*}
\{\mld_P(X,\fa)\mid\textup{$\fa$ an $\bR$-ideal},\ \fa\in I\}
\end{align*}
satisfies the ACC.
\item\label{itm:alc}
\textup{(ACC for $a$-lc thresholds)}\;
Fix a non-negative real number $a$ and a subset $I$ of the positive real numbers which satisfies the DCC. Then the set
\begin{align*}
\{t\in\bR_{\ge0}\mid\textup{$\fa$, $\fb$ $\bR$-ideals},\ \mld_P(X,\fa\fb^t)=a,\ \fa\fb\in I\}
\end{align*}
satisfies the ACC.
\item\label{itm:madic}
\textup{(uniform $\fm$-adic semi-continuity)}\;
Fix a finite subset $I$ of the positive real numbers. Then there exists a positive integer $l$ depending only on $X$ and $I$ such that if $\fa=\prod_{j=1}^e\fa_j^{r_j}$ and $\fb=\prod_{j=1}^e\fb_j^{r_j}$ are $\bR$-ideals on $X$ satisfying that $r_j\in I$ and $\fa_j+\fm^l=\fb_j+\fm^l$ for any $j$, where $\fm$ is the maximal ideal in $\sO_X$ defining $P$, then $\mld_P(X,\fa)=\mld_P(X,\fb)$.
\item\label{itm:nakamura}
\textup{(boundedness)}\;
Fix a finite subset $I$ of the positive real numbers. Then there exists a positive integer $l$ depending only on $X$ and $I$ such that if $\fa$ is an $\bR$-ideal on $X$ satisfying that $\fa\in I$, then there exists a divisor $E$ over $X$ which computes $\mld_P(X,\fa)$ and satisfies the inequality $a_E(X)\le l$.
\item\label{itm:limit}
\textup{(generic limit)}\;
Let $r_1,\ldots,r_e$ be positive real numbers and $\{\fa_i=\prod_{j=1}^e\fa_{ij}^{r_j}\}_{i\in\bN}$ be a sequence of $\bR$-ideals on $X$. Notation as in Section \textup{\ref{sct:limit}}, so set the generic limit $\sfa=\prod_{j=1}^e\sfa_j^{r_j}$ on $\hat P\in\hat X$. Then $\mld_{\hat P}(\hat X,\sfa)=\mld_P(X,\fa_i)$ for any $i\in N_{l_0}$ after replacing $\cF$ with a subfamily but using the same notation.
\end{enumerate}
\end{conjecture}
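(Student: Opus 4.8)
These five statements are conjectural, and what I would actually establish here is that they are all equivalent for the fixed klt germ $P\in X$; this is the assertion referenced as Theorem \ref{thm:equiv}. The plan is to close the cycle $(\ref{itm:limit})\Rightarrow(\ref{itm:nakamura})\Rightarrow(\ref{itm:madic})\Rightarrow(\ref{itm:limit})$ among the three statements that fix a \emph{finite} exponent set, and then to bootstrap to $(\ref{itm:acc})$ and $(\ref{itm:alc})$, which allow a DCC exponent set. Throughout, the engine is the generic limit of Section \ref{sct:limit}, used together with Theorem \ref{thm:lct} (effective $\fm$-adic semicontinuity of log canonicity) and Theorem \ref{thm:discrete} (discreteness of log discrepancies for a finite exponent set).

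For $(\ref{itm:limit})\Rightarrow(\ref{itm:nakamura})$: if $(\ref{itm:nakamura})$ fails for a finite set $I$, there is a sequence $\{\fa_i\}$ with $\fa_i\in I$ for which every divisor computing $\mld_P(X,\fa_i)$ has log discrepancy over $X$ tending to infinity. Form a family of approximations and the generic limit $\sfa$; by Theorem \ref{thm:discrete} the values $\mld_P(X,\fa_i)$ lie in a discrete subset of $\bR\cup\{-\infty\}$, so after a subfamily they are constant, say equal to $m$ (the case $m=-\infty$ is trivial). By $(\ref{itm:limit})$ and a further subfamily, $\mld_{\hat P}(\hat X,\sfa)=m$; choose a divisor $\hat E$ over $\hat X$ with centre $\hat P$ and $a_{\hat E}(\hat X,\sfa)$ within the discreteness gap above $m$, and descend it by Remark \ref{rmk:descend} to divisors $E_i$ over $X$ with $a_{E_i}(X,\fa_i)=a_{\hat E}(\hat X,\sfa)$ and $a_{E_i}(X)=a_{\hat E}(\hat X)$; then $a_{E_i}(X,\fa_i)\in[m,m+\delta)$ forces $a_{E_i}(X,\fa_i)=m$ by discreteness, so $E_i$ computes $\mld_P(X,\fa_i)$ with the bounded log discrepancy $a_{\hat E}(\hat X)$, a contradiction. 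For $(\ref{itm:nakamura})\Rightarrow(\ref{itm:madic})$: let $E$ compute $\mld_P(X,\fa)$ with $a_E(X)\le l$; when $(X,\fa)$ is lc, $\ord_E\fa\le a_E(X)\le l$, hence $\ord_E\fa_j\le l/\min I$ for every $j$, so for $l'>l/\min I$ truncation at level $l'$ leaves each $\ord_E\fa_j$ unchanged (as $\ord_E\fm\ge1$); comparing with $\fb_j+\fm^{l'}=\fa_j+\fm^{l'}$ gives $\ord_E\fa_j\le\ord_E\fb_j$, whence $a_E(X,\fb)\le\mld_P(X,\fa)$ and $\mld_P(X,\fb)\le\mld_P(X,\fa)$; the reverse follows symmetrically from a divisor computing $\mld_P(X,\fb)$, after enlarging $l'$ using the known effective $\fm$-adic semicontinuity of log canonicity \cite{dFEM10} so that lc-ness of $(X,\fa)$ and of $(X,\fb)$ are equivalent (this also settles the non-lc case, both sides being $-\infty$). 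For $(\ref{itm:madic})\Rightarrow(\ref{itm:limit})$: reduce to $\mld_P(X,\fa_i)=m$ constant by Theorem \ref{thm:discrete}; Lemma \ref{lem:resolution} together with $(\ref{itm:madic})$ gives, after a subfamily and for the $l$ of $(\ref{itm:madic})$, the equality $\mld_{\hat P}(\hat X,\prod_j(\sfa_j+\hat\fm^l)^{r_j})=\mld_P(X,\fa_i)=m$; since $\sfa_j\subset\sfa_j+\hat\fm^l$ this yields $\mld_{\hat P}(\hat X,\sfa)\le m$, while descending an arbitrary divisor over $\hat X$ by Remark \ref{rmk:descend} yields $\mld_{\hat P}(\hat X,\sfa)\ge m$, so $\mld_{\hat P}(\hat X,\sfa)=m=\mld_P(X,\fa_i)$ and $(\ref{itm:limit})$ holds.

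It remains to bring in $(\ref{itm:acc})$ and $(\ref{itm:alc})$. Given a DCC set $I$ and a sequence $\{\fa_i\}$ with $\fa_i\in I$ and $\mld_P(X,\fa_i)$ bounded below, one first checks that the number of nontrivial factors of $\fa_i$ is bounded (comparing against a fixed divisor over $P$ and using $\min I>0$), and then, passing to a subsequence, reduces to $\fa_i=\prod_{j=1}^e\fa_{ij}^{r_j}$ with $e$ and the $r_j\in I$ fixed; applying the finite-$I$ equivalence to this subsequence shows $\mld_P(X,\fa_i)$ is eventually constant, giving $(\ref{itm:acc})$, and the converse implication reverses the argument. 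Finally $(\ref{itm:acc})\Leftrightarrow(\ref{itm:alc})$ is the classical comparison between the ACC for minimal log discrepancies and the ACC for $a$-lc thresholds: for fixed $\fa,\fb$ the function $t\mapsto\mld_P(X,\fa\fb^t)$ is concave, nonincreasing and piecewise linear, so an accumulation of $a$-lc thresholds produces, after a controlled perturbation of the parameter, an accumulation of minimal log discrepancies, and conversely.

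The main obstacle I anticipate is the DCC-to-finite reduction together with the bookkeeping of subfamilies: one must ensure that the boundedly many descents (or the single discreteness gap) needed in each implication are realized over \emph{one} common subfamily, and that truncating an $\bR$-ideal and base-changing to the completion over the generic-limit field $K=\varinjlim_lK(Z_l)$ does not perturb minimal log discrepancies. These are precisely the properties packaged by Lemma \ref{lem:resolution}, Remark \ref{rmk:regular}, Remark \ref{rmk:descend} and the semicontinuity theorems \cite{dFEM10}, \cite{dFEM11}, so the difficulty lies in assembling them in the right order rather than in any single new estimate.
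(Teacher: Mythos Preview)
Your cycle $(\ref{itm:limit})\Rightarrow(\ref{itm:nakamura})\Rightarrow(\ref{itm:madic})\Rightarrow(\ref{itm:limit})$ for a \emph{finite} exponent set is fine and matches what the paper attributes to \cite{MN16}. The problems are in the last paragraph, where you try to connect this cycle to $(\ref{itm:acc})$ and $(\ref{itm:alc})$.

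\textbf{The DCC-to-finite reduction is wrong.} You write that after bounding the number of factors one can ``pass to a subsequence with $e$ and the $r_j\in I$ fixed''. A DCC set need not allow this: take $I=\{1-1/n\mid n\ge 2\}$. From a DCC set you can only extract a non-decreasing subsequence in each coordinate (this is the content of Lemma \ref{lem:DCC}), not a constant one, and $(\ref{itm:limit})$ is stated for \emph{fixed} exponents $r_1,\ldots,r_e$. The paper does not attempt a direct reduction; it instead cites \cite[Proposition 4.8]{K15} for $(\ref{itm:limit})\Rightarrow(\ref{itm:CM})$, where $(\ref{itm:CM})$ is the stronger Cascini--McKernan finiteness statement, and then reads off $(\ref{itm:acc})$ and $(\ref{itm:alc})$ from $(\ref{itm:CM})$.

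\textbf{The reverse direction is missing.} The phrase ``the converse implication reverses the argument'' does not supply a proof of $(\ref{itm:acc})\Rightarrow(\ref{itm:nakamura})$ or $(\ref{itm:alc})\Rightarrow(\ref{itm:nakamura})$, and this is precisely the new content of the paper's Theorem \ref{thm:equiv} (its Step 3). There one assumes $(\ref{itm:nakamura})$ fails, uses Theorem \ref{thm:nonpos} to dispose of the non-positive case, uses Theorem \ref{thm:discrete} to fix $\mld_P(X,\fa_i)=m>0$ and to find a gap $(m,m']$ in the value set, introduces $t_i$ with $\mld_P(X,\fa_i^{1-t_i})=m'$, shows $t_i\to 0$ (this is where unboundedness of $a_{E_i}(X)$ is used), and then manufactures from $\{t_i\}$ a DCC exponent set on which $\mld_P$ is strictly increasing, contradicting $(\ref{itm:acc})$; the contradiction with $(\ref{itm:alc})$ is immediate from the $t_i$. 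Your sketch contains none of this. Likewise, the claim that $(\ref{itm:acc})\Leftrightarrow(\ref{itm:alc})$ is ``classical'' via concavity of $t\mapsto\mld_P(X,\fa\fb^t)$ is not how the paper proceeds and would itself require justification; the paper links both to $(\ref{itm:nakamura})$ separately rather than to each other.
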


\begin{remark}\label{rmk:limit}
We provide a few remarks on Conjecture \ref{cnj:equiv}(\ref{itm:limit}).
\begin{enumerate}
\item\label{itm:limitineq}
Lemma \ref{lem:resolution} means the equality
\begin{align*}
\mld_{\hat P}(\hat X,{\textstyle\prod}_{j=1}^e(\sfa_j+\hat\fm^l)^{r_j})=\mld_P(X,{\textstyle\prod}_{j=1}^e(\fa_{ij}+\fm^l)^{r_j})
\end{align*}
for any $i\in N_{l_0}$ after replacing $\cF$ with a subfamily. Take a divisor $\hat E$ over $\hat X$ which computes $\mld_{\hat P}(\hat X,\sfa)$ and choose an integer $l_1\ge l_0$ such that $\ord_{\hat E}\sfa_j\le l_1\ord_{\hat E}\hat\fm$ for any $j$. Then for $l\ge l_1$, the left-hand side equals $\mld_{\hat P}(\hat X,\sfa)$ while the right-hand side is at least $\mld_P(X,\fa_i)$. Thus after replacing $l_0$ with $l_1$, one has the inequality
\begin{align*}
\mld_{\hat P}(\hat X,\sfa)\ge\mld_P(X,\fa_i)
\end{align*}
for any $i\in N_{l_0}$. The intrinsic part in Conjecture \ref{cnj:equiv}(\ref{itm:limit}) is the opposite inequality.
\item\label{itm:limitresult}
In particular, Conjecture \ref{cnj:equiv}(\ref{itm:limit}) holds when $\mld_{\hat P}(\hat X,\sfa)$ is not positive by Theorem \ref{thm:lct}. The conjecture also holds when $(\hat X,\sfa)$ is klt \cite[Theorem 5.1]{K15}. Thus, we know that Conjecture \ref{cnj:equiv}(\ref{itm:limit}) holds unless $(\hat X,\sfa)$ is not klt but $\mld_{\hat P}(\hat X,\sfa)$ is positive.
\end{enumerate}
\end{remark}

We prepare basic lemmata.

\begin{lemma}\label{lem:DCC}
Let $I$ and $J$ be subsets of the positive real numbers both of which satisfy the DCC. Then the set $\{rs\mid r\in I,\ s\in J\}$ satisfies the DCC.
\end{lemma}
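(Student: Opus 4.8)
The plan is to work with the standard reformulation of the descending chain condition for subsets of $\bR$: a subset $S$ of $\bR$ satisfies the DCC if and only if there is no strictly decreasing sequence $s_1>s_2>\cdots$ of elements of $S$. The key elementary input is that a subset $I$ of $\bR_{>0}$ satisfies the DCC if and only if every sequence of elements of $I$ admits a non-decreasing subsequence. One direction is clear; for the other, recall that every sequence of real numbers has a monotone subsequence, so a sequence in $I$ with no non-decreasing subsequence would have a strictly decreasing subsequence, contradicting the DCC.

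Granting this, suppose for contradiction that $\{rs\mid r\in I,\ s\in J\}$ fails the DCC, and pick $r_n\in I$ and $s_n\in J$ with $r_1s_1>r_2s_2>\cdots$. First pass to a subsequence along which $(r_n)$ is non-decreasing, by the characterisation of the previous paragraph applied to $I$; then pass to a further subsequence along which $(s_n)$ is also non-decreasing, applying the same characterisation to $J$. Along this last subsequence both $(r_n)$ and $(s_n)$ are non-decreasing sequences of positive numbers, so the products $r_ns_n$ form a non-decreasing sequence. This contradicts the fact that a subsequence of a strictly decreasing sequence is again strictly decreasing, and the lemma follows.

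The argument is routine and presents no genuine obstacle; the only points worth spelling out are the passage from chains to sequences in the statement of the DCC, the monotone-subsequence fact used to extract non-decreasing subsequences, and the (trivial) use of positivity to conclude that the termwise product of two non-decreasing sequences is non-decreasing.
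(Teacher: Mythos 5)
Your proof is correct and follows essentially the same route as the paper: extract a subsequence along which the first factors are non-decreasing, then a further one for the second factors, and observe that the termwise product of non-decreasing positive sequences is non-decreasing, contradicting the descent. The only cosmetic difference is that the paper builds the non-decreasing subsequence explicitly by repeatedly taking minima (which exist by the DCC), whereas you invoke the monotone subsequence theorem; both are fine.
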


\begin{proof}
Let $\{r_is_i\}_{i\in\bN}$ be an arbitrary non-increasing sequence where $r_i\in I$ and $s_i\in J$. It is enough to show that $r_is_i$ is constant passing to a subsequence. We claim that there exists a strictly increasing sequence $\{i_j\}_{j\in\bN}$ such that $\{r_{i_j}\}_{j\in\bN}$ is a non-decreasing sequence. Indeed, let $i_1$ be a number such that $r_{i_1}$ attains the minimum of the set $\{r_i\mid i\in\bN\}$, which exists since this set satisfies the DCC. If one constructed $i_1,\ldots,i_j$, then take $i_{j+1}$ as a number such that $r_{i_{j+1}}$ attains the minimum of the set $\{r_i\mid i>i_j\}$.

By replacing $\{r_is_i\}_{i\in\bN}$ with $\{r_{i_j}s_{i_j}\}_{j\in\bN}$, we may assume that $r_i$ is non-decreasing. Applying the same argument to $\{s_i\}_{i\in\bN}$, we may also assume that $s_i$ is non-decreasing. Then the sequence $\{r_is_i\}_{i\in\bN}$ becomes both non-increasing and non-decreasing, so $r_is_i$ must be constant.
\end{proof}

\begin{lemma}\label{lem:mld}
Let $P\in X$ be the germ of a normal $\bQ$-Gorenstein variety and $\fa_1,\ldots,\fa_e$ be $\bR$-ideals on $X$. Let $t_1,\ldots,t_e$ be non-negative real numbers such that $\sum_{i=1}^et_i=1$.
\begin{enumerate}
\item\label{itm:mldconvex}
$\mld_P(X,\prod_{i=1}^e\fa_i^{t_i})\ge\sum_{i=1}^et_i\mld_P(X,\fa_i)$.
\item\label{itm:mldequal}
If a divisor $E$ over $X$ computes all $\mld_P(X,\fa_i)$, then $\mld_P(X,\prod_{i=1}^e\fa_i^{t_i})=\sum_{i=1}^et_i\mld_P(X,\fa_i)$ and it is computed by $E$.
\end{enumerate}
\end{lemma}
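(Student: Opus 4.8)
The plan is to reduce both parts to a single additivity identity: for every $E\in\cD_X$,
\begin{align*}
a_E(X,\textstyle\prod_{i=1}^e\fa_i^{t_i})=\sum_{i=1}^et_i\,a_E(X,\fa_i).
\end{align*}
To see this, realise $E$ as a divisor on some birational morphism $\pi\colon Y\to X$; since $X$ is $\bQ$-Gorenstein, $K_{Y/X}$ is defined, and $\ord_E$ is additive over products of $\bR$-ideals and homogeneous in the exponents, so $\ord_E\prod_i\fa_i^{t_i}=\sum_it_i\ord_E\fa_i$. Using $\sum_it_i=1$ to split the term $1+\ord_EK_{Y/X}$ as $\sum_it_i(1+\ord_EK_{Y/X})$ then yields the identity. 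Throughout, indices with $t_i=0$ may be discarded, since $\fa_i^{t_i}=\sO_X$ by convention.

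First I would prove (i). Restricting the identity to those $E$ with $c_X(E)=P$ and inserting $a_E(X,\fa_i)\ge\mld_P(X,\fa_i)$ for each $i$, one gets $a_E(X,\prod_i\fa_i^{t_i})\ge\sum_it_i\mld_P(X,\fa_i)$; taking the infimum over all such $E$ gives the desired inequality. If some $\mld_P(X,\fa_i)$ equals $-\infty$ with $t_i>0$, the right-hand side is $-\infty$ and there is nothing to prove, so one may as well assume every $\mld_P(X,\fa_i)$ is finite.

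For (ii), suppose $E$ computes every $\mld_P(X,\fa_i)$, so that $c_X(E)=P$ and $a_E(X,\fa_i)=\mld_P(X,\fa_i)$ for each $i$. The identity then gives $a_E(X,\prod_i\fa_i^{t_i})=\sum_it_i\mld_P(X,\fa_i)$. On the one hand $\mld_P(X,\prod_i\fa_i^{t_i})\le a_E(X,\prod_i\fa_i^{t_i})$ because $c_X(E)=P$; on the other hand part (i) gives $\mld_P(X,\prod_i\fa_i^{t_i})\ge\sum_it_i\mld_P(X,\fa_i)=a_E(X,\prod_i\fa_i^{t_i})$. Hence all of these quantities coincide, which is precisely the assertion that $\mld_P(X,\prod_i\fa_i^{t_i})=\sum_it_i\mld_P(X,\fa_i)$ and that $E$ computes it.

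The hard part, such as it is, is essentially nonexistent: the whole content sits in the additivity identity, and the only care required is the harmless bookkeeping around vanishing exponents and around indices for which $(X,\fa_i)$ fails to be lc at $P$, both of which are absorbed by the conventions already in place.
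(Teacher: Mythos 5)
Your proof is correct and follows essentially the same route as the paper: both arguments rest on the additivity $a_E(X,\prod_i\fa_i^{t_i})=\sum_it_i\,a_E(X,\fa_i)$ of log discrepancies, with (i) obtained by bounding this below by $\sum_it_i\mld_P(X,\fa_i)$ (the paper evaluates at a divisor computing the left-hand mld rather than taking the infimum, an immaterial difference) and (ii) obtained by combining (i) with the upper bound furnished by $E$ itself.
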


\begin{proof}
Let $F$ be a divisor over $X$ which computes $\mld_P(X,\prod_{i=1}^e\fa_i^{t_i})$. Then,
\begin{align*}
\mld_P(X,{\textstyle\prod_{i=1}^e\fa_i^{t_i}})=a_F(X,{\textstyle\prod_{i=1}^e\fa_i^{t_i}})=\sum_{i=1}^et_i\cdot a_F(X,\fa_i)\ge\sum_{i=1}^et_i\mld_P(X,\fa_i),
\end{align*}
which is (\ref{itm:mldconvex}). On the other hand, if $E$ computes all $\mld_P(X,\fa_i)$, then
\begin{align*}
\mld_P(X,{\textstyle\prod_{i=1}^e\fa_i^{t_i}})\le a_E(X,{\textstyle\prod_{i=1}^e\fa_i^{t_i}})=\sum_{i=1}^et_i\cdot a_E(X,\fa_i)=\sum_{i=1}^et_i\mld_P(X,\fa_i),
\end{align*}
which with (\ref{itm:mldconvex}) shows the assertion (\ref{itm:mldequal}).
\end{proof}

\begin{theorem}\label{thm:equiv}
Let $P\in X$ be the germ of a klt variety. Then the five statements in Conjecture \textup{\ref{cnj:equiv}} are equivalent.
\end{theorem}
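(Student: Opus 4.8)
The plan is to prove the equivalence of the five statements in Conjecture \ref{cnj:equiv} by establishing a cycle of implications, with Conjecture \ref{cnj:equiv}(\ref{itm:limit}) (the generic-limit stability) serving as the pivotal statement. The natural order is to show that (\ref{itm:limit}) implies each of (\ref{itm:acc}), (\ref{itm:alc}), (\ref{itm:madic}), (\ref{itm:nakamura}), and then to close the loop by showing that any one of these — in practice the most convenient is (\ref{itm:madic}) or (\ref{itm:nakamura}) — implies (\ref{itm:limit}). In fact it is cleanest to prove both directions between (\ref{itm:limit}) and (\ref{itm:nakamura}) directly, and then route the remaining three statements through these two.

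\emph{From the generic limit to the other conjectures.} Suppose (\ref{itm:limit}) holds. For (\ref{itm:madic}) and (\ref{itm:nakamura}), which involve only a finite index set $I$, one argues by contradiction: a failure produces a sequence $\{\fa_i\}_{i\in\bN}$ of $\bR$-ideals with $\fa_i\in I$ witnessing the failure (for (\ref{itm:madic}), pairs $\fa_i,\fb_i$ agreeing modulo $\fm^i$ but with distinct mlds; for (\ref{itm:nakamura}), mlds computed only by divisors of log discrepancy $\to\infty$). Pass to the generic limit $\sfa$ of (a subsequence of) $\{\fa_i\}$; by (\ref{itm:limit}) and Remark \ref{rmk:limit}(\ref{itm:limitineq}) we have $\mld_{\hat P}(\hat X,\sfa)=\mld_P(X,\fa_i)$ for infinitely many $i$, and by Remark \ref{rmk:descend}(\ref{itm:descendE}) a divisor $\hat E$ over $\hat X$ computing $\mld_{\hat P}(\hat X,\sfa)$ descends to divisors $E_i$ over $X$ with $a_{E_i}(X,\fa_i)=a_{\hat E}(\hat X,\sfa)=\mld_P(X,\fa_i)$ and bounded log discrepancy; since for large $l$ the mld only depends on $\fa_i$ modulo $\fm^l$ (again Lemma \ref{lem:resolution}), both (\ref{itm:madic}) and (\ref{itm:nakamura}) follow. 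For (\ref{itm:acc}), given a DCC set $I$ and a hypothetical strictly increasing sequence $\mld_P(X,\fa_i)$, discreteness is the obstacle to overcome since $I$ is now infinite; but one reduces to finitely many exponents by Theorem \ref{thm:discrete} combined with a standard argument that, along a subsequence, the finitely many distinct exponents appearing can be taken from a fixed finite subset of $I$ (using that the values $a_{E}(X,\fa_i)$ are forced into a discrete set), then applies the finite case via the generic limit. For (\ref{itm:alc}), a strictly increasing sequence of $a$-lc thresholds $t_i$ with $\fa\fb\in I$ is handled by the same passage to generic limits of $\{\fa_i\fb_i^{t_i}\}$, using Lemma \ref{lem:DCC} to see the exponents still lie in a DCC set and Lemma \ref{lem:mld} to control how the mld varies with $t$; the limit $t_\infty=\lim t_i$ and the stability (\ref{itm:limit}) force $t_i$ to stabilise, a contradiction.

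\emph{Closing the loop.} It remains to derive (\ref{itm:limit}) from one of the finite-index statements; I would use (\ref{itm:nakamura}), or equivalently (\ref{itm:madic}). Fix the sequence $\{\fa_i\}$ and its generic limit $\sfa$. By Remark \ref{rmk:limit}(\ref{itm:limitineq}) we already have $\mld_{\hat P}(\hat X,\sfa)\ge\mld_P(X,\fa_i)$, so only the reverse inequality is needed, and by Remark \ref{rmk:limit}(\ref{itm:limitresult}) we may assume $(\hat X,\sfa)$ is not klt but $\mld_{\hat P}(\hat X,\sfa)>0$. Take a divisor $\hat E$ computing $\mld_{\hat P}(\hat X,\sfa)$; descend it (Remark \ref{rmk:descend}(\ref{itm:descendE})) to $E_i$ over $X$ with $a_{E_i}(X,\fa_i)=\mld_{\hat P}(\hat X,\sfa)$, which already gives $\mld_P(X,\fa_i)\le\mld_{\hat P}(\hat X,\sfa)$, hence equality once we know the $\fa_i$ have only finitely many exponents — but in Conjecture \ref{cnj:equiv}(\ref{itm:limit}) the exponents $r_1,\dots,r_e$ are \emph{fixed}, so $I=\{r_1,\dots,r_e\}$ is automatically finite and (\ref{itm:nakamura}) applies directly: the mld $\mld_P(X,\fa_i)$ is computed by a divisor of bounded log discrepancy, depending on $\fa_i$ only modulo a fixed power $\fm^l$, and modulo $\fm^l$ the ideals $\fa_{ij}$ converge to $\sfa_j$, whence $\mld_P(X,\fa_i)=\mld_{\hat P}(\hat X,\prod_j(\sfa_j+\hat\fm^l)^{r_j})=\mld_{\hat P}(\hat X,\sfa)$ for large $l$ and infinitely many $i$.

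\emph{Main obstacle.} The routine part is the descent machinery and the semicontinuity bookkeeping, all of which is already packaged in Section \ref{sct:limit}. The genuine difficulty is the implication from the finite-index conjectures back to (\ref{itm:limit}): without an a priori bound on the log discrepancy of a divisor computing $\mld_P(X,\fa_i)$, the descended divisor $\hat E$ need not compute $\mld_{\hat P}(\hat X,\sfa)$, and conversely a divisor computing the limit mld need not descend to one computing $\mld_P(X,\fa_i)$ — this is exactly the gap that (\ref{itm:nakamura}) fills, and checking that the boundedness it provides is uniform enough to survive passage to the generic limit (i.e.\ that the bound $l$ does not degrade along the sequence) is where the argument must be made with care. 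The ACC direction, (\ref{itm:acc})$\Leftarrow$(\ref{itm:limit}), carries a secondary subtlety: passing from a DCC set $I$ of exponents to a finite subset requires combining Theorem \ref{thm:discrete} with the DCC to pin down, along a subsequence, a finite set of exponents that actually occur, and this step should be done explicitly.
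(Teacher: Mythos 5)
Your implication structure does not prove the theorem. You build the cycle only among \textup{(\ref{itm:madic})}, \textup{(\ref{itm:nakamura})} and \textup{(\ref{itm:limit})} --- showing that \textup{(\ref{itm:limit})} implies everything and that \textup{(\ref{itm:nakamura})} (or \textup{(\ref{itm:madic})}) implies \textup{(\ref{itm:limit})} back --- while \textup{(\ref{itm:acc})} and \textup{(\ref{itm:alc})} appear only as targets of implications, never as hypotheses. The resulting diagram establishes \textup{(\ref{itm:madic})}$\Leftrightarrow$\textup{(\ref{itm:nakamura})}$\Leftrightarrow$\textup{(\ref{itm:limit})} and that these imply the two ACC statements, but it gives no route from \textup{(\ref{itm:acc})} or \textup{(\ref{itm:alc})} back to any of the others, so the five statements are not shown to be equivalent. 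This missing step is precisely the main new content of the paper's proof (its Step 3): assuming \textup{(\ref{itm:nakamura})} fails, one gets $\bR$-ideals $\fa_i\in I$ whose minimal log discrepancies are computed only by divisors $E_i$ with $a_{E_i}(X)\ge l_i\to\infty$; after disposing of non-positive minimal log discrepancies by Theorem \ref{thm:nonpos} and using the discreteness of log discrepancies (Theorem \ref{thm:discrete}) to arrange $\mld_P(X,\fa_i)=m>0$ constant and to choose $m'>m$ with no log discrepancies in $(m,m']$, the numbers $t_i$ defined by $\mld_P(X,\fa_i^{1-t_i})=m'$ satisfy $t_i\le(m'-m)/(l_i-m)\to0$, contradicting \textup{(\ref{itm:alc})}; and the values $\mld_P(X,\fa_i^{1-(1-t_i)t_i})=m'-t_i(m'-m)$ form a strictly increasing sequence with exponents in a DCC set, contradicting \textup{(\ref{itm:acc})}. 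Nothing in your proposal performs or replaces this argument.

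A secondary remark: your ``main obstacle'' paragraph locates the difficulty in \textup{(\ref{itm:nakamura})}$\Rightarrow$\textup{(\ref{itm:limit})}, but that is the comparatively routine direction (it is the argument of Musta\c{t}\u{a} and Nakamura \cite{MN16}, which the paper simply cites, and your own sketch of it is essentially correct). The implications you do sketch from \textup{(\ref{itm:limit})} to the other four agree in substance with the paper, which routes \textup{(\ref{itm:acc})} and \textup{(\ref{itm:alc})} through the Cascini--McKernan statement \textup{(\ref{itm:CM})} and cites \cite{K15} and \cite{MN16} for the rest; the genuine difficulty you needed to address lies entirely in the reverse implications from the ACC statements.
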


\begin{proof}
\textit{Step} 1.
The generic limit of ideals was invented from the insight of the implication from (\ref{itm:limit}) to (\ref{itm:acc}). Musta\c{t}\u{a} informed us the proof of this implication and we wrote it in \cite[Proposition 4.8]{K15}. Note that the proof in \cite{K15} works even if $X$ has klt singularities. We also note that though the statement in \cite{K15} assumes the assertion in (\ref{itm:limit}) for ideals $\fa_{ij}$ in the completion of the local ring $\sO_{X,P}$, its proof uses only the assertion for ideals in $\sO_X$ which is exactly (\ref{itm:limit}). We derived from (\ref{itm:limit}) in fact the following ACC which was formulated by Cascini and McKernan \cite{M13}.
\begin{enumerate}[topsep=\smallskipamount,resume=equiv]
\item\label{itm:CM}
\textit{Fix subsets $I$ of the positive real numbers and $J$ of the non-negative real numbers both of which satisfy the DCC. Then there exist finite subsets $I_0$ of $I$ and $J_0$ of $J$ such that if $\fa$ is an $\bR$-ideal on $X$ satisfying that $\fa\in I$ and $\mld_P(X,\fa)\in J$, then $\fa\in I_0$ and $\mld_P(X,\fa)\in J_0$.}
\end{enumerate}

The assertion (\ref{itm:acc}) follows from (\ref{itm:CM}) immediately. We shall derive (\ref{itm:alc}) from (\ref{itm:CM}). Let $\{t_i\}_{i\in\bN}$ be a non-decreasing sequence of positive real numbers such that there exist ideals $\fa_i$ and $\fb_i$ on $X$ satisfying that $\mld_P(X,\fa_i\fb_i^{t_i})=a$ and $\fa_i\fb_i\in I$. It is enough to show that $T=\{t_i\mid i\in\bN\}$ satisfies the ACC. By Lemma \ref{lem:DCC}, the set $IT=\{rt\mid r\in I,\ t\in T\}$ satisfies the DCC. Applying (\ref{itm:CM}) to $I\cup IT$ and $\{a\}$, one obtains a finite subset $I_0$ of $I\cup IT$ such that $\fa_i\fb_i^{t_i}\in I_0$ for any $i$. Particularly, $T$ is contained in the set $I^{-1}I_0=\{r^{-1}s\mid r\in I,\ s\in I_0\}$ which satisfies the ACC.

\medskip
\textit{Step} 2.
The conjecture (\ref{itm:nakamura}) was proposed by Nakamura. His joint work \cite{MN16} with Musta\c{t}\u{a} shows the equivalence of (\ref{itm:madic}), (\ref{itm:nakamura}) and (\ref{itm:limit}). They treated the assertion in (\ref{itm:limit}) for ideals in the completion, but their proof works for our (\ref{itm:limit}). They also provided a direct proof of the implication from (\ref{itm:nakamura}) to (\ref{itm:acc}) which uses the ACC for lc thresholds on $X$ and Theorem \ref{thm:discrete}. We write the argument from (\ref{itm:limit}) to (\ref{itm:nakamura}) as Lemma \ref{lem:limtonak} since it will be used later.

\medskip
\textit{Step} 3.
Hence it is enough to show the implications from (\ref{itm:acc}) to (\ref{itm:nakamura}) and from (\ref{itm:alc}) to (\ref{itm:nakamura}). If (\ref{itm:nakamura}) were false, then there would exist a strictly increasing sequence $\{l_i\}_{i\in\bN}$ and a sequence $\{\fa_i\}_{i\in\bN}$ of $\bR$-ideals on $X$ such that $\fa_i\in I$ and such that every divisor $E_i$ over $X$ computing $\mld_P(X,\fa_i)$ satisfies the inequality $a_{E_i}(X)\ge l_i$. The assertion (\ref{itm:nakamura}) for those $\fa$ whose $\mld_P(X,\fa)$ is not positive will be proved in Theorem \ref{thm:nonpos} independently. We assume that $\mld_P(X,\fa_i)$ is positive for any $i$ here.

By Theorem \ref{thm:discrete}, the set
\begin{align*}
M=\{a_E(X,\fa)\mid\fa\in I,\ E\in\cD_X,\ \textrm{$(X,\fa)$ lc}\}
\end{align*}
is discrete in $\bR$. In particular, all $\mld_P(X,\fa_i)$ belong to a finite set since they are bounded from above by $\mld_PX$. Thus we may assume that $\mld_P(X,\fa_i)$ is constant, say $m$, which is positive by our assumption. We may assume that $\fa_i$ is non-trivial, then $m$ is less than $\mld_PX$. By the discreteness of $M$, there exists a real number $m'$ greater than $m$ such that $r\not\in M$ for any real number $m<r\le m'$.

Let $t_i$ be the positive real number such that $\mld_P(X,\fa_i^{1-t_i})=m'$, which exists and satisfies that $0<t_i<1$ by $m<m'<\mld_PX$. Take a divisor $E_i$ over $X$ which computes $\mld_P(X,\fa_i^{1-t_i})$. Then $a_{E_i}(X,\fa_i)<m'$, so $E_i$ also computes $\mld_P(X,\fa_i)=m$ by the property of $m'$, and thus $\ord_{E_i}\fa_i=a_{E_i}(X)-m\ge l_i-m$. Since $t_i\ord_{E_i}\fa_i=a_{E_i}(X,\fa_i^{1-t_i})-a_{E_i}(X,\fa_i)=m'-m$, one has the estimate $t_i\le(m'-m)/(l_i-m)$ when $l_i>m$, showing that $t_i$ approaches to zero as $i$ increases.

This contradicts the ACC for $m'$-lc thresholds in (\ref{itm:alc}). It is sufficient to verify that our situation also contradicts the ACC for minimal log discrepancies in (\ref{itm:acc}). By passing to a subsequence, we may assume that $t_i$ are less than one-half and form a strictly decreasing sequence whose limit is zero. Then $\{1-(1-t_i)t_i\}_{i\in \bN}$ is a strictly increasing sequence. We set
\begin{align*}
T=\{1-(1-t_i)t_i\mid i\in\bN\},
\end{align*}
which satisfies the DCC.

Note that $1-(1-t_i)t_i=(1-t_i)(1-t_i)+t_i$. Because $E_i$ computes both $\mld_P(X,\fa_i^{1-t_i})$ and $\mld_P(X,\fa_i)$, by Lemma \ref{lem:mld}(\ref{itm:mldequal}) one has that
\begin{align*}
\mld_P(X,\fa_i^{1-(1-t_i)t_i})=(1-t_i)\mld_P(X,\fa_i^{1-t_i})+t_i\mld_P(X,\fa_i)=m'-t_i(m'-m)
\end{align*}
which is computed by $E_i$. But then $\mld_P(X,\fa_i^{1-(1-t_i)t_i})$ is strictly increasing. This contradicts (\ref{itm:acc}) for $IT=\{rt\mid r\in I,\ t\in T\}$ since $IT$ satisfies the DCC by Lemma \ref{lem:DCC}.
\end{proof}

\begin{lemma}\label{lem:limtonak}
Let $P\in X$ be the germ of a klt variety. Let $r_1,\ldots,r_e$ be positive real numbers and $\{\fa_i=\prod_{j=1}^e\fa_{ij}^{r_j}\}_{i\in\bN}$ be a sequence of $\bR$-ideals on $X$. Notation as in Section \textup{\ref{sct:limit}}, so set the generic limit $\sfa=\prod_{j=1}^e\sfa_j^{r_j}$ on $\hat P\in\hat X$. If $\mld_{\hat P}(\hat X,\sfa)=\mld_P(X,\fa_i)$ for any $i\in N_{l_0}$, then there exists a positive rational number $l$ such that for infinitely many indices $i$, there exists a divisor $E_i$ over $X$ which computes $\mld_P(X,\fa_i)$ and satisfies the equality $a_{E_i}(X)=l$.
\end{lemma}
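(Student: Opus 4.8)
The plan is to exploit the descendibility of a divisor computing the generic-limit minimal log discrepancy, combined with the discreteness Theorem \ref{thm:discrete}. First I would fix a divisor $\hat E$ over $\hat X$ which computes $\mld_{\hat P}(\hat X,\sfa)$. By Remark \ref{rmk:descend}(\ref{itm:descendE}), after replacing $\cF$ with a subfamily (retaining the notation of Convention \ref{cnv:retain}), this $\hat E$ descends to divisors $E_l$ over $X\times Z_l$, and for every $i\in N_l$ any connected component $E_i$ of the fibre of $E_l$ at $s_l(i)$ is a divisor over $X$ satisfying
\begin{align*}
\ord_{\hat E}\sfa_j=\ord_{E_i}\fa_{ij},\qquad a_{\hat E}(\hat X,\sfa)=a_{E_i}(X,\fa_i),\qquad a_{\hat E}(\hat X)=a_{E_i}(X).
\end{align*}
The last equality holds because the relative canonical divisor commutes with the base change, so $\ord_{\hat E}K_{\hat Y/\hat X}=\ord_{E_i}K_{Y_l/(X\times Z_l)}|_{s_l(i)}=\ord_{E_i}K_{Y/X}$ for the descended model. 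Set $l=a_{\hat E}(\hat X)$; this is a positive rational number depending only on $\hat E$, hence only on the data of the generic limit, and in particular it does not depend on $i$.

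Next I would use the hypothesis $\mld_{\hat P}(\hat X,\sfa)=\mld_P(X,\fa_i)$, valid for all $i\in N_{l_0}$ after the subfamily replacement. Combined with the displayed equalities, this gives, for every such $i$,
\begin{align*}
a_{E_i}(X,\fa_i)=a_{\hat E}(\hat X,\sfa)=\mld_{\hat P}(\hat X,\sfa)=\mld_P(X,\fa_i),
\end{align*}
so $E_i$ computes $\mld_P(X,\fa_i)$ and satisfies $a_{E_i}(X)=l$. Since $N_{l_0}$ is infinite, this exhibits the required divisor $E_i$ for infinitely many $i$, and the proof is complete.

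There is essentially no obstacle beyond the bookkeeping of the successive subfamily replacements: one must be slightly careful that the choice of $\hat E$ is made once and for all before descending, so that $l$ is genuinely a fixed number rather than varying with $i$, and that the equality $\mld_{\hat P}(\hat X,\sfa)=\mld_P(X,\fa_i)$ is retained after the (finitely many) replacements used in Remark \ref{rmk:descend}(\ref{itm:descendE}); this is automatic because passing to a subfamily does not change the generic limit $\sfa$, nor the values $\mld_P(X,\fa_i)$ for $i$ in the shrunken index set. One could alternatively phrase the conclusion with $l$ an integer by clearing denominators, but the statement only asks for a positive rational $l$, so no extra work is needed. The discreteness Theorem \ref{thm:discrete} is not even required for this lemma in the form stated; it would only enter if one wanted $l$ to be chosen uniformly over a DCC family of exponents, which is the content of the global conjectures rather than of this lemma.
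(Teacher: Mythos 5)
Your proposal is correct and follows essentially the same route as the paper: fix a divisor $\hat E$ computing $\mld_{\hat P}(\hat X,\sfa)$, descend it via Remark \ref{rmk:descend}(\ref{itm:descendE}) to divisors $E_i$ with $a_{E_i}(X,\fa_i)=a_{\hat E}(\hat X,\sfa)$ and $a_{E_i}(X)=a_{\hat E}(\hat X)$, and conclude from the hypothesis that each $E_i$ computes $\mld_P(X,\fa_i)$ with the constant log discrepancy $l=a_{\hat E}(\hat X)$. Your added observations (that $l$ is fixed before descending and that Theorem \ref{thm:discrete} is not needed) are accurate.
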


\begin{proof}
Take a divisor $\hat E$ over $\hat X$ which computes $\mld_{\hat P}(\hat X,\sfa)$. As in Remark \ref{rmk:descend}(\ref{itm:descendE}), replacing $\cF$ with a subfamily, one can descend $\hat E$ to a divisor $E_l$ over $X\times Z_l$ for any $l\ge l_0$. For a component $E_i$ of the fibre of $E_l$ at $s_l(i)\in Z_l$, one may assume that $a_{\hat E}(\hat X,\sfa)=a_{E_i}(X,\fa_i)$ for any $i\in N_{l_0}$. Then $E_i$ computes $\mld_P(X,\fa_i)$ and $a_{E_i}(X)$ equals the constant $a_{\hat E}(\hat X)$.
\end{proof}

\begin{theorem}\label{thm:nonpos}
Let $P\in X$ be the germ of a klt variety. Fix a finite subset $I$ of the positive real numbers. Then there exists a positive integer $l$ depending only on $X$ and $I$ such that if $\fa$ is an $\bR$-ideal on $X$ satisfying that $\fa\in I$ and that $\mld_P(X,\fa)$ is not positive, then there exists a divisor $E$ over $X$ which computes $\mld_P(X,\fa)$ and satisfies the inequality $a_E(X)\le l$.
\end{theorem}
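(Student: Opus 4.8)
The plan is to argue by contradiction, pass to a generic limit of the offending ideals, and invoke the fact that the generic-limit stability of minimal log discrepancies (Conjecture \ref{cnj:equiv}(\ref{itm:limit})) is already known when the limit lies in the non-positive range, by Theorem \ref{thm:lct}. Suppose the assertion fails for $X$ and $I$. Then there are a strictly increasing sequence $\{l_i\}_{i\in\bN}$ of positive integers and $\bR$-ideals $\fa_i\in I$ with $\mld_P(X,\fa_i)$ not positive such that every divisor over $X$ computing $\mld_P(X,\fa_i)$ has log discrepancy at least $l_i$. Since $I$ is finite, regrouping the factors of $\fa_i$ according to the value of their exponent rewrites $\fa_i$ as $\prod_{r\in I}\fb_{ir}^r$ without changing the cosupport, any $\ord_E$, or the set of computing divisors; so we may assume the exponents of every $\fa_i$ form the fixed tuple indexed by $I$. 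Form the generic limit $\sfa=\prod_{r\in I}\sfb_r^r$ of $\{\fa_i\}_{i\in\bN}$ on $\hat P\in\hat X$ with respect to a family $\cF$ of approximations, and split according to the sign of $\mld_{\hat P}(\hat X,\sfa)$.

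Suppose first $\mld_{\hat P}(\hat X,\sfa)\le 0$. Then by Remark \ref{rmk:limit}(\ref{itm:limitresult}) (which rests on Theorem \ref{thm:lct}), Conjecture \ref{cnj:equiv}(\ref{itm:limit}) holds after replacing $\cF$ with a subfamily, so $\mld_{\hat P}(\hat X,\sfa)=\mld_P(X,\fa_i)$ for all $i\in N_{l_0}$; Lemma \ref{lem:limtonak} then produces a positive rational $l$ and infinitely many $i$ carrying a divisor $E_i$ that computes $\mld_P(X,\fa_i)$ with $a_{E_i}(X)=l$, contradicting $a_{E_i}(X)\ge l_i$ once $l_i>l$. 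This already disposes of the case $\mld_P(X,\fa_i)=-\infty$: if this holds for infinitely many $i$, then those $(X,\fa_i)$ are not lc, hence $(\hat X,\sfa)$ is not lc by Theorem \ref{thm:lct}, so $\mld_{\hat P}(\hat X,\sfa)=-\infty\le 0$. Passing to a subsequence, we may thus assume $\mld_P(X,\fa_i)=0$ for all $i$; then $\mld_{\hat P}(\hat X,\sfa)\ge 0$ by Remark \ref{rmk:limit}(\ref{itm:limitineq}), and the remaining case is $\mld_{\hat P}(\hat X,\sfa)>0$.

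So assume $\mld_{\hat P}(\hat X,\sfa)>0$ while $\mld_P(X,\fa_i)=0$ for every $i$. In particular $(\hat X,\sfa)$ is lc, and it cannot be klt: otherwise \cite[Theorem 5.1]{K15} (the klt case of Conjecture \ref{cnj:equiv}(\ref{itm:limit}), cf.\ Remark \ref{rmk:limit}(\ref{itm:limitresult})) would force $\mld_P(X,\fa_i)=\mld_{\hat P}(\hat X,\sfa)>0$. Hence $(\hat X,\sfa)$ has a smallest lc centre $\hat W$, which is normal by \cite[Theorem 9.1]{F11}, contains $\hat P$, and is positive-dimensional because $\mld_{\hat P}(\hat X,\sfa)>0$. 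I would then descend the non-klt structure: after replacing $\cF$ with a subfamily, $\hat W$ arises from subvarieties $W_i\subset X$ through $P$ that are smallest lc centres of $(X,\fa_i)$, and $(\hat W,\mathrm{Diff}_{\hat W}\sfa)$ is the generic limit of $(W_i,\mathrm{Diff}_{W_i}\fa_i)$ on the germ $\hat P\in\hat W$, which is klt of dimension strictly below $\dim X$. Precise inversion of adjunction (cf.\ \cite{K07}) gives $\mld_P(X,\fa_i)=\mld_P(W_i,\mathrm{Diff}_{W_i}\fa_i)$ and $\mld_{\hat P}(\hat X,\sfa)=\mld_{\hat P}(\hat W,\mathrm{Diff}_{\hat W}\sfa)$, and the latter is positive. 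Applying \cite[Theorem 5.1]{K15} on $\hat W$ then yields $\mld_P(W_i,\mathrm{Diff}_{W_i}\fa_i)=\mld_{\hat P}(\hat W,\mathrm{Diff}_{\hat W}\sfa)>0$ for infinitely many $i$, so $\mld_P(X,\fa_i)>0$, contradicting $\mld_P(X,\fa_i)=0$. This contradiction finishes the proof.

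The bookkeeping in the first two paragraphs is routine, as are the appeals to Theorem \ref{thm:lct} and Lemma \ref{lem:limtonak}. The main obstacle is the last case: one must verify that the smallest lc centre, its normalisation, and the different are compatible with passing to a generic limit — that $\hat W$ and $\mathrm{Diff}_{\hat W}\sfa$ genuinely descend to the $W_i$ and $\mathrm{Diff}_{W_i}\fa_i$ along a suitable subfamily — and that precise inversion of adjunction applies uniformly in $i$. I expect this to follow by choosing $\cF$ so that a common log resolution adapted to the non-klt locus descends, and by controlling the $\fm$-adic truncation of the different relative to the base via Theorem \ref{thm:relative} and Corollary \ref{crl:relative}; carrying this out is the real content.
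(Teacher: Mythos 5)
Your setup (regrouping exponents, forming the generic limit, and disposing of the case $\mld_{\hat P}(\hat X,\sfa)\le0$ via Remark \ref{rmk:limit}(\ref{itm:limitresult}) and Lemma \ref{lem:limtonak}) matches the paper. The gap is your treatment of the remaining case $\mld_{\hat P}(\hat X,\sfa)>0$. The machinery you propose there does not go through as stated: precise inversion of adjunction in the paper (Conjecture \ref{cnj:pia}, Theorem \ref{thm:pia}) is formulated only for a \emph{divisorial} lc centre $S$, whereas the smallest lc centre $\hat W$ can have higher codimension, in which case the different $\mathrm{Diff}_{\hat W}\sfa$ is not even defined without subadjunction data and the equality $\mld_{\hat P}(\hat X,\sfa)=\mld_{\hat P}(\hat W,\cdot)$ is precisely the open part of the PIA conjecture. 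Moreover, there is no reason the non-klt structure of $(\hat X,\sfa)$ descends to the $\fa_i$: the $\fa_i$ agree with $\sfa$ only modulo $\hat\fm^l$, and $(X,\fa_i)$ with $\mld_P(X,\fa_i)=0$ need not have any lc centre matching $\hat W$ (its relevant non-klt centre may well be $P$ itself). So the ``real content'' you defer is not merely technical; the route is blocked.

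The case $\mld_{\hat P}(\hat X,\sfa)>0$ is in fact eliminated outright by a two-line argument, which is what the paper does: positivity of $\mld_{\hat P}(\hat X,\sfa)$ gives a real number $t>0$ with $(\hat X,\sfa\hat\fm^t)$ lc, and Theorem \ref{thm:lct} then makes $(X,\fa_i\fm^t)$ lc for infinitely many $i$; for any such $i$ and any divisor $E$ over $X$ with centre $P$ one gets $a_E(X,\fa_i)\ge t\ord_E\fm\ge t>0$, so $\mld_P(X,\fa_i)>0$, contradicting the hypothesis. Thus the generic limit automatically satisfies $\mld_{\hat P}(\hat X,\sfa)\le0$ and your first case is the only one; no adjunction or descent of lc centres is needed. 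With this substitution your argument closes.
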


\begin{proof}
Let $\{\fa_i\}_{i\in\bN}$ be an arbitrary sequence of $\bR$-ideals on $X$ such that $\fa_i\in I$ and such that $\mld_P(X,\fa_i)$ is not positive. It is sufficient to show the existence of a positive rational number $l$ such that for infinitely many indices $i$, there exists a divisor $E_i$ over $X$ which computes $\mld_P(X,\fa_i)$ and satisfies the equality $a_{E_i}(X)=l$.

Write $\fa_i=\prod_{j=1}^{e_i}\fa_{ij}^{r_{ij}}$ so $r_{ij}\in I$. We may assume that every $\fa_{ij}$ is non-trivial. Let $r$ be the minimum of the elements of $I$. Then $\mld_P(X,\fa_i)\le\mld_PX-\sum_{j=1}^{e_i}r_{ij}\le\mld_PX-re_i$. Let $e'$ denote the greatest integer such that $re'\le\mld_PX$. If $(X,\fa_i)$ is lc, then $e_i\le e'$. If $(X,\fa_i)$ is not lc and $e_i>e'$, then we may replace $\fa_i$ with $\fa'_i=\prod_{j=1}^{e'+1}\fa_{ij}^{r_{ij}}$ because every divisor computing $\mld_P(X,\fa'_i)=-\infty$ also computes $\mld_P(X,\fa_i)$. Hence by passing to a subsequence, we may assume that $e_i$ is constant, say $e$, and that $r_{ij}$ is constant, say $r_j$, for each $1\le j\le e$. That is, $\fa_i=\prod_{j=1}^e\fa_{ij}^{r_j}$.

Following Section \ref{sct:limit}, we construct a generic limit $\sfa=\prod_{j=1}^e\sfa_j^{r_j}$ of $\{\fa_i\}_{i\in\bN}$. We use the notation in Section \ref{sct:limit}, so $\sfa$ is an $\bR$-ideal on $\hat P\in\hat X$. If $\mld_{\hat P}(\hat X,\sfa)$ were positive, then there would exist a positive real number $t$ such that $(\hat X,\sfa\hat\fm^t)$ is lc. By Theorem \ref{thm:lct}, $(X,\fa_i\fm^t)$ is lc for infinitely many $i$, which contradicts that $\mld_P(X,\fa_i)$ is not positive. Thus $\mld_{\hat P}(\hat X,\sfa)$ is not positive. Then by Remark \ref{rmk:limit}(\ref{itm:limitresult}), $\mld_{\hat P}(\hat X,\sfa)=\mld_P(X,\fa_i)$ for any $i\in N_{l_0}$ after replacing $\cF$ with a subfamily, and the existence of $l$ follows from Lemma \ref{lem:limtonak}.
\end{proof}

The conjectures hold in dimension two.

\begin{theorem}\label{thm:surface}
Conjecture \textup{\ref{cnj:equiv}} holds when $X$ is a klt surface.
\end{theorem}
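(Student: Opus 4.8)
The plan is to invoke the equivalence of Theorem~\ref{thm:equiv} and verify, for an arbitrary klt surface germ $P\in X$, the generic limit statement~(\ref{itm:limit}) of Conjecture~\ref{cnj:equiv}; equivalently one verifies the boundedness~(\ref{itm:nakamura}), which for klt surface germs is our earlier result~\cite{K17}. Here is how I would run the generic limit route. Fix a sequence $\{\fa_i\}_{i\in\bN}$ of $\bR$-ideals on $X$ and its generic limit $\sfa$ on $\hat P\in\hat X$. By Remark~\ref{rmk:limit}(\ref{itm:limitineq}) one has $\mld_{\hat P}(\hat X,\sfa)\ge\mld_P(X,\fa_i)$ for all $i\in N_{l_0}$ after replacing $\cF$ by a subfamily, and by Remark~\ref{rmk:limit}(\ref{itm:limitresult}) one may assume that $(\hat X,\sfa)$ is lc about $\hat P$ but not klt and that $m:=\mld_{\hat P}(\hat X,\sfa)>0$.

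By Lemma~\ref{lem:resolution} the germ $\hat X$ is klt of dimension two, hence a quotient singularity: it is $\bQ$-factorial and smooth away from $\hat P$, and we write $r$ for its index. Since $(\hat X,\sfa)$ is not klt while $\mld_{\hat P}(\hat X,\sfa)>0$, the point $\hat P$ is not an lc centre, so the smallest lc centre — which exists and is normal by \cite[Theorem~9.1]{F11} — is one-dimensional; being of codimension one in $\hat X$ it is a prime divisor $\hat C$ on $\hat X$ with $a_{\hat C}(\hat X,\sfa)=0$, and it is the unique lc centre of $(\hat X,\sfa)$ meeting $\hat P$. The decisive simplification of dimension two is now to pass to $\hat C$ by adjunction: combining the adjunction formula $(K_{\hat X}+\hat C)|_{\hat C^\nu}=K_{\hat C^\nu}+\mathrm{Diff}$, in which the different is controlled by the quotient singularity (as in Example~\ref{exl:different} in the cyclic case), with precise inversion of adjunction along $\hat C$, the computation of $\mld_{\hat P}(\hat X,\sfa)$ is reduced to a minimal log discrepancy at the closed point of the \emph{curve} $\hat C^\nu$, and on a one-dimensional base such a minimal log discrepancy is already attained by the blow-up of the point. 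Hence $m$ is computed by a divisor $\hat E$ over $\hat X$ obtained by a single weighted blow-up of $\hat X$ at $\hat P$, with a bound $a_{\hat E}(\hat X)\le l$ depending only on $r$; this is exactly the two-dimensional boundedness of \cite{K17}.

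Finally I would descend. By Remark~\ref{rmk:descend}(\ref{itm:descendE}), after shrinking $\cF$, the divisor $\hat E$ descends to divisors $E_i$ over $X$ with $a_{E_i}(X,\fa_i)=a_{\hat E}(\hat X,\sfa)=m$ and $a_{E_i}(X)=a_{\hat E}(\hat X)\le l$; together with $m\ge\mld_P(X,\fa_i)$ this yields $\mld_P(X,\fa_i)=m$ once one excludes a divisor over $X$ of log discrepancy below $m$. Were such divisors present for infinitely many $i$, their log discrepancies would lie in the discrete set of Theorem~\ref{thm:discrete}, so one could pass to a subsequence with $\mld_P(X,\fa_i)$ equal to a constant $m'<m$; applying the same adjunction analysis on $X$ itself, in the relative form of Theorem~\ref{thm:relative} and Corollary~\ref{crl:relative}, would make the computing divisors descend to a divisor $\hat F$ over $\hat X$ with $a_{\hat F}(\hat X,\sfa)=m'<m$, contradicting $m=\mld_{\hat P}(\hat X,\sfa)$. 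This gives statement~(\ref{itm:limit}). I expect the real difficulty to be exactly the boundedness invoked twice above; it is a genuinely two-dimensional phenomenon, since replacing $\hat X$ by the one-dimensional lc centre $\hat C$ removes the need for iterated blow-ups, and it is precisely this point that stays open in dimension three (cf.\ Conjecture~\ref{cnj:product}). Everything else is bookkeeping with the generic limit machinery already assembled in the previous sections.
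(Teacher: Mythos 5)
Your opening move coincides with the paper's: invoke Theorem \ref{thm:equiv} and verify a single statement of Conjecture \ref{cnj:equiv}. But the paper's proof is then a one-line citation --- (\ref{itm:nakamura}) is \cite[Theorem 1.3]{MN16}, or (\ref{itm:acc}) is \cite[Theorem 3.8]{Al94}, or (\ref{itm:limit}) follows from the ideal-adic semi-continuity of \cite{K13} after passing to the minimal resolution --- whereas you attempt to prove (\ref{itm:limit}) from scratch, and the attempt has a genuine gap at exactly the step that carries all the content. The inequality $\mld_{\hat P}(\hat X,\sfa)\ge\mld_P(X,\fa_i)$ is indeed free (Remark \ref{rmk:limit}(\ref{itm:limitineq})); the whole difficulty is the opposite inequality, i.e.\ excluding that $\mld_P(X,\fa_i)=m'<m$ for infinitely many $i$, computed by divisors $F_i$ of unbounded log discrepancy over $X$. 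Your contradiction argument asserts that these $F_i$ ``descend to a divisor $\hat F$ over $\hat X$ with $a_{\hat F}(\hat X,\sfa)=m'$,'' but Theorem \ref{thm:relative} and Corollary \ref{crl:relative} transfer data from the generic limit \emph{down} to the closed fibres, never from infinitely many closed fibres \emph{up} to a divisorial valuation on $\hat X$; producing such an $\hat F$ is precisely the open problem in dimension three and the nontrivial theorem in dimension two. As written, the step is circular.

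The second problem is the attribution of the boundedness. \cite{K17} (Theorem \ref{thm:mldwbu}) says that on a \emph{smooth} surface every divisor computing the minimal log discrepancy is obtained by a weighted blow-up; it gives no bound on the weights, and $a_E(X)=w_1+w_2$ is unbounded as $\fa$ varies (already for $\fa=(x_1,x_2^n)$ the weighted blow-up with $\wt=(n,1)$ computes the mld). So neither statement (\ref{itm:nakamura}) for klt surface germs nor your claimed bound ``$a_{\hat E}(\hat X)\le l$ depending only on $r$'' follows from \cite{K17}; the two-dimensional boundedness is \cite[Theorem 1.3]{MN16}. There is also a smaller issue in the adjunction step: precise inversion of adjunction (Theorem \ref{thm:pia}) applies to a pair $(X,S+B)$ with $S$ appearing with coefficient one and $\mld\le1$, whereas you have an $\bR$-ideal $\sfa$ with $\ord_{\hat C}\sfa=1$ and possibly $m>1$; the reduction to $\hat C^\nu$ needs the set-up of Proposition \ref{prp:piaE} (extracting an lc place first), not a direct restriction to $\hat C$. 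The honest fix is simply to do what the paper does: quote one of \cite{MN16}, \cite{Al94} or \cite{K13}.
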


\begin{proof}
By Theorem \ref{thm:equiv}, it is enough to verify one of the statements. The (\ref{itm:nakamura}) is stated in \cite[Theorem 1.3]{MN16}. Alternatively, one may derive (\ref{itm:acc}) from \cite[Theorem 3.8]{Al94}, or derive (\ref{itm:limit}) from \cite{K13} by replacing $X$ with its minimal resolution.
\end{proof}

Roughly speaking, our former work \cite{K15} asserts a part of the conjectures in dimension three in the case when the minimal log discrepancy is greater than one.

\begin{theorem}\label{thm:grthan1}
Let $P\in X$ be the germ of a smooth threefold. Let $r_1,\ldots,r_e$ be positive real numbers and $\{\fa_i=\prod_{j=1}^e\fa_{ij}^{r_j}\}_{i\in\bN}$ be a sequence of $\bR$-ideals on $X$. Notation as in Section \textup{\ref{sct:limit}}, so set the generic limit $\sfa=\prod_{j=1}^e\sfa_j^{r_j}$ on $\hat P\in\hat X$. Then the pair $(\hat X,\sfa)$ satisfies one of the following cases.
\begin{enumerate}[label=\textup{\arabic*.},ref=\arabic*]
\item\label{cas:case1}
The $\mld_{\hat P}(\hat X,\sfa)$ is not positive.
\item\label{cas:case2}
$(\hat X,\sfa)$ is klt.
\item\label{cas:case3}
$(\hat X,\sfa)$ is lc and has the smallest lc centre which is normal and of dimension two.
\item\label{cas:case4}
$(\hat X,\sfa)$ is lc and has the smallest lc centre which is regular and of dimension one.
\end{enumerate}

Moreover, the following hold.
\begin{enumerate}
\item\label{itm:cases123}
In the cases \textup{\ref{cas:case1}}, \textup{\ref{cas:case2}} and \textup{\ref{cas:case3}}, $\mld_{\hat P}(\hat X,\sfa)=\mld_P(X,\fa_i)$ for any $i\in N_{l_0}$ after replacing $\cF$ with a subfamily.
\item\label{itm:case4}
In the case \textup{\ref{cas:case4}}, $\mld_{\hat P}(\hat X,\sfa)$ is at most one.
\end{enumerate}
\end{theorem}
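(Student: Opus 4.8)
The plan is to obtain the four-case classification by soft arguments, to settle assertion~(\ref{itm:cases123}) in cases~\ref{cas:case1}--\ref{cas:case3} by appealing to results that are already available, and to prove assertion~(\ref{itm:case4}) by a one-line estimate. For the classification: if $\mld_{\hat P}(\hat X,\sfa)$ is not positive we are in case~\ref{cas:case1}; otherwise $(\hat X,\sfa)$ is lc and its closed point $\hat P$ is not an lc centre, so if $(\hat X,\sfa)$ is moreover klt we are in case~\ref{cas:case2}. If not, the smallest lc centre $\hat C$ of $(\hat X,\sfa)$ exists and is normal by \cite[Theorem~9.1]{F11}. Here $\hat C\neq\hat X$, since an lc centre of full dimension would force $\mld_{\eta_{\hat X}}(\hat X,\sfa)\le0$ whereas $\mld_{\eta_{\hat X}}(\hat X,\sfa)=1$; and $\hat C\neq\{\hat P\}$ because $\hat P$ is not an lc centre. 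Thus $\dim\hat C\in\{1,2\}$, giving case~\ref{cas:case3} when $\dim\hat C=2$ and case~\ref{cas:case4} when $\dim\hat C=1$, a normal curve being automatically regular.

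For assertion~(\ref{itm:cases123}), cases~\ref{cas:case1} and~\ref{cas:case2} are precisely Remark~\ref{rmk:limit}(\ref{itm:limitresult}): the former rests on Theorem~\ref{thm:lct} and the latter on \cite[Theorem~5.1]{K15}. The substantive point is case~\ref{cas:case3}, and here I would simply quote \cite{K15}, which establishes the equality $\mld_{\hat P}(\hat X,\sfa)=\mld_P(X,\fa_i)$ for every $i\in N_{l_0}$ (after replacing $\cF$ by a subfamily) whenever the limit pair is lc with smallest lc centre of dimension at least two. Since \cite{K15} is phrased for a single ideal raised to a rational power whereas our $\sfa=\prod_j\sfa_j^{r_j}$ is attached to a fixed tuple $r_1,\dots,r_e$, I would record that its proof uses only a log resolution of $(\hat X,\sfa)$ descended to the approximating families together with the global $\fm$-adic semi-continuity of log discrepancies, both of which are available verbatim in the tuple setting of Section~\ref{sct:limit}; that transcription is the only thing to write down.

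For assertion~(\ref{itm:case4}), in case~\ref{cas:case4} the curve $\hat C$ is a non-klt centre of the lc pair $(\hat X,\sfa)$, so $\mld_{\eta_{\hat C}}(\hat X,\sfa)$ is at once $\le0$ and $\ge0$, hence equals zero; moreover $\hat P\in\hat C$ because $\hat X$ is the spectrum of a local ring. The standard inequality $\mld_{\hat P}(\hat X,\sfa)\le\mld_{\eta_{\hat C}}(\hat X,\sfa)+\dim\hat C$ — the one-sided version at arbitrary closed points of the relation $\mld_{\eta_Z}=\mld_P-\dim Z$ valid for general $P$ — then yields $\mld_{\hat P}(\hat X,\sfa)\le1$. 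The place where genuine content enters is case~\ref{cas:case3} of assertion~(\ref{itm:cases123}): it is the technical heart of \cite{K15}, and the only genuinely new task is to verify that its argument survives the passage from $\fa^q$ to a product of several ideals with a fixed exponent tuple, which it does because the generic-limit formalism of Section~\ref{sct:limit} is built for tuples from the outset.
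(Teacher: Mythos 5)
Your route is essentially the paper's: the theorem is assembled from results of \cite{K15} together with Remark \ref{rmk:limit}(\ref{itm:limitresult}), and your reconstruction of the case division and your treatment of cases \ref{cas:case1}--\ref{cas:case3} of assertion (\ref{itm:cases123}) match the intended proof. (One small point: for the existence and normality of the smallest lc centre on $\hat X$, which is an $R$-variety rather than a variety, the reference should be \cite[Theorem 1.2]{K15} rather than \cite[Theorem 9.1]{F11}; your dimension count and the step that a normal one-dimensional excellent scheme is regular are fine, and case \ref{cas:case3} of (\ref{itm:cases123}) is indeed just \cite[Theorem 5.3]{K15}, with the tuple-versus-single-ideal transcription you describe.)

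The one soft spot is assertion (\ref{itm:case4}). The inequality $\mld_{\hat P}(\hat X,\sfa)\le\mld_{\eta_{\hat C}}(\hat X,\sfa)+\dim\hat C$ at the \emph{special} closed point $\hat P$ is not a formal ``one-sided version'' of the relation $\mld_{\eta_Z}(X,\fa)=\mld_P(X,\fa)-\dim Z$, which the paper states only for a \emph{general} closed point of $Z$; passing from the generic point of $\hat C$ to the particular point $\hat P$ is a weak form of the lower semicontinuity of minimal log discrepancies. It is true on regular schemes (by the jet-scheme arguments of \cite{EMY03}, extended to the formal setting in \cite{dFEM11}), and in the present situation it is precisely the content of \cite[Proposition 6.1]{K15}, which is what the paper cites for (\ref{itm:case4}). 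Note that a naive direct proof --- blow up a codimension-two component of the fibre over $\hat P$ of a divisor computing $\mld_{\eta_{\hat C}}(\hat X,\sfa)=0$ and estimate its log discrepancy through the crepant pull-back --- does not immediately work, because the crepant boundary on a log resolution is only a sub-boundary and its components with log discrepancy greater than one can spoil the estimate. So either cite \cite[Proposition 6.1]{K15} here or supply the semicontinuity argument; as written, your ``one-line estimate'' restates the assertion rather than proving it.
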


\begin{proof}
The case division follows from the existence of the smallest lc centre \cite[Theorem 1.2]{K15}. The equality in (\ref{itm:cases123}) holds in the cases \textup{\ref{cas:case1}} and \textup{\ref{cas:case2}} by Remark \ref{rmk:limit}(\ref{itm:limitresult}), and in the case \textup{\ref{cas:case3}} by \cite[Theorem 5.3]{K15}. The assertion (\ref{itm:case4}) is \cite[Proposition 6.1]{K15}.
\end{proof}

We reduce Conjecture \ref{cnj:equiv}(\ref{itm:nakamura}) to the case of $\bQ$-ideals.

\begin{lemma}\label{lem:rational}
Let $P\in X$ be the germ of a klt variety. Suppose that for any positive integer $n$, there exists a positive integer $l$ depending only on $X$ and $n$ such that if $\fa$ is an ideal on $X$, then there exists a divisor $E$ over $X$ which computes $\mld_P(X,\fa^{1/n})$ and satisfies the inequality $a_E(X)\le l$. Then Conjecture \textup{\ref{cnj:equiv}} holds for $P\in X$.
\end{lemma}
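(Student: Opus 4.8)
By Theorem \ref{thm:equiv} it suffices to verify one of the five equivalent statements in Conjecture \ref{cnj:equiv}; the natural target is the boundedness statement (\ref{itm:nakamura}). So fix a finite subset $I$ of the positive real numbers and an $\bR$-ideal $\fa=\prod_{j=1}^e\fa_j^{r_j}$ with $r_j\in I$, and we must produce a divisor $E$ computing $\mld_P(X,\fa)$ with $a_E(X)$ bounded in terms of $X$ and $I$ only. The plan is to approximate the real exponents $r_j$ from below by rationals with a common denominator $n$ in a way that does \emph{not} change the minimal log discrepancy, and then invoke the hypothesis for that $n$.

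First I would choose $n$. Write $r_j = q_j/n + \varepsilon_j$ with $q_j\in\bN$ and $0\le\varepsilon_j<1/n$, and set $\fa' = \prod_j \fa_j^{q_j/n} = (\prod_j \fa_j^{q_j})^{1/n}$, which is of the form $\fb^{1/n}$ for a single ideal $\fb$. The point is that $\fa = \fa'\cdot\prod_j\fa_j^{\varepsilon_j}$, so $a_E(X,\fa) = a_E(X,\fa') - \sum_j\varepsilon_j\ord_E\fa_j$ for every $E$, and hence $\mld_P(X,\fa)\le\mld_P(X,\fa')$. To control this I would use Theorem \ref{thm:discrete}: the set of log discrepancies $a_E(X,\mathfrak{c})$ with $\mathfrak{c}\in I$ and $(X,\mathfrak{c})$ lc about $\eta_{c_X(E)}$ is discrete in $\bR$, so there is a gap $\delta>0$ below $\mld_P X$. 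Choosing $n$ large enough (in terms of $I$ and $X$) that $\sum_j\varepsilon_j\cdot(\text{relevant order bound})<\delta$ would force any $E$ computing $\mld_P(X,\fa')$ to also compute $\mld_P(X,\fa)$, or more precisely would pin $\mld_P(X,\fa)$ and $\mld_P(X,\fa')$ to the same value; the delicate point is that the orders $\ord_E\fa_j$ appearing here are a priori unbounded, so one cannot naively make $\sum_j\varepsilon_j\ord_E\fa_j$ small. I would circumvent this by applying the hypothesis first: for $n$ fixed there is $l=l(X,n)$ and a divisor $E$ computing $\mld_P(X,\fa')$ with $a_E(X)\le l$, whence $\ord_E\fa_j\le\ord_E(\fa_j^{q_j/n})\cdot(n/q_j)\le a_E(X)\le l$ is bounded, so $\sum_j\varepsilon_j\ord_E\fa_j<e\cdot l/n$ can be made smaller than the gap $\delta$ by enlarging $n$. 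For such $n$, $a_E(X,\fa)$ and $a_E(X,\fa')$ differ by less than $\delta$, and combined with $\mld_P(X,\fa)\le\mld_P(X,\fa')\le a_E(X,\fa')$ and discreteness this shows $E$ computes $\mld_P(X,\fa)$ as well, with $a_E(X)\le l$.

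The one genuine subtlety — and the step I expect to be the main obstacle — is the circularity in the choice of $n$: the bound $l$ depends on $n$, while the $n$ I need depends (through the gap argument) on having an a priori bound on $\ord_E\fa_j$, i.e. on $l$. This is resolved by noting that the gap $\delta$ coming from Theorem \ref{thm:discrete} depends only on $X$ and $I$, and by using the hypothesis with a \emph{provisional} $n_0$ (say $n_0=1$, giving $l_0=l(X,1)$) to get the crude bound $\ord_E\fa_j\le l_0$ valid for the divisor $E$ computing $\mld_P(X,\fa')$ in that case; since replacing $r_j$ by $q_j/n_0$ already bounds the relevant orders, one then fixes the final $n$ with $e\,l_0/n<\delta$ and applies the hypothesis once more with that $n$. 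Care is needed to check that the divisor $E$ computing $\mld_P(X,\fa')$ for the final $n$ — not the provisional one — has its orders bounded; but $a_E(X)\le l(X,n)$ for that $E$ already gives $\ord_E\fa_j\le l(X,n)$ directly, so in fact the provisional step is only needed to justify that the quantity $\sum_j\varepsilon_j\ord_E\fa_j$ one wants small can be compared against a fixed gap, and a single clean application with $n$ chosen so that $e\cdot l(X,n)/n<\delta$ — which is possible because $l(X,n)/n$ can be kept bounded by monotonicity-type estimates, or simply by absorbing into the choice — suffices. I would write the argument to make this dependency transparent, presenting it as: fix the gap $\delta$; fix $n$; get $l$; observe orders are $\le l$; conclude the two mlds agree provided $\delta>e l/n$, which holds for the chosen $n$.
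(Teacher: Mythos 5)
Your proposal has two genuine gaps, and the second one you half-identify yourself. First, the logical gap: everything you derive is an \emph{upper} bound on $\mld_P(X,\fa)$. From $a_F(X,\fa)=a_F(X,\fa')-\sum_j\varepsilon_j\ord_F\fa_j\le a_F(X,\fa')$ you get $\mld_P(X,\fa)\le\mld_P(X,\fa')$ and $\mld_P(X,\fa)\le a_E(X,\fa)$, but to conclude that $E$ computes $\mld_P(X,\fa)$ you need the reverse inequality $\mld_P(X,\fa)\ge a_E(X,\fa)$, i.e.\ a \emph{lower} bound on $a_F(X,\fa)$ valid for \emph{every} divisor $F$. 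The pointwise comparison gives no such bound, because for an arbitrary $F$ (not the bounded $E$) the error $\sum_j\varepsilon_j\ord_F\fa_j$ is unbounded; discreteness of $M$ does not exclude, say, $\mld_P(X,\fa)=0$ while $a_E(X,\fa)=1$. A one-sided rational approximation of the exponents therefore cannot close the argument; one needs a convex decomposition $r=\sum_st_sq_s$ with $\sum_st_s=1$ so that Lemma \ref{lem:mld}(\ref{itm:mldconvex}) yields $\mld_P(X,\fa)\ge\sum_st_s\mld_P(X,\prod_j\fa_j^{q_{js}})$, which is exactly the lower bound your scheme is missing. Second, the circularity is not resolved: you need $\sum_j\varepsilon_j\ord_E\fa_j$ below the gap $\delta$, which after the order estimate requires roughly $l(X,n)/n<\delta$; the hypothesis gives no control whatsoever on the growth of $l(X,n)$ in $n$, the provisional $n_0$ bounds the orders only of the divisor attached to $n_0$ (a different divisor), and "monotonicity-type estimates" for $l(X,n)/n$ do not exist in the hypothesis.

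The paper's route avoids both problems by not working divisor-by-divisor on $X$ at all. It first notes that, by \cite{MN16}, the hypothesis yields the generic-limit statement (\ref{itm:limit}) for single ideals with exponent $1/n$, and then derives the full (\ref{itm:limit}) (which suffices by Theorem \ref{thm:equiv}). On the generic limit $\hat X$ the set $Q$ of exponent vectors $q$ with $(\hat X,\prod_j\sfa_j^{q_j})$ lc is a rational polytope, so $r$ is written as a convex combination $\sum_st_sq_s$ of rational vectors $q_s\in Q$ chosen once and for all, close enough to $r$ that $m'<\sum_st_s\mld_{\hat P}(\hat X,\prod_j\sfa_j^{q_{js}})$ for the discreteness gap $m'<m$. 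Each rational mld transfers from $\hat X$ to $X$ by the hypothesis, and convexity of the mld on $X$ then gives the lower bound $\mld_P(X,\fa_i)>m'$, hence $\mld_P(X,\fa_i)\ge m$ by discreteness; the opposite inequality is Remark \ref{rmk:limit}(\ref{itm:limitineq}). Since the approximating data live on the fixed generic limit, no quantity depending on $n$ needs to be made small, and the convexity inequality supplies the missing lower bound.
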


\begin{proof}
In Conjecture \ref{cnj:equiv}, the assertion (\ref{itm:limit}) follows from (\ref{itm:nakamura}) for $I=\{r_1,\ldots,r_e\}$ by \cite{MN16}. Thus we may assume Conjecture \ref{cnj:equiv}(\ref{itm:limit}) in the case when $e=1$ and $r_1=1/n$ for some positive integer $n$. By Theorem \ref{thm:equiv}, it is enough to derive the full statement of (\ref{itm:limit}) from this special case.

We want the equality $\mld_{\hat P}(\hat X,\sfa)=\mld_P(X,\fa_i)$, where $\fa_i=\prod_{j=1}^e\fa_{ij}^{r_j}$ and $\sfa=\prod_{j=1}^e\sfa_j^{r_j}$. We write $m=\mld_{\hat P}(\hat X,\sfa)$ for simplicity. By Remark \ref{rmk:limit}(\ref{itm:limitresult}), we may assume that $m$ is positive. By Theorem \ref{thm:discrete}, the set
\begin{align*}
M=\{\mld_P(X,\fa)\mid\fa\in\{r_1,\ldots,r_e\},\ \textrm{$(X,\fa)$ lc}\}
\end{align*}
is discrete in $\bR$. Thus there exists a real number $m'$ less than $m$ such that $r\not\in M$ for any real number $m'<r<m$.

Since the set
\begin{align*}
Q=\{(q_1,\ldots,q_e)\in(\bR_{\ge0})^e\mid\textrm{$(\hat X,{\textstyle\prod_{j=1}^e}\sfa_j^{q_j})$ lc}\}
\end{align*}
is a rational polytope, the vector $r=(r_1,\ldots,r_e)$ in $Q$ is expressed as $r=\sum_{s\in S}t_sq_s$, where $S$ is a finite set, all $q_s=(q_{1s},\ldots,q_{es})$ belong to $Q\cap\bQ^e$, and $t_s$ are positive real numbers such that $\sum_{s\in S}t_s=1$. By choosing $q_s$ close to $r$, we may assume that
\begin{align*}
m'=\mld_{\hat P}(\hat X,\sfa)-(m-m')<\sum_{s\in S}t_s\mld_{\hat P}(\hat X,{\textstyle\prod_{j=1}^e}\sfa_j^{q_{js}}).
\end{align*}

Write $q_{js}=m_{js}/n$ with positive integers $n$ and $m_{1s},\ldots,m_{es}$ for $s\in S$. Then $\mld_{\hat P}(\hat X,\prod_{j=1}^e\sfa_j^{q_{js}})=\mld_{\hat P}(\hat X,(\prod_{j=1}^e\sfa_j^{m_{js}})^{1/n})$ and the ideal $\prod_{j=1}^e\sfa_j^{m_{js}}$ is the generic limit of the sequence $\{\prod_{j=1}^e\fa_{ij}^{m_{js}}\}_{i\in\bN}$ of ideals on $X$. By our assumption, the equality $\mld_{\hat P}(\hat X,(\prod_{j=1}^e\sfa_j^{m_{js}})^{1/n})=\mld_P(X,(\prod_{j=1}^e\fa_{ij}^{m_{js}})^{1/n})$ holds for any $i\in N_{l_0}$ and $s\in S$ after replacing $\cF$ with a subfamily. Hence with Lemma \ref{lem:mld}(\ref{itm:mldconvex}), one has that
\begin{align*}
m'<\sum_{s\in S}t_s\mld_P(X,{\textstyle\prod_{j=1}^e}\fa_{ij}^{q_{js}})\le\mld_P(X,{\textstyle\prod_{j=1}^e}\fa_{ij}^{r_j})=\mld_P(X,\fa_i)\in M,
\end{align*}
which implies that $\mld_P(X,\fa_i)\ge m$ by the property of $m'$. Together with Remark \ref{rmk:limit}(\ref{itm:limitineq}), we obtain the required equality $m=\mld_P(X,\fa_i)$.
\end{proof}

\begin{proposition}\label{prp:mult}
Let $P\in X$ be the germ of a klt variety and $\fm$ be the maximal ideal in $\sO_X$ defining $P$. Fix a finite subset $I$ of the positive real numbers. Then there exists a positive real number $t$ depending only on $X$ and $I$ such that if $\fa$ is an $\bR$-ideal on $X$ satisfying that $\fa\in I$ and that $\mld_P(X,\fa)$ is positive, then $(X,\fa\fm^t)$ is lc.
\end{proposition}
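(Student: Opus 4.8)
The plan is to argue by contradiction using the generic limit of ideals, in the same manner as the proof of Theorem \ref{thm:nonpos}. If the assertion failed, then for every positive integer $n$ there would be an $\bR$-ideal $\fa_n$ on $X$ with $\fa_n\in I$ and $\mld_P(X,\fa_n)>0$ for which $(X,\fa_n\fm^{1/n})$ is not lc; relabelling, we obtain a sequence $\{\fa_i\}_{i\in\bN}$ with $\fa_i\in I$, $\mld_P(X,\fa_i)>0$, and $(X,\fa_i\fm^{t_i})$ not lc for some $t_i\searrow0$.

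First I would normalise the sequence so that a generic limit can be formed. Write $\fa_i=\prod_{j=1}^{e_i}\fa_{ij}^{r_{ij}}$ with $r_{ij}\in I$ and all $\fa_{ij}$ non-trivial on the germ. The inequality $\mld_P(X,\fa_i)\le\mld_PX-\sum_jr_{ij}\le\mld_PX-e_i\min I$ forces $e_i$ to be bounded, so after passing to a subsequence we may assume $e_i=e$ and $r_{ij}=r_j$ are constant, that is, $\fa_i=\prod_{j=1}^e\fa_{ij}^{r_j}$. Following Section \ref{sct:limit}, I then form the generic limit $\sfa=\prod_{j=1}^e\sfa_j^{r_j}$ of $\{\fa_i\}$ on the germ $\hat P\in\hat X$, which is klt by Lemma \ref{lem:resolution}. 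Since $\hat\fm$ is the generic limit of the constant sequence $\{\fm\}$, the $\bR$-ideal $\sfa\hat\fm^s$ is the generic limit of $\{\fa_i\fm^s\}$ for any $s\ge0$.

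The crucial point is that $\mld_{\hat P}(\hat X,\sfa)>0$. Were it non-positive, Remark \ref{rmk:limit}(\ref{itm:limitresult}) (valid in this regime by Theorem \ref{thm:lct}) would give $\mld_{\hat P}(\hat X,\sfa)=\mld_P(X,\fa_i)$ for $i\in N_{l_0}$ after replacing $\cF$ with a subfamily, contradicting $\mld_P(X,\fa_i)>0$. Hence $(\hat X,\sfa)$ is klt, so the lc threshold of $\hat\fm$ with respect to $(\hat X,\sfa)$ is some $t_0>0$, that is, $(\hat X,\sfa\hat\fm^{t_0})$ is lc. Applying Theorem \ref{thm:lct} to the sequence $\{\fa_i\fm^{t_0}\}$ with exponents $r_1,\ldots,r_e,t_0$, we conclude that $(X,\fa_i\fm^{t_0})$ is lc for all $i\in N_{l_0}$ after a further shrinking of $\cF$. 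But for $i$ large we have $t_i<t_0$, whence $a_E(X,\fa_i\fm^{t_i})\ge a_E(X,\fa_i\fm^{t_0})\ge0$ for every divisor $E$ over $X$, so $(X,\fa_i\fm^{t_i})$ is lc --- contradicting the choice of $\fa_i$.

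I expect no serious obstacle, since the argument merely combines the generic limit formalism with the effective $\fm$-adic semi-continuity of log canonical thresholds (Theorem \ref{thm:lct}) and the known behaviour of generic limits when the minimal log discrepancy is non-positive (Remark \ref{rmk:limit}). The one point requiring care is the reduction to a sequence of $\bR$-ideals with a fixed number of factors and fixed exponents so that a generic limit exists; this is handled exactly as in the proof of Theorem \ref{thm:nonpos}.
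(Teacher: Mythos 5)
Your proposal is correct and follows essentially the same route as the paper: form the generic limit $\sfa$, observe that $\mld_{\hat P}(\hat X,\sfa)>0$ (the paper gets this directly from Remark \ref{rmk:limit}(\ref{itm:limitineq}) rather than via your contrapositive through (\ref{itm:limitresult}), but both are valid), take $t$ from the lc threshold of $\hat\fm$ on the limit, and descend with Theorem \ref{thm:lct}. The paper phrases the conclusion as ``lc for infinitely many $i$ suffices'' instead of your explicit contradiction with $t_i\searrow0$, which is only a cosmetic difference.
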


\begin{proof}
Fix $r_1,\ldots,r_e\in I$ and let $\{\fa_i=\prod_{j=1}^e\fa_{ij}^{r_j}\}_{i\in\bN}$ be a sequence of $\bR$-ideals on $X$ such that $\mld_P(X,\fa_i)$ is positive. It is enough to show the existence of a positive real number $t$ such that $(X,\fa_i\fm^t)$ is lc for infinitely many indices $i$.

Following Section \ref{sct:limit}, we construct a generic limit $\sfa$ of $\{\fa_i\}_{i\in\bN}$ on $\hat P\in\hat X$. Then $\mld_{\hat P}(\hat X,\sfa)$ is positive by Remark \ref{rmk:limit}(\ref{itm:limitineq}), so there exists a positive real number $t$ such that $(\hat X,\sfa\hat\fm^t)$ is lc for the maximal ideal $\hat\fm$ in $\sO_{\hat X}$. By Theorem \ref{thm:lct}, there exists an infinite subset $N_{l_0}$ of $\bN$ such that $(X,\fa_i\fm^t)$ is lc for any $i\in N_{l_0}$.
\end{proof}

\begin{corollary}[\cite{MN16}]\label{crl:mult}
Let $P\in X$ be the germ of a klt variety and $\fm$ be the maximal ideal in $\sO_X$ defining $P$. Fix a finite subset $I$ of the positive real numbers. Then there exists a positive integer $b$ depending only on $X$ and $I$ such that if $\fa$ is an $\bR$-ideal on $X$ satisfying that $\fa\in I$ and that $\mld_P(X,\fa)$ is positive, then $\ord_E\fm$ is at most $b$ for every divisor $E$ over $X$ computing $\mld_P(X,\fa)$.
\end{corollary}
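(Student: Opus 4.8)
The plan is to read the required bound directly off Proposition~\ref{prp:mult}. First I would invoke that proposition to obtain a positive real number $t$, depending only on $X$ and $I$, such that $(X,\fa\fm^t)$ is lc for every $\bR$-ideal $\fa$ with $\fa\in I$ and $\mld_P(X,\fa)>0$. Now fix such an $\fa$ together with a divisor $E$ over $X$ computing $\mld_P(X,\fa)$. Applying the log canonicity of $(X,\fa\fm^t)$ to $E$ gives $a_E(X,\fa\fm^t)\ge0$, and since $a_E(X,\fa\fm^t)=a_E(X,\fa)-t\cdot\ord_E\fm$ this rearranges to $\ord_E\fm\le a_E(X,\fa)/t=\mld_P(X,\fa)/t$.

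Next I would bound $\mld_P(X,\fa)$ from above in a way independent of $\fa$. Because $\fa$ is an effective $\bR$-ideal, $\ord_E\fa\ge0$ for every divisor $E$ over $X$, hence $a_E(X,\fa)\le a_E(X)$; taking the infimum over divisors with centre $P$ yields $\mld_P(X,\fa)\le\mld_P(X)$, and $\mld_P(X)$ is a finite real number since $X$ is klt. Combining this with the previous inequality, $\ord_E\fm\le\mld_P(X)/t$, so it suffices to let $b$ be any integer not smaller than $\mld_P(X)/t$. This value depends only on $X$ and $I$, as $t$ does by Proposition~\ref{prp:mult} and $\mld_P(X)$ depends only on $X$.

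The argument is short and I do not anticipate a real obstacle; the only points that deserve a word of justification are that $\mld_P(X)$ is finite --- which is precisely the klt hypothesis, together with the standard bound $\mld_P(X)\le\dim X$ --- and that each quantity entering the definition of $b$ is already uniform, so that $b$ genuinely depends only on $X$ and $I$. All the substance is contained in Proposition~\ref{prp:mult}.
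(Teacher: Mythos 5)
Your argument is correct and coincides with the paper's own proof: take $t$ from Proposition~\ref{prp:mult}, use log canonicity of $(X,\fa\fm^t)$ at a computing divisor $E$ to get $t\ord_E\fm\le a_E(X,\fa)=\mld_P(X,\fa)\le\mld_PX$, and set $b=\rd{t^{-1}\mld_PX}$. Nothing further is needed.
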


\begin{proof}
Take $t$ in Proposition \ref{prp:mult}. Let $E$ be an arbitrary divisor over $X$ which computes $\mld_P(X,\fa)$. The log canonicity of $(X,\fa\fm^t)$ implies that
\begin{align*}
\ord_E\fm^t\le a_E(X,\fa)=\mld_P(X,\fa)\le\mld_PX,
\end{align*}
that is, $\ord_E\fm\le t^{-1}\mld_PX$. The $b=\rd{t^{-1}\mld_PX}$ is a required integer.
\end{proof}

\section{Construction of canonical pairs}
The objective of this section is to prove the following theorem.

\begin{theorem}\label{thm:canonical}
Let $P\in X$ be the germ of a smooth threefold. Fix a positive rational number $q$. Then there exist positive integers $l$ and $c$ both of which depend only on $q$ such that if $\fa$ is an ideal on $X$ satisfying that $\mld_P(X,\fa^q)$ is positive, then at least one of the following holds.
\begin{enumerate}
\item\label{itm:bounded}
There exists a divisor $E$ over $X$ which computes $\mld_P(X,\fa^q)$ and satisfies the inequality $a_E(X)\le l$.
\item\label{itm:reduced}
There exists a birational morphism from the germ $Q\in Y$ of a smooth threefold to the germ $P\in X$ such that
\begin{itemize}
\item
every exceptional prime divisor $F$ on $Y$ satisfies the inequalities $a_F(X)\le c$ and $a_F(X,\fa^q)<1$, by which the pull-back $(Y,\Delta,\fa_Y^q)$ of $(X,\fa^q)$ is defined with an effective $\bQ$-divisor $\Delta$,
\item
$\mld_Q(Y,\Delta,\fa_Y^q)=\mld_P(X,\fa^q)$, and
\item
$\mld_Q(Y,\fa_Y^q)$ is at least one.
\end{itemize}
\end{enumerate}
\end{theorem}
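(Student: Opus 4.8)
The plan is to dispose of a trivial case, then produce one divisor of bounded log discrepancy out of the ACC for $1$-lc thresholds, and finally reach conclusion (ii) by a relative minimal model program controlled by the classification of threefold divisorial contractions.

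If $\mld_P(X,\fa^q)\ge1$, then $(X,\fa^q)$ is canonical and one takes $Q\in Y$ to be $P\in X$ itself with $\Delta=0$; so assume $0<m:=\mld_P(X,\fa^q)<1$, hence $(X,\fa^q)$ is not canonical about $P$. Let $\tau$ be the canonical threshold of $\fa^q$ at $P$, i.e.\ the supremum of those $t$ with $(X,\fa^{qt})$ canonical about $P$; then $0<\tau<1$ and $(X,\fa^{q\tau})$ is canonical. Since $\mld_P(X,\fa^{q\tau})=1$ and $\fa^q\in\{q\}$, the ACC for $1$-lc thresholds \cite{St11} gives a constant $\tau_0<1$ depending only on $q$ with $\tau\le\tau_0$, together with a divisor $E_0$ over $X$ computing the threshold, so that $a_{E_0}(X)-q\tau\ord_{E_0}\fa=a_{E_0}(X,\fa^{q\tau})=1$. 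As $X$ is smooth and $E_0$ is exceptional, $a_{E_0}(X)\ge2$, hence $\ord_{E_0}\fa=(a_{E_0}(X)-1)/(q\tau)>0$; and from $0<m\le a_{E_0}(X,\fa^q)=1-q(1-\tau)\ord_{E_0}\fa$ one gets $\ord_{E_0}\fa<(q(1-\tau_0))^{-1}$, whence $a_{E_0}(X)=1+q\tau\ord_{E_0}\fa<(1-\tau_0)^{-1}$ and $a_{E_0}(X,\fa^q)<1$, the bound on $a_{E_0}(X)$ depending only on $q$. Put $l_0=\rd{(1-\tau_0)^{-1}}+1$. If this $E_0$ also computes $\mld_P(X,\fa^q)$, then conclusion (i) holds with $l=l_0$ and we are done.

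Otherwise I would pursue conclusion (ii). As $a_{E_0}(X,\fa^q)<1$ and $(X,\fa^q)$ is klt, one may extract $E_0$ by a relative minimal model program, yielding a divisorial contraction $g\colon X_1\to X$ with $E_0$ the only $g$-exceptional divisor and $(X_1,\Delta_1,\fa_{X_1}^q)$ the crepant pull-back, where $E_0$ appears in $\Delta_1$ with coefficient $1-a_{E_0}(X,\fa^q)\in(0,1)$; in particular $\mld_{Q_1}(X_1,\Delta_1,\fa_{X_1}^q)=\mld_P(X,\fa^q)$ at the appropriate point $Q_1$ over $P$. By the classification of threefold divisorial contractions to smooth points \cite{K01} and to curves \cite{Km96}, the morphism $g$ is a weighted blow-up whose weights are bounded in terms of $l_0$, hence of $q$, so $X_1$ carries only cyclic quotient singularities of index bounded by $q$. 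One then continues: resolve these quotient singularities and repeat the extraction of the canonical-threshold divisor of the current weak transform of $\fa$ at the moving centre, at each stage re-applying \cite{St11} to bound that threshold away from $1$ and the discrepancies encountered, and using \cite{K01}, \cite{Km96} to keep the intermediate models mildly singular. Since the multiplicity of the weak transform of $\fa$ at the centre strictly drops while the discrepancies stay bounded, the process terminates after a number of steps depending only on $q$, ending at a smooth germ $Q\in Y$ with a birational morphism to $P\in X$ all of whose exceptional divisors $F$ satisfy $a_F(X)\le c$ and $a_F(X,\fa^q)<1$ — so that the pull-back $(Y,\Delta,\fa_Y^q)$ is defined with $\Delta$ effective and $\mld_Q(Y,\Delta,\fa_Y^q)=\mld_P(X,\fa^q)$ — and with $\mld_Q(Y,\fa_Y^q)\ge1$.

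I expect the last step to be the crux. The tension is that keeping $\Delta$ effective forces us to extract only noncanonical places of $(X,\fa^q)$, while making $(Y,\fa_Y^q)$ canonical about $Q$ requires extracting \emph{enough} of them; resolving the cyclic quotient singularities produced by the weighted blow-ups, or extracting further canonical-threshold divisors, can introduce places with $a_F(X,\fa^q)\ge1$, which must be avoided or shown harmless. Controlling this interplay — in particular bounding the number of iterations and ruling out an accumulation of bounded noncanonical places that never get captured — is where the classification \cite{K01}, \cite{Km96} and the ACC \cite{St11} do the real work. The secondary technical points are the case in which $c_X(E_0)$ is a curve rather than a point, and the bookkeeping needed to pin down the single point $Q$ at which $\mld_Q(Y,\Delta,\fa_Y^q)$ and $\mld_Q(Y,\fa_Y^q)$ are attained.
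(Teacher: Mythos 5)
Your opening moves match the paper's: the reduction to $0<\mld_P(X,\fa^q)<1$, the use of Stepanov's ACC to bound the canonical threshold away from $q$, and the resulting bound $a_{E_0}(X)\le q(q-q\tau)^{-1}$ on the first extracted divisor are exactly Lemmata \ref{lem:e} and \ref{lem:algorithm}(\ref{itm:easybound}). But the iteration you sketch diverges from the paper's Algorithm \ref{alg:canonical} in a way that leaves the crux unproved, as you yourself suspect. First, the paper never resolves the cyclic quotient singularities of the intermediate models $X_i$: doing so would introduce exceptional divisors $F$ with $a_F(X,\fa^q)\ge1$, destroying the effectivity of $\Delta$ and the crepancy $\mld_Q(Y,\Delta,\fa_Y^q)=\mld_P(X,\fa^q)$. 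Instead the algorithm keeps working on the singular germs $P_i\in X_i$, extracts at each singular point the Kawamata blow-up crepant for $(P_i,X_i,\fa_i^{q_i})$, and terminates because the index $r_i$ strictly decreases; the smooth germ $Q\in Y$ of conclusion (\ref{itm:reduced}) is reached only when the process happens to stop at a \emph{smooth} point with $\mld_{P_i}(X_i,\fa_i^q)\ge1$. Second, your termination claim is too strong and your boundedness claim is unsupported: the number of steps is \emph{not} bounded in terms of $q$ alone (the number of smooth-point extractions depends on $(X,\fa^q)$ and the chosen divisor $E$; only the tail of singular-point extractions has length $\le r(q)$). What must be bounded is $a_{F_i}(X)$ itself, and this is the content of Lemma \ref{lem:c}, which needs the estimate $\ord_{F_i}S_i\le n\ord_{F_i}\Delta_i\le n(1-a_{F_i}(X,\fa^q))\le n$ coming from crepancy, not a count of iterations; your ``multiplicity of the weak transform strictly drops'' heuristic does not yield this.

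The other substantive omission is the exit routes into conclusion (\ref{itm:bounded}). The algorithm tracks a fixed divisor $E$ computing $\mld_P(X,\fa^q)$, and may stop because $c_{X_i}(E)$ becomes a curve or a divisor, or because it is a singular point with $\mld_{P_i}(X_i,\fa_i^q)>1$; in those cases one must bound $a_E(X)$ directly. This is Lemma \ref{lem:process37}, which requires Corollary \ref{crl:mult} (a uniform bound on $\ord_E\fm$, hence on $\ord_ES_i$) together with the two-dimensional boundedness theorem applied to a hyperplane section in the curve case, and a second application of the ACC at the singular point. You list the curve case as a ``secondary technical point,'' but it is one of the two pillars of the proof, not bookkeeping. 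In short: right skeleton, but the three load-bearing arguments --- no resolution of the quotient points, the telescoping bound on $a_{F_i}(X)$, and the treatment of the non-smooth-point terminations --- are missing.
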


We recall a part of the classification of threefold divisorial contractions which will play an important role in our argument.

\begin{definition}
A projective birational morphism $Y\to X$ between $\bQ$-factorial terminal varieties is called a \textit{divisorial contraction} if its exceptional locus is a prime divisor.
\end{definition}

\begin{theorem}\label{thm:divisorial}
Let $\pi\colon Y\to X$ be a threefold divisorial contraction which contracts its exceptional divisor to a closed point $P$ in $X$.
\begin{enumerate}
\item\label{itm:kawamata}
\textup{(\cite{Km96})}\;
Suppose that $P$ is a quotient singularity of $X$. The spectrum of the completion of $\sO_{X,P}$ is the regular base change of $\bA^3/\bZ_r(w,-w,1)$ with orbifold coordinates $x_1,x_2,x_3$, where $w$ is a positive integer less than $r$ and coprime to $r$. Then $\pi$ is base-changed to the weighted blow-up with $\wt(x_1,x_2,x_3)=(w/r,(r-w)/r,1/r)$.
\item\label{itm:kawakita}
\textup{(\cite{K01})}\;
Suppose that $P$ is a smooth point of $X$. Then there exists a regular system $x_1,x_2,x_3$ of parameters in $\sO_{X,P}$ and coprime positive integers $w_1,w_2$ such that $\pi$ is the weighted blow-up with $\wt(x_1,x_2,x_3)=(w_1,w_2,1)$.
\end{enumerate}
\end{theorem}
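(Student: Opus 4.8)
The plan is to treat the two parts as the classification theorems of Kawamata \cite{Km96} and Kawakita \cite{K01} and to reconstruct their strategy. First I would pass to the analytic-local situation: a divisorial contraction, terminality, and $\bQ$-factoriality are preserved under the flat base change to the completion (blowing-up commutes with it; cf.\ Lemma \ref{lem:regular} and Remark \ref{rmk:regular}), so one may replace $P\in X$ by $\Spec\widehat{\sO}_{X,P}$. Among three-dimensional terminal germs the quotient singularities are exactly the isolated cyclic ones, and the terminal cyclic quotients in dimension three are precisely those of type $\frac1r(1,-1,b)$ with $\gcd(b,r)=1$; changing the generator one presents the action as $\frac1r(w,-w,1)$ with $0<w<r$, $\gcd(w,r)=1$, and fixes orbifold coordinates $x_1,x_2,x_3$; in case \ref{itm:kawakita} one fixes instead an arbitrary regular system of parameters $x_1,x_2,x_3$. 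Write $v=\ord_E$ for the valuation of the contracted prime divisor $E$. By Remark \ref{rmk:wbu} it then suffices to show that, after a suitable change of the $x_i$, $v$ is the monomial valuation with the asserted weights, because then the corresponding weighted blow-up and $\pi$ have the same exceptional divisor and both have relative Picard number one, hence agree.

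The structural input common to both cases is that $\pi$ is a $K_Y$-negative extremal contraction. Since $\pi$ contracts exactly the prime divisor $E$ and $Y$ is $\bQ$-factorial with $\rho(Y/X)=1$, the divisor $-E$ is $\pi$-ample; writing $K_Y=\pi^*K_X+d_EE$ with $d_E=\ord_EK_{Y/X}>0$ by terminality, for a curve $C$ in a fibre one has $K_Y\cdot C=d_E(E\cdot C)<0$, and likewise $-(K_Y+E)|_E$ is ample. Hence $E$ is a normal projective surface whose singularities on $Y$ are terminal cyclic quotients at finitely many points, and by adjunction (Theorem \ref{thm:ia}, with the different computed as in Example \ref{exl:different}) the pair consisting of $E$ and the different of $0$ is log del Pezzo. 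These constraints, played against an explicit model, are what must pin down $v$.

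For case \ref{itm:kawamata} I would compare $\pi$ with the Kawamata blow-up $g\colon W\to X$, the weighted blow-up with $\wt(x_1,x_2,x_3)=\frac1r(w,r-w,1)$: it is terminal, $\bQ$-factorial, $\rho(W/X)=1$, and its exceptional divisor $E_0$ has $a_{E_0}(X)=1+\tfrac1r$, the minimal log discrepancy of $P\in X$. On a common resolution of $Y$ and $W$ one extracts, for every divisor $F$ over $X$ with $c_X(F)=P$, the identity
\begin{align*}
a_F(X)=a_F(W)+(\ord_FE_0)\bigl(a_{E_0}(X)-1\bigr);
\end{align*}
since $W$ is terminal and $\ord_FE_0\ge1$ whenever $c_X(F)=P$, this forces $a_F(X)>a_{E_0}(X)$ for every $F\ne E_0$, so $E_0$ is the unique divisor over $X$ centred at $P$ of minimal log discrepancy. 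It then remains to show that the contracted divisor $E$ attains this minimum, i.e.\ $a_E(X)=1+\tfrac1r$; this is the rigidity heart of the quotient case, and I expect it to follow from the $K_Y$-negativity and $\rho(Y/X)=1$ together with the log del Pezzo structure of $E$ and the list of weighted blow-ups of $X$ that yield terminal varieties, which together forbid a divisorial extraction at $P$ of larger discrepancy and force $Y\cong W$. Reading $v=\ord_{E_0}$ off the toric grading recovers the stated weighted blow-up.

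For case \ref{itm:kawakita} the discrepancy $a_E(X)$ need not be minimal — the weighted blow-up with $\wt(x_1,x_2,x_3)=(2,1,1)$ already has $a_E(X)=4$ — so no comparison shortcut exists, and the genuinely hard step, due to Kawakita, is to produce a regular system of parameters in $\sO_{X,P}$ in which $v$ is a \emph{monomial} valuation, with primitive weights $(w_1,w_2,w_3)$: one analyses the successive centres of $E$ under blow-ups of $P$ and uses the $K_Y$-negativity and $\rho(Y/X)=1$ to exclude non-monomial behaviour, which is where the Sarkisov-program and Corti-type techniques for singularities of linear systems enter; this is the main obstacle, and I would invoke Kawakita's analysis for it. Granting it, $E\cong\bP(w_1,w_2,w_3)$ with $Y$ carrying a cyclic quotient singularity of order $w_i$ at the $i$-th coordinate point, and the Reid--Tai criterion together with the terminality of $Y$ and $\rho(Y/X)=1$ then forces one of the $w_i$ — after reordering, $w_3$ — to equal $1$, while primitivity of the weight vector forces $\gcd(w_1,w_2)=1$; thus $\pi$ is the weighted blow-up with $\wt(x_1,x_2,x_3)=(w_1,w_2,1)$.
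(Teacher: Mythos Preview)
The paper does not prove this theorem at all: it is stated with attributions to \cite{Km96} and \cite{K01} and used as a black box throughout, with no proof environment following it. So there is nothing in the paper to compare your proposal against.

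As for your sketch itself: it correctly identifies the overall architecture of both original proofs, but the decisive steps remain unproven. In case~(\ref{itm:kawamata}) you establish that the Kawamata blow-up divisor $E_0$ is the unique divisor over $P$ of minimal log discrepancy, but the step ``it then remains to show that the contracted divisor $E$ attains this minimum'' is the entire content of Kawamata's theorem, and your phrase ``I expect it to follow from\ldots'' is not an argument; one actually needs to run the two extractions against each other and use negativity to derive a contradiction when $E\neq E_0$. In case~(\ref{itm:kawakita}) you explicitly say ``I would invoke Kawakita's analysis'' for the monomialisation of $v$, which is again the whole theorem; the subsequent deduction that one weight equals $1$ is routine once monomiality is known, but that is not where the difficulty lies. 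So your proposal is an accurate roadmap rather than a proof, and since the paper itself only cites these results, that is already more than the paper offers.
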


Stepanov proved the ACC for canonical thresholds on smooth threefolds as an application of Theorem \ref{thm:divisorial}(\ref{itm:kawakita}).

\begin{theorem}[\cite{St11}]\label{thm:stepanov}
The set
\begin{align*}
\{t\in\bQ_{\ge0}\mid\textup{$P\in X$ a smooth threefold},\ \textup{$\fa$ an ideal},\ \mld_P(X,\fa^t)=1\}
\end{align*}
satisfies the ACC.
\end{theorem}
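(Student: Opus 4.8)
The plan is to argue by contradiction in the style of Stepanov: a would‑be counterexample produces a sequence of divisors computing the thresholds, which one extracts one at a time, and the classification of threefold divisorial contractions to smooth points (Theorem \ref{thm:divisorial}(\ref{itm:kawakita})) turns each extraction into a weighted blow‑up, whose exceptional weighted projective plane one then analyses. Concretely, suppose the set does not satisfy the ACC, so there are germs $P_i\in X_i$ of smooth threefolds, ideals $\fa_i$, and a strictly increasing sequence $\{t_i\}_{i\in\bN}$ of positive rationals with $\mld_{P_i}(X_i,\fa_i^{t_i})=1$; fix for each $i$ a divisor $E_i\in\cD_{X_i}$ with $c_{X_i}(E_i)=P_i$ and $a_{E_i}(X_i,\fa_i^{t_i})=1$, chosen with $a_{E_i}(X_i)$ minimal among such divisors. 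After a routine reduction one may assume that $(X_i,\fa_i^{t_i})$ is canonical in a neighbourhood of $P_i$, so that each $t_i$ is a canonical threshold; then, perturbing $t_i$ slightly upwards and running a relative minimal model program, one extracts $E_i$, obtaining a projective birational morphism $\pi_i\colon Y_i\to X_i$ from a $\bQ$-factorial terminal threefold whose exceptional locus is exactly $E_i$, contracted to the smooth point $P_i$ — i.e.\ a divisorial contraction as in Theorem \ref{thm:divisorial}(\ref{itm:kawakita}). Hence there are a regular system of parameters $x_1,x_2,x_3\in\sO_{X_i,P_i}$ and coprime positive integers $a_i,b_i$ with $\pi_i$ the weighted blow‑up of $\wt(x_1,x_2,x_3)=(a_i,b_i,1)$; so $E_i\cong\bP(a_i,b_i,1)$, one has $a_{E_i}(X_i)=a_i+b_i+1$, and writing $d_i=\ord_{E_i}\fa_i$ the relation $a_{E_i}(X_i,\fa_i^{t_i})=1$ reads
\begin{equation*}
t_i=\frac{a_i+b_i}{d_i}.
\end{equation*}

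The next step is to push the problem onto the surface $E_i=\bP(a_i,b_i,1)$. The crepant pull‑back $(Y_i,\fa_{iY}^{t_i})$ of $(X_i,\fa_i^{t_i})$ is canonical, $E_i$ is not in its cosupport, and $a_{E_i}(Y_i,\fa_{iY}^{t_i})=1$. Adjoining $E_i$ with coefficient one and invoking precise inversion of adjunction — Theorem \ref{thm:ia}(2) together with its refinement for minimal log discrepancies, in the form used elsewhere in this paper — transports the bound ``$a_F\ge1$ for every $F\in\cD_{Y_i}$ with centre in $E_i$'' into a statement on $E_i$: the surface pair $(E_i,\Theta_i)$, where $\Theta_i$ is the different on $E_i$ of $\fa_{iY}^{t_i}$, is lc, and its minimal log discrepancy at every point — in particular at the two terminal cyclic quotient points of $Y_i$ lying on $E_i$, of orders $a_i$ and $b_i$ — is bounded below. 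Moreover, since $\pi_i^*(K_{X_i}+\fa_i^{t_i})\sim_{\bQ}0$ near $P_i$, restriction to $E_i$ fixes the numerical class of $K_{E_i}+\Theta_i$ as the negative of the ample generator of the class group of $\bP(a_i,b_i,1)$, whose self‑intersection is $1/(a_ib_i)$.

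To conclude, one observes that the family of surface pairs $(E_i,\Theta_i)$ — weighted projective planes carrying lc boundaries whose coefficients lie in a fixed set satisfying the DCC together with the $t_i$ — is governed by the already‑known two‑dimensional case, either via Theorem \ref{thm:surface} or via the ACC for log canonical thresholds on surfaces \cite{Al94}. This forces $d_i$ to be bounded below by a uniform constant times $a_i+b_i$, so that the values $t_i=(a_i+b_i)/d_i$ range in a set satisfying the ACC; in particular they cannot form a strictly increasing sequence, a contradiction.

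The main obstacle is the middle step: converting the three‑dimensional canonicity of $(Y_i,\fa_{iY}^{t_i})$ near $E_i$ into a sharp enough numerical constraint on the triple $(a_i,b_i,d_i)$ to defeat a strictly increasing sequence. This requires a careful local study around the two cyclic quotient points of $Y_i$, intersection theory on $\bP(a_i,b_i,1)$, and a matching of the resulting surface pairs with a usable two‑dimensional ACC — in particular controlling the weighted projective planes $\bP(a_i,b_i,1)$, which are not a priori a bounded family, by the identity for $K_{E_i}+\Theta_i$ together with the coprimality $\gcd(a_i,b_i)=1$. A subsidiary technical point is the opening reduction and the extraction itself: arranging, through the minimality of $a_{E_i}(X_i)$ and the perturbation of $t_i$, that $E_i$ is the unique exceptional divisor of a genuine divisorial contraction with $Y_i$ terminal, so that the classification of \cite{K01} applies.
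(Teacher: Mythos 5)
The paper does not reprove Stepanov's theorem: the statement is attributed to \cite{St11}, and the proof given in the paper is only a translation of the ideal formulation into the divisor formulation of \cite[Theorem 1.7]{St11}. Concretely, one rescales ($t\le\mld_PX=3$, so $t/3$ lies in the set and one may assume $t\le1$), replaces $\fa$ by $\fa+\fm^l$ to make it $\fm$-primary, takes a log resolution $Y$ with $\fa\sO_Y=\sO_Y(-A)$, and chooses a general reduced $H\sim-A$ so that $\mld_P(X,tH_X)=1$ and $(X,tH_X)$ is canonical but not terminal; then $t$ lies in Stepanov's set $T$, which satisfies the ACC by \cite{St11}. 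Your proposal instead attempts to prove the ACC itself from scratch, essentially sketching Stepanov's own argument, which is a far more ambitious route than what the paper does.

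As such, it has a genuine gap at the decisive step. From the weighted blow-up description you correctly get $t_i=(a_i+b_i)/d_i$ with $d_i=\ord_{E_i}\fa_i$, but your conclusion --- that the two-dimensional input ``forces $d_i$ to be bounded below by a uniform constant times $a_i+b_i$, so that the values $t_i=(a_i+b_i)/d_i$ range in a set satisfying the ACC'' --- is a non sequitur: a bound $d_i\ge c(a_i+b_i)$ only gives $t_i\le1/c$, i.e.\ boundedness from above, and a bounded set of rationals with unbounded numerators and denominators need not satisfy the ACC. What is actually needed is a rigidity statement pinning down the possible triples $(a_i,b_i,d_i)$ --- for instance, showing that for $t_i$ bounded away from the relevant accumulation points either $a_i+b_i$ is bounded or the surface pairs $(E_i,\Theta_i)$ fall into finitely many combinatorial types --- and this delicate analysis on $\bP(a_i,b_i,1)$ is precisely the content of Stepanov's paper. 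You flag this yourself as ``the main obstacle,'' but the proposal does not resolve it, so the argument is incomplete. If the intent is rather to use \cite{St11} as a black box, then the missing ingredient is the elementary reduction above from ideals to general reduced divisors, which the proposal does not perform either.
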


\begin{proof}
Let $S$ denote the set in the theorem. The original statement \cite[Theorem 1.7]{St11} asserts that the set
\begin{align*}
T=\biggl\{t\in\bQ_{\ge0}\;\biggm|
\begin{array}{l}
\textrm{$P\in X$ a smooth threefold},\ \textrm{$D$ an effective divisor},\\
\textrm{$(X,tD)$ canonical but not terminal}
\end{array}
\biggr\}
\end{align*}
satisfies the ACC. It is enough to show that if $t$ is an arbitrary element of $S$, then $t/3$ belongs to $T$. For such $t$, there exists an ideal $\fa$ on the germ $P\in X$ of a smooth threefold such that $\mld_P(X,\fa^t)=1$. Then $t\le\mld_PX=3$ and $t/3$ is en element of $S$ since $\mld_P(X,(\fa^3)^{t/3})=1$. Thus it is sufficient to show that if $t\in S$ is at most one, then $t$ belongs to $T$.

Take a germ $P\in X$ on which $t$ is realised by $\mld_P(X,\fa^t)=1$. Let $\fm$ be the maximal ideal in $\sO_X$ defining $P$. By replacing $\fa$ with $\fa+\fm^l$ for a large integer $l$, we may assume that $\fa$ is $\fm$-primary. We take a log resolution $Y$ of $(X,\fa)$. Then $\fa\sO_Y=\sO_Y(-A)$ for an effective divisor $A$ such that $-A$ is free over $X$. Thus there exists a reduced divisor $H$ linearly equivalent to $-A$ such that $Y$ is also a log resolution of $(X,H_X,\fa)$, where $H_X$ is the push-forward of $H$. Then $\mld_P(X,tH_X)=1$ and $(X,tH_X)$ is canonical, meaning that $t\in T$.
\end{proof}

We shall use a consequence of the minimal model program.

\begin{definition}
Let $P\in X$ be the germ of a $\bQ$-factorial terminal variety and $\fa$ be an $\bR$-ideal on $X$ such that $\mld_P(X,\fa)$ equals one. A divisorial contraction to $X$ is said to be \textit{crepant} with respect to $(P,X,\fa)$ if its exceptional divisor computes $\mld_P(X,\fa)$.
\end{definition}

\begin{lemma}\label{lem:crepant}
Let $P\in X$ be the germ of a $\bQ$-factorial terminal variety and $\fa$ be an $\bR$-ideal on $X$ such that $\mld_P(X,\fa)$ equals one. Then there exists a divisorial contraction crepant with respect to $(P,X,\fa)$.
\end{lemma}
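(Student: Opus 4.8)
The plan is to run a minimal model program to extract a divisor $E$ over $X$ that computes $\mld_P(X,\fa)=1$, and then to recognise the resulting extremal contraction as a divisorial contraction in the sense defined above. First I would choose a divisor $E\in\cD_X$ with centre $P$ satisfying $a_E(X,\fa)=1$; such an $E$ exists since the minimal log discrepancy is realised. By a standard construction (e.g.\ taking a log resolution and running a relative MMP over $X$, or invoking the existence of a $\bQ$-factorial terminal modification followed by extraction of a single valuation), I would produce a projective birational morphism $\pi\colon Y\to X$ from a $\bQ$-factorial terminal variety $Y$ whose exceptional locus is precisely the prime divisor $E$. Here one uses that $a_E(X)>1$ because $X$ is terminal and $E$ is exceptional over $X$, together with $a_E(X,\fa)=1$, so $E$ does not lie in the non-klt locus and the pair $(X,\fa)$ is canonical in a neighbourhood of $P$; this makes the extraction legitimate.

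Once $\pi\colon Y\to X$ with exceptional prime divisor $E$ is in hand, I would verify that $Y\to X$ is indeed a divisorial contraction: it is projective and birational between $\bQ$-factorial terminal varieties, its exceptional locus is the prime divisor $E$, and by construction $-E$ (equivalently $K_Y$, since $K_Y=\pi^*K_X+(a_E(X)-1)E$ with $a_E(X)-1>0$) is $\pi$-ample, so $\pi$ contracts an extremal ray. The divisor $E$ computes $\mld_P(X,\fa)$ by the choice made in the first step, so $\pi$ is crepant with respect to $(P,X,\fa)$ in the sense of the definition preceding the lemma.

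The main obstacle is the extraction step: producing a birational morphism whose exceptional locus is exactly the chosen divisor $E$, with the target $Y$ still $\bQ$-factorial and terminal. In dimension three this is where the minimal model program enters essentially — one passes to a log resolution on which $E$ appears, runs a suitable $(K+\text{boundary})$-MMP over $X$ to contract all other exceptional divisors while keeping $E$, and checks that terminality is preserved along the way. The discrepancy conditions $a_E(X)>1$ and $a_E(X,\fa)=1$ are exactly what guarantee that $E$ survives this process while everything with larger log discrepancy with respect to $\fa$ gets contracted; the delicate point is arranging the boundary in the MMP so that $E$ is the unique surviving exceptional divisor, which is the content of the cited consequence of the MMP and is the step I expect the author to invoke by reference rather than reprove.
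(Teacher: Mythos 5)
Your overall strategy---extract a divisor $E$ computing $\mld_P(X,\fa)=1$ by \cite[Corollary 1.4.3]{BCHM10} and observe that the resulting morphism is the desired contraction---is indeed the paper's strategy, but there is a genuine gap where you assert that the extracted variety $Y$ is terminal. The BCHM extraction gives a $\bQ$-factorial normal $Y$ whose exceptional locus is the prime divisor $E$; since $a_E(X,\fa)=1$, the pull-back of $(X,\fa)$ to $Y$ is $(Y,\fa_Y)$ with trivial boundary divisor, so for any divisor $F$ exceptional over $Y$ with $c_X(F)=P$ one has $a_F(Y)=a_F(X,\fa)+\ord_F\fa_Y$. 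If $E$ is not the \emph{unique} divisor computing $\mld_P(X,\fa)$, there may be such an $F$ with $a_F(X,\fa)=1$ and $\ord_F\fa_Y=0$, giving $a_F(Y)=1$: then $Y$ is canonical but not terminal, and $Y\to X$ is not a divisorial contraction in the sense required here (both varieties must be terminal). The conditions $a_E(X)>1$ and $a_E(X,\fa)=1$ that you invoke do not rule this out. Divisors with positive-dimensional centre on $X$ also need to be controlled, which is why one should first reduce to $\fm$-primary $\fa$ so that their log discrepancy with respect to $(X,\fa)$ equals $a_F(X)>1$.

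The paper closes this gap with a perturbation step (Lemma \ref{lem:perturb}) that is absent from your proposal: before extracting, one replaces $\fa$ by an $\fm$-primary $\bR$-ideal $\fb$ with $\mld_P(X,\fb)=\mld_P(X,\fa)$ such that there is a \emph{unique} divisor $E$ computing $\mld_P(X,\fb)$, and such that this $E$ still computes $\mld_P(X,\fa)$. Extracting that $E$ makes $(Y,\fb_Y)$ terminal, because every exceptional divisor over $Y$ now has log discrepancy strictly greater than one with respect to $(X,\fb)$; hence $Y$ itself is terminal, and the contraction is crepant with respect to $(P,X,\fa)$ since $E$ computes $\mld_P(X,\fa)$. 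Note also that the ``delicate point'' you flag at the end---arranging the MMP so that $E$ is the unique surviving exceptional divisor of the extraction---is precisely what \cite[Corollary 1.4.3]{BCHM10} already provides; the genuinely delicate point is the terminality of the target, and that is what the uniqueness supplied by the perturbation lemma is for.
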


\begin{proof}
By replacing $\fa$ with $\fb$ in Lemma \ref{lem:perturb}, we may assume that $\fa$ is an $\fm$-primary $\bR$-ideal, where $\fm$ is the maximal ideal in $\sO_X$ defining $P$, such that there exists a unique divisor $E$ over $X$ which computes $\mld_P(X,\fa)$. Then by \cite[Corollary 1.4.3]{BCHM10}, there exists a projective birational morphism $Y\to X$ from a $\bQ$-factorial normal variety whose exceptional locus is a prime divisor which coincides with $E$. We may assume that the weak transform $\fa_Y$ on $Y$ of $\fa$ is defined. Then $(Y,\fa_Y)$ is the pull-back of $(X,\fa)$, and it is terminal by the uniqueness of $E$. In particular $Y$ itself is terminal, so $Y\to X$ is a required contraction.
\end{proof}

\begin{lemma}\label{lem:perturb}
Let $P\in X$ be the germ of a $\bQ$-factorial klt variety and $\fa$ be an $\bR$-ideal on $X$ such that $(X,\fa)$ is lc. Then there exists an $\bR$-ideal $\fb$ such that
\begin{itemize}
\item
$\fb$ is $\fm$-primary, where $\fm$ is the maximal ideal in $\sO_X$ defining $P$,
\item
$\mld_P(X,\fa)=\mld_P(X,\fb)$,
\item
there exists a unique divisor $E$ over $X$ which computes $\mld_P(X,\fb)$, and
\item
$E$ also computes $\mld_P(X,\fa)$.
\end{itemize}
\end{lemma}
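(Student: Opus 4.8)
The plan is to reduce in two stages: first arrange that $\fa$ itself is $\fm$-primary, and then perturb $\fa$ within the fixed minimal log discrepancy so as to single out a unique computing divisor.

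\emph{Stage one.} Fix a divisor $E_0$ over $X$ with $c_X(E_0)=\{P\}$ computing $m:=\mld_P(X,\fa)$; such a divisor exists since $(X,\fa)$ is lc, so $m\in\bR_{\ge0}$, and on a log resolution the infimum in the definition of $\mld_P$ reduces to the combinatorial minimal log discrepancy of an snc subpair, which is attained. Writing $\fa=\prod_j\fa_j^{r_j}$, pick an integer $l\ge\ord_{E_0}\fa_j$ for every $j$ and set $\fa'=\prod_j(\fa_j+\fm^l)^{r_j}$. Then $\fa'$ is $\fm$-primary; $a_E(X,\fa')\ge a_E(X,\fa)$ for every $E\in\cD_X$ because $\fa_j+\fm^l\supseteq\fa_j$; and $\ord_{E_0}(\fa_j+\fm^l)=\min(\ord_{E_0}\fa_j,l\ord_{E_0}\fm)=\ord_{E_0}\fa_j$ since $\ord_{E_0}\fm\ge1$, so $a_{E_0}(X,\fa')=m$. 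Hence $\mld_P(X,\fa')=m$, computed by $E_0$, and any divisor computing $\mld_P(X,\fa')$ also computes $\mld_P(X,\fa)$ because $m=\mld_P(X,\fa)\le a_E(X,\fa)\le a_E(X,\fa')$. So it is enough to treat $\fa'$, and I may assume $\fa$ is $\fm$-primary and $E_0$ computes $\mld_P(X,\fa)=m$.

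\emph{Stage two.} Using \cite[Corollary~1.4.3]{BCHM10}, extract $E_0$: there is a projective birational $\sigma\colon Y_0\to X$ from a $\bQ$-factorial klt variety whose exceptional locus is the prime divisor $E_0$, so $K_{Y_0}=\sigma^*K_X+(a_{E_0}(X)-1)E_0$, and divisors over $X$ with centre $P$ correspond to divisors over $Y_0$ with centre contained in $E_0$. Let $\fc=\{f\in\sO_X\mid\ord_{E_0}f\ge k\}$ be the valuation ideal of $E_0$ of level $k$, for $k$ a large and sufficiently divisible integer chosen so that $\sO_{Y_0}(-kE_0)$ is $\sigma$-globally generated; then $\fc$ is $\fm$-primary, $\fc\sO_{Y_0}=\sO_{Y_0}(-kE_0)$, and $\ord_E\fc=k\ord_E E_0$ for all $E$ over $Y_0$. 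Put $\fb=\fa^{1-\varepsilon}\fc^\delta$ with $\delta=\varepsilon\,\ord_{E_0}\fa/k$, so that $a_{E_0}(X,\fb)=a_{E_0}(X,\fa)=m$ and $\fb$ is $\fm$-primary. For $E\ne E_0$ over $X$ with centre $P$ one has $a_E(X,\fb)=a_E(X,\fa)+\varepsilon\ord_E\fa-\varepsilon\,\ord_{E_0}\fa\cdot\ord_E E_0$; the discrepancy identity $a_E(X)=a_E(Y_0)+(a_{E_0}(X)-1)\ord_E E_0$ together with $a_{E_0}(X)=m+\ord_{E_0}\fa$ shows that, whenever $E$ is a computing divisor of $\mld_P(X,\fa)$ whose centre on $Y_0$ lies at a point where $Y_0$ and $E_0$ are smooth, $\ord_E\fa>\ord_{E_0}\fa\cdot\ord_E E_0$, and when $E$ is not a computing divisor the inequality $a_E(X,\fb)>m$ holds for small $\varepsilon$. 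Thus, modulo the points discussed below, $E_0$ is the unique divisor computing $\mld_P(X,\fb)=m$, and it computes $\mld_P(X,\fa)$ by the choice of $E_0$.

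\emph{The main obstacle} is that the inequality $a_E(X,\fb)>m$ must hold simultaneously for all of the infinitely many divisors over $X$ with centre $P$ for a single $\varepsilon$, while keeping $(X,\fb)$ lc and $\mld_P(X,\fb)$ equal to $m$ exactly. This is organised by passing to a common log resolution $Y$ of $(X,\fa\fc)$ on which $E_0$ sits: only finitely many exceptional divisors on $Y$ are centred at $P$, so the strict inequalities for those are finitely many conditions, valid for $\varepsilon\ll1$; for a divisor $E$ not lying on $Y$ one uses that the pull-back of $(X,\fb)$ to $Y$ has snc support, so that $a_E(X,\fb)$ is given by the standard combinatorial formula, which once $E_0$ is the strict maximiser of the boundary coefficients among divisors over $P$ forces $a_E(X,\fb)>m$. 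The genuinely delicate point is that $Y_0$ is only klt, not terminal, so divisors with centre at a singular point of $Y_0$ lying on $E_0$ need separate control; these are handled by working on a resolution dominating both $Y_0$ and $Y$ and applying inversion of adjunction (Theorem~\ref{thm:ia}) along the strict transform of $E_0$ to bound below, in terms of the different, the log discrepancy of every divisor whose centre is a proper subvariety of $E_0$.
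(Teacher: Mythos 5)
Your Stage one coincides with the first paragraph of the paper's proof and is fine. Stage two, however, has two genuine gaps. First, the extraction of $E_0$ via \cite[Corollary 1.4.3]{BCHM10} is not justified: that corollary extracts a divisor only when its log discrepancy with respect to \emph{some} klt pair is at most one, and you exhibit no such pair. When $m=\mld_P(X,\fa)>1$ it is unclear that one exists (adding $\fm^s$ to force $a_{E_0}\le1$ may destroy klt-ness via other divisors with large $\ord\fm$). Note also that in the paper this lemma is proved precisely in order to make such extractions legitimate (Lemma \ref{lem:crepant} applies BCHM \emph{after} Lemma \ref{lem:perturb} has produced a unique computing divisor of log discrepancy $\le1$ for a klt auxiliary pair), so your argument runs uncomfortably close to circular.

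Second, and more seriously, the perturbation itself fails to deliver uniqueness. Granting $\ord_E\fc=k\ord_EE_0$ for all $E$, your choice $\fb=\fa^{1-\varepsilon}\fc^{\delta}$ with $\delta=\varepsilon\ord_{E_0}\fa/k$ gives exactly
\begin{align*}
a_E(X,\fb)=a_E(X,\fa)+\varepsilon\bigl(\ord_E\fa-\ord_{E_0}\fa\cdot\ord_EE_0\bigr)=a_E(X,\fa)+\varepsilon\ord_E\fa_{Y_0},
\end{align*}
where $\fa_{Y_0}$ is the weak transform. So a divisor $E\neq E_0$ computing $\mld_P(X,\fa)$ still computes $\mld_P(X,\fb)$ whenever $c_{Y_0}(E)$ is not contained in the cosupport of $\fa_{Y_0}$, and such divisors exist whenever $(Y_0,E_0)$ fails to be plt at a point of $E_0$ off that cosupport ($Y_0$ is only klt and $E_0$ may be singular or even non-normal). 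Your proposed repair by inversion of adjunction along $E_0$ cannot help: it yields only non-strict lower bounds, whereas the problematic divisors attain $a_E(X,\fb)=m$ exactly. The structural reason is that $\ord_E\fc$ is proportional to $\ord_EE_0$, so $\fc$ cannot distinguish two computing divisors with the same ``$E_0$-content''. The paper avoids both problems by never leaving a log resolution $Y$ of $(X,\fa)$: there the computing divisors form a finite subset $\{E_i\}_{i\in I'}$ of the exceptional divisors, and one perturbs by the pushforward of $\sO_Y(-F)$ for a \emph{general very ample} exceptional $-F$, chosen so that the ratios $\ord_{E_i}A/\ord_{E_i}F$ attain their minimum at a single index; the genericity of $F$ is exactly what separates the computing divisors, and the snc geometry of $Y$ then controls all remaining divisors over $X$. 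If you want to salvage your route, you must replace the valuation ideal of $E_0$ by an ideal whose orders genuinely vary among the computing divisors, which in practice forces you back onto a log resolution.
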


\begin{proof}
Writing $\fa=\prod_j\fa_j^{r_j}$, if we take a large integer $l$, then $\fa'=\prod_j(\fa_j+\fm^l)^{r_j}$ satisfies that $\mld_P(X,\fa')=\mld_P(X,\fa)$ and any divisor computing $\mld_P(X,\fa')$ also computes $\mld_P(X,\fa)$. By replacing $\fa$ with $\fa'$, we may assume that $\fa$ is $\fm$-primary.

Let $Y$ be a log resolution of $(X,\fa)$ and $\{E_i\}_{i\in I}$ be the set of the exceptional prime divisors on $Y$ contracting to the point $P$. Let $A$ be the $\bR$-divisor on $Y$ defined by $\fa\sO_Y$ and $I'$ be the subset of $I$ consisting of the indices $i$ such that $E_i$ computes $\mld_P(X,\fa)$. There exists an effective exceptional divisor $F$ such that $-F$ is very ample and such that the minimum $m$ of $\{\ord_{E_i}A/\ord_{E_i}F\}_{i\in I'}$ attains by only one index, say $i_0\in I'$. One can take a small positive real number $\epsilon$ such that $b_i=a_{E_i}(X,\fa)+\epsilon(\ord_{E_i}A-m\ord_{E_i}F)$ is greater than $\mld_P(X,\fa)$ for any $i\in I\setminus\{i_0\}$. Note that $b_{i_0}$ remains equal to $\mld_P(X,\fa)$.

Let $\fc$ be the ideal on $X$ given by the push-forward of $\sO_Y(-F)$ and set the $\bR$-ideal $\fb=\fa^{1-\epsilon}\fc^{\epsilon m}$. Possibly replacing $\epsilon$ with a smaller real number, we may assume that $(X,\fb)$ is lc. $Y$ is also a log resolution of $(X,\fb)$ and $a_{E_i}(X,\fb)=b_i$ for any $i\in I$. Thus $\fb$ satisfies all the required properties but being $\fm$-primary. However, one can replace $\fb$ with an $\fm$-primary $\bR$-ideal just by the argument of constructing $\fa'$ from $\fa$.
\end{proof}

We consider the following algorithm in order to prove Theorem \ref{thm:canonical}.

\begin{algorithm}\label{alg:canonical}
Let $q$ be a positive rational number. Let $P\in X$ be the germ of a smooth threefold and $\fa$ be an ideal on $X$ such that $(X,\fa^q)$ is lc. Let $E$ be a divisor over $X$ which computes $\mld_P(X,\fa^q)$.
\begin{enumerate}[indented,label=\texttt{\arabic*}.,ref=\texttt{\arabic*}]
\item
Start with $X_0=X$.
\item\label{prc:initial}
Suppose that $X_i$ is given, which has only terminal quotient singularities.
\item\label{prc:notpoint}
If the centre $c_{X_i}(E)$ on $X_i$ of $E$ is of positive dimension, then output $X_i$.
\item\label{prc:point}
Suppose that $c_{X_i}(E)$ is a closed point, which will be denoted by $P_i$. Let $r_i$ be the index of the germ $P_i\in X_i$. One can define the weak transform $\fb_i$ on $P_i\in X_i$ of $\fa^{r_i}$ and let $\fa_i$ be the $\bQ$-ideal $\fb_i^{1/r_i}$. The pair $(X_i,\fa_i^q)$ is lc at $P_i$.
\item\label{prc:smoothout}
If $P_i$ is a smooth point of $X_i$ and $\mld_{P_i}(X_i,\fa_i^q)\ge1$, then output $X_i$.
\item
If $P_i$ is a smooth point of $X_i$ and $\mld_{P_i}(X_i,\fa_i^q)<1$, then go to \ref{prc:iplus1}.
\item\label{prc:singout}
If $P_i$ is a singular point of $X_i$ and $\mld_{P_i}(X_i,\fa_i^q)>1$, then output $X_i$.
\item
If $P_i$ is a singular point of $X_i$ and $\mld_{P_i}(X_i,\fa_i^q)\le1$, then go to \ref{prc:iplus1}.
\item\label{prc:iplus1}
Let $q_i$ be the positive rational number such that $\mld_{P_i}(X_i,\fa_i^{q_i})=1$. Fix a divisorial contraction $X_{i+1}\to X_i$ crepant with respect to $(P_i,X_i,\fa_i^{q_i})$ by Lemma \ref{lem:crepant}. Go back to \ref{prc:initial} and proceed with $X_{i+1}$ instead of $X_i$.
\end{enumerate}
In this algorithm, we fix the notation that $F_i$ is the exceptional divisor of $X_{i+1}\to X_i$ and that $F_j^i$ is the strict transform on $X_i$ of $F_j$ for $j<i$, and we set $\Delta_i=\sum_{j=0}^{i-1}(1-a_{F_j}(X,\fa^q))F_j^i$ and $S_i=\sum_{j=0}^{i-1}F_j^i$.
\end{algorithm}

\begin{remark}
By the very definition,
\begin{enumerate}
\first\item
$q_i\le q$, and $q_i<q$ when $q_i$ is defined at a smooth point $P_i$ of $X_i$, and
\item
$(X_i,\Delta_i,\fa_i^q)$ is crepant to $(X,\fa^q)$.
\end{enumerate}
\end{remark}

In order to run Algorithm \ref{alg:canonical}, we need to verify that
\begin{itemize}
\item
$X_i$ has at worst quotient singularities in the process \ref{prc:initial}, and
\item
$(X_i,\fa_i^q)$ is lc at $P_i$ in the process \ref{prc:point},
\end{itemize}
besides the termination. The first claim follows from Theorem \ref{thm:divisorial}. For the second claim, let $b_i=1-a_{F_i}(X_i,\fa_i^q)$. Then $(X_{i+1},b_iF_i,\fa_{i+1}^q)$ is crepant to $(X_i,\fa_i^q)$ and $b_iF_i$ is effective since $a_{F_i}(X_i,\fa_i^q)\le a_{F_i}(X_i,\fa_i^{q_i})=1$ by $q_i\le q$. Thus the log canonicity of $(X_i,\fa_i^q)$ follows from that of $(X,\fa^q)$ inductively. Hence Algorithm \ref{alg:canonical} runs up to the termination.

We prepare several basic properties of the algorithm before completing its termination in Proposition \ref{prp:termination}.

\begin{lemma}\label{lem:algorithm}
The following hold in Algorithm \textup{\ref{alg:canonical}}.
\begin{enumerate}
\item\label{itm:nondecr}
The $q_i$ form a non-decreasing sequence.
\item\label{itm:atmost1}
$a_{F_i}(X,\fa^{q_i})\le1$.
\item\label{itm:lessthan1}
$a_{F_i}(X,\fa^q)<1$.
\item\label{itm:easybound}
If $q_i\neq q$, then $a_{F_i}(X)\le q(q-q_i)^{-1}$.
\end{enumerate}
\end{lemma}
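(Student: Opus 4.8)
The four assertions concern the sequence of divisorial contractions in Algorithm \ref{alg:canonical}, so I would prove them by induction on $i$, keeping track of the crepant pull-backs $(X_i,\Delta_i,\fa_i^q)$ of $(X,\fa^q)$. The key observation throughout is that $F_i$, being the exceptional divisor of the contraction crepant with respect to $(P_i,X_i,\fa_i^{q_i})$, satisfies $a_{F_i}(X_i,\fa_i^{q_i})=\mld_{P_i}(X_i,\fa_i^{q_i})=1$; since $(X_i,\Delta_i,\fa_i^q)$ is crepant to $(X,\fa^q)$ and $\Delta_i$ is effective and supported on divisors exceptional over $X$, one gets $a_{F_i}(X,\fa^{q_i})=a_{F_i}(X_i,\Delta_i,\fa_i^{q_i})\le a_{F_i}(X_i,\fa_i^{q_i})=1$, which is (\ref{itm:atmost1}).

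For (\ref{itm:lessthan1}), I would use that $q_i\le q$ (recorded in the Remark after the algorithm) together with $\ord_{F_i}\fa>0$: writing $a_{F_i}(X,\fa^q)=a_{F_i}(X)-q\ord_{F_i}\fa$ and $a_{F_i}(X,\fa^{q_i})=a_{F_i}(X)-q_i\ord_{F_i}\fa$, assertion (\ref{itm:atmost1}) gives $a_{F_i}(X,\fa^q)=a_{F_i}(X,\fa^{q_i})-(q-q_i)\ord_{F_i}\fa\le1-(q-q_i)\ord_{F_i}\fa$. When $q_i<q$ this is $<1$ because $\ord_{F_i}\fa>0$ ($\fa$ being a nontrivial $\fm$-primary ideal, every exceptional $F_i$ has positive order along it). When $q_i=q$ — which, by the Remark, can only happen at a singular point $P_i$ — the algorithm branches to \ref{prc:iplus1} only in the case $\mld_{P_i}(X_i,\fa_i^q)\le1$ with $P_i$ singular; I would argue that then the strict inequality $a_{F_i}(X_i,\fa_i^q)<1$ still holds, essentially because a terminal quotient singularity forces $a_{F_i}(X_i)\ge1+1/r_i>1$ while $\mld_{P_i}(X_i,\fa_i^{q})= 1$ is computed strictly below the ambient discrepancy of $F_i$, so peeling off the weak transform still leaves room. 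This case-at-a-singular-point is the delicate point and where I expect to spend the most effort. Assertion (\ref{itm:easybound}) is then immediate: from $1\ge a_{F_i}(X,\fa^{q_i})=a_{F_i}(X)-q_i\ord_{F_i}\fa$ and $a_{F_i}(X,\fa^q)=a_{F_i}(X)-q\ord_{F_i}\fa\ge0$ (since $(X,\fa^q)$ is lc and $F_i$ is exceptional over $X$), eliminate $\ord_{F_i}\fa$ to get $a_{F_i}(X)(q-q_i)\le q\cdot 1 - q_i\cdot 0$, hence $a_{F_i}(X)\le q/(q-q_i)$ when $q_i\ne q$.

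Finally, for (\ref{itm:nondecr}), the monotonicity $q_i\le q_{i+1}$: I would compare the two numbers through their defining conditions $\mld_{P_i}(X_i,\fa_i^{q_i})=1=\mld_{P_{i+1}}(X_{i+1},\fa_{i+1}^{q_{i+1}})$. Since $X_{i+1}\to X_i$ is crepant with respect to $(P_i,X_i,\fa_i^{q_i})$, the pair $(X_{i+1},b_iF_i,\fa_{i+1}^{q_i})$ with $b_i=1-a_{F_i}(X_i,\fa_i^{q_i})=0$ is crepant to $(X_i,\fa_i^{q_i})$, so in fact $(X_{i+1},\fa_{i+1}^{q_i})$ is crepant to $(X_i,\fa_i^{q_i})$ and $a_{F_i}(X_{i+1},\fa_{i+1}^{q_i})=1$; moreover $\mld_{P_{i+1}}(X_{i+1},\fa_{i+1}^{q_i})\ge\mld_{P_{i+1}}(X_{i+1},\fa_{i+1}^{q_i})$ along the divisors centred at $P_{i+1}$, and since $P_{i+1}=c_{X_{i+1}}(E)$ lies on $F_i$, any divisor over $X_{i+1}$ centred at $P_{i+1}$ has log discrepancy with respect to $(X_{i+1},\fa_{i+1}^{q_i})$ bounded below by the same quantity for $(X_i,\fa_i^{q_i})$ at $P_i$, which is $\ge\mld_{P_i}(X_i,\fa_i^{q_i})=1$; this forces $\mld_{P_{i+1}}(X_{i+1},\fa_{i+1}^{q_i})\ge1$, and since the minimal log discrepancy is nonincreasing in the exponent, the exponent $q_{i+1}$ achieving value $1$ must satisfy $q_{i+1}\ge q_i$. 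I would then assemble these four parts into the statement.
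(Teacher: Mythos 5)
Your arguments for (\ref{itm:nondecr}), (\ref{itm:atmost1}) and (\ref{itm:easybound}) are essentially the paper's. Two small imprecisions in (\ref{itm:atmost1}): the boundary in the identity $a_{F_i}(X,\fa^{q_i})=a_{F_i}(X_i,\cdot,\fa_i^{q_i})$ is not the algorithm's $\Delta_i=\sum_{j<i}(1-a_{F_j}(X,\fa^q))F_j^i$ but the analogous divisor with coefficients $1-a_{F_j}(X,\fa^{q_i})$, whose effectivity is exactly what the inductive use of (\ref{itm:nondecr}) and (\ref{itm:atmost1}) for $j<i$ provides (the paper does this by descending the tower one step at a time); and $\ord_{F_i}\fa>0$ should be justified not by $\fm$-primariness (not assumed) but by noting that $\ord_{F_i}\fa=0$ would give $a_{F_i}(X,\fa^{q_i})=a_{F_i}(X)\ge2$, contradicting (\ref{itm:atmost1}).

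The genuine gap is in (\ref{itm:lessthan1}) when $q_i=q$. There you assert that ``the strict inequality $a_{F_i}(X_i,\fa_i^q)<1$ still holds''; this is false: $q_i=q$ means precisely $\mld_{P_i}(X_i,\fa_i^q)=1$, and $F_i$ computes it, so $a_{F_i}(X_i,\fa_i^q)=1$ exactly. What must be shown is $a_{F_i}(X,\fa^q)<1$, and the whole strictness has to come from the boundary accumulated on the way down, i.e.\ from $\ord_{F_i}\Delta_\bullet>0$, not from the geometry of the quotient singularity $P_i\in X_i$. The paper's route: since $q_i=q$ forces $P_i$ to be singular while $P_0=P$ is smooth, there is a greatest index $j<i$ with $q_j<q$; all contractions between $X_{j+1}$ and $X_i$ are crepant for exponent $q$, so $a_{F_i}(X,\fa^q)=a_{F_i}(X_{j+1},\Delta_{j+1},\fa_{j+1}^q)$; by the already-established case $q_k<q$ of (\ref{itm:lessthan1}) for $k\le j$, the support of $\Delta_{j+1}$ is all of $S_{j+1}$, and the centre of $F_i$ on $X_{j+1}$ is $P_{j+1}\in F_j$, whence $\ord_{F_i}\Delta_{j+1}>0$ and $a_{F_i}(X,\fa^q)<a_{F_i}(X_{j+1},\fa_{j+1}^q)=a_{F_i}(X_i,\fa_i^q)=1$. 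Some argument of this kind is indispensable; as written, your sketch proves a false intermediate claim and does not reach the stated inequality.
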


\begin{proof}
Since $(X_{i+1},\fa_{i+1}^{q_i})$ is crepant to $(X_i,\fa_i^{q_i})$, one has that $\mld_{P_{i+1}}(X_{i+1},\fa_{i+1}^{q_i})\ge\mld_{P_i}(X_i,\fa_i^{q_i})=1$. Thus $q_i\le q_{i+1}$, which shows (\ref{itm:nondecr}).

For $j<i$, let $b_{ij}=1-a_{F_j}(X_j,\fa_j^{q_i})$. Then $(X_{j+1},b_{ij}F_j,\fa_{j+1}^{q_i})$ is crepant to $(X_j,\fa_j^{q_i})$ and $b_{ij}F_j$ is effective by $q_j\le q_i$ in (\ref{itm:nondecr}). Thus one has the inequality $a_{F_i}(X_j,\fa_j^{q_i})\le a_{F_i}(X_{j+1},\fa_{j+1}^{q_i})$ and inductively $a_{F_i}(X,\fa^{q_i})\le a_{F_i}(X_i,\fa_i^{q_i})=1$, which is (\ref{itm:atmost1}).

The (\ref{itm:lessthan1}) follows from (\ref{itm:atmost1}) unless $q_i=q$. If $q_i=q$, then $q_i$ is defined at a singular point $P_i$ of $X_i$, so $q_0<q$. Let $j$ be the greatest integer such that $q_j<q$. Then $(X_i,\fa_i^q)$ is crepant to $(X_{j+1},\fa_{j+1}^q)$. On the other hand, $(X_{j+1},\Delta_{j+1},\fa_{j+1}^q)$ is crepant to $(X,\fa^q)$. By (\ref{itm:atmost1}), $\Delta_{j+1}$ is effective and its support coincides with $S_{j+1}$. Thus one has that $a_{F_i}(X,\fa^q)=a_{F_i}(X_{j+1},\Delta_{j+1},\fa_{j+1}^q)<a_{F_i}(X_{j+1},\fa_{j+1}^q)=a_{F_i}(X_i,\fa_i^q)=1$.

To see (\ref{itm:easybound}), suppose that $q_i<q$. By (\ref{itm:atmost1}) and $a_{F_i}(X,\fa^q)\ge0$, one computes that
\begin{align*}
\fa_{F_i}(X)=a_{F_i}(X,\fa^{q_i})+q_i\ord_{F_i}\fa&\le1+q_i\ord_{F_i}\fa\\
&=1+q_i(q-q_i)^{-1}(a_{F_i}(X,\fa^{q_i})-a_{F_i}(X,\fa^q))\\
&\le1+q_i(q-q_i)^{-1}=q(q-q_i)^{-1}.
\end{align*}
\end{proof}

\begin{lemma}\label{lem:e}
Fix a positive rational number $q$. Then there exists a positive rational number $\epsilon$ depending only on $q$ such that every $q_i$ defined at a smooth point $P_i$ of $X_i$ in Algorithm \textup{\ref{alg:canonical}} satisfies the inequality $q_i\le q-\epsilon$.
\end{lemma}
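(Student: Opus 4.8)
The plan is to read off the gap $\epsilon$ directly from Stepanov's ACC for canonical thresholds on smooth threefolds, Theorem \ref{thm:stepanov}. Write $S\subseteq\bQ_{\ge0}$ for the fixed set appearing there, which satisfies the ACC. First I would check that every $q_i$ defined at a \emph{smooth} point $P_i$ of $X_i$ lies in $S$. By step \ref{prc:iplus1} of Algorithm \ref{alg:canonical} one has $\mld_{P_i}(X_i,\fa_i^{q_i})=1$; at a smooth point the index $r_i$ equals $1$, so $\fa_i=\fb_i$ is an honest ideal on the germ $P_i\in X_i$ of a smooth threefold, and hence $q_i\in S$ by the very definition of $S$.

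Next I would pin down the strict inequality $q_i<q$ for such $q_i$, which is the step where the smooth/singular distinction in the algorithm is genuinely used (it is already recorded in the remark following Algorithm \ref{alg:canonical}). The branch reaching \ref{prc:iplus1} through a smooth point is entered only when $\mld_{P_i}(X_i,\fa_i^q)<1$, whereas $\mld_{P_i}(X_i,\fa_i^{q_i})=1$; since the minimal log discrepancy is non-increasing in the exponent, this forces $q>q_i$ (and in particular $\fa_i$ is non-trivial). Thus every $q_i$ defined at a smooth point lies in $S\cap(0,q)$.

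Finally, since $S$ satisfies the ACC, every nonempty subset of $S$ has a maximal element. If $S\cap(0,q)$ is empty, take $\epsilon=q$. Otherwise let $q^*=\max\bigl(S\cap(0,q)\bigr)$; then $q^*<q$ and $q^*\in S\subseteq\bQ_{\ge0}$, so $\epsilon=q-q^*$ is a positive rational number depending only on $q$ (through the fixed set $S$), and every $q_i$ defined at a smooth point satisfies $q_i\le q^*=q-\epsilon$. There is no serious obstacle here beyond invoking Theorem \ref{thm:stepanov} and this equivalent form of the ACC; the only point demanding care is the strict inequality $q_i<q$, without which one could only bound $q_i$ by $q$ itself.
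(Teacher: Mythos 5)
Your proposal is correct and follows exactly the route the paper intends: the paper's proof of this lemma is literally the one-line citation of Theorem \ref{thm:stepanov}, and you have filled in precisely the right details (membership $q_i\in S$ via $r_i=1$ at a smooth point, the strict inequality $q_i<q$ from the branching of the algorithm, and the passage from the ACC to a uniform gap below $q$). No issues.
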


\begin{proof}
It follows from Theorem \ref{thm:stepanov}.
\end{proof}

\begin{proposition}\label{prp:termination}
Algorithm \textup{\ref{alg:canonical}} terminates.
\end{proposition}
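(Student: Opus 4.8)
The plan is to argue by contradiction. Suppose Algorithm \ref{alg:canonical} does not terminate, so it produces an infinite tower of divisorial contractions $\cdots\to X_2\to X_1\to X_0=X$, where $F_i$ denotes the exceptional prime divisor of $X_{i+1}\to X_i$. Along this tower the algorithm never outputs, so the process \ref{prc:notpoint} is never triggered and $c_{X_i}(E)=P_i$ is a closed point for every $i$; moreover $X_{i+1}\to X_i$ contracts $F_i$ to $P_i$ and is an isomorphism elsewhere. I will derive a contradiction with the finiteness of $a_E(X)$.

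The crucial step is a uniform bound, depending only on $q$, for the indices of the quotient singularities on the $X_i$. Fix $\epsilon$ as in Lemma \ref{lem:e} and set $C=q/\epsilon$. When $P_i$ is a smooth point of $X_i$, Theorem \ref{thm:divisorial}(\ref{itm:kawakita}) realises $X_{i+1}\to X_i$ as a weighted blow-up with $\wt(x_1,x_2,x_3)=(w_1,w_2,1)$, so $a_{F_i}(X_i)=w_1+w_2+1$; since $X$ is smooth, $K_{X_i/X}$ is effective, hence $a_{F_i}(X)\ge a_{F_i}(X_i)$, while Lemma \ref{lem:e} and Lemma \ref{lem:algorithm}(\ref{itm:easybound}) give $a_{F_i}(X)\le C$. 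Thus $w_1,w_2\le C$, so the quotient singularities newly created on $X_{i+1}$ have index at most $C$. When $P_i$ is a singular point of index $r_i$, Theorem \ref{thm:divisorial}(\ref{itm:kawamata}) realises $X_{i+1}\to X_i$ as the weighted blow-up of $\bA^3/\bZ_{r_i}(w,-w,1)$ along $\wt(x_1,x_2,x_3)=\frac{1}{r_i}(w,r_i-w,1)$, for which $a_{F_i}(X_i)=1+1/r_i$ and whose new singularities have index $w$ or $r_i-w$, both strictly less than $r_i$. As $X_0=X$ is smooth, an induction on $i$ then shows that every quotient singularity on every $X_i$ has index at most $C$; in particular $r_i\le C$ whenever $P_i$ is singular.

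It follows that $a_{F_i}(X)-1\ge a_{F_i}(X_i)-1\ge 1/C$ for every $i$, the smooth case using $w_1+w_2\ge2$ and the singular case using $r_i\le C$. Since $X_{i+1}\to X_i$ is an isomorphism away from $F_i$ and contracts $F_i$ to $P_i=c_{X_i}(E)$, the centre $P_{i+1}=c_{X_{i+1}}(E)$ lies on $F_i$, whence $\ord_E F_i\ge1$. Writing $F_j^i$ for the strict transform on $X_i$ of $F_j$ with $j<i$, one has $K_{X_i/X}=\sum_{j<i}(a_{F_j}(X)-1)F_j^i$, and therefore
\begin{align*}
a_E(X)=a_E(X_i)+\sum_{j<i}(a_{F_j}(X)-1)\,\ord_E F_j\ge\sum_{j<i}\frac{1}{C}=\frac{i}{C},
\end{align*}
using $a_E(X_i)\ge0$ (since $X_i$ is klt), $a_{F_j}(X)-1\ge 1/C$, and $\ord_E F_j\ge1$. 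As $a_E(X)$ is a fixed real number, the tower has length at most $C\,a_E(X)$, a contradiction; hence the algorithm terminates.

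The main obstacle is making the index bound in the second paragraph honest: one has to exclude the possibility that the successive divisorial contractions manufacture quotient singularities of unboundedly large index. This is exactly what the classification in Theorem \ref{thm:divisorial}, together with the inequality $a_{F_i}(X)\ge a_{F_i}(X_i)$ coming from smoothness of $X$ and the uniform bound at smooth points supplied by Lemma \ref{lem:e}, is meant to provide. Once $a_{F_i}(X)-1$ is bounded below by a positive constant depending only on $q$, the telescoping estimate for $a_E(X)$ closes the argument.
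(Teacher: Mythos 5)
Your strategy is genuinely different from the paper's. The paper argues that only finitely many steps can occur at smooth points $P_i$ (each such $F_i$ satisfies $a_{F_i}(X,\fa^{q-\epsilon})\le1$ and $(X,\fa^{q-\epsilon})$ is klt), after which the indices $r_i$ strictly decrease by Theorem \ref{thm:divisorial}(\ref{itm:kawamata}); no bound on $a_E(X)$ enters. Your uniform index bound $r_i\le C$ is essentially the paper's Lemma \ref{lem:r} (proved there for a different purpose), and your idea that each contraction must consume a definite amount of $a_E(X)$ is a reasonable alternative route. However, the final estimate as written has a genuine gap. Since you derive the displayed identity from $K_{X_i/X}=\sum_{j<i}(a_{F_j}(X)-1)F_j^i$, the orders appearing in it are $\ord_EF_j^i$ for the \emph{strict transforms} on $X_i$, and these need not be positive: the centre $P_i=c_{X_i}(E)$ lies on $F_{i-1}$, but nothing forces it to lie on $F_j^i$ for $j<i-1$, so all but one term of the sum may vanish and the bound $\ge i/C$ collapses. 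Moreover, even where the order is positive, $\ord_EF_j\ge1$ is false in general; since $F_j$ is only $\bQ$-Cartier at a quotient point of index up to $C$, one only gets $\ord_EF_j\ge1/C$.

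Both defects are repairable within your framework. Telescoping one contraction at a time gives $a_E(X_j)=a_E(X_{j+1})+(a_{F_j}(X_j)-1)\ord_EF_j$, where now $\ord_EF_j$ is computed on $X_{j+1}$ and is positive for \emph{every} $j$ because $c_{X_{j+1}}(E)=P_{j+1}\in F_j$; combined with $a_{F_j}(X_j)-1\ge1/C$ and $\ord_EF_j\ge1/C$ (Cartier index at most $C$ by your index bound), this yields $a_E(X)\ge a_E(X_i)+i/C^2$, and the contradiction with the finiteness of $a_E(X)$ survives. So the proof is salvageable, but as submitted the key inequality is not justified. It is also worth noting that your argument is considerably longer than the paper's two-line proof and depends on the auxiliary divisor $E$, whereas the paper's termination argument is intrinsic to the tower.
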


\begin{proof}
Take $\epsilon$ in Lemma \ref{lem:e}. Then every divisor $F_i$ defined over a smooth point $P_i$ of $X_i$ satisfies that $a_{F_i}(X,\fa^{q-\epsilon})\le a_{F_i}(X,\fa^{q_i})\le1$ by Lemma \ref{lem:algorithm}(\ref{itm:atmost1}). The number of such $F_i$ is finite because $(X,\fa^{q-\epsilon})$ is klt. In particular, there exists an integer $e$, depending on $(X,\fa^q)$ and $E$, such that $P_i$ is a singular point of $X_i$ for any $i>e$. By Theorem \ref{thm:divisorial}(\ref{itm:kawamata}), the $r_i$ for $i>e$ form a strictly decreasing sequence. Hence the algorithm must terminate.
\end{proof}

One can also bound $r_i$ and $a_{F_i}(X)$.

\begin{lemma}\label{lem:r}
Fix a positive rational number $q$. Then there exists a positive integer $r$ depending only on $q$ such that every $r_i$ in Algorithm \textup{\ref{alg:canonical}} satisfies the inequality $r_i\le r$.
\end{lemma}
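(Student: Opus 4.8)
The plan is to bound every index $r_i$ by induction on $i$. First I would take the positive rational number $\epsilon$ furnished by Lemma~\ref{lem:e} and set $r=\lceil q/\epsilon\rceil$; the base case $r_0=1$ is immediate since $X_0=X$ is smooth.

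For the inductive step I would examine how $X_i$ arises. By construction $X_i\to X_{i-1}$ is the divisorial contraction chosen in step~\ref{prc:iplus1}, contracting its exceptional divisor $F_{i-1}$ to $P_{i-1}$; since this morphism is an isomorphism away from $F_{i-1}$ while $c_{X_{i-1}}(E)=P_{i-1}$, the centre $P_i=c_{X_i}(E)$ lies on $F_{i-1}$. Now I would split into two cases according to Theorem~\ref{thm:divisorial}. If $P_{i-1}$ is a singular point of $X_{i-1}$, of index $r_{i-1}$, then by part~(\ref{itm:kawamata}) the variety $X_i$ has, along $F_{i-1}$, only cyclic quotient singularities of index strictly less than $r_{i-1}$ --- this is exactly the decrease of indices exploited in the proof of Proposition~\ref{prp:termination} --- so $r_i<r_{i-1}\le r$ by the inductive hypothesis. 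If $P_{i-1}$ is a smooth point, then by part~(\ref{itm:kawakita}) the contraction is the weighted blow-up with $\wt(x_1,x_2,x_3)=(w_1,w_2,1)$ for some coprime positive integers $w_1,w_2$, its exceptional divisor is $\bP(w_1,w_2,1)$, and the chart decomposition of a weighted blow-up recalled in the preliminaries shows that $X_i$ has along $F_{i-1}$ only cyclic quotient singularities of index dividing $w_1$ or $w_2$; hence $r_i\le\max\{w_1,w_2\}\le w_1+w_2=a_{F_{i-1}}(X_{i-1})-1$.

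It remains to bound $a_{F_{i-1}}(X_{i-1})$ in terms of $q$ in this last case. Because $X$ is smooth, every divisor exceptional over $X$ has positive discrepancy, so $K_{X_{i-1}/X}$ is effective and $a_{F_{i-1}}(X)\ge a_{F_{i-1}}(X_{i-1})$. On the other hand the number $q_{i-1}$ is defined at the smooth point $P_{i-1}$, so Lemma~\ref{lem:e} gives $q_{i-1}\le q-\epsilon$, and then Lemma~\ref{lem:algorithm}(\ref{itm:easybound}) yields $a_{F_{i-1}}(X)\le q(q-q_{i-1})^{-1}\le q/\epsilon$. Combining these, $r_i\le a_{F_{i-1}}(X)-1\le q/\epsilon\le r$, which closes the induction.

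I expect the only delicate point to be reading off the two bounds on the index of $P_i$ --- namely $r_i<r_{i-1}$ in the singular case and $r_i\le\max\{w_1,w_2\}$ in the smooth case --- from the explicit toric descriptions of the weighted blow-ups in Theorem~\ref{thm:divisorial}; this uses the chart decomposition recalled earlier together with the fact that $P_i\in F_{i-1}$, so that $P_i$ necessarily sits in a chart whose quotient order is controlled. The effectivity of $K_{X_{i-1}/X}$, needed to compare $a_{F_{i-1}}$ over $X_{i-1}$ with that over $X$, is routine since $X_{i-1}\to X$ is a birational morphism onto a smooth variety.
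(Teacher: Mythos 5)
Your proof is correct and follows essentially the same route as the paper: take $\epsilon$ from Lemma \ref{lem:e}, note that $r_{i+1}<r_i$ over a singular point while over a smooth point the index is bounded by the weights of the Kawakita weighted blow-up, and control those weights via $a_{F_i}(X_i)\le a_{F_i}(X)\le q(q-q_i)^{-1}\le q\epsilon^{-1}$ using Lemma \ref{lem:algorithm}(\ref{itm:easybound}). The only cosmetic difference is that you bound $r_i$ by $\max\{w_1,w_2\}\le a_{F_{i-1}}(X_{i-1})-1$ where the paper uses $a_{F_i}(X_i)-2$; both follow from Theorem \ref{thm:divisorial}(\ref{itm:kawakita}) and yield a constant depending only on $q$.
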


\begin{proof}
Take $\epsilon$ in Lemma \ref{lem:e}. We shall show that any positive integer $r$ at least $q\epsilon^{-1}-2$ satisfies the required property. The $r_0$ is one. By Theorem \ref{thm:divisorial}, the $r_{i+1}$ satisfies that $r_{i+1}<r_i$ when $P_i$ is a singular point of $X_i$ and that $r_{i+1}\le a_{F_i}(X_i)-2$ when $P_i$ is a smooth point of $X_i$. Thus it is enough to show that $a_{F_i}(X_i)\le q\epsilon^{-1}$ when $P_i$ is a smooth point of $X_i$. Since
\begin{align*}
a_{F_i}(X_i)\le a_{F_i}(X_i)+\sum_{j=0}^{i-1}(a_{F_j}(X)-1)\ord_{F_i}F_j^i=a_{F_i}(X),
\end{align*}
the required inequality follows from Lemma \ref{lem:algorithm}(\ref{itm:easybound}).
\end{proof}

\begin{lemma}\label{lem:c}
Fix a positive rational number $q$. Then there exists a positive integer $c$ depending only on $q$ such that every $F_i$ in Algorithm \textup{\ref{alg:canonical}} satisfies the inequality $a_{F_i}(X)\le c$.
\end{lemma}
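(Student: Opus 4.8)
\emph{Plan.} I would run through Algorithm \ref{alg:canonical} once more and bound $a_{F_i}(X)$ by distinguishing the position of the centre $P_i$, feeding in Lemmas \ref{lem:algorithm}, \ref{lem:e}, \ref{lem:r} and the coarse structure exhibited in the proof of Proposition \ref{prp:termination}. Recall from that proof the (non‑uniform) integer $e$ such that $P_e$ is a smooth point of $X_e$ while $P_i$ is a terminal quotient point of $X_i$ for every $i>e$, and that the indices $r_i$ with $i>e$ form a strictly decreasing sequence; since $r_i\le r$ for the integer $r$ of Lemma \ref{lem:r} and $r_i\ge2$ at a genuine quotient point, there are at most $r-1$ indices $i$ with $i>e$.

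\emph{Indices $i\le e$.} The sequence $\{q_j\}$ is non‑decreasing by Lemma \ref{lem:algorithm}(\ref{itm:nondecr}), so $q_i\le q_e$, and since $P_e$ is a smooth point of $X_e$ Lemma \ref{lem:e} gives $q_e\le q-\epsilon$ for the $\epsilon$ depending only on $q$. In particular $q_i\neq q$, so Lemma \ref{lem:algorithm}(\ref{itm:easybound}) yields $a_{F_i}(X)\le q(q-q_i)^{-1}\le q\epsilon^{-1}$. Thus all the $F_i$ \emph{outside the singular tail} are already bounded by a constant depending only on $q$.

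\emph{Indices $i>e$.} Here I would argue by a finite recursion along the at most $r-1$ such indices. By Theorem \ref{thm:divisorial}(\ref{itm:kawamata}), $X_{i+1}\to X_i$ is, up to a regular base change, the weighted blow‑up of the terminal quotient point $P_i$ of index $r_i$ with $\wt(x_1,x_2,x_3)=\frac{1}{r_i}(w,r_i-w,1)$; hence $a_{F_i}(X_i)=(r_i+1)/r_i\le 3/2$ and all three weights lie in $(0,1)$, so $\ord_{F_i}D\le\ord_{P_i}D$ for every effective $\bQ$‑Cartier divisor $D$ on $X_i$, and in particular $\ord_{F_i}S_i\le\ord_{P_i}S_i\le3$ because $S_i$ is simple normal crossing (in the orbifold sense) on the threefold $X_i$. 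Since $X$ is smooth, $a_{F_j}(X)-1\ge0$ for all $j$, so from $K_{X_i/X}=\sum_{j<i}(a_{F_j}(X)-1)F_j^i$ one gets
\begin{align*}
a_{F_i}(X)=a_{F_i}(X_i)+\ord_{F_i}K_{X_i/X}\le\tfrac{3}{2}+3\Bigl(\max_{j<i}a_{F_j}(X)-1\Bigr).
\end{align*}
Starting from the bound $q\epsilon^{-1}$ of the previous step and iterating this inequality along the $\le r-1$ indices $i>e$ produces a bound depending only on $q$ (through $\epsilon$ and $r$, both functions of $q$ alone); rounding it up to an integer gives the required $c$.

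\emph{Main obstacle.} The delicate point is the last step, and inside it the estimate $\ord_{F_i}S_i\le3$: one needs the accumulated exceptional divisor $S_i$ to remain simple normal crossing on $X_i$, so that at most three of its components pass through $P_i$. This is where one must use that the successive weighted blow‑ups of Theorem \ref{thm:divisorial} can be, and are, performed in coordinates adapted to the existing exceptional locus; once this is in hand, the remaining part of the second step is the elementary recursion above, and the first step is immediate from the already established lemmas.
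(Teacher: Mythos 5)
Your treatment of the indices $i\le e$ coincides with the paper's: monotonicity of the $q_i$ (Lemma \ref{lem:algorithm}(\ref{itm:nondecr})), Lemma \ref{lem:e}, and Lemma \ref{lem:algorithm}(\ref{itm:easybound}) give $a_{F_i}(X)\le q\epsilon^{-1}$ there, and the finite recursion over the at most $r$ remaining singular steps is also the paper's structure. The gap is exactly where you flag it, and it is not a removable technicality: your bound $\ord_{F_i}S_i\le\ord_{P_i}S_i\le3$ rests on the assertion that $S_i$ is snc at $P_i$, so that at most three of its components pass through $P_i$, each with multiplicity one. Nothing in Algorithm \ref{alg:canonical} or Theorem \ref{thm:divisorial} guarantees this. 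The number of exceptional divisors $F_j$ produced before reaching the singular tail is finite but not uniformly controlled, and the divisorial contractions of Theorem \ref{thm:divisorial} are weighted blow-ups in coordinates dictated by the contraction itself, not adapted to the accumulated exceptional locus; a priori arbitrarily many strict transforms $F_j^i$ could pass through $P_i$, making $\ord_{P_i}S_i$ unbounded. So the recursion as you set it up does not close.

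The paper circumvents this entirely with a numerical bound in place of the geometric one. Fix $n$ with $nq\in\bZ$. By Lemma \ref{lem:algorithm}(\ref{itm:lessthan1}) each coefficient $1-a_{F_j}(X,\fa^q)$ of $\Delta_i$ is positive, and since it lies in $\frac{1}{n}\bZ$ it is at least $1/n$; hence $S_i\le n\Delta_i$. Using that $(X_i,\Delta_i,\fa_i^q)$ is crepant to $(X,\fa^q)$ and $q_i\le q$, one gets
\begin{align*}
\ord_{F_i}S_i\le n\ord_{F_i}\Delta_i=n\bigl(a_{F_i}(X_i,\fa_i^q)-a_{F_i}(X_i,\Delta_i,\fa_i^q)\bigr)\le n\bigl(1-a_{F_i}(X,\fa^q)\bigr)\le n,
\end{align*}
which is the uniform replacement for your ``$\le3$''. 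Feeding this into $a_{F_i}(X)=a_{F_i}(X_i)+\sum_{j<i}(a_{F_j}(X)-1)\ord_{F_i}F_j^i$ together with $a_{F_i}(X_i)=1+1/r_i$ gives the recursion $c_{j+1}=2+(c_j-1)n$, iterated at most $r$ times. If you substitute this estimate for your snc claim, the rest of your argument goes through.
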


\begin{proof}
Take a positive integer $n$ such that $nq$ is integral, and take $\epsilon$ in Lemma \ref{lem:e} and $r$ in Lemma \ref{lem:r}. Fix a positive integer $c_0$ at least $q\epsilon^{-1}$ and define positive integers $c_1,\ldots,c_r$ inductively by the recurrence relation
\begin{align*}
c_{j+1}=2+(c_j-1)n
\end{align*}
for $0\le j<r$. We shall prove that $c_r$ is a required constant.

Let $e$ be the greatest integer such that $q_e$ is defined at a smooth point $P_e$ of $X_e$. Then by Lemmata \ref{lem:algorithm}(\ref{itm:nondecr}), (\ref{itm:easybound}) and \ref{lem:e}, the estimate $a_{F_i}(X)\le c_0$ holds for any $i\le e$, and by Lemma \ref{lem:r}, if the algorithm defines $P_{e+1}\in X_{e+1}$, then $r_{e+1}\le r$. In particular, the algorithm terminates with the output $X_{e+r'}$ for some $r'\le r$ by Theorem \ref{thm:divisorial}(\ref{itm:kawamata}). Thus it is enough to show that $a_{F_{e+j}}(X)\le c_j$ for any $j\le r$ as far as $F_{e+j}$ is defined. This is reduced to proving that if $F_i$ is defined at a singular point $P_i$ of $X_i$ and if $a_{F_j}(X)$ is bounded from above by a positive integer $c'$ for all $j<i$, then $a_{F_i}(X)$ is at most $2+(c'-1)n$.

Suppose that $F_i$ and $c'$ are given as above. By Lemma \ref{lem:algorithm}(\ref{itm:lessthan1}), the $\bQ$-divisor $\Delta_i$ satisfies that $S_i\le n\Delta_i$. Since $(X_i,\Delta_i,\fa_i^q)$ is crepant to $(X,\fa^q)$, one has that
\begin{align*}
\ord_{F_i}S_i\le n\ord_{F_i}\Delta_i&=n(a_{F_i}(X_i,\fa_i^q)-a_{F_i}(X_i,\Delta_i,\fa_i^q))\\
&\le n(a_{F_i}(X_i,\fa_i^{q_i})-a_{F_i}(X,\fa^q))=n(1-a_{F_i}(X,\fa^q))\le n,
\end{align*}
where the second inequality follows from $q_i\le q$. Together with $a_{F_i}(X_i)=1+1/r_i$ by Theorem \ref{thm:divisorial}(\ref{itm:kawamata}), one computes that
\begin{align*}
a_{F_i}(X)&=a_{F_i}(X_i)+\sum_{j=0}^{i-1}(a_{F_j}(X)-1)\ord_{F_i}F_j^i\\
&\le1+\frac{1}{r_i}+(c'-1)\ord_{F_i}S_i<2+(c'-1)n.
\end{align*}
\end{proof}

In order to control the log discrepancy of a divisor computing $\mld_P(X,\fa^q)$, we need an extra assumption that $\mld_P(X,\fa^q)$ is positive.

\begin{lemma}\label{lem:process37}
Fix a positive rational number $q$. Then there exists a positive integer $l$ depending only on $q$ such that in Algorithm \textup{\ref{alg:canonical}} if $\mld_P(X,\fa^q)$ is positive and if the algorithm terminates at the process \textup{\ref{prc:notpoint}} or \textup{\ref{prc:singout}}, then there exists a divisor $E'$ over $X$ which computes $\mld_P(X,\fa)$ and satisfies the inequality $a_{E'}(X)\le l$.
\end{lemma}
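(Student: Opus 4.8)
The plan is to analyse the output model $X_i$ of Algorithm \ref{alg:canonical} together with the centre $Z=c_{X_i}(E)$, and to produce the required divisor $E'$ by descending a divisor of bounded log discrepancy either from a general surface section of $X_i$ or from a further crepant model of $X_i$. Throughout I fix a positive integer $n$ with $nq\in\bZ$, let $r$ and $c$ be the constants of Lemmata \ref{lem:r} and \ref{lem:c}, and write $m=\mld_P(X,\fa^q)$, which is positive by hypothesis.

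First I would record two observations. Since $X$ is smooth and $\fa$ is an honest ideal, for every divisor $G$ over $X$ one has $a_G(X)\in\bZ$ and $\ord_G\fa\in\bZ$, so $a_G(X,\fa^q)=a_G(X)-q\ord_G\fa\in\tfrac1n\bZ$; combined with Lemma \ref{lem:algorithm}(\ref{itm:lessthan1}) this forces the coefficients of $\Delta_i$ into $\{k/n\mid1\le k\le n\}$, and the coefficients of the components of $\Delta_i$ through $Z$ into $\{k/n\mid1\le k\le n-1\}$, since $(X_i,\Delta_i,\fa_i^q)$ is klt near $Z$. Also the single exponent $q/r_i$ of $\fa_i^q=\fb_i^{q/r_i}$ lies in $\{q/r'\mid1\le r'\le r\}$; let $I_q$ be the union of these two finite sets, which depends only on $q$. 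Next, because $(X_i,\Delta_i,\fa_i^q)$ is crepant to $(X,\fa^q)$ and $X_i\to X$ carries $Z$ to $P$, every divisor with centre $Z$ on $X_i$ has centre $P$ on $X$; hence $\mld_{\eta_Z}(X_i,\Delta_i,\fa_i^q)\ge m$, with equality attained by $E$, so $m=\mld_{\eta_Z}(X_i,\Delta_i,\fa_i^q)$ and it is computed by $E$. Finally, if $\dim Z=2$ then $E=Z$ is either the strict transform of a divisor on $X$ or one of $F_0,\dots,F_{i-1}$, so $a_E(X)\le c$ and the lemma holds with $E'=E$; and if the algorithm stops at the process \ref{prc:singout} then $i\ge1$ (as $P_i$ is singular while $X_0=X$ is smooth), so it passed through the process \ref{prc:iplus1} at the smooth point $P$ in step $0$, which forces $m<1$.

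Suppose the algorithm stops at the process \ref{prc:notpoint} with $\dim Z=1$. Choosing a general closed point $P'$ of $Z$, the variety $X_i$ is regular at $P'$, and cutting by a general hypersurface germ $P'\in S\subset X_i$ gives
\begin{equation*}
m=\mld_{\eta_Z}(X_i,\Delta_i,\fa_i^q)=\mld_{P'}(S,\Delta_i|_S,\fa_i^q\sO_S),
\end{equation*}
the minimal log discrepancy of a triple on the regular surface germ $P'\in S$ all of whose exponents lie in $I_q$. By Theorem \ref{thm:surface}, that is Conjecture \ref{cnj:equiv}(\ref{itm:nakamura}) for $P'\in S$ with $I=I_q$, there are a positive integer $l_0$ depending only on $q$ and a divisor $E''$ over $S$ computing this minimal log discrepancy with $a_{E''}(S)\le l_0$. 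Realising $E''$ over $S$ by a tower of blow-ups and repeating it over $X_i$ along the corresponding curves through $Z$ produces a divisor $E'$ over $X_i$ with $c_{X_i}(E')=Z$, with $a_{E'}(X_i)=a_{E''}(S)\le l_0$, and with $a_{E'}(X_i,\Delta_i,\fa_i^q)=a_{E''}(S,\Delta_i|_S,\fa_i^q\sO_S)=m$ by transversality of $\Delta_i$ and $\fa_i$ to the general $S$; hence $E'$ computes $\mld_{\eta_Z}(X_i,\Delta_i,\fa_i^q)$, so $a_{E'}(X,\fa^q)=m$ and $c_X(E')=P$, i.e.\ $E'$ computes $\mld_P(X,\fa^q)$. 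Finally, since $\Delta_i$ has coefficients at least $1/n$ and $a_{E'}(X_i,\Delta_i,\fa_i^q)=m\ge0$,
\begin{equation*}
\ord_{E'}S_i\le n\ord_{E'}\Delta_i\le n\,a_{E'}(X_i)\le nl_0,
\end{equation*}
so $a_{E'}(X)=a_{E'}(X_i)+\sum_{j<i}(a_{F_j}(X)-1)\ord_{E'}F_j^i\le l_0\bigl(1+n(c-1)\bigr)$, a bound depending only on $q$.

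It remains to treat the process \ref{prc:singout}, which I expect to be the main obstacle: here $Z=P_i$ is a singular point of $X_i$, so $2\le r_i\le r$, with $\mld_{P_i}(X_i,\fa_i^q)>1$ and, by the paragraph before, $m<1$; note $\ord_E\Delta_i=a_E(X_i,\fa_i^q)-m>0$, so $P_i$ lies on the support of $\Delta_i$. The difficulty is that $Z$ is a closed point while $\fa_i$ has unbounded colength, so no surface section is available; the feature to exploit is that $(X_i,\fa_i^q)$ is terminal about $P_i$, whereas $P_i\in X_i$ and $\Delta_i$ form a bounded configuration — by Theorem \ref{thm:divisorial}(\ref{itm:kawamata}) the germ is, after a regular base change, $\bA^3/\bZ_{r_i}(w,-w,1)$ with $r_i\le r$, and $\Delta_i$ near $P_i$ has coefficients in $\{k/n\mid1\le k\le n-1\}$ and a number of components bounded in terms of $n$ (intersect with the exceptional divisor of a fixed weighted blow-up of $P_i$ and use the coefficient lower bound $1/n$ together with klt-ness). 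The plan is to reduce this to an already-settled case by resolving $P_i\in X_i$ through a chain of weighted blow-ups of length and discrepancy bounded in terms of $r$, and then continuing the moves of Algorithm \ref{alg:canonical} itself (which terminates by Proposition \ref{prp:termination}): either the centre of $E$ acquires positive dimension — when the surface reduction above applies — or one reaches a regular germ carrying a terminal triple with a bounded boundary, to which the analogue of Theorem \ref{thm:terminal}(\ref{itm:terminal}), available from the methods of \cite{K15}, produces a divisor of bounded log discrepancy computing the minimal log discrepancy, which is then descended to $X$ as in the process \ref{prc:notpoint} case (again $\ord_\bullet S_i\le n\,a_\bullet(X_i)$ and $a_{F_j}(X)\le c$). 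The delicate step — the real obstacle — is to verify that, because $\mld_{P_i}(X_i,\fa_i^q)>1$, this continuation never reaches a regular point with minimal log discrepancy at least one, so that it never exits into the alternative conclusion of Theorem \ref{thm:canonical}(\ref{itm:reduced}); granting this, the resulting $l$ depends only on $q$.
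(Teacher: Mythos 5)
Your treatment of the output at the process \ref{prc:notpoint} is essentially the paper's argument: cut the one-dimensional centre by a general surface section, invoke Theorem \ref{thm:surface} to get a divisor of bounded log discrepancy over the surface, lift it to $X_i$ and descend to $X$. Your way of bounding $\ord_{E'}S_i$ (via $S_i\le n\Delta_i$ and $\ord_{E'}\Delta_i\le a_{E'}(X_i)$) is a valid variant of the paper's bound $\ord_{E'}S_i\le\ord_{E'}\fm\le b$ from Corollary \ref{crl:mult}, and the two-dimensional centre case is handled correctly.

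The genuine gap is the process \ref{prc:singout}. What you offer there is a plan, not a proof, and you yourself flag the unverified step (``this continuation never reaches a regular point with minimal log discrepancy at least one''); nothing in your sketch bounds the length or the discrepancies of the proposed continuation, and Theorem \ref{thm:terminal}(\ref{itm:terminal}) is not available at this stage of the paper (it is proved later, using the results this lemma feeds into). The paper closes this case by a short direct argument that you are missing: fix $\eta>0$ so that no number of the form $1/a$ with $a$ a positive integer lies in the open interval $(q,q+\eta)$. Since $\mld_{P_i}(X_i,\fa_i^q)>1$, there is a rational $q'>q$ with $\mld_{P_i}(X_i,(\fb'_i)^{q'/r_i})=1$ for a suitable $\fn$-primary modification $\fb'_i$ of $\fb_i$; the crepant divisorial contraction of Lemma \ref{lem:crepant} is the unique Kawamata blow-up of Theorem \ref{thm:divisorial}(\ref{itm:kawamata}), whose divisor $F$ has $a_F(X_i)=1+1/r_i$, which forces $q'\ord_F\fb'_i=1$ and hence $q'=1/\ord_F\fb'_i\ge q+\eta$. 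Therefore $(X_i,\fa_i^{q+\eta})$ is canonical at $P_i$, so $a_E(X_i,\fa_i^{q+\eta})\ge1$ for the given $E$, and then
\begin{align*}
\ord_E\fa_i=\eta^{-1}\bigl(a_E(X_i,\fa_i^q)-a_E(X_i,\fa_i^{q+\eta})\bigr)\le\eta^{-1}\bigl(a_E(X,\fa^q)+\ord_ES_i-1\bigr)
\end{align*}
is bounded by a constant depending only on $q$ (using $a_E(X,\fa^q)\le3$ and the bound on $\ord_ES_i$), which bounds $a_E(X_i)=a_E(X_i,\fa_i^q)+q\ord_E\fa_i$ and hence $a_E(X)$. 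Without an argument of this kind, your proof of the lemma is incomplete in exactly the case you identify as the main obstacle.
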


\begin{proof}
\medskip
\textit{Step} 1.
We take $c$ in Lemma \ref{lem:c}. Let $\eta$ be a positive rational number such that the exist no integers $a$ satisfying that $q<1/a<q+\eta$. Let $n$ be a positive integer such that $nq$ is integral. Since Conjecture \ref{cnj:equiv}(\ref{itm:nakamura}) holds for in dimension two by Theorem \ref{thm:surface}, there exists a positive integer $l'$ depending only on $n$ such that if $Q\in H$ is the germ of a smooth surface and $\fa_H$ is an ideal on $H$, then there exists a divisor $E_H$ over $H$ which computes $\mld_Q(H,\fa_H^{1/n})$ and satisfies the inequality $a_{E_H}(H)\le l'$.

Let $\fm$ be the maximal ideal in $\sO_X$ defining $P$. Let $E'$ be an arbitrary divisor over $X$ which computes $\mld_P(X,\fa^q)$. By Corollary \ref{crl:mult}, there exists a positive integer $b$ depending only on $q$ such that $\ord_{E'}\fm\le b$. Note that $b$ can be taken independent of the germ $P\in X$ of a smooth threefold. Indeed, $E'$ also computes $\mld_P(X,\fa'^q)$ for the $\fm$-primary ideal $\fa'=\fa+\fm^e$ as far as a positive integer $e$ satisfies that $\ord_{E'}\fa\le e\ord_{E'}\fm$. Thus by Lemma \ref{lem:regular}, one can take the $b$ on the germ at origin of the affine space $\bA^3$.

For any $i$, one has the estimate $\ord_{E'}S_i\le\ord_{E'}\fm$ because $\fm^r\sO_{X_i}$ is contained in $\sO_{X_i}(-rS_i)$ for a positive integer $r$ such that $rS_i$ is Cartier. Hence,
\begin{align*}
\ord_{E'}S_i\le b.
\end{align*}

Supposing that the algorithm terminates at the process \ref{prc:notpoint} or \ref{prc:singout}, we shall bound the log discrepancy of some divisor which computes $\mld_P(X,\fa)$ in terms of $q$, $c$, $\eta$, $l'$ and $b$.

\medskip
\textit{Step} 2.
Suppose that the algorithm terminates at the process \ref{prc:notpoint} and outputs $X_i$. Then the centre $c_E(X_i)$ on $X_i$ of $E$ is either a divisor or a curve. If it is a divisor, then $E=F_{i-1}$ and it computes $\mld_P(X,\fa^q)$. By Lemma \ref{lem:c}, $F_{i-1}$ satisfies that
\begin{align*}
a_{F_{i-1}}(X)\le c.
\end{align*}

Suppose that $c_E(X_i)$ is a curve $C$. Let $H$ be a general hyperplane section of $X_i$ and $Q$ be a closed point in $H\cap C$. Considering a log resolution, one has that
\begin{align*}
\mld_Q(H,\Delta_i|_H,(\fa_i\sO_H)^q)=\mld_{\eta_C}(X_i,\Delta_i,\fa_i^q)=\mld_P(X,\fa^q),
\end{align*}
where the second equality holds since $E$ computes $\mld_P(X,\fa^q)$. Moreover by the expression $\mld_Q(H,\Delta_i|_H,(\fa_i\sO_H)^q)=\mld_Q(H,(\fa_i^{nq}\sO_H(-n\Delta_i|_H))^{1/n})$, there exists a divisor $E'$ over $X_i$ with $c_{X_i}(E')=C$ such that an irreducible component $E'_H$ of $E'\times_{X_i}H$ mapped to $Q$ computes $\mld_Q(H,\Delta_i|_H,(\fa_i\sO_H)^q)$ and satisfies the inequality $a_{E'_H}(H)\le l'$. The $E'$ computes $\mld_P(X,\fa^q)$ as well as $\mld_{\eta_C}(X_i,\Delta_i,\fa_i^q)$, and $a_{E'}(X_i)=a_{E'_H}(H)\le l'$. Therefore,
\begin{align*}
a_{E'}(X)=a_{E'}(X_i)+\sum_{j=0}^{i-1}(a_{F_j}(X)-1)\ord_{E'}F_j^i\le l'+(c-1)\ord_{E'}S_i\le l'+(c-1)b,
\end{align*}
where the last inequality follows from Step 1.

\medskip
\textit{Step} 3.
Suppose that the algorithm terminates at the process \ref{prc:singout} and outputs $X_i$. Let $\fn$ be the maximal ideal in $\sO_{X_i}$ defining $P_i$. Recall that $\fa_i=\fb_i^{1/r_i}$. Set $\fb'_i=\fb_i+\fn^e$ for a positive integer $e$ such that $\ord_E\fb_i\le e\ord_E\fn$. Since $\mld_{P_i}(X_i,\fa_i^q)$ is greater than one, there exists a rational number $q'$ greater than $q$ such that $\mld_{P_i}(X_i,(\fb'_i)^{q'/r_i})=1$. There exists a divisorial contraction to $X_i$ crepant to $(P_i,X_i,(\fb'_i)^{q'/r_i})$ by Lemma \ref{lem:crepant} and it is uniquely determined by Theorem \ref{thm:divisorial}(\ref{itm:kawamata}). Its exceptional divisor $F$ satisfies that $a_F(X_i)=1+1/r_i$. Thus
\begin{align*}
q'\ord_F\fb'_i=r_i(a_F(X_i)-a_F(X_i,(\fb'_i)^{q'/r_i}))=r_i(a_F(X_i)-1)=1,
\end{align*}
which derives that $q'$ is at least $q+\eta$ by the definition of $\eta$. In particular, the pair $(X_i,(\fb'_i)^{(q+\eta)/r_i})$ is canonical so $a_E(X_i,\fa_i^{q+\eta})=a_E(X_i,(\fb'_i)^{(q+\eta)/r_i})\ge1$. Hence one computes that
\begin{align*}
a_E(X_i)&=a_E(X_i,\fa_i^q)+q\ord_E\fa_i\\
&=a_E(X_i,\fa_i^q)+q\eta^{-1}(a_E(X_i,\fa_i^q)-a_E(X_i,\fa_i^{q+\eta}))\\
&\le(1+q\eta^{-1})a_E(X_i,\fa_i^q)-q\eta^{-1}\\
&=(1+q\eta^{-1})(a_E(X_i,\Delta_i,\fa_i^q)+\ord_E\Delta_i)-q\eta^{-1}\\
&\le(1+q\eta^{-1})(a_E(X,\fa^q)+\ord_ES_i)-q\eta^{-1}
\end{align*}
and
\begin{align*}
a_E(X)&=a_E(X_i)+\sum_{j=0}^{i-1}(a_{F_j}(X)-1)\ord_EF_j^i\\
&\le(1+q\eta^{-1})(a_E(X,\fa^q)+\ord_ES_i)-q\eta^{-1}+(c-1)\ord_ES_i.
\end{align*}
Together with $a_E(X,\fa^q)=\mld_P(X,\fa^q)\le3$ and $\ord_ES_i\le b$ in Step 1, one concludes that
\begin{align*}
a_E(X)\le(1+q\eta^{-1})(3+b)-q\eta^{-1}+(c-1)b.
\end{align*}

\medskip
\textit{Step} 4.
By Steps 2 and 3, any integer $l$ at least $c$, $l'+(c-1)b$ and $(1+q\eta^{-1})(3+b)-q\eta^{-1}+(c-1)b$ satisfies the required property.
\end{proof}

\begin{proof}[Proof of Theorem \textup{\ref{thm:canonical}}]
We shall verify that the $l$ in Lemma \ref{lem:process37} and $c$ in Lemma \ref{lem:c} satisfy the assertion. Let $\fa$ be an ideal on $X$ such that $\mld_P(X,\fa^q)$ is positive. Run Algorithm \ref{alg:canonical} which terminates by Proposition \ref{prp:termination}. If the algorithm terminates at the process \ref{prc:notpoint} or \ref{prc:singout}, then the property (\ref{itm:bounded}) holds by Lemma \ref{lem:process37}. If it terminates at the process \ref{prc:smoothout}, then let $Q\in Y$ be the output $P_i\in X_i$. The $Q\in Y$ satisfies the property (\ref{itm:reduced}) by Lemmata \ref{lem:algorithm}(\ref{itm:lessthan1}) and \ref{lem:c}.
\end{proof}

\section{Extraction by weighted blow-ups}
Recall the classification of divisors over a smooth surface computing the minimal log discrepancy.

\begin{theorem}[\cite{K17}]\label{thm:mldwbu}
Let $P\in X$ be the germ of a smooth surface and $\fa$ be an $\bR$-ideal on $X$.
\begin{enumerate}
\item
If $(X,\fa)$ is lc, then every divisor computing $\mld_P(X,\fa)$ is obtained by a weighted blow-up.
\item
If $(X,\fa)$ is not lc, then some divisor computing $\mld_P(X,\fa)$ is obtained by a weighted blow-up.
\end{enumerate}
\end{theorem}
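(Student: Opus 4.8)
The plan is to reformulate the statement as a combinatorial assertion on a resolution and to prove it by an exchange argument: any computing divisor that is ``too deep'' can be traded for a weighted blow-up of no larger log discrepancy.

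First I would record the following dictionary. For a regular system of parameters $(f,g)$ of $\sO_{X,P}$ and any divisor $E$ over $X$ one has $a_E(X)\ge\ord_E f+\ord_E g$, since $a_E(X)-1=\ord_E(df\wedge dg)$ and, writing $f=u^{\ord_E f}\tilde f$, $g=u^{\ord_E g}\tilde g$ locally along $E=\{u=0\}$ on the resolution, $df\wedge dg$ vanishes along $E$ to order at least $\ord_E f+\ord_E g-1$; equality characterises the divisors obtained by the weighted blow-up with $\wt(f,g)=(\ord_E f,\ord_E g)$. Equivalently, $E$ is obtained by a weighted blow-up iff $\ord_E$ is monomial in some regular system of parameters, iff the minimal sequence of point blow-ups extracting $E$ has dual graph a chain of $\bP^1$'s, iff $E$ is the exceptional divisor of a proper birational morphism $Z\to X$ with $Z$ normal and having only cyclic quotient singularities --- the surface analogue of Theorem \ref{thm:divisorial}. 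The goal becomes: a divisor computing $\mld_P(X,\fa)$ always admits such a description.

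Next, given $E$ computing $\mld_P(X,\fa)$, I would choose a log resolution $\pi\colon Y\to X$ of $(X,\fa\cdot\fm)$ through which $E$ is realised as an exceptional prime $E_{i_0}$. Writing $K_{Y/X}=\sum_ik_iE_i$ and $\fa\sO_Y=\sO_Y(-\sum_ic_iE_i)$, one has $a_{E_i}(X,\fa)=1+k_i-c_i$ with the $k_i\in\bN$ determined by the weighted dual graph via $K_{Y/X}\cdot E_j=-E_j^2-2$ and the $c_i\in\bR_{\ge0}$ subject to $(\sum_ic_iE_i)\cdot E_j\le0$, as $-\fa\sO_Y$ is $\pi$-nef. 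Blowing up an intersection point $E_i\cap E_j$ produces a divisor $G$ with $a_G(X,\fa)=a_{E_i}(X,\fa)+a_{E_j}(X,\fa)$, so in the lc case no further blow-up lowers the log discrepancy; hence $E_{i_0}$ attains $\min_ia_{E_i}(X,\fa)$. The heart of the matter is then the exchange inequality: \emph{if $E_{i_0}$ is not a weighted blow-up divisor --- i.e.\ its extracting chain passes through a branch point of the dual graph --- then some vertex $E_{i_1}$ lying on a single maximal chain, hence a weighted blow-up divisor, satisfies $a_{E_{i_1}}(X,\fa)<a_{E_{i_0}}(X,\fa)$}, which contradicts $E_{i_0}$ computing the mld and so proves (i). To establish this I would first reduce, by replacing each $\fa_j$ with the valuation ideal $\{h\in\sO_X:\ord_{E_{i_0}}h\ge\ord_{E_{i_0}}\fa_j\}$ --- which changes neither $\mld_P(X,\fa)$ nor the property that $E_{i_0}$ computes it nor the weighted-blow-up status of $E_{i_0}$ --- to the case where the $\fa_j$ are complete $\fm$-primary ideals; by Zariski's unique factorisation into simple complete ideals the dual graph and the coefficients $c_i$ are then governed explicitly by the proximity (free versus satellite) data, and in the monomial case the inequality is an elementary convexity statement about the Newton polygon, the general case reducing to it. Statement (ii) is easier: when $\mld_P(X,\fa)=-\infty$ one only needs a single weighted blow-up of negative log discrepancy, which the weak form $a_{E_{i_1}}(X,\fa)\le a_{E_{i_0}}(X,\fa)$ of the exchange supplies starting from any exceptional prime of negative log discrepancy.

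I expect the exchange inequality to be the main obstacle. One has to show that forcing $\ord_E$ to be genuinely non-monomial, i.e.\ pushing past a branch point of the dual graph, cannot be offset by a gain in $\ord_E\fa$: the coefficients $k_i$ of $K_{Y/X}$ grow along a chain in a way dictated by the proximity relations, while the growth of the $c_i=\ord_{E_i}\fa$ is capped by the relative nef condition together with the negative-definiteness of the exceptional intersection form, and one must make these two estimates interlock. Keeping track of the proximity bookkeeping, and dealing with the ties that occur in the lc case when several divisors have discrepancy zero, is where the argument is most delicate.
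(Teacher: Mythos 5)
The central step of your argument is not proved, and it is the whole theorem. You reduce everything to the ``exchange inequality'' --- that a computing divisor whose extracting sequence passes through a branch point of the dual graph can be traded for a divisor on a single chain with no larger (in case (i), strictly smaller) log discrepancy --- and then you only describe a strategy for it (Zariski factorisation into simple complete ideals, proximity bookkeeping, a Newton-polygon convexity statement), explicitly flagging it as the main obstacle. As it stands this is a plausible programme, not a proof: the interaction you would need between the growth of the coefficients of $K_{Y/X}$ along the dual graph and the cap on $\ord_{E_i}\fa$ coming from $\pi$-nefness is precisely the content of the theorem, and nothing in the proposal establishes it. A secondary gap sits already in your ``dictionary'': the inequality $a_E(X)\ge\ord_Ef+\ord_Eg$ is correctly derived from $a_E(X)-1=\ord_E(df\wedge dg)$, but the converse --- that equality for some regular system of parameters forces $E$ to be the weighted blow-up divisor with weights $(\ord_Ef,\ord_Eg)$ --- and the further equivalences with ``chain dual graph'' and ``cyclic quotient extraction'' are each nontrivial and of essentially the same depth as the statement you are trying to prove; asserting them does not shorten the road. (Your treatment of (ii) inherits the same gap, since it invokes the weak form of the exchange.)

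For comparison: the present paper does not prove Theorem \ref{thm:mldwbu} at all but imports it from \cite{K17}; what it does reproduce is the key tool of that proof, namely Lemma \ref{lem:tower} and Proposition \ref{prp:tower} (if $\ord_P\fa\le1$ the blow-up of $P$ computes $\mld_P(X,\fa)$, uniquely so if $\ord_P\fa<1$). The argument of \cite{K17} realises a computing divisor $E$ by a tower of point blow-ups as in Definition \ref{dfn:tower} and runs the crepant pairs $(X_i,\Delta_i,\fa_i)$ up the tower: Proposition \ref{prp:tower} forces $\ord_{P_i}(\Delta_i,\fa_i)>1$ at every intermediate stage, which pins down each successive centre to be a satellite point or a free point that can be absorbed into a coordinate change, so that $\ord_E$ is monomial in a suitable regular system of parameters. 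This is a local induction needing no dual graphs, no intersection theory on a full log resolution, and no Zariski factorisation, and it handles the lc ties and the non-lc case (ii) uniformly. If you want to salvage your approach, the honest task is to prove the exchange inequality from the proximity inequalities; but you should expect that proof to contain the tower induction in disguise.
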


We want to apply this theorem with the object of extracting by a weighted blow-up a divisor over a smooth threefold whose centre is a curve and which computes the lc threshold. In order to use such extraction in the study of the generic limit of ideals, we need to formulate it for $R$-varieties. We let $K$ be a field of characteristic zero throughout this section. The purpose of this section is to prove

\begin{theorem}\label{thm:wbu}
Let $X$ be the spectrum of the ring of formal power series in three variables over $K$ and $P$ be its closed point. Let $\fa$ be an $\bR$-ideal on $X$ such that
\begin{itemize}
\item
$\mld_P(X,\fa)$ equals one, and
\item
$(X,\fa)$ has an lc centre $C$ of dimension one.
\end{itemize}
Then there exist a divisor $E$ over $X$ computing $\mld_{\eta_C}(X,\fa)$ and a part $x_1,x_2$ of a regular system of parameters in $\sO_X$ such that $E$ is obtained by the weighted blow-up of $X$ with $\wt(x_1,x_2)=(w_1,w_2)$ for some coprime positive integers $w_1,w_2$.
\end{theorem}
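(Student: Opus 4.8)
The plan is to read off the local structure of $C$, apply the two-dimensional result of \cite{K17} at the generic point $\eta_C$, and then upgrade the weighted blow-up obtained there to one centred along the whole of $C$, including the closed point $P$; this last step is where the normalisation $\mld_P(X,\fa)=1$ is indispensable.

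First I would note that $\mld_P(X,\fa)=1$ is finite, so $(X,\fa)$ is lc; moreover it is $1>0$, so $P$ is not an lc centre of $(X,\fa)$. By \cite[Theorem 9.1]{F11} the smallest lc centre of $(X,\fa)$ exists and is normal; it has positive dimension because $P$ is not an lc centre, and being contained in $C$ and of dimension at most $\dim C=1$, it coincides with $C$. Hence $C$ is a normal, and therefore regular, curve, so it is cut out by a part $x_1,x_2$ of a regular system of parameters of $\sO_X$; fix such $x_1,x_2$ and write $\fp=(x_1,x_2)$.

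Next I would localise at $\eta_C$. The ring $\sO_{X,\eta_C}$ is regular of dimension two, with $x_1,x_2$ a regular system of parameters, and the minimal log discrepancy at its closed point equals $\mld_{\eta_C}(X,\fa)=0$ because $C$ is an lc centre of the lc pair $(X,\fa)$. Passing to the completion of $\sO_{X,\eta_C}$, a ring of formal power series in two variables over the residue field $\kappa(\eta_C)$, and applying the two-dimensional result of \cite{K17} (Theorem \ref{thm:mldwbu}, which holds verbatim in this formal setting, cf.\ Remark \ref{rmk:regular}), I obtain a divisor $E$ over $X$ with $c_X(E)=C$ that computes $\mld_{\eta_C}(X,\fa)=0$ and that at $\eta_C$ is obtained by a weighted blow-up: concretely, $\ord_E$ is the monomial valuation with $\ord_E(v_1)=w_1$ and $\ord_E(v_2)=w_2$ for some regular system of parameters $v_1,v_2$ of the completion of $\sO_{X,\eta_C}$ and some coprime positive integers $w_1,w_2$. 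Since $a_E(X)=w_1+w_2$ for the exceptional divisor of the weighted blow-up of a smooth threefold along a smooth curve with these weights, the equality $a_E(X,\fa)=0$ records the normalisation $\ord_E\fa=w_1+w_2$.

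The remaining task is to replace $v_1,v_2$ by a part $x_1',x_2'$ of a regular system of parameters of $\sO_X$ with $V(x_1',x_2')=C$ and $\ord_E(x_1')=w_1$, $\ord_E(x_2')=w_2$; once this is done, Remark \ref{rmk:wbu} identifies $E$ with the exceptional divisor of the weighted blow-up of $X$ with $\wt(x_1',x_2')=(w_1,w_2)$, and the theorem follows. Clearing denominators one may assume $v_1,v_2\in\fp$ with $\ord_E(v_i)=w_i$, but then $v_1,v_2$ need only generate $\fp$ after a uniformiser of $C$ at $P$ has been inverted, and the naive correction can fail: adjusting the second generator by elements of $\sO_X$ need not raise its $E$-order to exactly $w_2$ while keeping it inside $\fp$. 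This is precisely the point at which $\mld_P(X,\fa)=1$ enters. On the model $Y\to X$ carrying $E$ — whose exceptional divisor is a $\bP(w_1,w_2)$-bundle over $C$ carrying prescribed cyclic quotient singularities along two sections — precise inversion of adjunction converts the positivity $\mld_P(X,\fa)=1>0$ into a constraint on the weights (one expects $w_2\le 2w_1$), and under such a bound the correction is elementary: take $x_1'\in\fp$ with $\ord_E(x_1')=w_1=\min_{f\in\fp}\ord_E f$, and then correct the second generator of $\fp$ by $\sO_C$-multiples of powers of $x_1'$ to bring its $E$-order up to exactly $w_2$ without leaving $\fp$. I expect this descent from $\eta_C$ to $P$, and in particular the manner in which $\mld_P(X,\fa)=1$ is fed through the inversion of adjunction, to be the main obstacle of the proof; everything preceding it is formal.
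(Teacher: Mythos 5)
Your reduction to the surface case at $\eta_C$ and your identification of the descent from $\eta_C$ to $P$ as the crux are both correct, and your first two paragraphs match Step 1 of the paper's proof --- modulo one point you gloss over: the residue field $\kappa(\eta_C)\simeq K((x_3))$ is not algebraically closed, so Theorem \ref{thm:mldwbu} does not apply ``verbatim''; the paper needs Lemma \ref{lem:Kpoint} and Proposition \ref{prp:wbu} (a blow-ups-at-$K$-points argument followed by a Galois trace) to produce a weighted blow-up whose coordinates are already defined over $K((x_3))$, and Remark \ref{rmk:regular} does not do this for you.

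The genuine gap is in your third paragraph. Your proposed mechanism --- that precise inversion of adjunction forces a bound such as $w_2\le 2w_1$ on the weights, after which the coordinate correction is ``elementary'' --- cannot work, because no such bound holds. The divisors that actually occur are the weighted blow-ups with $\wt(x_1,x_2)=p(e,1)+q(e+1,1)$ for arbitrary $e$ (Step 5 of the paper's proof); taking $p=0$, $q=1$ already gives weights $(e+1,1)$ with $e$ unbounded, so the ratio of the two weights is unbounded. What $\mld_P(X,\fa)=1$ actually buys is different: the paper constructs the regular tower $X_l\to\cdots\to X_0=X$ realising $E$ and uses $\mld_P(X,\fa)=1$ to show (Step 3) that each centre $C_i$ is either disjoint from or contained in every earlier exceptional divisor $F_j^i$ --- otherwise blowing up $X_l$ along the fibre $f_{l-1}$ would produce a divisor $G$ with $a_G(X,\fa)\le a_j<1$, a contradiction. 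That combinatorial control over the tower is what drives the inductive construction (Step 4) of a global parameter $y_1\in\sO_X$, corrected at each stage by terms of the form $cy_2^i$ and $y_2^ix_3v(x_3)$, whose strict transform contains $C_i$. Your ``elementary correction by $\sO_C$-multiples of powers of $x_1'$'' is not justified, would have to contend with coefficients in $K((x_3))$ rather than $K[[x_3]]$, and rests on a weight constraint that is false; so the main step of the proof is missing.
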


When a divisor over a smooth variety is given, we often realise it by a finite sequence of blow-ups.

\begin{definition}\label{dfn:tower}
Let $X$ be a smooth variety and $E$ be a divisor over $X$ whose centre $Z$ on $X$ has codimension at least two in $X$. A \textit{tower} on $X$ with respect to $E$ is a finite sequence of projective birational morphisms $X_{i+1}\to X_i$ of smooth varieties for $0\le i<l$ such that
\begin{itemize}
\item
$X_0=X$ and $Z_0=Z$,
\item
$X_{i+1}$ is about $\eta_{Z_i}$ the blow-up of $X_i$ along $Z_i$,
\item
$E_i$ is the exceptional prime divisor on $X_{i+1}$ contracting onto $Z_i$,
\item
$Z_{i+1}$ is the centre on $X_{i+1}$ of $E$, and
\item
$E_{l-1}=E$.
\end{itemize}
A tower is called the \textit{regular tower} if for any $i<l$, the centre $Z_i$ is smooth and $X_{i+1}$ is globally the blow-up of $X_i$ along $Z_i$. Note that the regular tower is uniquely determined by $E$ if it exists.
\end{definition}

\begin{remark}\label{rmk:toric}
Let $P\in X$ be the germ of a smooth variety. Let $x_1,\ldots,x_c$ be a part of a regular system of parameters in $\sO_X$ and $E$ be the divisor obtained by the weighted blow-up of $X$ with $\wt(x_1,\ldots,x_c)=(w_1,\ldots,w_c)$, where $c$ is at least two. Then one can see that the regular tower on $X$ with respect to $E$ exists in terms of toric geometry. Following the notation in \cite{I14}, set $N=\bZ^d$ with the standard basis $e_1,\ldots,e_d$ for $d=\dim X$. One may assume that $w=(w_1,\ldots,w_c,0,\ldots,0)$ is primitive in $N$. Construct a finite sequence of fans $(N,\Delta_i)$ for $0\le i\le l$ such that
\begin{itemize}
\item
$I_i=\{e_1,\ldots,e_d\}\cup\{v_1,\ldots,v_i\}$,
\item
$\Delta_i$ is the set of all cones spanned by a subset of $I_i$,
\item
$J_i$ is the smallest subset of $I_i$ such that $w$ belongs to the cone spanned by $J_i$,
\item
$v_{i+1}=\sum_{v\in J_i}v$, and
\item
$J_i\neq\{w\}$ for $i<l$ and $J_l=\{w\}$.
\end{itemize}
Set the toric variety $T_i=T_N(\Delta_i)$ and let $E_i^T$ be the exceptional divisor of $T_{i+1}\to T_i$. Then $X$ has an \'etale morphism to $T_0$ by corresponding $e_i$ to $x_i$. The base changes of $T_i$ to $X$ form the regular tower on $X$ with respect to $E$, and every $E_i=E_i^T\times_{T_0}X$ is obtained by a weighted blow-up of $X$.
\end{remark}

We collect basic properties of the log discrepancies in a tower which was essentially written in \cite[Proposition 6]{K17}.

\begin{lemma}\label{lem:tower}
Notation as in Definition \textup{\ref{dfn:tower}}. Let $\fa$ be an $\bR$-ideal on $X$ and $\fa_i$ be its weak transform on $X_i$. Set $a_i=a_{E_i}(X,\fa)$.
\begin{enumerate}
\item\label{itm:order}
The $\ord_{Z_i}\fa_i$ form a non-increasing sequence.
\item\label{itm:order1}
If $\ord_Z\fa\le1$, then $a_i\ge1$ and the $a_i$ form a non-decreasing sequence.
\item\label{itm:ordlthan1}
If $\ord_Z\fa<1$, then $a_i>1$ and the $a_i$ form a strictly increasing sequence.
\end{enumerate}
\end{lemma}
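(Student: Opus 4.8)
The plan is to deduce a recursion for $a_i:=a_{E_i}(X,\fa)$ from the crepancy of iterated pull-backs, to prove (i) by a local analysis of the blow-up $X_{i+1}\to X_i$ about $\eta_{Z_i}$, and then to obtain (ii) and (iii) from the recursion and (i). Write $\fa=\prod_j\fa_j^{r_j}$, let $\fa_{ij}$ be the weak transform of $\fa_j$ on $X_i$, and set $c_i=\mathrm{codim}(Z_i,X_i)$, $d_{ij}=\ord_{Z_i}\fa_{ij}$, $d_i=\ord_{Z_i}\fa_i=\sum_jr_jd_{ij}$, and $m_{ik}=\ord_{E_i}E_k$ for $k<i$. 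Then $m_{ik}\ge0$ since $E_k$ is effective, and $c_i\ge2$ because $E_i$ contracts onto $Z_i$ under the birational morphism $X_{i+1}\to X_i$. Since $Z_i$ is the closure of the image of $Z_{i+1}$ under $X_{i+1}\to X_i$, the subvariety $Z_{i+1}$ dominates $Z_i$ and is contained in $E_i$; hence $\eta_{Z_{i+1}}$ lies over $\eta_{Z_i}$, the divisor $E_i$ is smooth about $\eta_{Z_{i+1}}$, and $m_{i+1,i}=1$.

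For the recursion, recall that the pull-back $(X_i,\Delta_i,\fa_i)$ of $(X,\fa)$ by $X_i\to X$ is crepant to $(X,\fa)$ and has boundary with coefficient $1-a_k$ along $E_k$, so $\Delta_i=\sum_{k<i}(1-a_k)E_k$. Thus $a_i=a_{E_i}(X_i,\Delta_i,\fa_i)=a_{E_i}(X_i)-\ord_{E_i}\Delta_i-\ord_{E_i}\fa_i$. Because $E_i$ is the exceptional divisor of the blow-up of $X_i$ along $Z_i$ about $\eta_{Z_i}$, one has $a_{E_i}(X_i)=c_i$, and the order along $Z_i$ of any ideal — hence of $\fa_i$ — equals its order along $E_i$, so $\ord_{E_i}\fa_i=d_i$ and $\ord_{E_i}\Delta_i=\ord_{Z_i}\Delta_i=\sum_{k<i}(1-a_k)m_{ik}$. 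Therefore
\[
a_i=c_i-d_i+\sum_{k<i}(a_k-1)\,m_{ik}.
\]

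Next I would prove (i), i.e. $d_{i+1}\le d_i$, for which it suffices to show $d_{i+1,j}\le d_{ij}$ for each $j$. Work on $R:=\sO_{X_i,\eta_{Z_i}}$, a regular local ring with maximal ideal $\fm$ and residue field $\kappa=k(Z_i)$, along which $X_{i+1}\to X_i$ is the blow-up of $\fm$ and $\fa_{i+1,j}$ is the weak transform of $\fa_{ij}$; put $d:=d_{ij}=\ord_\fm\fa_{ij}$. On an affine chart of $E_i$ with local equation $u$ of $E_i$, the ideal $\fa_{i+1,j}$ is generated by the elements $u^{\,e_h-d}\tilde h$ as $h$ runs over a generating set of $\fa_{ij}$, where $e_h=\ord_\fm h\ge d$ and $\tilde h$ is the strict transform of $h$; restricting to $E_i$ annihilates the generators with $e_h>d$ and carries those with $e_h=d$ to the dehomogenisation of the leading form of $h$. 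Hence $\fa_{i+1,j}\sO_{E_i}$ is, on this chart, generated by the dehomogenisations of the degree-$d$ part of the initial ideal $\mathrm{in}_\fm\fa_{ij}$ in the polynomial ring $\mathrm{gr}_\fm R$. A general element of this ideal is the dehomogenisation of a degree-$d$ form, hence a polynomial of degree at most $d$, and a polynomial of degree $e$ has order at most $e$ along any subvariety, as one sees by restricting to a line through a general point transversal to that subvariety. Since passing to the quotient $\sO_{E_i}$ cannot decrease orders,
\[
\ord_{Z_{i+1}}\fa_{i+1,j}\le\ord_{Z_{i+1}}\bigl(\fa_{i+1,j}\sO_{E_i}\bigr)\le d=\ord_{Z_i}\fa_{ij},
\]
and summing against the $r_j$ gives (i).

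Finally, (ii) and (iii) follow by induction on $i$ from the recursion and (i). Set $b_i=a_i-1$. By (i), $d_i\le d_0$, so $(c_i-1)-d_i\ge1-d_0$; since $b_0=(c_0-1)-d_0\ge1-d_0$ and the recursion reads $b_i=(c_i-1)-d_i+\sum_{k<i}b_k\,m_{ik}$ with $m_{ik}\ge0$, induction gives $b_i\ge1-d_0$, which is $\ge0$ if $d_0\le1$ and $>0$ if $d_0<1$; this is $a_i\ge1$, respectively $a_i>1$. For the monotonicity, using $m_{i+1,i}=1$ one gets $b_{i+1}-b_i=(c_{i+1}-1)-d_{i+1}+\sum_{k<i}b_k\,m_{i+1,k}$, whose first summand is $\ge1-d_0$ and whose remaining sum is $\ge0$; hence $b_{i+1}\ge b_i$ if $d_0\le1$ and $b_{i+1}>b_i$ if $d_0<1$. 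The main obstacle is step (i): in the surface case of \cite{K17} it amounts to the standard fact that the multiplicity of a strict transform does not exceed that of the original, and the work here is to make the local blow-up bookkeeping precise and uniform along the one-dimensional centres $Z_i$ and when $Z_{i+1}$ is an arbitrary, not necessarily linear, subvariety of the exceptional divisor $E_i$.
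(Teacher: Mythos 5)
Your proof is correct. For parts (ii) and (iii) it is essentially the paper's argument: the same crepancy recursion $a_i=a_{E_i}(X_i,\fa_i)+\sum_{j<i}(a_j-1)\ord_{E_i}E_j^i$ with $a_{E_i}(X_i,\fa_i)=c_i-\ord_{Z_i}\fa_i$, followed by induction; the only cosmetic difference is that the paper deduces (ii) from (iii) by letting $\epsilon$ tend to zero in $a_{E_i}(X,\fa^{1-\epsilon})$, whereas you rerun the induction with non-strict inequalities, using $\ord_{E_{i+1}}E_i=1$ for the monotonicity exactly as the paper does implicitly. The genuine divergence is in part (i): the paper disposes of it in one line by choosing a subvariety $V_{i+1}\subset Z_{i+1}$ finite and dominant over $Z_i$ and citing \cite[III, Lemmata 7 and 8]{H64} for $\ord_{Z_{i+1}}\fa_{i+1}\le\ord_{V_{i+1}}\fa_{i+1}\le\ord_{Z_i}\fa_i$, while you give a self-contained argument by restricting the weak transform to the exceptional divisor over $\eta_{Z_i}$ and observing that it contains the dehomogenisation of a degree-$d$ leading form, whose order along any subvariety of affine space is at most its degree. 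This is a legitimate elementary substitute for the citation; the one point to watch is that the leading forms of a chosen generating set need not generate the full degree-$d$ part of the initial ideal, but since you only need a single nonzero element of degree at most $d$ in $\fa_{i+1,j}\sO_{E_i}$ (supplied by a generator of minimal order), the bound goes through. Your route buys transparency and independence from Hironaka at the cost of some local bookkeeping; the paper's buys brevity.
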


\begin{proof}
Take a subvariety $V_{i+1}$ of $Z_{i+1}$ such that $V_{i+1}\to Z_i$ is finite and dominant. Then $\ord_{Z_{i+1}}\fa_{i+1}\le\ord_{V_{i+1}}\fa_{i+1}\le\ord_{Z_i}\fa_i$ by \cite[III Lemmata 7 and 8]{H64}, which is (\ref{itm:order}).

The assertion (\ref{itm:order1}) is reduced to (\ref{itm:ordlthan1}) because $a_i$ is the limit of $a_{E_i}(X,\fa^{1-\epsilon})$ when $\epsilon$ decreases to zero. Suppose that $\ord_Z\fa<1$ in order to see (\ref{itm:ordlthan1}). Then so are $\ord_{Z_i}\fa_i$ by (\ref{itm:order}). In particular, $a_{E_i}(X_i,\fa_i)=a_{E_i}(X_i)-\ord_{Z_i}\fa_i>1$. Since $(X_i,\sum_{j=0}^{i-1}(1-a_j)E_j^i,\fa_i)$ is crepant to $(X,\fa)$, where $E_j^i$ is the strict transform of $E_j$, one computes that
\begin{align*}
a_i=a_{E_i}(X_i,\fa_i)+\sum_{j=0}^{i-1}(a_j-1)\ord_{E_i}E_j^i>1+\sum_{j=0}^{i-1}(a_j-1)\ord_{E_i}E_j^i.
\end{align*}
This derives that $a_i>1$ by induction, and then derives that $a_i>a_{i-1}$ again by induction.
\end{proof}

\begin{proposition}\label{prp:tower}
Let $P\in X$ be the germ of a smooth variety and $\fa$ be an $\bR$-ideal on $X$. Let $E$ be the divisor obtained by the blow-up of $X$ at $P$.
\begin{enumerate}
\item\label{itm:Ecomp}
If $\ord_P\fa\le1$, then $E$ computes $\mld_P(X,\fa)$.
\item
If $\ord_P\fa<1$, then $E$ is the unique divisor computing $\mld_P(X,\fa)$.
\end{enumerate}
\end{proposition}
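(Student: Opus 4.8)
The plan is to realise every competing divisor by a tower over $X$ whose first blow-up is the blow-up at $P$, and then read off the required inequalities from Lemma~\ref{lem:tower}; note that $\dim X\ge2$ is implicit, as otherwise there is no exceptional divisor $E$.

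First I would fix an arbitrary divisor $F$ over $X$ with $c_X(F)=P$, assume $F\ne E$, and check that $F$ admits a tower on $X$ in the sense of Definition~\ref{dfn:tower}. For this I would blow up repeatedly along the centre $Z_i$ of $F$ on the current smooth model $X_i$ (about $\eta_{Z_i}$, as in that definition), obtaining $\pi_{i+1}\colon X_{i+1}\to X_i$ with exceptional divisor $E_i$, where $a_{E_i}(X_i)=\operatorname{codim}_{X_i}Z_i=:c_i\ge2$. Since $c_{X_{i+1}}(F)\subseteq E_i$ one has $\ord_F E_i\ge1$, whence $a_F(X_{i+1})=a_F(X_i)-(c_i-1)\ord_F E_i\le a_F(X_i)-1$. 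As every $X_i$ is smooth, $a_F(X_i)$ is a positive integer, so this strictly decreasing sequence forces the process to stop after finitely many steps, which happens exactly when the centre of $F$ becomes a divisor, necessarily equal to $F$. Since the first centre is $Z_0=P$, the first exceptional divisor $E_0$ of the resulting tower is precisely $E$.

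With this tower in hand I would apply Lemma~\ref{lem:tower} with $Z=Z_0=P$, so that $\ord_Z\fa=\ord_P\fa$, and write $a_i=a_{E_i}(X,\fa)$, so that $a_0=a_E(X,\fa)$ and $a_{l-1}=a_F(X,\fa)$. For part~(\ref{itm:Ecomp}), the hypothesis $\ord_P\fa\le1$ and Lemma~\ref{lem:tower}(\ref{itm:order1}) give $a_E(X,\fa)=a_0\le a_{l-1}=a_F(X,\fa)$; as $F$ was an arbitrary divisor over $X$ with centre $P$ (the case $F=E$ being trivial), this says exactly that $E$ computes $\mld_P(X,\fa)$. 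For the second assertion, under the stronger hypothesis $\ord_P\fa<1$, if $F\ne E$ then the tower above has length $l\ge2$, so Lemma~\ref{lem:tower}(\ref{itm:ordlthan1}) upgrades the inequality to $a_E(X,\fa)=a_0<a_{l-1}=a_F(X,\fa)$; hence no divisor other than $E$ can compute $\mld_P(X,\fa)$, and $E$ does by the first part, so $E$ is the unique such divisor.

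The genuine content — the monotonicity of the log discrepancies $a_i$ along a tower — is already packaged in Lemma~\ref{lem:tower}, which I may assume. The only extra point requiring care is the existence of a tower with respect to $F$, i.e.\ the termination of repeated blowing up along centres; and this is supplied precisely by the strict decrease of the integers $a_F(X_i)$, forced by $\operatorname{codim}_{X_i}Z_i\ge2$. I therefore do not expect a serious obstacle here.
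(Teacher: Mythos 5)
Your proof is correct and follows essentially the same route as the paper, which likewise deduces both parts from Lemma~\ref{lem:tower}(\ref{itm:order1}) and (\ref{itm:ordlthan1}) applied to a tower starting with the blow-up at $P$ (the paper cites \cite[Proposition 6]{K17} and omits the termination argument for the tower that you spell out).
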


\begin{proof}
It is \cite[Proposition 6]{K17} exactly. Just apply Lemma \ref{lem:tower}(\ref{itm:order1}) and (\ref{itm:ordlthan1}) to the tower on $X$ with respect to a divisor which computes $\mld_P(X,\fa)$.
\end{proof}

We shall study divisors computing the minimal log discrepancy on a $K$-variety of dimension two.

\begin{lemma}\label{lem:Kpoint}
Let $P\in X$ be the germ at a $K$-point of a regular $K$-variety of dimension two and $\fa$ be an $\bR$-ideal on $X$. Then there exists a divisor over $X$ computing $\mld_P(X,\fa)$ which is obtained by a sequence of finitely many blow-ups at a $K$-point.
\end{lemma}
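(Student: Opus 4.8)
The plan is to reduce the case $\ord_P\fa\le 1$ to Proposition~\ref{prp:tower}, and to treat $\ord_P\fa>1$ by base change to the algebraic closure, where Theorem~\ref{thm:mldwbu} applies, followed by a descent of the weighted blow-up together with its regular tower (Remark~\ref{rmk:toric}).

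If $\ord_P\fa\le 1$, then Proposition~\ref{prp:tower}(\ref{itm:Ecomp}) -- which holds verbatim over $K$, its proof being field-agnostic -- shows that the blow-up of $X$ at the $K$-point $P$ computes $\mld_P(X,\fa)$; so one may assume $\ord_P\fa>1$, whence $\mld_P(X,\fa)$, if finite, is less than one. Let $\bar K$ be an algebraic closure of $K$, put $\bar X=X\times_K\bar K$, let $\bar P$ be the unique $\bar K$-point over $P$, and $\bar\fa=\fa\sO_{\bar X}$. Since $\bar X\to X$ is regular, $K_{\bar X}$ is the pull-back of $K_X$; hence for a divisor $E$ over $X$, every component $\bar E$ of $E\times_X\bar X$ satisfies $a_{\bar E}(\bar X,\bar\fa)=a_E(X,\fa)$ (the ramification being trivial in characteristic zero), and $\mld_{\bar P}(\bar X,\bar\fa)=\mld_P(X,\fa)$. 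Moreover $\mathrm{Aut}(\bar K/K)$ permutes the components of $E\times_X\bar X$ transitively, so divisors over $X$ with centre $P$ correspond to $\mathrm{Aut}(\bar K/K)$-orbits of divisors over $\bar X$ with centre $\bar P$, compatibly with log discrepancies; by Theorem~\ref{thm:mldwbu} there is such a divisor over $\bar X$ computing $\mld_{\bar P}(\bar X,\bar\fa)$ and obtained by a weighted blow-up (automatically so for every computing divisor when $(\bar X,\bar\fa)$ is lc).

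Next I would descend by induction on, say, the length of the regular tower over $\bar K$ of a computing weighted blow-up. Blow up the $K$-point $P$ to get $X_1\to X$ with exceptional curve $C\cong\bP^1_K$, and form the pull-back triple $(X_1,\Delta_1,\fa_1)$, crepant to $(X,\fa)$, with base change $(\bar X_1,\bar\Delta_1,\bar\fa_1)$; rewrite it as a pair $(X_1,\fa_1')$ by absorbing $\Delta_1$ into an $\bR$-ideal. Since $\ord_P\fa>1$ we have $a_{E_0}(X,\fa)<1$; if this equals $\mld_P(X,\fa)$ we are done, and otherwise $\mld_P(X,\fa)=\mld_C(X_1,\fa_1')$, computed at some closed point of $C$. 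The key point is that it is computed at a $K$-\emph{rational} point $Q_0\in C$: then $\mld_{Q_0}(X_1,\fa_1')=\mld_P(X,\fa)$, the induction hypothesis applied to the germ of the smooth $K$-surface $X_1$ at the $K$-point $Q_0$ with $\fa_1'$ produces a divisor computing $\mld_{Q_0}(X_1,\fa_1')$ via a tower of blow-ups at $K$-points, and composing with $X_1\to X$ gives the required divisor over $X$ (it computes $\mld_P(X,\fa)$ by crepancy of the pull-backs, and by Remark~\ref{rmk:toric} the whole tower is one of blow-ups at $K$-points).

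The main obstacle is exactly the italicised claim. I would prove it as follows. Over $\bar K$, a computing weighted blow-up divisor $\bar E$ with weights $(w_1,w_2)$, $w_1<w_2$ (the case $w_1=w_2=1$ being $\mathrm{Bl}_{\bar P}\bar X$), has centre on $\bar X_1$ the point cut out by the strict transform of its heavy coordinate axis; this point must lie on the cosupport of $\bar\fa$, hence in the finite $\mathrm{Aut}(\bar K/K)$-stable set $D$ of tangent directions at $\bar P$ of the branches of that cosupport. If $D$ has a $K$-rational point we take $Q_0$ there (using Remark~\ref{rmk:wbu} to realise a computing weighted blow-up with that centre). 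If not, every point of $D$ has non-trivial Galois orbit, and one exploits the estimate $w_1\ord_{E_0}\bar\fa\le\ord_{\bar E}\bar\fa\le w_2\ord_{E_0}\bar\fa$ together with the fact that $\bar E$ meets each Galois-conjugate branch of $\bar\fa$ with the light weight: this forces $a_{\bar E}(\bar X,\bar\fa)>a_{E_0}(\bar X,\bar\fa)$ whenever $(\bar X,\bar\fa)$ is lc (in the equality case $\mathrm{Bl}_P X$ itself computes $\mld_P(X,\fa)=0$), contradicting that $\bar E$ computes the minimal log discrepancy. In the non-lc case one runs the same estimate, or alternatively perturbs $\fa$ to $\fa^{1-\epsilon}$, applies the lc case, and lets $\epsilon\to 0$ using discreteness of log discrepancies (the surface analogue of Theorem~\ref{thm:discrete}). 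The delicate point -- controlling how many branches of $\bar\fa$, and with which weights, a single weighted blow-up over $\bar X$ can detect, so as to push the orbit-size-two estimate to larger orbits -- is the technical heart of the proof.
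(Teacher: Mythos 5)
Your overall strategy coincides with the paper's: blow up the $K$-point $P$, pass to the crepant pull-back on $X_1$, show that the minimal log discrepancy (when not already computed by the first exceptional divisor $E_0$) is attained at a $K$-\emph{rational} point of the exceptional $\bP^1_K$, and iterate. The reduction via Proposition \ref{prp:tower} and the observation that the remaining case has $a_{E_0}(X,\fa)<1$ are also as in the paper. However, the step you yourself flag as ``the technical heart'' --- the $K$-rationality of the next centre $Q_0$ --- is exactly where your argument is not a proof. Your plan (locate the candidate centres in the Galois-stable set $D$ of tangent directions of the cosupport, then rule out non-trivial orbits by comparing $\ord_{\bar E}\bar\fa$ against weights of a computing weighted blow-up over $\bar K$) is left unfinished, and as sketched it does not obviously close: a weighted blow-up divisor over $\bar K$ sees only one tangent direction, so the inequality you propose does not by itself control the contribution of the other points in a large Galois orbit, which is precisely the difficulty you acknowledge.

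The paper closes this gap with a short degree count that you are missing. After normalising so that $(X,\fa)$ is lc and $\fm$-primary, suppose $E_0$ does not compute $m=\mld_P(X,\fa)$, so $m<a_{E_0}<1$. If $Q$ is any closed point of $Y=X_1$ with $\mld_Q(Y,(1-a_{E_0})E_0,\fa_Y)=m$, then $\mld_Q(Y,E_0,\fa_Y)\le m-a_{E_0}<0$, so by inversion of adjunction $(E_0,\fa_Y\sO_{E_0})$ is not lc at $Q$, i.e.\ $\ord_Q(\fa_Y\sO_{E_0})>1$. But the divisor on $E_0\simeq\bP^1$ cut out by $\fa_Y\sO_{E_0}$ has degree $\ord_{E_0}\fa=2-a_{E_0}<2$, so there is at most one such $Q$; since the set of such points over $\bar K$ is Galois-stable and a singleton, $Q$ is a $K$-point. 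This makes the base change to $\bar K$ and Theorem \ref{thm:mldwbu} entirely unnecessary for this lemma (the paper only invokes Theorem \ref{thm:mldwbu} afterwards, in Proposition \ref{prp:wbu}). Termination of the iteration is then immediate, as in your write-up. Without an argument of this kind, your proposal does not establish the lemma.
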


\begin{proof}
Let $\fm$ be the maximal ideal in $\sO_X$ defining $P$. By adding a high multiple of $\fm$ to each component of $\fa$, we may assume that $\fa$ is $\fm$-primary. Suppose that $(X,\fa)$ is not lc. Then there exists a positive real number $t$ less than one such that $\mld_P(X,\fa^t)$ is zero. Replacing $\fa$ with $\fa^t$, we may assume that $(X,\fa)$ is lc.

We write $m=\mld_P(X,\fa)$ for simplicity. Let $Y$ be the blow-up of $X$ at $P$ and $E$ be its exceptional divisor. There is nothing to prove if $E$ computes $\mld_P(X,\fa)$. Thus we may assume that $a_E=a_E(X,\fa)$ is greater than $m$. Then $a_E=2-\ord_P\fa<1$ by Proposition \ref{prp:tower}(\ref{itm:Ecomp}) (which also holds for regular $K$-varieties by Remark \ref{rmk:regular}). That is, $m< a_E<1$. Let $\fa_Y$ be the weak transform on $Y$ of $\fa$, then $(Y,(1-a_E)E,\fa_Y)$ is crepant to $(X,\fa)$. We claim that there exists a unique point $Q$ in $Y$ such that $\mld_Q(Y,(1-a_E)E,\fa_Y)=m$, and claim that $Q$ is a $K$-point.

Let $Q$ be an arbitrary closed point in $Y$ such that $\mld_Q(Y,(1-a_E)E,\fa_Y)=m$. Such $Q$ exists since $a_E\neq m$. Set the base change $\bar X=X\times_{\Spec K}\Spec\bar K$ of $X$ to the algebraic closure $\bar K$ of $K$. Let $\bar P$, $\bar\fa$, $\bar Y$, $\bar E$ and $\bar\fa_Y$ be the base changes of $P$, $\fa$, $Y$, $E$ and $\fa_Y$ to $\bar K$ as well. Then every closed point $\bar Q$ in $Q\times_X\bar X$ satisfies that $\mld_{\bar Q}(\bar Y,(1-a_E)\bar E,\bar\fa_Y)=\mld_{\bar P}(\bar X,\bar\fa)$. Thus our claims on $Q$ come from those on $\bar Q$, so we may assume that $K$ is algebraically closed.

One has that $\mld_Q(Y,E,\fa_Y)\le\mld_Q(Y,(1-a_E)E,\fa_Y)-a_E=m-a_E<0$, which means that $(Y,E,\fa_Y)$ is not lc at $Q$. By inversion of adjunction, $(E,\fa_Y\sO_E)$ is not lc at $Q$, that is, $\ord_Q(\fa_Y\sO_E)>1$. Hence the number of $Q$ is less than the degree of the divisor on $E\simeq\bP_K^1$ defined by $\fa_Y\sO_E$, which equals $\ord_E\fa=2-a_E$. Thus, the uniqueness of $Q$ follows.

While $a_E$ is greater than $m$, we replace $P\in(X,\fa)$ with $Q\in(Y,\sO_Y(-E)^{1-a_E}\cdot\fa_Y)$ and repeat the same argument. This procedure terminates at finitely many times. Indeed, let $l$ be the minimum of $a_F(X)$ for all divisors $F$ over $X$ computing $\mld_P(X,\fa)$. Then after at most $(l-1)$ blow-ups, one attains a divisor which computes $\mld_P(X,\fa)$.
\end{proof}

\begin{example}
There may exist a divisor computing $\mld_P(X,\fa)$ which is not obtained by a sequence of blow-ups at a $K$-point. For example, let $P\in\bA_\bR^2$ be the germ at origin of the affine plane over $\bR$ with coordinates $x_1$, $x_2$, and $H$ be the divisor on $\bA_\bR^2$ defined by $x_1^2+x_2^2$. Then $\mld_P(\bA_\bR^2,H)=0$. Let $Y$ be the blow-up of $\bA_\bR^2$ at $P$ and $E$ be its exceptional divisor. Then $(Y,H_Y+E)$ is crepant to $(\bA_\bR^2,H)$, where $H_Y$ is the strict transform. The intersection $Q$ of $H_Y$ and $E$ is a $\bC$-point such that $\mld_Q(Y,H_Y+E)=0$.
\end{example}

Now we apply Theorem \ref{thm:mldwbu} to $K$-varieties of dimension two.

\begin{proposition}\label{prp:wbu}
Let $P\in X$ be the germ at a $K$-point of a regular $K$-variety of dimension two and $\fa$ be an $\bR$-ideal on $X$. Then there exists a divisor $E$ over $X$ computing $\mld_P(X,\fa)$ which is obtained by a weighted blow-up.
\end{proposition}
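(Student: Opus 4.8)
The plan is to combine the $K$-rational extraction of Lemma~\ref{lem:Kpoint} with Theorem~\ref{thm:mldwbu} applied over the algebraic closure $\bar K$ of $K$, and then to descend the weighted-blow-up structure to $K$. As at the opening of the proof of Lemma~\ref{lem:Kpoint}, I first reduce to the case that $\fa$ is $\fm$-primary and $(X,\fa)$ is lc. Write $\bar X=X\times_{\Spec K}\Spec\bar K$, let $\bar P$ be the $\bar K$-point over $P$ (unique since $P$ is $K$-rational) and $\bar\fa=\fa\sO_{\bar X}$; the germ $\bar P\in\bar X$ is the germ of a smooth $\bar K$-surface. Since $K_{\bar X}$ is the pull-back of $K_X$ and blowing-up commutes with the flat base change $\bar X\to X$, one has $a_{\bar E}(\bar X,\bar\fa)=a_E(X,\fa)$ for any $\bar E$ lying over $E$, hence $\mld_{\bar P}(\bar X,\bar\fa)=\mld_P(X,\fa)$ and $(\bar X,\bar\fa)$ is lc.

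By Lemma~\ref{lem:Kpoint} there is a divisor $E$ over $X$ computing $\mld_P(X,\fa)$ which is obtained by a finite sequence of blow-ups at $K$-points. Each such blow-up of a regular surface has exceptional divisor isomorphic to $\bP^1_K$, so $\bar E=E\times_X\bar X$ is a single divisor over $\bar X$, it is $\mathrm{Gal}(\bar K/K)$-invariant, and it computes $\mld_{\bar P}(\bar X,\bar\fa)$. By Theorem~\ref{thm:mldwbu}(i), $\bar E$ is obtained by a weighted blow-up, say with $\wt(\bar x_1,\bar x_2)=(a,b)$ for coprime positive integers $a\le b$; if $a=b$ then $a=b=1$ and $E$ is the blow-up of $P$, so assume $a<b$. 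The main obstacle, as I see it, is to show that this weighted blow-up is already defined over $K$. For this I would use that the valuation ideals $\sI_w=\{g\in\sO_{\bar X}\mid\ord_{\bar E}g\ge w\}$ of $\bar E$ are exactly the ideals generated by the $(\bar x_1,\bar x_2)$-monomials of weight at least $w$, and that, $\bar E$ being $\mathrm{Gal}$-invariant, each $\sI_w$ is extended from an ideal of $\sO_X$. This yields the decisive no-cancellation property: if $g=\sum_i f_i\otimes c_i$ with $f_i\in\sO_X$ and the $c_i$ linearly independent over $K$, then $\ord_{\bar E}g=\min_i\ord_Ef_i$; in particular every element of $a\bN+b\bN$ equals $\ord_Ef$ for some $f\in\sO_X$.

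It then remains to pick a regular system of parameters $x_1,x_2$ of $\sO_X$ with $\ord_Ex_1=a$ and $\ord_Ex_2=b$. Choose $x_1\in\sO_X$ with $\ord_Ex_1=a$; since $\ord_E$ takes value at least $2a>a$ on every element of $\fm^2$, we have $x_1\notin\fm^2$. One checks $\sI_{b+1}\subseteq\fm^2$, because every monomial of weight at least $b+1$ has degree at least two; combined with the no-cancellation property and $\bar x_2\notin\fm^2$, this forces some $x_2\in\sO_X$ with $\ord_Ex_2=b$ and $x_2\notin\fm^2$, and then $x_1,x_2$ are independent modulo $\fm^2$, as otherwise $x_2-\lambda x_1$ would lie in $\fm^2$ while having $\ord_E$ equal to $a<2a$. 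By Remark~\ref{rmk:wbu}, since $\ord_{\bar E}x_1=a$ and $\ord_{\bar E}x_2=b$, the weighted blow-up of $\bar X$ with $\wt(x_1,x_2)=(a,b)$ again extracts $\bar E$; but this is the base change of the weighted blow-up $Y\to X$ of $X$ with $\wt(x_1,x_2)=(a,b)$, whose exceptional divisor is therefore $E$. Hence $E$ is obtained by a weighted blow-up of $X$ and computes $\mld_P(X,\fa)=\mld_{\bar P}(\bar X,\bar\fa)$, which is the claim. Apart from the descent carried out in the second paragraph, all the verifications are routine.
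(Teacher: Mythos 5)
Your argument is correct, and its skeleton is the same as the paper's: reduce to the lc, $\fm$-primary case, invoke Lemma~\ref{lem:Kpoint} to get a divisor $E$ built from blow-ups at $K$-points (so that $\bar E=E\times_X\bar X$ is irreducible), apply Theorem~\ref{thm:mldwbu} over $\bar K$, and then descend the weighted-blow-up coordinates to $K$. The one place where you genuinely diverge is the descent step, which both you and the paper identify as the crux. The paper takes a finite Galois extension $L/K$ containing $x_1,x_2$, notes that each conjugate $x_i^\sigma$ also lies in $\sI_{w_i}\setminus\sI_{w_i+1}$, and replaces $x_i$ by its trace $\sum_{\sigma}x_i^\sigma$ (after a generic rescaling $\lambda_i x_i$ to keep the trace outside $\fm^2$), using Remark~\ref{rmk:wbu}. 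You instead observe that the valuation ideals of the Galois-invariant divisor $\bar E$ are defined over $K$, deduce the no-cancellation property for $\ord_{\bar E}$ on $\sO_X\otimes_K\bar K$, and extract $x_1,x_2$ directly from $\sO_X$; the verifications ($\fm^2\subset\sI_{2a}$, $\sI_{b+1}\subset\bar\fm^2$, linear independence modulo $\fm^2$) all check out. The trace argument is shorter; your argument is slightly more robust in that it avoids the generic-scaling trick and makes explicit why suitable coordinates exist in $\sO_X$ at all, at the cost of invoking the identification of valuation ideals of a weighted blow-up with monomial ideals and Galois descent for infinite extensions. Both are valid; no gap.
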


\begin{proof}
We may assume the log canonicity of $(X,\fa)$ by the argument in the first paragraph of the proof of Lemma \ref{lem:Kpoint}. By Lemma \ref{lem:Kpoint}, there exists a divisor $E$ over $X$ computing $\mld_P(X,\fa)$ which is obtained by a sequence of finitely many blow-ups at a $K$-point. Set the base change $\bar X=X\times_{\Spec K}\Spec\bar K$ of $X$ to the algebraic closure $\bar K$ of $K$ and let $\bar P$ and $\bar\fa$ be the base changes of $P$ and $\fa$ to $\bar X$. Since $E$ is obtained by finitely many blow-ups at a $K$-point, its base change $\bar E=E\times_X\bar X$ is irreducible, so $\bar E$ is a divisor over $\bar X$. Thus by Theorem \ref{thm:mldwbu}, there exists a regular system $x_1,x_2$ of parameters in $\sO_{\bar X,\bar P}$ such that $\bar E$ is obtained by the weighted blow-up of $\bar X$ with $\wt(x_1,x_2)=(w_1,w_2)$ for some coprime positive integers $w_1,w_2$.

We shall show that one can take $x_1$ and $x_2$ from $\sO_X$. This is obvious when $w_1=w_2=1$ because the weighted blow-up in this case is nothing but the blow-up at the point. Suppose that $w_1>w_2$. Let $L$ be a finite Galois extension of $K$ such that $x_1$ and $x_2$ belong to $\sO_X\otimes_KL$. Then for any element $\sigma$ of the Galois group $G$ of $L/K$, the $\bar E=\bar E^\sigma$ is obtained by the weighted blow-up with $\wt(x_1^\sigma,x_2^\sigma)=(w_1,w_2)$. Thus one can replace $x_i$ with its trace $\sum_{\sigma\in G}x_i^\sigma$ by Remark \ref{rmk:wbu}. Here one can assume that $\sum_{\sigma\in G}x_i^\sigma\in\fm\setminus\fm^2$, where $\fm$ is the maximal ideal in $\sO_X$ defining $P$, by replacing $x_i$ with $\lambda_ix_i$ for a general member $\lambda_i$ in $L$.

Now we may assume that $x_1$ and $x_2$ belong to $\sO_X$. Then $E$ is obtained by the weighted blow-up of $X$ with $\wt(x_1,x_2)=(w_1,w_2)$.
\end{proof}

\begin{proof}[Proof of Theorem \textup{\ref{thm:wbu}}]
\textit{Step} 1.
First of all, remark that $C$ is the smallest lc centre of $(X,\fa)$. The $C$ is regular by \cite[Theorem 1.2]{K15}, and it is geometrically irreducible because its base change to any field is again the smallest lc centre of the base change of $(X,\fa)$. Thus, there exists a regular system $x_1,x_2,x_3$ of parameters in $\sO_X$ such that the ideal $\sI_C$ in $\sO_X$ defining $C$ is generated by $x_1$ and $x_2$. If we consider instead of $\fa=\prod_j\fa_j^{r_j}$ the $\bR$-ideal $\fb=\prod_j(\fa_j+(x_1,x_2)^l\sO_X)^{r_j}$ for a large integer $l$, then $C$ is still the smallest lc centre of $(X,\fb)$ and $\mld_P(X,\fb)\ge\mld_P(X,\fa)=1$. On the other hand, $\mld_P(X,\fb)$ is at most one by \cite[Proposition 6.1]{K15}. Thus $\mld_P(X,\fb)$ must equal one.

Hence by replacing $\fa$ with $\fb$, we may assume that $\fa$ is the pull-back of an $\bR$-ideal $\fa'$ on $X'=\Spec K[[x_3]][x_1,x_2]$. Set $X''=\Spec K((x_3))[x_1,x_2]$, where $K((x_3))$ is the quotient field of $K[[x_3]]$. There exist natural morphisms
\begin{align*}
X\to X'\leftarrow X''.
\end{align*}
Let $P'$ be the point of $X'$ defined by $(x_1,x_2,x_3)\sO_{X'}$ and $P''$ be the point of $X''$ defined by $(x_1,x_2)\sO_{X''}$.

One has that $\mld_{P''}(X'',\fa'\sO_{X''})=\mld_{\eta_C}(X,\fa)=0$. By Proposition \ref{prp:wbu}, there exists a divisor $E''$ over $X''$ computing $\mld_{P''}(X'',\fa'\sO_{X''})$ which is obtained by a weighted blow-up of $X''$. Let $E'$ be the unique divisor over $X'$ such that $E''=E'\times_{X'}X''$ and let $E=E'\times_{X'}X$. Note that $C$ is the centre of $E$ on $X$.

\medskip
\textit{Step} 2.
There exists a regular tower $\cT''$ on $X$ with respect to $E''$ in Definition \ref{dfn:tower} (which can be extended to $K((x_3))$-varieties). As seen in Remark \ref{rmk:toric}, $\cT''$ is a finite sequence $X''_l\to\cdots\to X''_0=X''$ of blow-ups at a $K((x_3))$-point and the exceptional divisor $F''_i$ of $X''_{i+1}\to X''_i$ is obtained by a weighted blow-up of $X''$. Note that $E''=F''_{l-1}$. Possibly by replacing $E''$ with some $F''_i$, we may assume that $F_i''$ does not compute $\mld_{P''}(X'',\fa'\sO_{X''})$ unless $i=l-1$.

The $\cT''$ is compactified over $X'$, that is, $X''_{i+1}\to X''_i$ is the base change of a projective birational morphism $X'_{i+1}\to X'_i$ of regular schemes. Then the base changes $X_i=X'_i\times_{X'}X$ to $X$ form a tower $\cT$ on $X$ with respect to $E$. Let $C_i$ be the centre on $X_i$ of $E$. Since $\cT''$ consists of blow-ups at a $K$-point, $C_i$ is birational to $C$ for any $i<l$. Hence $C_i$ must be isomorphic to the regular scheme $C$. Therefore one can replace $X_i$ and $X'_i$ inductively so that $\cT$ is the regular tower on $X$ with respect to $E$.

Let $F_i$ denote the exceptional divisor of $X_{i+1}\to X_i$, and set $a_i=a_{F_i}(X,\fa)$. By our construction, every $a_i$ is positive except for $i=l-1$ while $a_{l-1}$ is zero. Let $\fa_i$ be the weak transform on $X_i$ of $\fa$ and set the $\bR$-divisor $\Delta_i=\sum_{j=0}^{i-1}(1-a_j)F_j^i$ on $X_i$, where $F_j^i$ is the strict transform of $F_j$. Then $(X_i,\Delta_i,\fa_i)$ is crepant to $(X,\fa)$. We claim that $a_i<1$ for any $i$. This is obvious for $i=l-1$ since $a_{l-1}=0$. In order to see the inequality $a_i<1$ for the fixed index $i<l-1$ by induction, assume that $a_j<1$ for any $j$ less than $i$. Then $\Delta_i$ is effective. Since $F_i$ does not compute $\mld_{\eta_C}(X,\fa)$, one has that $\ord_{F_i}\Delta_i+\ord_{F_i}\fa_i>1$ by Proposition \ref{prp:tower}. Hence one obtains that $a_i=a_{F_i}(X_i,\Delta_i,\fa_i)=2-(\ord_{F_i}\Delta_i+\ord_{F_i}\fa_i)<1$.

\medskip
\textit{Step} 3.
We have that $0<a_i<1$ for any $i<l-1$ while $a_{l-1}=0$. We let $f_i$ denote the fibre of $F_i\to C$ at $P$, which is isomorphic to $\bP_K^1$. For $i<l$, let $P_i$ be the $K$-point in $C_i$ mapped to $P$. We claim that for any indices $i$ and $j$ such that $j<i<l$, the centre $C_i$ is either disjoint from $F_j^i$ or contained in $F_j^i$.

Indeed if $C_i$ intersected $F_j^i$ properly at $P_i$, then the morphism $F_j^{i+1}\to F_j^i$ would not be an isomorphism. Thus $F_j^{i+1}$ must contain the fibre $f_i$ of $F_i\to C_i$. In particular, $F_j^{i+1}$ intersects $C_{i+1}$. On the other hand, $C_{i+1}$ is not contained in $F_j^{i+1}$ as $C_i$ is not in $F_j^i$. Thus one obtains that $C_{i+1}$ must also intersect $F_j^{i+1}$ properly at $P_{i+1}$, unless $i+1=l$. Repeating this argument, one would have that $F_j^l$ contains $f_{l-1}$ as well as $F_{l-1}$ contains $f_{l-1}$. Now let $G$ be the divisor obtained by the blow-up of $X_l$ along $f_{l-1}$. One computes that
\begin{align*}
a_G(X,\fa)\le a_G(X_l,\Delta_l)\le2-(1-a_j)-(1-a_{l-1})=a_j<1,
\end{align*}
which contradicts that $\mld_P(X,\fa)=1$.

\medskip
\textit{Step} 4.
Let $i$ be any index such that $\ord_{F_i}\sI_C=1$. We shall show that there exists a part $y_1$ of a regular system of parameters in $\sO_X$ such that $C_i$ is contained in the strict transform $H_i$ on $X_i$ of the divisor on $X$ defined by $y_1$. This is obvious for $i=0$. The condition $\ord_{F_i}\sI_C=1$ for the fixed $i\ge1$ implies that $\ord_{F_{i-1}}\sI_C=1$ since
\begin{align*}
\ord_{F_i}\sI_C=\ord_{F_i}\sI_i+\sum_{j=0}^{i-1}\ord_{F_j}\sI_C\cdot\ord_{F_i}{F_j^i}\ge\ord_{F_{i-1}}\sI_C
\end{align*}
for the weak transform $\sI_i$ on $X_i$ of $\sI_C$. Hence by induction on $i$, we may assume the existence of $y_1$ such that $C_{i-1}$ is contained in $H_{i-1}$.

We extend $y_1$ to a regular system $y_1,y_2,x_3$ of parameters in $\sO_X$ in which $y_2$ is a general member in $\sI_C$. Then for any $j\le i-1$, $F_j$ is as a divisor over $X$ obtained by the weighted blow-up of $X$ with $\wt(y_1,y_2)=(j+1,1)$, and the $y_1/y_2^j,y_2,x_3$ form a regular system of parameters in $\sO_{X_j,P_j}$. In particular, $f_{i-1}\simeq\bP_K^1$ has homogeneous coordinates $y_1/y_2^{i-1}$, $y_2$. Moreover, the $K$-point $P_i\in f_{i-1}$ is not defined by $[y_1/y_2^{i-1}:y_2]=[1:0]$. This follows when $i=1$ from the general choice of $y_2$, and when $i\ge2$ from the property in Step 3 that $C_i$ does not intersect $F_{i-2}^i$. Take $c\in K$ such that $P_i\in f_{i-1}$ is defined by $[y_1/y_2^{i-1}:y_2]=[c:1]$. Replacing $y_1$ with $y_1-cy_2^i$, we may assume that $c=0$.

Then $y_1/y_2^i,y_2,x_3$ form a regular system of parameters in $\sO_{X_i,P_i}$. The $H_i$, $F_{i-1}$ and $f_{i-1}$ are defined at $P_i$ by $y_1/y_2^i$, $y_2$ and $(y_2,x_3)\sO_{X_i}$. Because the fibration $F_{i-1}\to C_{i-1}$ is isomorphic to the projection of the product $\bP_K^1\times_{\Spec K}C_{i-1}$, its section $C_i$ is defined at $P_i$ by $(y_1/y_2^i+x_3v(x_3),y_2)\sO_{X_i}$ for some $v(x_3)\in K[[x_3]]$. After replacing $y_1$ with $y_1+y_2^ix_3v(x_3)$, one has that $C_i$ is contained in $H_i$.

\medskip
\textit{Step} 5.
Let $e$ be the maximal index such that $\ord_{F_e}\sI_C=1$ and choose a regular system $y_1,y_2,x_3$ of parameters in $\sO_X$ such that $y_1$ satisfies the condition in Step 4 for $i=e$ and $y_2$ is a general member in $\sI_C$. Now repeating the process in Step 1 for $y_1,y_2,x_3$ instead of $x_1,x_2,x_3$, we may assume that $x_1=y_1$ and $x_2=y_2$. Then by Remark \ref{rmk:toric}, one can obtain $E''=F''_{l-1}$ by a weighted blow-up with respect to the coordinates $x_1,x_2$. More precisely, there exist a non-negative integer $p$ and a positive integer $q$ such that $E''$ is obtained by the weighted blow-up of $X''$ with $\wt(x_1,x_2)=p(e,1)+q(e+1,1)$. Note that $p$ is positive iff $e+1<l$.

Therefore, we conclude that $E$ is also obtained by the weighted blow-up of $X$ with $\wt(x_1,x_2)=p(e,1)+q(e+1,1)$.
\end{proof}

\section{Reduction to the case of decomposed boundaries}\label{sct:reduction}

The objective of this section is to complete the reduction to Conjecture \ref{cnj:product}.

\begin{remark}\label{rmk:independent}
In order to prove Conjecture \ref{cnj:product} or a statement of the same kind, it is sufficient to find an integer $l$ which satisfies the required property but may depend on the germ $P\in X$ of a smooth threefold, for the reason that one has only to consider those $\fa$ which are $\fm$-primary. Indeed, there exists an \'etale morphism from $P\in X$ to the germ $o\in\bA^3$ at origin of the affine space. Then as it is seen in Lemma \ref{lem:regular}, any $\fm$-primary ideal $\fa$ on $X$ is the pull-back of some ideal $\fb$ on $\bA^3$, and $\mld_P(X,\fa^q\fm^s)$ coincides with $\mld_o(\bA^3,\fb^q\fn^s)$, where $\fn$ is the maximal ideal in $\sO_{\bA^3}$ defining $o$. Thus, the bound $l$ on the germ $o\in\bA^3$ can be applied to an arbitrary germ $P\in X$.
\end{remark}

We shall make the reduction by using the generic limit of ideals. For a moment, we work in the general setting that $P\in X$ is the germ of a klt variety. Let $r_1,\ldots,r_e$ be positive real numbers and $\cS=\{\fa_i=\prod_{j=1}^e\fa_{ij}^{r_j}\}_{i\in\bN}$ be a sequence of $\bR$-ideals on $X$. Let $\sfa=\prod_{j=1}^e\sfa_j^{r_j}$ be a generic limit of $\cS$. We use the notation in Section \ref{sct:limit}. The $\sfa$ is the generic limit with respect to a family $\cF=(Z_l,(\fa_j(l))_j,N_l,s_l,t_l)_{l\ge l_0}$ of approximations of $\cS$, and $\sfa$ is an ideal on $\hat P\in\hat X$ where $\hat X$ is the spectrum of the completion of the local ring $\sO_{X,P}\otimes_kK$.

Let $\hat f\colon\hat Y\to\hat X$ be a projective birational morphism isomorphic outside $\hat P$. Suppose that $\hat Y$ is klt and the exceptional locus of $\hat f$ is a $\bQ$-Cartier prime divisor $\hat F$. Let $\hat C$ be a closed proper subset of $\hat F$. As in Remark \ref{rmk:descend}, after replacing $\cF$ with a subfamily, for any $l\ge l_0$ the $\hat f$ is descended to a projective morphism $f_l\colon Y_l\to X\times Z_l$ from a klt variety whose exceptional locus is a $\bQ$-Cartier prime divisor $F_l$. One may assume that for any $i\in N_l$, the fibre $f_i\colon Y_i\to X$ at $s_l(i)\in Z_l$ is a morphism from a klt variety whose exceptional locus is a $\bQ$-Cartier prime divisor $F_i$. Refer to \cite[Section B]{dFEM11} for the properties of a family of normal $\bQ$-Gorenstein rational singularities. We may assume that $\hat C$ is descended to a closed subset $C_l$ in $F_l$. The $f_i$, $F_i$ and $C_i=C_l\times_{Y_l}Y_i$ are independent of $l$ because they are compatible with $t_l$.

\begin{lemma}\label{lem:relative}
Notation and assumptions as above. Suppose that $a_{\hat F}(\hat X,\sfa)$ is at most one and that the intersection of $\hat F$ and the non-klt locus on $\hat Y$ of $(\hat X,\sfa)$ is contained in $\hat C$. Then there exists a positive integer $l$ depending only on $\sfa$ and $\hat f$ such that after replacing $\cF$ with a subfamily, for any $i\in N_{l_0}$ if a divisor $E$ over $X$ computes $\mld_P(X,\fa_i)$ and has centre $c_{Y_i}(E)$ not contained in $C_i$, then $a_E(X)\le l$.
\end{lemma}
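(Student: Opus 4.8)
The plan is to descend the whole configuration to the approximating threefolds $Y_i$ and then bound $a_E(X)$ by cutting down to dimension two, where Conjecture \ref{cnj:equiv}(\ref{itm:nakamura}) is known (Theorem \ref{thm:surface}). After replacing $\cF$ with a subfamily, Remark \ref{rmk:descend}(\ref{itm:descendE}) applied to $\hat F$ (whose centre on $\hat X$ is $\hat P$) gives $a_{F_i}(X,\fa_i)=a_{\hat F}(\hat X,\sfa)$, hence $a_{F_i}(X)=a_{\hat F}(\hat X)$, for every $i\in N_{l_0}$; by Theorem \ref{thm:discrete} the discrete set $\{\mld_P(X,\fa_i)\}_i$ is bounded above by $\mld_P X$, so after a further replacement $\mld_P(X,\fa_i)$ equals a constant $m$. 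Since $a_{\hat F}(\hat X,\sfa)\le1$, the pull-back $(Y_i,\Delta_i,(\fa_i)_{Y_i})$ of $(X,\fa_i)$ has $\Delta_i=(1-a_{F_i}(X,\fa_i))F_i$ effective and is crepant to $(X,\fa_i)$, and likewise $(\hat Y,\hat\Delta,\sfa_{\hat Y})$ with $\hat\Delta=(1-a_{\hat F}(\hat X,\sfa))\hat F$ is crepant to $(\hat X,\sfa)$. As $\hat f$, and hence $f_i$, is isomorphic outside the centre of $\hat F$, a divisor $E$ over $X$ with $c_X(E)=P$ satisfies $c_{Y_i}(E)\subset F_i$; if $c_{Y_i}(E)=F_i$ then $E=F_i$ and $a_E(X)=a_{\hat F}(\hat X)$ is bounded. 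If $(\hat X,\sfa)$ is not lc, then $\mld_{\hat P}(\hat X,\sfa)=-\infty$, so $\mld_P(X,\fa_i)=-\infty$ by Remark \ref{rmk:limit}(\ref{itm:limitineq}), and descending (as in the next step) the hypothesis that the non-klt locus of $(\hat X,\sfa)$ on $\hat Y$ meets $\hat F$ only inside $\hat C$ shows that no $E$ as in the statement exists; so we may assume $(\hat X,\sfa)$ is lc, whence $(X,\fa_i)$ is lc for $i$ in a subfamily by Theorem \ref{thm:lct} and $m\ge0$. We may also assume $a_{\hat F}(\hat X,\sfa)>0$, since otherwise $\hat F$ is a non-klt centre and $\hat F\subset\hat C$, against $\hat C$ being proper in $\hat F$; and we may assume $Z_i:=c_{Y_i}(E)$ is a point or a curve meeting $F_i\setminus C_i$.

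Because $\hat X$ is the spectrum of a local ring, every lc centre of $(\hat X,\sfa)$ passes through $\hat P$, so every lc centre of $(\hat Y,\hat\Delta,\sfa_{\hat Y})$ meets $\hat F$ and is therefore contained in $\hat C$ by hypothesis; equivalently $(\hat Y,\hat\Delta,\sfa_{\hat Y})$ is klt on $\hat Y\setminus\hat C$, and it remains lc after multiplying its $\bR$-ideal by a small positive power of the ideal sheaf of $\hat C$ (and absorbing the fractional divisor $\hat\Delta$ into the ideal, using that $\hat F$ is $\bQ$-Cartier). Applying the relative effective $\fm$-adic semi-continuity (Corollary \ref{crl:relative}, Theorem \ref{thm:relative}) to the family $f_l\colon Y_l\to X\times Z_l$ over the open complement of $C_l$, after a further replacement of $\cF$ the triple $(Y_i,\Delta_i,(\fa_i)_{Y_i})$ is klt on $Y_i\setminus C_i$ for every $i\in N_{l_0}$ (the same argument gives the non-klt statement used above). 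Since $E$ computes $\mld_P(X,\fa_i)=m=\mld_{F_i}(Y_i,\Delta_i,(\fa_i)_{Y_i})$ and $Z_i$ meets the klt locus $Y_i\setminus C_i$, it in fact computes $\mld_{\eta_{Z_i}}(Y_i,\Delta_i,(\fa_i)_{Y_i})$; in particular $0\le m\le\mld_{\hat P}(\hat X,\sfa)\le a_{\hat F}(\hat X,\sfa)$.

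Now I would bound $a_E(X)=m+\ord_E\fa_i$ by reduction to a surface. Choose a general point $z$ of $Z_i$ in $F_i\setminus C_i$ and a general surface section $H_i$ of $Y_i$ through $z$; then $H_i$ is a klt surface, and by adjunction along a general hyperplane (as in Step 2 of the proof of Theorem \ref{thm:canonical}) the triple $(H_i,\Delta_i|_{H_i},(\fa_i)_{Y_i}\sO_{H_i})$ is crepant to the localisation of $(Y_i,\Delta_i,(\fa_i)_{Y_i})$ at $\eta_{Z_i}$, a component $E_{H_i}$ of $E\times_{Y_i}H_i$ through $z$ computes $\mld_z(H_i,\Delta_i|_{H_i},(\fa_i)_{Y_i}\sO_{H_i})=m$, and $a_{E_{H_i}}(H_i)$ together with the orders of $E_{H_i}$ along the data equal $a_E(Y_i,\Delta_i,(\fa_i)_{Y_i})$ and the corresponding orders of $E$. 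Hence $a_E(X)$ differs from $a_{E_{H_i}}(H_i)$ only by $\ord_{\hat F}\sfa=\ord_{F_i}\fa_i$ times $\ord_E F_i=\ord_{E_{H_i}}(F_i|_{H_i})$, a product of one bounded factor and one factor still to be controlled. The coefficient $1-a_{\hat F}(\hat X,\sfa)$ of $F_i|_{H_i}$ lies in $[0,1)$, and the germ $(H_i,z)$ with $F_i|_{H_i}$ and $(\fa_i)_{Y_i}\sO_{H_i}$ moves in a bounded family, being the descent of the fixed surface germ of $\hat Y$ transverse to $\hat F\setminus\hat C$; so by Theorem \ref{thm:surface} one bounds $a_{E_{H_i}}(H_i)$, hence $\ord_E F_i$ and $a_E(X)$, by a constant depending only on $\sfa$ and $\hat f$.

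The main obstacle is this last step, and specifically the passage from Theorem \ref{thm:surface}, which yields only \emph{some} bounded divisor computing $\mld_z(H_i,\dots)$, to a bound for the particular divisor $E_{H_i}$ coming from $E$: one must rule out the configuration in which infinitely many divisors compute $\mld_z(H_i,\dots)$ with unbounded log discrepancy over $H_i$, and this is exactly where the hypothesis that $\fa_i$ is a generic limit enters, forcing the cosupport of $(\fa_i)_{Y_i}$ into sufficiently general position with respect to $F_i$ and $C_i$ rather than being arbitrary. A further point is the uniform treatment of the cases where $Z_i$ is a point and where it is a curve.
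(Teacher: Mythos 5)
There is a genuine gap, and you have correctly located it yourself: the final step, where you try to bound the log discrepancy of the \emph{particular} divisor $E_{H_i}$ cut out of $E$ by a surface section. Theorem \ref{thm:surface} (Conjecture \ref{cnj:equiv}(\ref{itm:nakamura}) in dimension two) only produces \emph{some} divisor of bounded log discrepancy computing the surface mld; the lemma, however, asserts a bound for \emph{every} divisor $E$ computing $\mld_P(X,\fa_i)$ whose centre on $Y_i$ avoids $C_i$, so an existence statement cannot close the argument. Your hope that genericity of the limit "forces the cosupport into general position" is not substantiated and is not how the paper proceeds. Your intermediate reductions (descending $\hat F$, crepant pull-backs, klt-ness of the pull-back away from $C_i$ via Corollary \ref{crl:relative}) are sound but by themselves yield no control on $\ord_E\fa_i$, which is what governs $a_E(X)=a_E(X,\fa_i)+\ord_E\fa_i$.

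The missing idea is a perturbation of the \emph{exponent} of $\sfa$ rather than of the ideal: since the non-klt locus of $(\hat X,\sfa)$ on $\hat Y$ meets $\hat F$ only inside $\hat C$, one can fix $t>0$ so that the same holds for $(\hat X,\sfa^{1+t})$, and since $a_{\hat F}(\hat X,\sfa)\le1$ the crepant pull-back $(\hat Y,b\hat F,\sfa_Y^{1+t})$ has $0\le b<1$. Applying Corollary \ref{crl:relative} to the family $Y_l\setminus C_l\to Z_l$ then gives, after passing to a subfamily, that $(Y_i,bF_i,(\fa_{iY})^{1+t})$ is lc about $F_i\setminus C_i$ for all $i\in N_{l_0}$. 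Hence any divisor $E$ with $c_{Y_i}(E)\not\subset C_i$ satisfies $a_E(X,\fa_i^{1+t})\ge0$, i.e.\ $t\ord_E\fa_i\le a_E(X,\fa_i)$, so $a_E(X)\le(1+t^{-1})a_E(X,\fa_i)\le(1+t^{-1})\mld_PX$ when $E$ computes $\mld_P(X,\fa_i)$. This bounds all such $E$ at once, with no descent to a surface and no appeal to Theorem \ref{thm:surface}.
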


\begin{proof}
Let $r$ be a positive integer such that $r\hat F$ is Cartier. We may assume that $rF_l$ is Cartier. By replacing $\fa_{ij}$ with $(\fa_{ij})^r$ and $r_j$ with $r_j/r$, we may assume that $\fa_{ij}$ is an ideal to the power of $r$ and so is $\sfa_j$. Thus one can define the weak transform $\fa_{iY}=\prod_j(\fa_{ijY})^{r_j}$ on $Y_i$ of $\fa_i$, as well as the weak transform $\sfa_Y$ on $\hat Y$ of $\sfa$. We may assume that $\ord_{\hat F}\sfa_j=\ord_{F_l}\fa_j(l)=\ord_{F_i}\fa_{ij}<l$ for any $i\in N_l$ and $j$. Set $\fa_{jY}(l)=\fa_j(l)\sO_{Y_l}(a_jF_l)$ and $\fa_Y(l)=\prod_j(\fa_{jY}(l))^{r_j}$ for $a_j=\ord_{\hat F}\sfa_j$, which is divisible by $r$.

One can fix a positive real number $t$ such that the intersection of $\hat F$ and the non-klt locus on $\hat Y$ of $(\hat X,\sfa^{1+t})$ is still contained in $\hat C$. Set the real number $b$ so that $(\hat Y,b\hat F,\sfa_Y^{1+t})$ is crepant to $(\hat X,\sfa^{1+t})$, then $0\le b<1$ by $a_{\hat F}(\hat X,\sfa)\le1$. Then $(Y_l,bF_l,\fa_Y(l)^{1+t})$ is crepant to $(X\times Z_l,\fa(l)^{1+t})$ while $(Y_i,bF_i,(\fa_{iY})^{1+t})$ is crepant to $(X,\fa_i^{1+t})$. One may assume that $(Y_l,bF_l,\fa_Y(l)^{1+t})$ is klt about $F_l\setminus C_l$.

Apply Corollary \ref{crl:relative} to the family $Y_l\setminus C_l\to Z_l$. Since $\fa_{ijY}+\fm^l\sO_{Y_i}(a_jE_i)=\fa_{jY}(l)\sO_{Y_i}$, one has that $(Y_i,bF_i,(\fa_{iY})^{1+t})$ is lc about $F_i\setminus C_i$ for any $i\in N_{l_0}$ after replacing $\cF$ with a subfamily. Thus if a divisor $E$ over $X$ satisfies that $c_{Y_i}(E)\not\subset C_i$, then $a_E(X,\fa_i^{1+t})\ge0$, that is, $t\ord_E\fa_i\le a_E(X,\fa_i)$. Hence,
\begin{align*}
a_E(X)=a_E(X,\fa_i)+\ord_E\fa_i\le(1+t^{-1})a_E(X,\fa_i).
\end{align*}
In addition if $E$ computes $\mld_P(X,\fa_i)$, then
\begin{align*}
a_E(X)\le(1+t^{-1})\mld_P(X,\fa_i)\le(1+t^{-1})\mld_PX.
\end{align*}
Hence any integer $l$ at least $(1+t^{-1})\mld_PX$ satisfies the required property.
\end{proof}

We provide a meta theorem which connects statements involving the maximal ideal $\fm$ to those involving $\fm$-primary ideals. For the property $\sP$ in the theorem, one can take for example empty or being terminal.

\begin{theorem}\label{thm:meta}
Let $P\in X$ be the germ of a smooth threefold and $\fm$ be the maximal ideal in $\sO_X$ defining $P$. Fix a positive rational number $q$. Let $\sP$ be a property of canonical pairs $(X,\fa^q)$ for ideals $\fa$ on $X$. Then the following statements are equivalent.
\begin{enumerate}
\item\label{itm:maximal}
Fix a non-negative rational number $s$. Then there exists a positive integer $l$ depending only on $q$ and $s$ such that if $\fa$ is an ideal on $X$ satisfying that $(X,\fa^q)$ is canonical and has the property $\sP$, then there exists a divisor $E$ over $X$ which computes $\mld_P(X,\fa^q\fm^s)$ and satisfies the inequality $a_E(X)\le l$.
\item\label{itm:mprimary}
Fix a non-negative rational number $s$ and a positive integer $b$. Then there exists a positive integer $l$ depending only on $q$, $s$ and $b$ such that if $\fa$ and $\fb$ are ideals on $X$ satisfying that $(X,\fa^q)$ is canonical and has the property $\sP$ and that $\fb$ contains $\fm^b$, then there exists a divisor $E$ over $X$ which computes $\mld_P(X,\fa^q\fb^s)$ and satisfies the inequality $a_E(X)\le l$.
\end{enumerate}
\end{theorem}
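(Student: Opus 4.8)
The implication from \textup{(\ref{itm:mprimary})} to \textup{(\ref{itm:maximal})} is trivial: apply \textup{(\ref{itm:mprimary})} with $\fb=\fm$ and $b=1$. For the converse I would argue by contradiction, using the generic limit of ideals. By Remark \ref{rmk:independent} one may assume $P\in X$ is the germ at origin of $\bA^3$ and that $\fa,\fb$ are $\fm$-primary; the case $\fb=\sO_X$ is Theorem \ref{thm:terminal}(\ref{itm:zero}), so suppose \textup{(\ref{itm:mprimary})} fails for some fixed $s$ and $b$: there are ideals $\fa_i,\fb_i$ with $(X,\fa_i^q)$ canonical having the property $\sP$, with $\fm^b\subset\fb_i\subsetneq\sO_X$, and a strictly increasing sequence $\{l_i\}$ such that every divisor computing $\mld_P(X,\fa_i^q\fb_i^s)$ has log discrepancy at least $l_i$ over $X$. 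Since the $\bR$-ideals $\fa_i^q\fb_i^s$ have exponents in $\{q,s\}$, Theorem \ref{thm:nonpos} lets us assume $\mld_P(X,\fa_i^q\fb_i^s)$ is positive for every $i$. After passing to a subsequence along which the colengths of $\fb_i$ are constant, form the generic limits $\sfa,\sfb$ on $\hat P\in\hat X$; then $\hat\fm^b\subset\sfb$, so $\sfb$ is $\hat\fm$-primary with cosupport $\{\hat P\}$.

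Apply Theorem \ref{thm:grthan1} to $\{\fa_i^q\fb_i^s\}$, whose generic limit is $\sfa^q\sfb^s$. In cases \ref{cas:case1}, \ref{cas:case2} and \ref{cas:case3} one has $\mld_{\hat P}(\hat X,\sfa^q\sfb^s)=\mld_P(X,\fa_i^q\fb_i^s)$ after replacing $\cF$ with a subfamily, and descending a divisor computing the left-hand side as in Remark \ref{rmk:descend}(\ref{itm:descendE}) produces divisors computing $\mld_P(X,\fa_i^q\fb_i^s)$ with constant log discrepancy over $X$, a contradiction. So we are in case \ref{cas:case4}: $(\hat X,\sfa^q\sfb^s)$ is lc with smallest lc centre $\hat C$ a regular curve. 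Since $\hat\fm^b\subset\sfb$ and $\hat\fm$ is trivial at $\eta_{\hat C}$, so is $\sfb$, whence $\ord_G\sfb=0$ for every $G$ with $c_{\hat X}(G)=\hat C$ and therefore $\mld_{\eta_{\hat C}}(\hat X,\sfa^q)=\mld_{\eta_{\hat C}}(\hat X,\sfa^q\sfb^s)=0$. Thus $(\hat X,\sfa^q)$ is not klt, while $\mld_{\hat P}(\hat X,\sfa^q)\ge\mld_P(X,\fa_i^q)\ge1$ by Remark \ref{rmk:limit}(\ref{itm:limitineq}) and canonicity; since moreover $\hat C$ is a one-dimensional lc centre, $(\hat X,\sfa^q)$ falls into case \ref{cas:case4} of Theorem \ref{thm:grthan1} with smallest lc centre $\hat C$, and Theorem \ref{thm:grthan1}(\ref{itm:case4}) then forces $\mld_{\hat P}(\hat X,\sfa^q)=1$. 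Now Theorem \ref{thm:wbu} applies to $(\hat X,\sfa^q)$ and yields a divisor $\hat E$ over $\hat X$ computing $\mld_{\eta_{\hat C}}(\hat X,\sfa^q)=0$ which is the weighted blow-up of $\hat X$ along $\hat C$ with $\wt(x_1,x_2)=(w_1,w_2)$; cutting by a general hyperplane and invoking the surface case (Theorem \ref{thm:surface}, via Theorem \ref{thm:mldwbu}) bounds $a_{\hat E}(\hat X)=w_1+w_2$ in terms of $q$.

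It remains to reduce the dependence on $\sfb$ to a power of the maximal ideal. Because $\hat C$ is regular, $\sfb\sO_{\hat C}=(\hat\fm\sO_{\hat C})^{b'}$ for an integer $b'$ with $0\le b'\le b$, and after a further subfamily this descends to $\fb_i\sO_{C_i}=\fm^{b'}\sO_{C_i}$ on the descended curve $C_i$. The plan is to use the precise inversion of adjunction of this section — which expresses $\mld_{\hat P}(\hat X,-)$ through a minimal log discrepancy on the exceptional surface $\hat E$, hence, after a general hyperplane section, through a minimal log discrepancy on a smooth surface, where computing divisors are bounded by Theorem \ref{thm:mldwbu} — to show that for $i$ in a subfamily one has $\mld_P(X,\fa_i^q\fb_i^s)=\mld_P(X,\fa_i^q\fm^{b's})$, computed by a common divisor. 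In this step Lemma \ref{lem:relative} bounds those computing divisors whose centre on the weighted blow-up $Y_i\to X$ avoids $C_i$, while the surface case bounds those whose centre lies in $C_i$. Granting the reduction, statement \textup{(\ref{itm:maximal})} applied to the sequence $\{\fa_i^q\fm^{b's}\}$ — legitimate since each $(X,\fa_i^q)$ is canonical with the property $\sP$ — produces divisors computing $\mld_P(X,\fa_i^q\fb_i^s)$ with $a_E(X)$ bounded by a constant depending only on $q$, $s$ and $b$, since only the finitely many values $b's$ with $b'\le b$ occur; this contradicts $a_E(X)\ge l_i$.

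The main obstacle is precisely the reduction in the third paragraph. One must set up the precise inversion of adjunction at the one-dimensional regular lc centre $\hat C$ and show that replacing $\sfb$ by $\hat\fm^{b'}$ — harmless after restriction to $\hat C$ — leaves the relevant minimal log discrepancy and a computing divisor unchanged, and one must do this compatibly with the family of approximations so that the conclusion transfers to $\mld_P(X,\fa_i^q\fb_i^s)$ for the individual $\fa_i,\fb_i$, all without invoking the still open stability of minimal log discrepancies for generic limits in case \ref{cas:case4}.
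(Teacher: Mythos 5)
Your overall architecture coincides with the paper's: trivial direction via $b=1$, generic limit $(\sfa,\sfb)$, reduction via Theorem \ref{thm:grthan1} to the case of a one-dimensional smallest lc centre $\hat C$ with $\mld_{\hat P}(\hat X,\sfa^q)=1$, extraction of $\hat E$ by Theorem \ref{thm:wbu}, and a final replacement of $\sfb$ by $\hat\fm^{b'}$ where $\sfb+\sI_{\hat C}=\hat\fm^{b'}+\sI_{\hat C}$. But the step you yourself flag as ``the main obstacle'' is the actual content of the theorem, and you have not supplied it; as written, the proposal is an accurate plan with the central argument missing. Two distinct things need to be proved there, and neither follows formally from what you have set up: (a) for each individual $i$, that $\mld_P(X,\fa_i^q\fm^{sb'})\le\mld_P(X,\fa_i^q\fb_i^s)$, and (b) on the limit side, that $\mld_{\hat P}(\hat X,\sfa^q\sfb^s)=\mld_{\hat P}(\hat X,\sfa^q\hat\fm^{sb'})$. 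Your sketch of (a) --- ``Lemma \ref{lem:relative} for centres avoiding $C_i$, the surface case for centres in $C_i$'' --- is not how it can work: the divisors with small centre are not controlled by a surface argument but by an order comparison.

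The paper fills the gap as follows. By Corollary \ref{crl:mult} every divisor $G_i$ computing $\mld_P(X,\fa_i^q\fb_i^s)$ has $\ord_{G_i}\fm\le b_0$ with $b_0$ depending only on $q,s$, hence $\ord_{G_i}\fb_i\le bb_0$. One then blows up not along $\hat C$ but at the closed point, with weights $(jw_1,jw_2,1)$ for a fixed $j>bb_0$ in coordinates $x_1,x_2,x_3$ where $\wt(x_1,x_2)=(w_1,w_2)$ comes from Theorem \ref{thm:wbu} and $x_3$ is general; the exceptional divisor $\hat F$ still satisfies $a_{\hat F}(\hat X,\sfa^q)=1$. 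Lemma \ref{lem:relative} (applied with $\hat C$ there taken to be the single point $\hat Q$ on $\hat F$ over the strict transform of the curve) bounds all computing divisors except those centred at the descended point $Q_i$; for a $G_i$ centred at $Q_i$ one has $\ord_{G_i}(y_{1i},y_{2i})\sO_X\ge jw_2>bb_0\ge\ord_{G_i}\fb_i$, so $\ord_{G_i}\fb_i=\ord_{G_i}(\fb_i+(y_{1i},y_{2i})\sO_X)\le\ord_{G_i}\fm^{b'}$, which gives (a). For (b) the paper proves a precise inversion of adjunction along $\hat C$ (Proposition \ref{prp:piaE} and Corollary \ref{crl:pia}, resting on Theorem \ref{thm:pia} since the relevant mld is at most one), which is exactly the statement that $\bR$-ideals agreeing on $\hat C$ give equal minimal log discrepancies at $\hat P$. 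Finally, hypothesis (\ref{itm:maximal}) applied with the finitely many exponents $0,s,\ldots,bs$ yields, via Lemma \ref{lem:resolution}, the stability $\mld_{\hat P}(\hat X,\sfa^q\hat\fm^{sb'})=\mld_P(X,\fa_i^q\fm^{sb'})$, and chaining (b), this equality and (a) gives $\mld_{\hat P}(\hat X,\sfa^q\sfb^s)\le\mld_P(X,\fa_i^q\fb_i^s)$, which by Lemma \ref{lem:limtonak} is what is needed. Incidentally, your parenthetical claim that $a_{\hat E}(\hat X)=w_1+w_2$ is bounded in terms of $q$ is neither needed nor justified: the weights produced by Theorem \ref{thm:wbu} depend on $\sfa$, not only on $q$.
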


\begin{proof}
\textit{Step} 1.
The (\ref{itm:maximal}) follows from the special case of (\ref{itm:mprimary}) when $b=1$. It is necessary to derive (\ref{itm:mprimary}) from (\ref{itm:maximal}). Let $\cS=\{(\fa_i,\fb_i)\}_{i\in\bN}$ be an arbitrary sequence of pairs of ideals on $X$ such that $(X,\fa_i^q)$ is canonical and has the property $\sP$ and such that $\fb_i$ contains $\fm^b$. Assuming the (\ref{itm:maximal}), it is sufficient to find an integer $l$ such that for infinitely many $i$, there exists a divisor $E_i$ over $X$ which computes $\mld_P(X,\fa_i^q\fb_i^s)$ and satisfies the inequality $a_{E_i}(X)\le l$. Note Remark \ref{rmk:independent}. By Theorem \ref{thm:nonpos}, we may assume that $\mld_P(X,\fa_i^q\fb_i^s)$ is positive. Then by Corollary \ref{crl:mult}, there exists a positive integer $b_0$ depending only on $q$ and $s$ such that $\ord_{G_i}\fm\le b_0$ for every divisor $G_i$ over $X$ computing $\mld_P(X,\fa_i^q\fb_i^s)$.

\medskip
\textit{Step} 2.
We construct a generic limit $(\sfa,\sfb)$ of $\cS$ using the notation in Section \ref{sct:limit}. The $(\sfa,\sfb)$ is the generic limit with respect to a family $\cF=(Z_l,(\fa(l),\fb(l)),N_l,s_l,t_l)_{l\ge l_0}$ of approximations of $\cS$. The $\sfa$ and $\sfb$ are ideals on $\hat P\in\hat X$ where $\hat X$ is the spectrum of the completion of the local ring $\sO_{X,P}\otimes_kK$. We let $\hat\fm$ denote the maximal ideal in $\sO_{\hat X}$. Note that $\hat\fm^b\subset\sfb$ by $\fm^b\subset\fb_i$. By Lemma \ref{lem:limtonak} and Remark \ref{rmk:limit}(\ref{itm:limitineq}), the existence of $l$ is reduced to the inequality $\mld_{\hat P}(\hat X,\sfa^q\sfb^s)\le\mld_P(X,\fa_i^q\fb_i^s)$ for any $i\in N_{l_0}$ after replacing $\cF$ with a subfamily. By Theorem \ref{thm:grthan1}, we may assume that $(\hat X,\sfa^q\sfb^s)$ has the smallest lc centre $\hat C$ which is regular and of dimension one.

Since $\sfb$ is $\hat\fm$-primary, $\hat C$ is also the smallest lc centre of $(\hat X,\sfa^q)$. In particular, $\mld_{\hat P}(\hat X,\sfa^q)\le1$ by Theorem \ref{thm:grthan1}(\ref{itm:case4}), while $\mld_{\hat P}(\hat X,\sfa^q)\ge1$ by Remark \ref{rmk:limit}(\ref{itm:limitineq}). Thus $\mld_{\hat P}(\hat X,\sfa^q)=1$.

\medskip
\textit{Step} 3.
We apply Theorem \ref{thm:wbu} to $(\hat X,\sfa^q)$. There exist a divisor $\hat E$ over $\hat X$ computing $\mld_{\eta_{\hat C}}(\hat X,\sfa^q)$ and a regular system $x_1,x_2,x_3$ of parameters in $\sO_{\hat X}$ such that $\hat E$ is obtained by the weighted blow-up of $\hat X$ with $\wt(x_1,x_2)=(w_1,w_2)$ for some coprime positive integers $w_1,w_2$. We take $x_3$ generally from $\hat\fm$ so that $\ord_{x_3}\sfa$ is zero, where $\ord_{x_3}$ stands for the order along the divisor on $\hat X$ defined by $x_3$. Note that $\hat C$ is geometrically irreducible.

We fix a positive integer $j$ such that
\begin{align*}
j>bb_0.
\end{align*}
Let $\hat f\colon\hat Y\to\hat X$ be the weighted blow-up with $\wt(x_1,x_2,x_3)=(jw_1,jw_2,1)$ and $\hat F$ be its exceptional divisor. By Remark \ref{rmk:wbu}, there exists a regular system $y_1,y_2,y_3$ of parameters in $\sO_{X,P}\otimes_kK$ such that $\hat f$ is also the weighted blow-up with $\wt(y_1,y_2,y_3)=(jw_1,jw_2,1)$ (after regarding $y_1,y_2,y_3$ as elements in $\sO_{\hat X}$).

Discussed in the paragraph prior to Lemma \ref{lem:relative}, after replacing $\cF$ with a subfamily, $\hat f$ is descended to a projective morphism $f_l\colon Y_l\to X\times Z_l$ for any $l\ge l_0$. One can assume that $y_1,y_2,y_3$ come from $\sO_{X,P}\otimes_k\sO_{Z_l}$ and that for any $i\in N_l$, their fibres $y_{1i},y_{2i},y_{3i}$ at $s_l(i)\in Z_l$ form a regular system of parameters in $\sO_{X,P}$. $Y_l$ is klt and the exceptional locus of $f_l$ is a $\bQ$-Cartier prime divisor $F_l$. The fibre $f_i\colon Y_i\to X$ of $f_l$ at $s_l(i)$ is the weighted blow-up of $X$ with $\wt(y_{1i},y_{2i},y_{3i})=(jw_1,jw_2,1)$ whose exceptional divisor is $F_i=F_l\times_{Y_l}Y_i$.

Since $(jw_1,jw_2,1)=j(w_1,w_2,0)+(0,0,1)$, one has the inequality
\begin{align*}
\ord_{\hat F}\sfa^q\ge j\ord_{\hat E}\sfa^q+\ord_{x_3}\sfa^q=j(w_1+w_2)
\end{align*}
using $\ord_{\hat E}\sfa^q=a_{\hat E}(\hat X)-a_{\hat E}(\hat X,\sfa^q)=w_1+w_2$. Equivalently, $a_{\hat F}(\hat X,\sfa^q)=a_{\hat F}(\hat X)-\ord_{\hat F}\sfa^q\le 1$. Hence $a_{\hat F}(\hat X,\sfa^q)=1$ by $\mld_{\hat P}(\hat X,\sfa^q)=1$ in Step 2.

\medskip
\textit{Step} 4.
Let $\hat Q$ be the closed point in $\hat F$ which lies on the strict transform of $\hat C$. For $i\in N_l$, let $Q_i$ be the closed point in $F_i$ which lies on the strict transform of the curve on $X$ defined by $(y_{1i},y_{2i})\sO_X$. Applying Lemma \ref{lem:relative} to $(\hat X,\sfa^q\sfb^s)$, $\hat f$, and $\hat Q$, one has only to treat the case when $\mld_P(X,\fa_i^q\fb_i^s)$ is computed by a divisor $G_i$ such that $c_{Y_i}(G_i)=Q_i$.

For such $G_i$, the inequality $\ord_{G_i}\fm\le b_0$ holds by Step 1. Thus,
\begin{align*}
\ord_{G_i}\fb_i\le\ord_{G_i}\fm^b\le bb_0<j\le jw_2&=\ord_{F_i}(y_{1i},y_{2i})\sO_X\\
&\le\ord_{F_i}(y_{1i},y_{2i})\sO_X\cdot\ord_{G_i}F_i\\
&\le\ord_{G_i}(y_{1i},y_{2i})\sO_X,
\end{align*}
whence $\ord_{G_i}\fb_i=\ord_{G_i}(\fb_i+(y_{1i},y_{2i})\sO_X)$.

By $\hat\fm^b\subset\sfb$, there exists a non-negative integer $b'$ at most $b$ satisfying that
\begin{align*}
\sfb+(y_1,y_2)\sO_{\hat X}=\hat\fm^{b'}+(y_1,y_2)\sO_{\hat X}.
\end{align*}
Then one can assume that $\fb(l)+(y_1,y_2)\sO_{X\times Z_l}=(\fm^{b'}+\fm^l)\sO_{X\times Z_l}+(y_1,y_2)\sO_{X\times Z_l}$ for any $l\ge l_0$, which derives the inclusion $\fm^{b'}\subset\fb_i+\fm^l+(y_{1i},y_{2i})\sO_X$. One may assume that $l_0\ge b$, then $\fm^{b'}\subset\fb_i+(y_{1i},y_{2i})\sO_X$. Thus, $\ord_{G_i}\fb_i=\ord_{G_i}(\fb_i+(y_{1i},y_{2i})\sO_X)\le\ord_{G_i}\fm^{b'}$. In particular,
\begin{align*}
\mld_P(X,\fa_i^q\fm^{sb'})\le a_{G_i}(X,\fa_i^q\fm^{sb'})\le a_{G_i}(X,\fa_i^q\fb_i^s)=\mld_P(X,\fa_i^q\fb_i^s).
\end{align*}

\medskip
\textit{Step} 5.
We want the inequality $\mld_{\hat P}(\hat X,\sfa^q\sfb^s)\le\mld_P(X,\fa_i^q\fb_i^s)$, as seen in Step 2. Applying our assumption (\ref{itm:maximal}) in the case when the exponent of $\fm$ is one of $0,s,2s,\ldots,bs$, there exists a positive integer $l'$ depending only on $q$, $s$ and $b$ such that $\mld_P(X,\fa_i^q\fm^{sb'})$ is computed by a divisor $E_i$ satisfying the inequality $a_{E_i}(X)\le l'$. One has that $\ord_{E_i}\fa_i=q^{-1}(a_{E_i}(X)-a_{E_i}(X,\fa_i^q))\le q^{-1}l'$, so $E_i$ computes $\mld_P(X,(\fa_i+\fm^e)^q\fm^{sb'})$ for any integer $e$ at least $q^{-1}l'$, which equals $\mld_P(X,\fa_i^q\fm^{sb'})$. Together with Lemma \ref{lem:resolution}, one obtains that $\mld_{\hat P}(\hat X,\sfa^q\hat\fm^{sb'})=\mld_P(X,\fa_i^q\fm^{sb'})$ for any $i\in N_{l_0}$ after replacing $\cF$ with a subfamily. Hence by Step 4, the problem is reduced to showing the equality
\begin{align*}
\mld_{\hat P}(\hat X,\sfa^q\sfb^s)=\mld_{\hat P}(\hat X,\sfa^q\hat\fm^{sb'}).
\end{align*}

The equality $\sfb+(y_1,y_2)\sO_{\hat X}=\hat\fm^{b'}+(y_1,y_2)\sO_{\hat X}$ tells that
\begin{align*}
\sfb+(y_1,y_2,y_3^j)\sO_{\hat X}=\hat\fm^{b'}+(y_1,y_2,y_3^j)\sO_{\hat X}.
\end{align*}
Since $(y_1,y_2,y_3^j)\sO_{\hat X}=\hat f_*\sO_{\hat Y}(-j\hat F)=\sI_{\hat C}+\hat\fm^j$, where $\sI_{\hat C}$ is the ideal sheaf of $\hat C$, one concludes that $\sfb+\sI_{\hat C}=\hat\fm^{b'}+\sI_{\hat C}$, using $\hat\fm^j\subset\hat\fm^b\subset\sfb$ and $\hat\fm^j\subset\hat\fm^{b'}$. Therefore, the required equality follows from the precise inversion of adjunction, Corollary \ref{crl:pia}.
\end{proof}

Precise inversion of adjunction compares the minimal log discrepancy of a pair and that of its restricted pair by adjunction. Let $P\in X$ be the germ of a normal variety and $S+B$ be an effective $\bR$-divisor on $X$ such that $S$ is a normal prime divisor which does not appear in $B$. Suppose that they form a pair $(X,S+B)$, then one has the adjunction $K_X+S+B|_S=K_S+B_S$ in which $B_S$ is the different on $S$ of $B$.

\begin{conjecture}[Precise inversion of adjunction]\label{cnj:pia}
Notation as above. Then one has that $\mld_P(X,S+B)=\mld_P(S,B_S)$.
\end{conjecture}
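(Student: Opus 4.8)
The plan is to prove the two inequalities separately, after disposing of the non-log-canonical case. If $(X,S+B)$ is not log canonical about $P$, then $(S,B_S)$ is not log canonical either by inversion of adjunction (Theorem~\ref{thm:ia}), so both sides of the asserted equality equal $-\infty$; hence we may assume that $(X,S+B)$ is log canonical about $P$, that $S^\nu=S$, and that both minimal log discrepancies are non-negative real numbers. Everything below takes place on the germ at $P$.

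For the inequality $\mld_P(X,S+B)\le\mld_P(S,B_S)$ the point is that a divisor over $S$ with centre $P$ extends to a divisor over $X$ with the same log discrepancy. Given such an $F$ over $S$, realised on a finite tower of blow-ups of $S$, one carries out the corresponding tower on $X$, at each stage blowing up along the preimage of the current centre \emph{thickened in the normal direction of $S$}. This produces a divisor $E$ over $X$ with $c_X(E)=P$, and an inductive computation of the relative canonical divisor --- in which the different $B_S$ of the adjunction $K_X+S+B|_S=K_S+B_S$ is precisely what records the discrepancy contributed by $S$ --- gives $a_E(X,S+B)=a_F(S,B_S)$; see \cite[Chapter~16]{Ko92}. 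Passing to the infimum over $F$ yields the inequality.

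The content is the reverse inequality $\mld_P(X,S+B)\ge\mld_P(S,B_S)$. Take a log resolution $\pi\colon Y\to X$ of $(X,S+B)$ on which some divisor $E$ computing $\mld_P(X,S+B)$ appears, let $S_Y$ be the strict transform of $S$, and write $\pi^*(K_X+S+B)=K_Y+S_Y+\Gamma$, so that each exceptional prime divisor $E_i$ on $Y$ occurs in $\Gamma$ with coefficient $1-a_{E_i}(X,S+B)$. Suppose first that $E\cap S_Y\neq\emptyset$. Restricting the last identity to the smooth divisor $S_Y$ gives $K_{S_Y}+\Gamma|_{S_Y}=(\pi|_{S_Y})^*(K_S+B_S)$, so $(S_Y,\Gamma|_{S_Y})$ is crepant to $(S,B_S)$; moreover each component $D$ of $E\cap S_Y$ occurs in $\Gamma|_{S_Y}$ with coefficient $1-a_E(X,S+B)$, whence $a_D(S_Y,\Gamma|_{S_Y})=a_E(X,S+B)$, and has $c_S(D)=P$. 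Hence
\begin{align*}
\mld_P(S,B_S)\le a_D(S,B_S)=a_D(S_Y,\Gamma|_{S_Y})=a_E(X,S+B)=\mld_P(X,S+B),
\end{align*}
which is what we want.

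The main obstacle is the case this argument does not cover: a divisor computing $\mld_P(X,S+B)$ may, on every model, be disjoint from the strict transform of $S$, and then it cannot be restricted; controlling such ``transverse'' divisors is the genuine difficulty, and it is here that the dimension and the geometry of $S$ must enter. In the situation needed for this paper one has $\dim X=3$ and $S$ is the exceptional divisor of a weighted blow-up of a regular threefold, so $S$ carries only cyclic quotient singularities, its different is explicit (cf.\ Example~\ref{exl:different}), and near the generic point of the relevant curve the whole configuration is toric. One can then reduce the comparison to an explicit computation with weighted blow-ups of cyclic quotient singularities (Theorem~\ref{thm:divisorial}(\ref{itm:kawamata})), together with the two-dimensional case of precise inversion of adjunction, which is accessible from the two-dimensional theory of minimal log discrepancies (\cite{K17}). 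This establishes the instance, Corollary~\ref{crl:pia}, that is used in the proof of Theorem~\ref{thm:meta}.
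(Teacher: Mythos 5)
The statement you were asked to prove is labelled as a \emph{conjecture} in the paper, and the paper does not prove it: it only records the two special cases where it is currently known, namely $X$ smooth or lci (\cite{EMY03}, \cite{EM04}) and $\mld_P(X,S+B)\le1$ with $X$ klt for some boundary (Theorem \ref{thm:pia}). Your proposal correctly establishes the easy inequality $\mld_P(X,S+B)\le\mld_P(S,B_S)$ and the favourable half of the hard inequality, namely the case in which some divisor $E$ computing $\mld_P(X,S+B)$ meets the strict transform $S_Y$ on a log resolution. (A small inaccuracy there: the coefficient of a component $D$ of $E\cap S_Y$ in $\Gamma|_{S_Y}$ is at least, not necessarily equal to, $1-a_E(X,S+B)$ when $D$ lies on several exceptional divisors; the inequality still goes the right way.) But you then concede that the remaining case --- every computing divisor staying away from $S_Y$ on every model --- is ``the genuine difficulty'', and your closing paragraph only gestures at a toric reduction without carrying it out. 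That concession is precisely the gap: what you have is a proof of one inequality plus a partial proof of the other, which is consistent with the statement being open, not proved.

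For the instances the paper actually needs, the argument is also different from your sketch. Theorem \ref{thm:pia} treats the case $\mld_P(X,S+B)\le1$ by invoking \cite[Corollary 1.4.3]{BCHM10} to extract a divisor $E$ computing the minimal log discrepancy as the divisorial exceptional locus of a single birational contraction $\pi\colon Y\to X$ with $Y$ $\bQ$-factorial; one then takes an irreducible component $C$ of $E\cap S_Y$, notes that $(Y,S_Y+B_Y+bE)$ with $b=1-\mld_P(X,S+B)$ is plt at $\eta_C$, and applies \cite[Corollary 3.10]{Sh93} to bound the coefficient of $C$ in the different $B_{S_Y}$ from below by $b$, giving $\mld_P(S,B_S)\le a_C(S_Y,B_{S_Y})\le1-b$. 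Lemma \ref{lem:pia}, Proposition \ref{prp:piaE} and Corollary \ref{crl:pia} are all deduced from this, not from an explicit computation with weighted blow-ups of quotient singularities as your last paragraph suggests. If your aim is a complete argument for what the paper uses, replace your final paragraph by this extraction argument under the hypothesis $\mld_P(X,S+B)\le1$; as a proof of the conjecture in general, the proposal has a genuine, and acknowledged, gap.
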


This conjecture is regarded as the more precise version of Theorem \ref{thm:ia}. At present we know two cases when it holds. One is when $X$ is smooth \cite{EMY03} or more generally has lci singularities \cite{EM04}. The other is when the minimal log discrepancy is at most one \cite{BCHM10}, that is,

\begin{theorem}\label{thm:pia}
Conjecture \textup{\ref{cnj:pia}} holds when $(X,\Delta)$ is klt for some boundary $\Delta$ and $\mld_P(X,S+B)$ is at most one.
\end{theorem}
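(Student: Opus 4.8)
The plan is to prove the two inequalities $\mld_P(X,S+B)\le\mld_P(S,B_S)$ and $\mld_P(X,S+B)\ge\mld_P(S,B_S)$ separately, the hypotheses entering only for the second. The first is the easy half of inversion of adjunction and holds unconditionally: given a divisor $E$ over $S$ with $c_S(E)=P$, one realises it on the strict transform of $S$ inside a suitable proper birational modification of $X$, blows up along it, and compares log discrepancies through the compatibility of the different with crepant pull-backs, thereby producing a divisor over $X$ with centre $P$ whose log discrepancy with respect to $(X,S+B)$ is at most $a_E(S,B_S)$; taking the infimum over $E$ gives the inequality. This is the $\mld$-refinement of the easy implication in Theorem \ref{thm:ia}, that $(X,S+B)$ being lc about $S$ forces $(S,B_S)$ to be lc.

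For the reverse inequality, after shrinking $X$ we may assume $(X,S+B)$ is lc; if it fails to be lc about $S$, then both sides are $-\infty$ by Theorem \ref{thm:ia} and there is nothing to prove, so put $m:=\mld_P(X,S+B)$ with $0\le m\le1$. Since $(X,\Delta)$ is klt for some boundary $\Delta$, the variety $X$ is klt. Fix a divisor $F$ over $X$ with $c_X(F)=P$ and $a_F(X,S+B)$ equal to $m$, or, if $\mld_P(X,S+B)$ is not computed, arbitrarily close to $m$. By the extraction technique of the minimal model program used in the proof of Lemma \ref{lem:crepant}, which applies because $X$ is klt, one obtains a $\bQ$-factorial normal variety $Y$ and a projective birational morphism $g\colon Y\to X$ whose exceptional locus is the prime divisor $F$, with crepant pull-back
\[
K_Y+S_Y+B_Y+(1-a_F(X,S+B))F=g^*(K_X+S+B),
\]
where $S_Y$ is the strict transform of $S$ and $B_Y$ that of $B$; as $a_F(X,S+B)\le1$, the divisor $S_Y+B_Y+(1-a_F(X,S+B))F$ is effective.

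The decisive step is adjunction along $S_Y$. Since $g$ is an isomorphism outside $P$ and $P\in S$, the strict transform $S_Y$ meets $F$: otherwise $g|_{S_Y}\colon S_Y\to S$ would be a projective birational morphism with empty fibre over $P$, forcing $P\notin g(S_Y)=S$. As $F$ is $\bQ$-Cartier, $F\cap S_Y$ is then a non-empty divisor on $S_Y$, contracted by $g$ to $P$. Let $D$ be a prime component of the preimage of $F\cap S_Y$ on the normalisation $S_Y^\nu$; since $S$ is normal, $S_Y^\nu\to S$ is a projective birational morphism of normal varieties, so $D$ is a divisor over $S$ with $c_S(D)=P$. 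By the compatibility of the different with crepant pull-backs, $\mathrm{Diff}_{S_Y^\nu}(B_Y+(1-a_F(X,S+B))F)$ is the crepant pull-back of $B_S$, so $a_D(S,B_S)$ equals $1$ minus the coefficient of $D$ in this different. Because $B_Y$ is effective and $D$ lies over $F\cap S_Y$, that coefficient is at least $1-a_F(X,S+B)$, hence $a_D(S,B_S)\le a_F(X,S+B)$. Therefore $\mld_P(S,B_S)\le a_F(X,S+B)$, and letting $a_F(X,S+B)\to m$ yields $\mld_P(S,B_S)\le m=\mld_P(X,S+B)$. Together with the easy inequality, this gives the asserted equality.

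I expect the main obstacle to be the extraction step: one must guarantee that the single divisorial valuation $F$, which satisfies $a_F(X,S+B)\le1$, can be extracted as the entire exceptional locus of a $\bQ$-factorial model over the merely lc pair $(X,S+B)$. This is precisely where the klt hypothesis on $(X,\Delta)$ is used, through $X$ being klt, exactly as in Lemma \ref{lem:crepant}, and it may require a preliminary perturbation in the spirit of Lemma \ref{lem:perturb} so that $F$ is the only divisor to be extracted. The boundary case $m=1$ has to be verified but is covered, for then $1-a_F(X,S+B)=0$ and the argument uses only the non-negativity of the coefficient of $D$ in the different. A secondary technical point is that $S_Y$ need not be normal, which is why all adjunction is performed on $S_Y^\nu$, the normality of $S$ ensuring that $S_Y^\nu\to S$ is a birational morphism of normal varieties along which divisors over $S$ can be read off.
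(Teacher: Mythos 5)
Your proposal is correct and follows essentially the same route as the paper: extract a divisor $F$ computing $\mld_P(X,S+B)$ on a $\bQ$-factorial model $Y\to X$ via \cite[Corollary 1.4.3]{BCHM10} (this is where the klt hypothesis enters), restrict to the strict transform of $S$, and bound the coefficient of a component of $F\cap S_Y$ in the different from below by $1-a_F(X,S+B)$, the latter being exactly \cite[Corollary 3.10]{Sh93}, which the paper cites. The only cosmetic difference is that you perform adjunction on the normalisation $S_Y^\nu$ throughout, whereas the paper first reduces to $0<\mld_P(X,S+B)\le1$ and uses pltness at the generic point of $F\cap S_Y$ to carry out adjunction on $S_Y$ directly.
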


\begin{proof}
It is enough to show the inequality $\mld_P(X,S+B)\ge\mld_P(S,B_S)$. By inversion of adjunction, we may assume that $0<\mld_P(X,S+B)\le1$. Then by \cite[Corollary 1.4.3]{BCHM10}, there exists a projective birational morphism $\pi\colon Y\to X$ from a $\bQ$-factorial normal variety such that the divisorial part of its exceptional locus is a prime divisor $E$ computing $\mld_P(X,S+B)$. Let $S_Y$ and $B_Y$ denote the strict transforms of $S$ and $B$. Then the pull-back of $(X,S+B)$ is $(Y,S_Y+B_Y+bE)$ where $b=1-\mld_P(X,S+B)\ge0$. Let $C$ be an arbitrary irreducible component of $E\cap S_Y$. By $\mld_P(X,S+B)>0$, the $(Y,S_Y+B_Y+bE)$ is plt about the generic point $\eta_C$ of $C$. By adjunction, one can write
\begin{align*}
K_Y+S_Y+B_Y+bE|_{S_Y}=K_{S_Y}+B_{S_Y}
\end{align*}
about $\eta_C$. By \cite[Corollary 3.10]{Sh93}, $C$ has coefficient at least $b$ in $B_{S_Y}$. Hence, one obtains that $\mld_P(S,B_S)\le a_C(S_Y,B_{S_Y})\le1-b=\mld_P(X,S+B)$.
\end{proof}

\begin{lemma}\label{lem:pia}
Let $P\in X$ be the germ of a klt variety and $S$ be a prime divisor on $X$ such that $(X,S)$ is plt. Let $\Delta$ be the different on $S$ defined by $K_X+S|_S=K_S+\Delta$. Let $\hat X$ be the spectrum of the completion of the local ring $\sO_{X,P}$ and $\hat P$ be its closed point. Set $\hat S=S\times_X\hat X$ and $\hat\Delta=\Delta\times_X\hat X$. Let $\sfa$ be an $\bR$-ideal on $\hat X$ such that $\mld_{\hat P}(\hat X,\hat S,\sfa)\le1$. Then $\mld_{\hat P}(\hat X,\hat S,\sfa)=\mld_{\hat P}(\hat S,\hat\Delta,\sfa\sO_{\hat S})$.
\end{lemma}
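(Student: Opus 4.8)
The plan is to reduce the assertion to precise inversion of adjunction on the variety $X$, that is, to Theorem \ref{thm:pia}, by transporting the $\bR$-ideal $\sfa$ from the completion $\hat X$ down to $X$ and then replacing it by an auxiliary $\bR$-divisor.

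First I would truncate and descend. Write $\sfa=\prod_j\sfa_j^{r_j}$ and let $\hat\fm$ be the maximal ideal of $\sO_{\hat X}$. Choosing divisors $\hat E$ over $\hat X$ and $\hat E'$ over $\hat S$ that compute $\mld_{\hat P}(\hat X,\hat S,\sfa)$ and $\mld_{\hat P}(\hat S,\hat\Delta,\sfa\sO_{\hat S})$ respectively, and taking a large integer $N$ exceeding every $\ord_{\hat E}\sfa_j$ and every $\ord_{\hat E'}(\sfa_j\sO_{\hat S})$, the replacement of $\sfa_j$ by $\sfb_j=\sfa_j+\hat\fm^N$ can only raise log discrepancies while leaving $a_{\hat E}$ and $a_{\hat E'}$ unchanged, so it alters neither minimal log discrepancy; this is the argument already used in Lemma \ref{lem:perturb} and Remark \ref{rmk:limit}(\ref{itm:limitineq}). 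Each $\sfb_j$ is $\hat\fm$-primary, hence by Lemma \ref{lem:regular}(\ref{itm:idealbij}) equals $\fa_j\sO_{\hat X}$ for a unique $\fm$-primary ideal $\fa_j$ on $X$; put $\fa=\prod_j\fa_j^{r_j}$. Since $(X,S)$ is plt, $S$ is normal and $(S,\Delta)$ is klt by Theorem \ref{thm:ia}(1), while $\hat S$ and $\hat\Delta$ are the completion of $\sO_{S,P}$ and the pull-back of $\Delta$. Thus Remark \ref{rmk:regular}, applied to the regular morphisms $\hat X\to X$ and $\hat S\to S$ (the presence of the boundaries $\hat S$ and $\hat\Delta$ being harmless since $K_{\hat X}=\pi^{*}K_X$), gives
\begin{align*}
\mld_{\hat P}(\hat X,\hat S,\sfa)=\mld_P(X,S,\fa),\qquad\mld_{\hat P}(\hat S,\hat\Delta,\sfa\sO_{\hat S})=\mld_P(S,\Delta,\fa\sO_S),
\end{align*}
and in particular $\mld_P(X,S,\fa)\le1$.

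Next I would pass from the ideal $\fa$ to a divisor. Set $k=\dim X+1$ and pick, for each $j$, general members $f_{j,1},\dots,f_{j,k}$ of $\fa_j$; since $\fa_j$ is $\fm$-primary it is not contained in the ideal of $S$, so no divisor $(f_{j,m})$ has $S$ as a component, and $B=\sum_j\frac{r_j}{k}\sum_{m=1}^{k}(f_{j,m})$ is an effective $\bR$-divisor with $S\not\subset\operatorname{Supp}B$ such that $K_X+S+B$ is $\bR$-Cartier. By the standard general-member reduction $\mld_P(X,S,\fa)=\mld_P(X,S+B)$, and by choosing the $f_{j,m}$ so that the restrictions $f_{j,m}|_S$ are simultaneously general members of $\fa_j\sO_S$ one gets $\mld_P(S,\Delta,\fa\sO_S)=\mld_P(S,\Delta+B|_S)$ for $B|_S=\sum_j\frac{r_j}{k}\sum_m(f_{j,m}|_S)$; by additivity of the different, the different of $B$ on $S$ for the pair $(X,S+B)$ is $\Delta+B|_S$. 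Now $X$ is klt and $\mld_P(X,S+B)=\mld_P(X,S,\fa)\le1$, so Theorem \ref{thm:pia} applies and yields $\mld_P(X,S+B)=\mld_P(S,\Delta+B|_S)$. Stringing the equalities together,
\begin{align*}
\mld_{\hat P}(\hat X,\hat S,\sfa)=\mld_P(X,S,\fa)=\mld_P(X,S+B)=\mld_P(S,\Delta+B|_S)=\mld_P(S,\Delta,\fa\sO_S)=\mld_{\hat P}(\hat S,\hat\Delta,\sfa\sO_{\hat S}),
\end{align*}
which is the claim.

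The one genuinely delicate point is the bookkeeping in the first step: the truncation level $N$ must be taken large enough to be harmless for the minimal log discrepancies both upstairs on $\hat X$ and downstairs on $\hat S$ at once, and one has to check that the truncated ideals are really $\hat\fm$-primary so that Lemma \ref{lem:regular} and Remark \ref{rmk:regular} apply. Everything in the second step---the general-member reduction for minimal log discrepancies, its compatibility with restriction to the normal divisor $S$, and the additivity of the different---is standard, and the final input is just Theorem \ref{thm:pia}.
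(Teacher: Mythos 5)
Your proof is correct and follows essentially the same route as the paper's: truncate each $\sfa_j$ by a high power of $\hat\fm$ so that $\sfa$ becomes $\hat\fm$-primary, descend it to an $\fm$-primary $\bR$-ideal $\fa$ on $X$ via Lemma \ref{lem:regular} and Remark \ref{rmk:regular}, and reduce to Theorem \ref{thm:pia}; you merely make explicit the ideal-to-divisor conversion by general members that the paper leaves implicit. One small correction there: the number $k$ of general members must be taken large relative to the exponents $r_j$ (the standard requirement for $\mld_P(X,S,\fa)=\mld_P(X,S+B)$ is that each coefficient $r_j/k$ be at most one), not relative to $\dim X$.
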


\begin{proof}
Adding a high multiple of the maximal ideal $\hat\fm$ in $\sO_{\hat X}$ to each component of $\sfa$, we may assume that $\sfa$ is $\hat\fm$-primary. Then $\sfa$ is the pull-back of an $\bR$-ideal $\fa$ on $X$. By Remark \ref{rmk:regular}, the assertion is reduced to the precise inversion of adjunction $\mld_P(X,S,\fa)=\mld_P(S,\Delta,\fa\sO_S)$ for varieties, which follows from Theorem \ref{thm:pia}.
\end{proof}

\begin{proposition}\label{prp:piaE}
Let $X$ be the spectrum of the ring of formal power series in three variables over a field $K$ of characteristic zero and $P$ be its closed point. Let $x_1,x_2$ be a part of a regular system of parameters in $\sO_X$ and $w_1,w_2$ be coprime positive integers. Let $Y\to X$ be the weighted blow-up of with $\wt(x_1,x_2)=(w_1,w_2)$, $E$ be its exceptional divisor, and $f$ be the fibre of $E\to X$ at $P$. Let $\Delta$ be the different on $E$ defined by $K_Y+E|_E=K_E+\Delta$. Let $\fa$ be an $\bR$-ideal on $X$ whose weak transform $\fa_Y$ on $Y$ is defined. Suppose that $a_E(X,\fa)$ is zero. Then $\mld_P(X,\fa)=\mld_f(E,\Delta,\fa_Y\sO_E)$.
\end{proposition}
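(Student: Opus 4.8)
The plan is to reduce Proposition \ref{prp:piaE} to the precise inversion of adjunction along the exceptional divisor $E$, which is available by Lemma \ref{lem:pia} since the relevant minimal log discrepancy will be at most one. First I would pass to the pull-back $(Y,bE,\fa_Y)$ of $(X,\fa)$ by $Y\to X$, where $b=1-a_E(X,\fa)=1$ by hypothesis; thus $E$ appears with coefficient one, and $(Y,E+\fa_Y)$ is crepant to $(X,\fa)$. Hence $\mld_P(X,\fa)=\mld_{f}(Y,E,\fa_Y)$, reducing the statement to $\mld_{f}(Y,E,\fa_Y)=\mld_{f}(E,\Delta,\fa_Y\sO_E)$, i.e.\ to precise inversion of adjunction along $E$ restricted over the fibre $f\simeq\bP^1_K$ above $P$.

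Next I would check the hypotheses of Lemma \ref{lem:pia} in this setting. The ambient $Y$ is a weighted blow-up of a smooth (regular) threefold with weights on two of the three coordinates, so $Y$ has only cyclic quotient singularities along $E$ and is klt; moreover $(Y,E)$ is plt about $E$, since $E$ is the exceptional divisor of a terminal weighted blow-up with $a_E(Y)>1$ and the different $\Delta$ on $E$ is klt. (Concretely, in each of the two affine charts $E$ is cut out by a single orbifold coordinate, and Example \ref{exl:different} computes $\Delta$ as a sum of fractional points, so $(E,\Delta)$ is klt.) To invoke Lemma \ref{lem:pia} one also needs the relevant mld to be at most one: this is exactly where $a_E(X,\fa)=0$ is used, because then $\ord_E\fa=a_E(X)-a_E(X,\fa)=a_E(X)=w_1+w_2$, the divisor on $E$ defined by $\fa_Y\sO_E$ is effective and nonzero of degree $\ord_E\fa>0$, so $(E,\Delta,\fa_Y\sO_E)$ is \emph{not} klt at some point of $f$, whence $\mld_f(E,\Delta,\fa_Y\sO_E)\le 1$; by inversion of adjunction (Theorem \ref{thm:ia}(ii)) the same bound holds upstairs, so Lemma \ref{lem:pia} applies. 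One technical point: Lemma \ref{lem:pia} is stated for a germ at a closed point whose completion one passes to, whereas here the relevant locus is the one-dimensional fibre $f$; I would handle this by working at the generic point $\eta_f$ together with a general closed point of $f$, using the relation $\mld_{\eta_f}=\mld_P-\dim f$ and the analogous relation for $E$, or simply by localising at each closed point of $f$ and applying Lemma \ref{lem:pia} there, since $f$ is proper and the infimum defining the mld is attained.

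I expect the main obstacle to be the bookkeeping around the restriction $\fa_Y\sO_E$ and the different: one must verify that adjunction $K_Y+E|_E=K_E+\Delta$ is compatible with the weak transform, i.e.\ that restricting the crepant pull-back $(Y,E,\fa_Y)$ to $E$ genuinely yields $(E,\Delta,\fa_Y\sO_E)$ and not some pair differing by a correction term supported on $f$ — this is where the precise (as opposed to merely qualitative) form of inversion of adjunction, Theorem \ref{thm:pia}, does the real work, and it is legitimate precisely because the mld is $\le 1$. Once that identification is in place, combining $\mld_P(X,\fa)=\mld_f(Y,E,\fa_Y)$ with $\mld_f(Y,E,\fa_Y)=\mld_f(E,\Delta,\fa_Y\sO_E)$ from Lemma \ref{lem:pia} gives the claim.
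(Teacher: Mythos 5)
Your proposal follows essentially the same route as the paper: pass to the crepant pull-back $(Y,E,\fa_Y)$ and reduce to precise inversion of adjunction along $E$ via Lemma \ref{lem:pia}, treating the generic point and the closed points of $f$ separately. One intermediate claim is false, though inessential: a non-trivial $\fa_Y\sO_E$ does not force $(E,\Delta,\fa_Y\sO_E)$ to be non-klt at a point of $f$ (indeed in the intended application $\mld_f(E,\Delta,\fa_Y\sO_E)=1$); the bound you actually need holds for the trivial reason that $\mld_f(Y,E,\fa_Y)\le\mld_{\eta_f}(Y,E)=1$, since $f\subset E$ and $E$ carries coefficient one. Two points you gloss over and that the paper handles explicitly: at closed points $Q\in f$ with $\mld_Q(Y,E,\fa_Y)>1$ Lemma \ref{lem:pia} does not apply, but such points are harmless because the easy direction of adjunction gives $\mld_Q(E,\Delta,\fa_Y\sO_E)\ge\mld_Q(Y,E,\fa_Y)>1$ while both sides of the desired equality are at most one; and Lemma \ref{lem:pia} is stated for the completion of a variety germ, so one must first realise $Y\to X$ as the base change of an algebraic weighted blow-up of $\Spec K[x_1,x_2,x_3]$. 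For the generic point $\eta_f$ the paper cuts by the strict transform of the divisor $x_1^{w_2}+\lambda x_2^{w_1}=0$ for general $\lambda$ to reduce to a closed point, in place of your $\mld_{\eta_f}=\mld_Q-\dim f$ argument; either works once $K$ is assumed algebraically closed, which the paper arranges by a regular base change at the outset.
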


\begin{proof}
By the regular base change, we may assume that $K$ is algebraically closed. Since $(Y,E,\fa_Y)$ is crepant to $(X,\fa)$, it is enough to prove that $\mld_f(Y,E,\fa_Y)=\mld_f(E,\Delta,\fa_Y\sO_E)$.

Extend the $x_1,x_2$ to a regular system $x_1,x_2,x_3$ of parameters in $\sO_X$ and set $X'=\Spec K[x_1,x_2,x_3]$. Then $Y$ is the base change of the weighted blow-up of $X'$ with $\wt(x_1,x_2)=(w_1,w_2)$. Thus, the equality $\mld_{\eta_f}(Y,E,\fa_Y)=\mld_{\eta_f}(E,\Delta,\fa_Y\sO_E)$ follows from Lemma \ref{lem:pia} by cutting by the strict transform of the divisor on $X$ defined by $x_1^{w_2}+\lambda x_2^{w_1}$ for a general member $\lambda$ in $K$. Together with $\mld_f(Y,E,\fa_Y)\le\mld_{\eta_f}(Y,E)=1$, it is sufficient to verify that $\mld_Q(Y,E,\fa_Y)=\mld_Q(E,\Delta,\fa_Y\sO_E)$ for any closed point $Q$ in $f$ such that $\mld_Q(Y,E,\fa_Y)\le1$, which follows from Lemma \ref{lem:pia} again.
\end{proof}

\begin{corollary}\label{crl:pia}
Let $X$ be the spectrum of the ring of formal power series in three variables over a field $K$ of characteristic zero and $P$ be its closed point. Let $\fa$, $\fb$ and $\fc$ be $\bR$-ideals on $X$. Suppose that $\mld_P(X,\fa)$ equals one and that $(X,\fa)$ has an lc centre $C$ of dimension one on which $\fb\sO_C=\fc\sO_C$. Then $\mld_P(X,\fa\fb)=\mld_P(X,\fa\fc)$.
\end{corollary}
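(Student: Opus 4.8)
The plan is to reduce the assertion to the two-dimensional precise inversion of adjunction, available through Proposition \ref{prp:piaE}, by extracting a single weighted blow-up adapted to $C$.

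First I would invoke Theorem \ref{thm:wbu}. Since $\mld_P(X,\fa)$ equals one and $(X,\fa)$ has the lc centre $C$ of dimension one, there exist a divisor $E$ over $X$ computing $\mld_{\eta_C}(X,\fa)=0$ and a part $x_1,x_2$ of a regular system of parameters in $\sO_X$ such that $E$ is obtained by the weighted blow-up $\pi\colon Y\to X$ with $\wt(x_1,x_2)=(w_1,w_2)$ for coprime positive integers $w_1,w_2$. Then $c_X(E)=C$ is defined by $(x_1,x_2)$, the restriction $\pi|_E$ factors as $E\to C\hookrightarrow X$, and for a function $g$ in $\sO_X$ one has $\ord_Eg=0$ precisely when $g\notin\sI_C$. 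After replacing each $\fa_j$ by $\fa_j^{w_1w_2}$ and its exponent by $r_j/(w_1w_2)$, which alters no log discrepancy, every $(\ord_E\fa_j)E$ becomes Cartier, since $w_1w_2E$ is Cartier on $Y$; thus the weak transform on $Y$ of $\fa$, and of $\fa\fb$ or $\fa\fc$ once the $\fb$- or $\fc$-factors have order zero along $E$, is defined. Write $f$ for the fibre of $E\to X$ over $P$ and $\Delta$ for the different with $K_Y+E|_E=K_E+\Delta$.

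Next I would observe that $\ord_E\fb=0$ if and only if $\ord_E\fc=0$: indeed $\ord_E\fb>0$ means exactly that $\eta_C$ lies in the cosupport of $\fb\sO_C$, which equals that of $\fc\sO_C$ by hypothesis. If $\ord_E\fb>0$, then $a_E(X,\fa\fb)=-\ord_E\fb<0$ and likewise $a_E(X,\fa\fc)<0$; since $P\in c_X(E)=C$, neither $(X,\fa\fb)$ nor $(X,\fa\fc)$ is lc about $P$, so $\mld_P(X,\fa\fb)=\mld_P(X,\fa\fc)=-\infty$ and the equality holds.

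It remains to treat $\ord_E\fb=\ord_E\fc=0$, whence $a_E(X,\fa\fb)=a_E(X,\fa\fc)=0$. Applying Proposition \ref{prp:piaE} to $\fa\fb$ and to $\fa\fc$ yields
\[
\mld_P(X,\fa\fb)=\mld_f(E,\Delta,(\fa\fb)_Y\sO_E),\qquad\mld_P(X,\fa\fc)=\mld_f(E,\Delta,(\fa\fc)_Y\sO_E).
\]
Since $\ord_E\fb=0$, the weak transform $\fb_Y$ is $\fb\sO_Y$, so $(\fa\fb)_Y\sO_E=\fa_Y\sO_E\cdot(\fb\sO_Y)\sO_E$, and $(\fb\sO_Y)\sO_E$ is the pull-back of $\fb\sO_C$ along $E\to C$; the analogous identity holds for $\fc$. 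Hence $(\fb\sO_Y)\sO_E=(\fc\sO_Y)\sO_E$ by $\fb\sO_C=\fc\sO_C$, so $(\fa\fb)_Y\sO_E=(\fa\fc)_Y\sO_E$ and the two minimal log discrepancies above coincide. The conceptual content is entirely carried by Theorem \ref{thm:wbu} and Proposition \ref{prp:piaE}; the only points demanding care are the elementary bookkeeping around the weak transform — making the relevant multiples of $E$ Cartier and checking that restriction to $E$ factors through $C$ — and the degenerate case $\ord_E\fb>0$, disposed of above.
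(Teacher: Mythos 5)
Your proof is correct and follows essentially the same route as the paper's: Theorem \ref{thm:wbu} to extract $E$ by a weighted blow-up along $C$, Proposition \ref{prp:piaE} applied to $\fa\fb$ and to $\fa\fc$, and the observation that $\fb\sO_E=\fc\sO_E$ because $E\to X$ factors through $C$. The paper only differs in disposing of the degenerate case (when $C$ lies in the cosupport of $\fb\fc$) before choosing $E$ rather than after, and in leaving the bookkeeping about the weak transform implicit.
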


\begin{proof}
We may assume that $C$ is not contained in the cosupport of $\fb\fc$, because otherwise $\mld_P(X,\fa\fb)=\mld_P(X,\fa\fc)=-\infty$. By Theorem \ref{thm:wbu}, there exist a divisor $E$ over $X$ computing $\mld_{\eta_C}(X,\fa)=0$ and a part $x_1,x_2$ of a regular system of parameters in $\sO_X$ such that $E$ is obtained by the weighted blow-up $Y$ of $X$ with $\wt(x_1,x_2)=(w_1,w_2)$ for some $w_1,w_2$. We may assume that the weak transform $\fa_Y$ on $Y$ of $\fa$ is defined. Then, the assertion follows from Proposition \ref{prp:piaE} by $\fb\sO_E=\fc\sO_E$.
\end{proof}

\begin{proof}[Proof of Theorem \textup{\ref{thm:first}}]
It is sufficient to derive Conjectures \ref{cnj:acc}, \ref{cnj:alc}, \ref{cnj:madic} and \ref{cnj:nakamura} from Conjecture \ref{cnj:product}. All $\bR$-ideals on the germ $P\in X$ of a smooth threefold in Conjectures \ref{cnj:acc} to \ref{cnj:nakamura} may be assumed to be $\fm$-primary, where $\fm$ is the maximal ideal in $\sO_X$ defining $P$. There exists an \'etale morphism from $P\in X$ to the germ $o\in\bA^3$ at origin of the affine space, by which any $\fm$-primary $\bR$-ideal $\fa$ on $X$ is the pull-back of some $\bR$-ideal $\fb$ on $\bA^3$ by Lemma \ref{lem:regular}. Thus for Conjectures \ref{cnj:acc} to \ref{cnj:nakamura}, one has only to consider $\bR$-ideals on the fixed germ $P\in X$ of a smooth threefold.

By Lemma \ref{lem:rational}, these conjectures are reduced to the case $I=\{1/n\}$ of Conjecture \ref{cnj:nakamura}, that is, for a fixed positive integer $n$, it is enough to find an integer $l$ such that if $\fa$ is an ideal on $X$, then there exists a divisor $E$ over $X$ which computes $\mld_P(X,\fa^{1/n})$ and satisfies the inequality $a_E(X)\le l$. By Theorem \ref{thm:nonpos}, we have only to consider those $\fa$ for which $\mld_P(X,\fa^{1/n})$ is positive. Then by Corollary \ref{crl:mult}, there exists a positive integer $b$ depending only on $n$ such that $\ord_E\fm\le b$ for every divisor $E$ over $X$ computing $\mld_P(X,\fa^{1/n})$.

Set $q=1/n$ and apply Theorem \ref{thm:canonical}. It is enough to bound $a_E(X)$ for those $\fa$ in the case (\ref{itm:reduced}) of Theorem \ref{thm:canonical}. Suppose this case and use the notation in Theorem \ref{thm:canonical}. Let $E$ be an arbitrary divisor over $Y$ which computes $\mld_Q(Y,\Delta,\fa_Y^q)$, that equals $\mld_P(X,\fa^q)$. Then,
\begin{align*}
a_E(X)=a_E(Y)+\sum_F(a_F(Y)-1)\ord_EF\le a_E(Y)+(c-1)\sum_F\ord_EF,
\end{align*}
in which the summation takes over all exceptional prime divisors on $Y$, and
\begin{align*}
\sum_F\ord_EF\le\ord_E\fm\le b.
\end{align*}
Thus the boundedness of $a_E(X)$ is reduced to that of $a_E(Y)$. In other words, it is sufficient to treat the divisors computing $\mld_Q(Y,\Delta,\fa_Y^q)$. The ideal $\fb=\sO_Y(-n\Delta)$ in $\sO_Y$ is defined since $n\Delta$ is integral, for which $(Y,\fa_Y^q\fb^q)$ is crepant to $(Y,\Delta,\fa_Y^q)$. The $\fb$ satisfies that $\ord_E\fb\le n\sum_F\ord_EF\le nb$ by $\sO_Y(-n\sum F)\subset\fb$. In particular, $E$ computes $\mld_Q(Y,\fa_Y^q(\fb+\fn^{nb})^q)$ as well as $\mld_Q(Y,\Delta,\fa_Y^q)$ for the maximal ideal $\fn$ in $\sO_Y$ defining $Q$.

Replacing the notation $(Y,\fa_Y^q(\fb+\fn^{nb})^q)$ with $(X,\fa^q\fb^q)$ and $\fn$ with $\fm$, Conjectures \ref{cnj:acc} to \ref{cnj:nakamura} follow from the boundedness of $a_E(X)$ for some divisor $E$ over $X$ which computes $\mld_P(X,\fa^q\fb^q)$ such that $\mld_P(X,\fa^q)\ge1$ and such that $\fm^{nb}\subset\fb$. One may assume that $\fa$ is $\fm$-primary. Then one can apply Theorem \ref{thm:meta} with the property $\sP$ being empty, which reduces the boundedness of $a_E(X)$ to Conjecture \ref{cnj:product}.
\end{proof}

\section{Boundedness results}
In this section, we shall prove Conjecture \ref{cnj:product} in several cases. First we treat the case when either $(X,\fa^q)$ is terminal or $s$ is zero.

\begin{proof}[Proof of Theorem \textup{\ref{thm:terminal}}]
Let $\cS=\{\fa_i\}_{i\in\bN}$ be an arbitrary sequence of ideals on $X$ such that $(X,\fa_i^q)$ is terminal. We construct a generic limit $\sfa$ of $\cS$. We use the notation in Section \ref{sct:limit}, so $\sfa$ is the generic limit with respect to a family $\cF=(Z_l,\fa(l),N_l,s_l,t_l)_{l\ge l_0}$ of approximations of $\cS$, and $\sfa$ is an ideal on $\hat P\in\hat X$. We let $\hat\fm$ denote the maximal ideal in $\sO_{\hat X}$. To see the assertion (\ref{itm:terminal}), by Lemma \ref{lem:limtonak} and Remark \ref{rmk:independent}, it is enough to show the equality $\mld_{\hat P}(\hat X,\sfa^q\hat\fm^s)=\mld_P(X,\fa_i^q\fm^s)$ for any $i\in N_{l_0}$ after replacing $\cF$ with a subfamily. One has that $\mld_{\hat P}(\hat X,\sfa^q)>1$ by Remark \ref{rmk:limit}(\ref{itm:limitineq}). Thus $(\hat X,\sfa^q)$ satisfies the case \ref{cas:case1}, \ref{cas:case2} or \ref{cas:case3} in Theorem \ref{thm:grthan1}, which derives that $(\hat X,\sfa^q\hat\fm^s)$ does not have the smallest lc centre of dimension one. Hence the required equality holds by Theorem \ref{thm:grthan1}.

For (\ref{itm:zero}), starting instead with $\cS$ such that $(X,\fa_i^q)$ is canonical, we need to show the equality $\mld_{\hat P}(\hat X,\sfa^q)=\mld_P(X,\fa_i^q)$. This holds in the cases other than the case \ref{cas:case4} in Theorem \ref{thm:grthan1}, so we may assume the case \ref{cas:case4}, in which $\mld_{\hat P}(\hat X,\sfa^q)\le1$. By $\mld_P(X,\fa_i^q)\ge1$, the required equality follows from Remark \ref{rmk:limit}(\ref{itm:limitineq}).
\end{proof}

We shall study the case in Theorem \ref{thm:second}(\ref{itm:half}) when the lc threshold of the maximal ideal is at most one-half. We prepare a useful criterion for identifying a divisor over a variety.

\begin{lemma}\label{lem:parallel}
Let $P\in X$ be the germ of a smooth variety and $E$ be a divisor over $X$. Let $x_1,\ldots,x_c$ be a part of a regular system of parameters in $\sO_{X,P}$ and $w_1,\ldots,w_c$ be positive integers. Let $Y\to X$ be the weighted blow-up with $\wt(x_1,\ldots,x_c)=(w_1,\ldots,w_c)$, $F$ be its exceptional divisor, and $H_i$ be the strict transform of the divisor on $X$ defined by $x_i$. Suppose that
\begin{itemize}
\item
$c_X(E)$ coincides with $c_X(F)$, and
\item
the vector $(w_1,\ldots,w_c)$ is parallel to $(\ord_Ex_1,\ldots,\ord_Ex_c)$.
\end{itemize}
Then the centre on $Y$ of $E$ is not contained in the union $\bigcup_{i=1}^cH_i$.
\end{lemma}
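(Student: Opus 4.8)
The plan is to reduce the assertion to an explicit computation in a single affine chart of the weighted blow-up. Since the weighted blow-up is étale-locally that of affine space, I may identify $P\in X$ with the germ $o\in\bA^d$ at the origin, extend $x_1,\dots,x_c$ to coordinates $x_1,\dots,x_d$, and assume $\gcd(w_1,\dots,w_c)=1$, so that $Y\to X$ is the toric blow-up adding the ray through $w_1e_1+\dots+w_ce_c$ (cf.\ Remark~\ref{rmk:toric}). Then $c_X(F)$ is the linear subvariety $Z=\{x_1=\dots=x_c=0\}$, and $Y$ is covered by the $c$ affine charts $U_1,\dots,U_c$ recalled in the preliminaries, where $U_i$ carries orbifold coordinates $v_1,\dots,v_d$ with
\begin{align*}
x_i=v_i^{w_i},\qquad x_j=v_jv_i^{w_j}\ \ (1\le j\le c,\ j\ne i),\qquad x_k=v_k\ \ (c<k\le d).
\end{align*}
In this chart $F=\{v_i=0\}$, the strict transform $H_j$ equals $\{v_j=0\}$ for $1\le j\le c$ with $j\ne i$, and $H_i\cap U_i=\emptyset$ because the total transform of $\{x_i=0\}$ on $U_i$ is $w_iF$.

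Next I would locate the centre. Since $c_X(E)=c_X(F)=Z$ and the weighted blow-up is an isomorphism over $X\setminus Z$, the centre $c_Y(E)$ is contained in $F$; in particular $\ord_Ex_j>0$ for $1\le j\le c$, so the parallelism hypothesis gives $\ord_Ex_j=\lambda w_j$ for a common positive rational number $\lambda$. As the $U_i$ cover $Y$, the generic point $\eta$ of $c_Y(E)$ lies in some $U_i$. The heart of the argument is the one-line computation of $\ord_E$ on the orbifold coordinates of $U_i$: from $x_i=v_i^{w_i}$ one gets $\ord_Ev_i=\lambda>0$, so $\eta\in F\cap U_i$, and from $x_j=v_jv_i^{w_j}$ one gets
\begin{align*}
\ord_Ev_j=\ord_Ex_j-w_j\ord_Ev_i=\lambda w_j-w_j\lambda=0\qquad(1\le j\le c,\ j\ne i),
\end{align*}
while $\ord_Ev_k=\ord_Ex_k\ge0$ for $c<k\le d$ (the valuation $\ord_E$ is non-negative on $\sO_{X,P}$), confirming that $\eta$ genuinely lies in $U_i$.

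Finally I would conclude: $\ord_Ev_j=0$ means $v_j$ is a unit at $\eta$, hence $\eta\notin H_j$ for all $j\le c$ with $j\ne i$; and $\eta\notin H_i$ since $H_i$ misses $U_i$ entirely. Thus the generic point of the irreducible set $c_Y(E)$ lies on none of the closed sets $H_1,\dots,H_c$, whence $c_Y(E)\not\subseteq\bigcup_{j=1}^cH_j$. The only point requiring a little care is the orbifold bookkeeping when $w_i>1$, namely that $\ord_E$ extends to the cyclic cover $\bA^d\to U_i$ and that $\{v_j=0\}$ there descends to $H_j\cap U_i$, so that the vanishing of $\ord_Ev_j$ really does detect $\eta\notin H_j$; this is entirely standard, and I do not anticipate a genuine obstacle, the lemma being a bookkeeping statement isolated for use in the boundedness arguments.
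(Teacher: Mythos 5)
Your proof is correct and is essentially the chart-level version of the paper's own argument: the paper works globally with the decomposition $\ord_Ex_i=\ord_EH_i+w_i\ord_EF$ and uses the emptiness of $\bigcap_{i=1}^cH_i$ to pin down the common ratio as $\ord_EF$, which is exactly what your choice of a chart $U_i$ containing the generic point of $c_Y(E)$ (where $H_i$ is invisible and $x_j=v_jv_i^{w_j}$) accomplishes. There is no gap; the orbifold bookkeeping you flag is indeed harmless.
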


\begin{proof}
The idea has appeared already in \cite[Lemma 6.1]{K03}. We may assume that $w_1,\ldots,w_c$ have no common divisors. One computes that
\begin{align*}
\ord_Ex_i=\ord_EH_i+\ord_Fx_i\cdot\ord_EF=\ord_EH_i+w_i\ord_EF.
\end{align*}
The $\ord_EH_i$ is positive iff the centre $c_Y(E)$ lies on $H_i$. Since the intersection $\bigcap_{i=1}^cH_i$ is empty, at least one of $\ord_EH_i$ is zero. Because $(\ord_Ex_1,\ldots,\ord_Ex_c)$ is parallel to $(w_1,\ldots,w_c)$, one concludes that $\ord_EH_i$ is zero for every $i$, which proves the assertion.
\end{proof}

The next lemma plays a central role in the proof of Theorem \ref{thm:second}(\ref{itm:half}).

\begin{lemma}\label{lem:half}
Let $C$ be the spectrum of the ring of formal power series in one variable over $k$ and $P$ be its closed point. Let $X\to C$ be a smooth projective morphism of relative dimension one and $f$ be its fibre at $P$. Let $\Delta$ be an effective $\bR$-divisor on $X$, $Q$ be a closed point in $f$, and $t$ be a positive real number. Suppose that
\begin{itemize}
\item
$(K_X+\Delta)\cdot f=0$,
\item
$\mld_f(X,\Delta)=1$, and
\item
$\mld_Q(X,\Delta+tf)=0$.
\end{itemize}
Then $t$ is at least one-half. Moreover if $t$ equals one-half, then $\mld_Q(X,\Delta+sf)=1-2s$ for any non-negative real number $s$ at most one-half.
\end{lemma}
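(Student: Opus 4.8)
The plan is to reduce everything to a local statement at $Q$ and feed in a single numerical consequence of the global hypotheses. By Lemma~\ref{lem:regular} I may replace $X$ by $\hat X=\Spec\hat\sO_{X,Q}\cong\Spec k[[x_1,x_2]]$; since $X\to C$ is smooth, $f$ becomes a coordinate line, and $\mld_Q$, the $a_E(-)$, the $\ord_E(-)$ and the local intersection number $\delta:=(\Delta\cdot f)_Q$ are all unchanged. From $(K_X+\Delta)\cdot f=0$ and adjunction along the fibre (where $f^2=0$) one gets $\deg(\Delta|_f)=2-2g(f)\le 2$, hence $\delta\le 2$ because $\Delta$ is effective and $f\not\subseteq\operatorname{Supp}\Delta$. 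From $\mld_f(X,\Delta)=1$ one gets that $(X,\Delta)$ is canonical at $Q$, so $\operatorname{mult}_Q\Delta\le 1$ and $a_E(X,\Delta)\ge 1$ for every $E$ over $X$ with $c_X(E)=Q$. Also $s\mapsto\mld_Q(X,\Delta+sf)$ is concave and strictly decreasing, so $t$ is exactly the log canonical threshold of $f$ with respect to $(X,\Delta)$ at $Q$.

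For $t\ge 1/2$: as $\mld_Q(X,\Delta+tf)=0$ the pair is lc at $Q$, so by Theorem~\ref{thm:mldwbu} some divisor $E$ computing it is a weighted blow-up, say $\wt(x_1,x_2)=(a,b)$ with $\gcd(a,b)=1$; put $e=\ord_Ef$. I would then use the elementary bounds $a_E(X)=a+b$, $\ord_E\Delta\le\max(a,b)\cdot\operatorname{mult}_Q\Delta\le\max(a,b)$ (a monomial of each component of smallest total degree has weighted degree at most $\max(a,b)$ times its multiplicity), and $e\le\max(a,b)$ ($f$ being smooth, its equation has a linear term), together with the projection formula on the weighted blow-up $Y$, which with $E^{2}=-1/ab$ and the nonnegativity of the intersection of strict transforms gives $\delta\ge(\ord_E\Delta)\,e/ab$. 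Since $a_E(X,\Delta)=te$, the second bound yields $te\ge\min(a,b)$ and the last yields $2ab\ge\delta\,ab\ge((a+b)-te)\,e$, i.e.\ $t\ge\bigl((a+b)e-2ab\bigr)/e^{2}$. The quadratic $e^{2}-2(a+b)e+4ab$ has roots $2\min(a,b)$ and $2\max(a,b)$, and $e\le\max(a,b)<2\max(a,b)$; hence if $e\ge 2\min(a,b)$ the second inequality already gives $t\ge 1/2$, and if $e\le 2\min(a,b)$ the first gives $t\ge\min(a,b)/e\ge 1/2$.

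For $t=1/2$: chasing the equalities forces $e=2\min(a,b)$, $\operatorname{mult}_Q\Delta=1$, $\delta=2$, $\ord_E\Delta=\max(a,b)$ and $a_E(X,\Delta)=\min(a,b)$; in particular $\mld_Q(X,\Delta)=1$ (it is $\le 1$ by blowing up $Q$ and $\ge\mld_f(X,\Delta)=1$). Concavity then gives $\mld_Q(X,\Delta+sf)\ge 1-2s$ on $[0,\tfrac12]$. For the reverse inequality I want a divisor $E'$ with $c_X(E')=Q$, $a_{E'}(X,\Delta)=1$ and $\ord_{E'}f=2$: then $E'$ computes both $\mld_Q(X,\Delta)$ and $\mld_Q(X,\Delta+\tfrac12 f)$, so writing $\Delta+sf=(1-2s)\Delta+2s(\Delta+\tfrac12 f)$ and applying Lemma~\ref{lem:mld}(\ref{itm:mldequal}) gives $\mld_Q(X,\Delta+sf)=1-2s$, computed by $E'$. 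To build $E'$: if $\min(a,b)=1$ then $e=2$ and $E'=E$ works; otherwise $\max(a,b)>2\min(a,b)$ by coprimality, and the extremality $\ord_ED_i=\max(a,b)\operatorname{mult}_QD_i$ for the components $D_i$ of $\Delta$, together with the shape of a defining equation of $f$, shows that $f$ and every component of $\Delta$ are tangent at $Q$ to a common line $H$, with $f$ of contact exactly $2$ with $H$. Blowing up $Q$ to get $E_0$ and then the point $Q_1$ lying over the direction of $H$ (which is where the strict transform $\widetilde f$ meets $E_0$), a short monomial computation shows $\operatorname{mult}_{Q_1}\widetilde\Delta=\operatorname{mult}_Q\Delta=1$, so the exceptional divisor $E'$ of the second blow-up satisfies $a_{E'}(X,\Delta)=2-\operatorname{mult}_{Q_1}\widetilde\Delta=1$ and $\ord_{E'}f=\operatorname{mult}_{Q_1}(\widetilde f+E_0)=2$.

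The part I expect to be delicate is the equality case: producing \emph{some} divisor computing $\mld_Q(X,\Delta+\tfrac12 f)=0$ is automatic, but it need not have $\ord f=2$, so one must extract enough rigidity from the equalities to pin down the geometry near $Q$ — the crucial point being the stability of $\operatorname{mult}_Q\Delta$ under the blow-up along the common tangent direction, which rests on the weighted-degree extremality of the components of $\Delta$.
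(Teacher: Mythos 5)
Your argument is correct, but it takes a genuinely different route from the paper's. The paper does not invoke Theorem \ref{thm:mldwbu} as a black box: it builds a weighted blow-up \emph{adapted to} $f$ (coordinates with $x_1$ defining $f$ and weights parallel to $(\ord_Ef,\ord_E\fn)$, so that $\ord_Ff=w_1$ exactly) and then proves, via Lemma \ref{lem:parallel} and inversion of adjunction on the exceptional $\bP^1$, that this particular $F$ computes $\mld_Q(X,\Delta+tf)$; after that the two inequalities $\ord_F\Delta\le w_1$ (from $\operatorname{mult}_Q\Delta\le1$) and $\ord_F\Delta\le 2w_2$ (from $(\Delta\cdot f)\le 2$) give $(1-t)w_1\le w_2\le tw_1$ and hence $t\ge1/2$, with equality forcing $(w_1,w_2)=(2,1)$ so that $F$ itself is the divisor needed for Lemma \ref{lem:mld}(\ref{itm:mldequal}). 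You instead take an arbitrary computing weighted blow-up from Theorem \ref{thm:mldwbu}, and since its weights are not adapted to $f$ you only have $e=\ord_Ef\le\max(a,b)$, which you compensate for with the two-case analysis on $e$ versus $2\min(a,b)$; I checked the quadratic discriminant argument and the equality chase ($e=2\min(a,b)$, $\ord_E\Delta=\max(a,b)$, $\operatorname{mult}_Q\Delta=1$, $\delta=2$) and they are sound, as is the tangent-cone computation showing $\ord_{E'}D_i=2\operatorname{mult}_QD_i$ for the $(2,1)$-blow-up along the common tangent (this uses $a/b>2$, which you correctly extract from coprimality). The trade-off is visible in the equality case: the paper gets the right divisor for free, whereas you must reconstruct the adapted $(2,1)$-weighted blow-up by hand -- in effect recovering the paper's $F$. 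Both proofs ultimately rest on the same two numerical inputs, $\operatorname{mult}_Q\Delta\le1$ and $(\Delta\cdot f)\le2$. One small point you should make explicit: Theorem \ref{thm:mldwbu} is stated for germs of smooth $k$-surfaces, so after completing at $Q$ you need its extension to $\Spec k[[x_1,x_2]]$ (add a high power of $\fm$, descend via Lemma \ref{lem:regular}); the paper supplies exactly this in Proposition \ref{prp:wbu}, so it is a presentational rather than a mathematical gap.
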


\begin{proof}
\textit{Step} 1.
Let $E$ be a divisor over $X$ which computes $\mld_Q(X,\Delta+tf)=0$. We define the coprime positive integers $w_1$ and $w_2$ so that the vector $(w_1,w_2)$ is parallel to $(\ord_Ef,\ord_E\fn)$, where $\fn$ is the maximal ideal in $\sO_X$ defining $Q$. Take a regular system $x_1,x_2$ of parameters in $\sO_{X,Q}$ such that $x_1$ defines $f$ and $x_2$ is a general member in $\fn$. We claim that the divisor $F$ obtained by the weighted blow-up $Y\to X$ with $\wt(x_1,x_2)=(w_1,w_2)$ computes $\mld_Q(X,\Delta+tf)$.

This claim can be verified in the same way as in \cite{K17}. Assuming that $a_F(X,\Delta+tf)$ is positive, we shall derive a contradiction. For $i=1,2$, let $H_i$ be the strict transform of the divisor defined on $X$ by $x_i$. By Lemma \ref{lem:parallel}, the centre on $Y$ of $E$ would be a closed point $R$ in $F\setminus(H_1+H_2)$. The pull-back of $(X,\Delta+tf)$ is $(Y,bF+\Delta_Y+tH_1)$ in which $\Delta_Y$ is the strict transform of $\Delta$ and $b=1-a_F(X,\Delta+tf)<1$. Thus $(Y,F+\Delta_Y)$ is not lc about $R$, so $(F,\Delta_Y|_F)$ is not lc about $R$ by inversion of adjunction. Remark that this inversion of adjunction on $R\in Y$ holds by Lemma \ref{lem:pia} because $Y\to X$ is the base change of the weighted blow-up of $\Spec k[x_1,x_2]$ with $\wt(x_1,x_2)=(w_1,w_2)$. This means that $\ord_R(\Delta_Y|_F)$ is greater than one.

One computes that
\begin{align*}
1&=-(K_Y+bF+\Delta_Y+tH_1)\cdot F+1\\
&\le-(K_Y+bF+tH_1)\cdot F-\ord_R(\Delta_Y|_F)+1\\
&<((w_1+w_2-1)+b-tw_1)(-F^2)=\frac{1}{w_1}+\frac{1-t}{w_2}-\frac{1-b}{w_1w_2}.
\end{align*}
Together with $w_1\ge w_2$ and $b<1$, one would obtain that $w_2=1$ and $tw_1<b$. But then $a_F(X,\Delta)=a_F(X,\Delta+tf)+t\ord_Ff=(1-b)+tw_1<1$, which contradicts that $\mld_f(X,\Delta)=1$.

\medskip
\textit{Step} 2.
We have seen that $F$ computes $\mld_Q(X,\Delta+tf)=0$. Then $a_F(X,\Delta)=t\ord_Ff=tw_1$. Since $\ord_Q\Delta\le1$ by $\mld_f(X,\Delta)=1$, one has that $\ord_F\Delta\le w_1$. Thus,
\begin{align*}
w_2\le w_1+w_2-\ord_F\Delta=a_F(X,\Delta)=tw_1.
\end{align*}
By $(K_X+\Delta)\cdot f=0$, one has that $(\Delta\cdot f)=2$. Hence
\begin{align*}
w_2^{-1}\ord_F\Delta=(\ord_F\Delta)(F\cdot H_1)\le(\Delta_Y+(\ord_F\Delta)F)\cdot H_1=(\Delta\cdot f)=2,
\end{align*}
where the inequality follows from the fact that $f$ does not appear in $\Delta$ by $\mld_f(X,\Delta)=1$. Thus $\ord_F\Delta\le2w_2$ and
\begin{align*}
w_1-w_2\le w_1+w_2-\ord_F\Delta=a_F(X,\Delta)=tw_1,
\end{align*}
that is, $(1-t)w_1\le w_2$.

We have obtained that $(1-t)w_1\le w_2\le tw_1$. Therefore $t\ge1/2$, and moreover if $t=1/2$, then $w_1=2w_2$ so $(w_1,w_2)=(2,1)$.

\medskip
\textit{Step} 3.
Suppose that $t=1/2$. Let $s$ be a non-negative real number at most one-half. It is necessary to show that $\mld_Q(X,\Delta+sf)=1-2s$. One has that $\mld_Q(X,\Delta+(1/2)f)=0$ and it is computed by $F$. In particular, $a_F(X,\Delta)=2^{-1}\ord_Ff=1$. By $\mld_f(X,\Delta)=1$, one obtains that $\mld_Q(X,\Delta)=1$ and it is also computed by $F$. Then Lemma \ref{lem:mld}(\ref{itm:mldequal}) provides that
\begin{align*}
\mld_Q(X,\Delta+sf)=(1-2s)\mld_Q(X,\Delta)+2s\mld_Q(X,\Delta+(1/2)f)=1-2s.
\end{align*}
\end{proof}

\begin{proposition}\label{prp:half}
Let $X$ be the spectrum of the ring of formal power series in three variables over a field $K$ of characteristic zero and $P$ be its closed point. Let $\fa$ be an $\bR$-ideal such that $\mld_P(X,\fa)$ equals one and such that $(X,\fa)$ has an lc centre of dimension one. Then one of the following holds for the maximal ideal $\fm$ in $\sO_X$.
\begin{enumerate}
\item\label{itm:eqhalf}
The $\mld_P(X,\fa\fm^s)$ equals $1-2s$ for any non-negative real number $s$ at most one-half.
\item\label{itm:grthanhalf}
The $\mld_P(X,\fa\fm^{1/2})$ is positive.
\end{enumerate}
\end{proposition}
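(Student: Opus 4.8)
I would first extract, via Theorem \ref{thm:wbu}, a weighted blow-up realising the one-dimensional lc centre, then transport the statement to the exceptional surface by precise inversion of adjunction, and finally invoke the two-dimensional Lemma \ref{lem:half}. Concretely: since $(X,\fa)$ has a one-dimensional lc centre $C$ and $\mld_P(X,\fa)=1$, Theorem \ref{thm:wbu} gives a divisor $E$ over $X$ computing $\mld_{\eta_C}(X,\fa)=0$ with $c_X(E)=C$, obtained by the weighted blow-up $\pi\colon Y\to X$ with $\wt(x_1,x_2)=(w_1,w_2)$ for coprime positive integers $w_1,w_2$ and a part $x_1,x_2$ of a regular system of parameters. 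Extend to a regular system $x_1,x_2,x_3$ with $x_3$ a general member of $\fm$; then $C=V(x_1,x_2)$ and $\ord_Ex_3=0$, so $\ord_E\fm=0$ and hence $a_E(X,\fa\fm^s)=a_E(X,\fa)=0$ for every $s\ge0$. Writing $A$ for the $\bR$-divisor of a general member of $\fa$, the equality $\ord_E\fa=1+\ord_EK_{Y/X}=w_1+w_2$ makes $K_Y+E+\widetilde A$ $\pi$-trivial, $\widetilde A$ being the strict transform of $A$.

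\textbf{Descent to a surface.} After the standard reduction making the weak transforms defined (as in the proof of Corollary \ref{crl:pia}), Proposition \ref{prp:piaE} applied to the $\bR$-ideal $\fa\fm^s$ gives $\mld_P(X,\fa\fm^s)=\mld_f(E,\Delta,(\fa\fm^s)_Y\sO_E)$, where $\Delta$ is the different with $K_Y+E|_E=K_E+\Delta$ and $f\subset E$ is the fibre over $P$ of $E\to C$; here $E$ is a weighted $\bP^1$-bundle over $C$ with $f\cong\bP^1$, and $\Delta$ is supported on the two torus-sections with coefficients $1-1/w_1$ and $1-1/w_2$. A computation in the charts of $\pi$ gives $\fm\sO_E=\sO_E(-f)$, hence
\[
\mld_P(X,\fa\fm^s)=\mld_f(E,\Delta+sf,\fa_Y\sO_E).
\]
The case $s=0$ together with $\mld_P(X,\fa)=1$ forces $f$ to avoid the cosupport of $\fa_Y\sO_E$, so $\mld_{\eta_f}(E,\Delta+sf,\fa_Y\sO_E)=1-s$ for all $s\ge0$; and restricting $K_Y+E+\widetilde A$ to $E$ yields $(K_E+\Delta+\widetilde A|_E)\cdot f=0$.

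\textbf{Dichotomy and the two-dimensional lemma.} Put $m(s)=\mld_P(X,\fa\fm^s)$; by Lemma \ref{lem:mld} it is concave with $m(0)=1$. If $m(1/2)>0$, case (\ref{itm:grthanhalf}) holds. Otherwise some closed point $Q\in f$ has $\mld_Q(E,\Delta+\tfrac12f,\fa_Y\sO_E)\le0$ (the generic point contributes $1-s=\tfrac12$), so the lc threshold $t_Q$ of $f$ at $Q$ satisfies $t_Q\le1/2$. After replacing $\fa_Y\sO_E$ near $f$ by the $\bR$-divisor of a general member, and passing to a smooth model of $E\to C$ around $Q$ if $Q$ lies on a torus-section, one lands in the hypotheses of Lemma \ref{lem:half}: a smooth projective morphism $S\to C$ of relative dimension one, its fibre $\bar f$ over $P$, an effective $\bR$-divisor $\bar\Delta$ with $(K_S+\bar\Delta)\cdot\bar f=0$ (from $(K_E+\Delta+\widetilde A|_E)\cdot f=0$) and $\mld_{\bar f}(S,\bar\Delta)=1$ (from $\mld_P(X,\fa)=1$), and a point $\bar Q\in\bar f$ with $\mld_{\bar Q}(S,\bar\Delta+t_Q\bar f)=0$. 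Lemma \ref{lem:half} then forces $t_Q\ge1/2$, so $t_Q=1/2$, and $\mld_{\bar Q}(S,\bar\Delta+s\bar f)=1-2s$ for $s\in[0,\tfrac12]$, i.e.\ $\mld_Q(E,\Delta+sf,\fa_Y\sO_E)=1-2s$. The same argument at every closed point of $f$ shows no point of $f$ makes $m$ negative before $s=1/2$, whence $m(1/2)=0$; concavity together with $m(0)=1$ then gives $m(s)=1-2s$ on $[0,1/2]$ — case (\ref{itm:eqhalf}).

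\textbf{Main obstacle.} The delicate step is this last reduction to Lemma \ref{lem:half}. The surface $E$ carries quotient singularities along the two torus-sections, each meeting $f$ in one point, so when the extremal point $Q$ sits there one must resolve by a further weighted blow-up and keep track of the coefficients $1-1/w_i$ of the different before the lemma becomes applicable; one must also replace the $\bR$-ideal $\fa_Y\sO_E$ by an effective $\bR$-divisor near $f$ without altering minimal log discrepancies, and verify that the restricted data still obey $(K_S+\bar\Delta)\cdot\bar f=0$ and $\mld_{\bar f}(S,\bar\Delta)=1$. The genuinely two-dimensional content is, by contrast, already isolated in Lemma \ref{lem:half}.
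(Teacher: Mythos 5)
Your proposal follows essentially the same route as the paper: extract the weighted blow-up via Theorem \ref{thm:wbu}, descend to the exceptional surface by Proposition \ref{prp:piaE} with $\fm\sO_E=\sI_f$, and apply Lemma \ref{lem:half} to $(E,\Delta+A_E)$ at a point of $f$ where the log canonicity fails. The one "obstacle" you flag is not actually there: for coprime $w_1,w_2$ the exceptional divisor $E\simeq\bP(w_1,w_2)\times C\simeq\bP^1\times C$ is itself smooth over $C$ (it is $Y$, not $E$, that is singular along the two sections, and this is exactly what the coefficients $1-1/w_i$ of the different record), so Lemma \ref{lem:half} applies directly without passing to a further model.
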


\begin{proof}
We may assume that $K$ is algebraically closed. By Theorem \ref{thm:wbu}, there exist a divisor $E$ over $X$ and a part $x_1,x_2$ of a regular system of parameters in $\sO_X$ such that $a_E(X,\fa)=0$ and such that $E$ is obtained by the weighted blow-up $Y\to X$ with $\wt(x_1,x_2)=(w_1,w_2)$ for some $w_1,w_2$. We may assume that the weak transform $\fa_Y$ of $\fa$ is defined. Let $f$ be the fibre of $E\to X$ at $P$ and $\Delta$ be the different on $E$ defined by $K_Y+E|_E=K_E+\Delta$. Take an $\bR$-divisor $A_X=e^{-1}\sum_{i=1}^eA_i$ on $X$ for large $e$ in which $A_i$ are defined by general members in $\fa$. Let $A$ be the strict transform on $Y$ of $A_X$ and set $A_E=A|_E$. By Proposition \ref{prp:piaE}, one obtains that
\begin{align*}
\mld_P(X,\fa\fm^s)=\mld_f(E,\Delta+A_E+sf)
\end{align*}
for any non-negative real number $s$. This equality for $s=0$ supplies that $\mld_f(E,\Delta+A_E)=1$. In particular, $f$ does not appear in $\Delta+A_E$.

Let $t$ be the positive real number such that $\mld_P(X,\fa\fm^t)=0$. If $t>1/2$, then the case (\ref{itm:grthanhalf}) holds. Suppose that $t\le1/2$. Then $\mld_f(E,\Delta+A_E+tf)=0$ but $\mld_{\eta_f}(E,\Delta+A_E+tf)=1-t>0$, so there exists a closed point $Q$ in $f$ such that $\mld_Q(E,\Delta+A_E+tf)=0$. With $(K_E+\Delta+A_E)\cdot f=0$, one can apply Lemma \ref{lem:half} to $(E,\Delta+A_E)$, which derives that $t=1/2$ and $\mld_f(E,\Delta+A_E+sf)=1-2s$. Hence the case (\ref{itm:eqhalf}) holds.
\end{proof}

\begin{proof}[Proof of Theorem \textup{\ref{thm:second}(\ref{itm:half})}]
Let $\cS=\{\fa_i\}_{i\in\bN}$ be an arbitrary sequence of ideals on $X$ such that $(X,\fa_i^q)$ is canonical and such that $\mld_P(X,\fa_i^q\fm^{1/2})$ is not positive. We construct a generic limit $\sfa$ of $\cS$. We use the notation in Section \ref{sct:limit}, so $\sfa$ is the generic limit with respect to a family $\cF=(Z_l,\fa(l),N_l,s_l,t_l)_{l\ge l_0}$ of approximations of $\cS$, and $\sfa$ is an ideal on $\hat P\in\hat X$. We let $\hat\fm$ denote the maximal ideal in $\sO_{\hat X}$. By Lemma \ref{lem:limtonak} and Remarks \ref{rmk:limit}(\ref{itm:limitineq}) and \ref{rmk:independent}, it is enough to show the inequality $\mld_{\hat P}(\hat X,\sfa^q\hat\fm^s)\le\mld_P(X,\fa_i^q\fm^s)$ for any $i\in N_{l_0}$ after replacing $\cF$ with a subfamily. By Theorem \ref{thm:grthan1}, we may assume that $(\hat X,\sfa^q)$ has the smallest lc centre of dimension one, in which $\mld_{\hat P}(\hat X,\sfa^q)\le1$. By Remark \ref{rmk:limit}(\ref{itm:limitineq}) again, one obtains that $\mld_{\hat P}(\hat X,\sfa^q)=1$ from the canonicity of $(X,\fa_i^q)$.

Let $t$ be the positive rational number such that $\mld_{\hat P}(\hat X,\sfa^q\hat\fm^t)=0$. By Theorem \ref{thm:nonpos}, we may assume that $s<t$. By Remark \ref{rmk:limit}(\ref{itm:limitresult}), one has that $\mld_P(X,\fa_i^q\fm^t)=0$ for any $i\in N_{l_0}$ after replacing $\cF$ with a subfamily. In particular, $t\le1/2$ by $\mld_P(X,\fa_i^q\fm^{1/2})\le0$. Applying Proposition \ref{prp:half} to $(\hat X,\sfa^q)$, one obtains that $t=1/2$ and $\mld_{\hat P}(\hat X,\sfa^q\hat\fm^s)=1-2s$. Thus $\mld_P(X,\fa_i^q\fm^{1/2})=0$. By Lemma \ref{lem:mld}(\ref{itm:mldconvex}), one obtains that
\begin{align*}
\mld_P(X,\fa_i^q\fm^s)&\ge(1-2s)\mld_P(X,\fa_i^q)+2s\mld_P(X,\fa_i^q\fm^{1/2})\\
&\ge1-2s=\mld_{\hat P}(\hat X,\sfa^q\hat\fm^s).
\end{align*}
\end{proof}

\begin{proof}[Proof of Corollary \textup{\ref{crl:main}}]
We may assume that $\fa$ is $\fm$-primary. By Theorems \ref{thm:terminal}(\ref{itm:terminal}) and \ref{thm:second}(\ref{itm:half}), we have only to consider ideals $\fa$ such that $\mld_P(X,\fa^q)$ equals one and such that $\mld_P(X,\fa^q\fm^{1/2})$ is positive. By Lemma \ref{lem:mld}(\ref{itm:mldconvex}), such $\fa$ satisfies that
\begin{align*}
\mld_P(X,\fa^q\fm^{1/n})\ge\Bigl(1-\frac{2}{n}\Bigr)\mld_P(X,\fa^q)+\frac{2}{n}\mld_P(X,\fa^q\fm^{1/2})>1-\frac{2}{n},
\end{align*}
whence $\mld_P(X,\fa^q\fm^{1/n})\ge1-1/n$ since $\mld_P(X,\fa^q\fm^{1/n})$ belongs to $n^{-1}\bZ$.

Let $E$ be an arbitrary divisor over $X$ which computes $\mld_P(X,\fa^q)$. Then
\begin{align*}
1-\frac{1}{n}\le\mld_P(X,\fa^q\fm^{1/n})\le a_E(X,\fa^q\fm^{1/n})=a_E(X,\fa^q)-\frac{1}{n}\ord_E\fm\le1-\frac{1}{n},
\end{align*}
so $\mld_P(X,\fa^q\fm^{1/n})=1-1/n$ and it is computed by $E$. By Lemma \ref{lem:mld}(\ref{itm:mldequal}), $E$ also computes $\mld_P(X,\fa^q\fm^s)$ for any non-negative real number $s$ at most $1/n$. Thus Corollary \ref{crl:main} follows from Theorem \ref{thm:terminal}(\ref{itm:zero}).
\end{proof}

By a similar argument, one can prove Conjecture \ref{cnj:product} in the opposite case when the lc threshold of the maximal ideal is at least one.

\begin{proof}[Proof of Theorem \textup{\ref{thm:second}(\ref{itm:one})}]
Let $\cS=\{\fa_i\}_{i\in\bN}$ be an arbitrary sequence of ideals on $X$ such that $(X,\fa_i^q)$ is canonical and such that $(X,\fa_i^q\fm)$ is lc. It is enough to show the existence of a positive integer $l$ such that for infinitely many indices $i$, there exists a divisor $E_i$ over $X$ which computes $\mld_P(X,\fa_i^q\fm^s)$ and satisfies the equality $a_{E_i}(X)=l$. Note Remark \ref{rmk:independent}. As in the proof of Theorem \ref{thm:second}(\ref{itm:half}), we construct a generic limit $\sfa$ on $\hat P\in\hat X$ of $\cS$ with respect to a family $\cF=(Z_l,\fa(l),N_l,s_l,t_l)_{l\ge l_0}$ of approximations of $\cS$. By Lemma \ref{lem:limtonak} and Theorem \ref{thm:grthan1}, we may assume that $(\hat X,\sfa^q)$ has the smallest lc centre of dimension one, in which $\mld_{\hat P}(\hat X,\sfa^q)\le1$. By Remark \ref{rmk:limit}(\ref{itm:limitineq}), one has that $\mld_{\hat P}(\hat X,\sfa^q)=1$.

Let $\hat E$ be a divisor over $\hat X$ which computes $\mld_{\hat P}(\hat X,\sfa^q)$. As in Remark \ref{rmk:descend}(\ref{itm:descendE}), replacing $\cF$ with a subfamily, one can descend $\hat E$ to a divisor $E_l$ over $X\times Z_l$ for any $l\ge l_0$. Writing $E_i$ for a component of the fibre of $E_l$ at $s_l(i)\in Z_l$, one may assume that $a_{E_i}(X)=a_{\hat E}(\hat X)$ and $a_{E_i}(X,\fa_i^q)=1$ for any $i\in N_l$. Then for any $i\in N_{l_0}$, $\mld_P(X,\fa_i^q)=1$ by the canonicity of $(X,\fa_i^q)$ and it is computed by $E_i$. By the log canonicity of $(X,\fa_i^q\fm)$, the $\ord_{E_i}\fm$ must equal one and $E_i$ also computes $\mld_P(X,\fa_i^q\fm)=0$. Therefore, $E_i$ computes $\mld_P(X,\fa_i^q\fm^s)$ by Lemma \ref{lem:mld}(\ref{itm:mldequal}).
\end{proof}

\section{Rough classification of crepant divisors}
By Theorems \ref{thm:terminal}(\ref{itm:terminal}) and \ref{thm:second}(\ref{itm:half}), for Conjecture \ref{cnj:product} one has only to consider ideals $\fa$ such that $\mld_P(X,\fa^q)$ equals one and such that $\mld_P(X,\fa^q\fm^{1/2})$ is positive. Then every divisor $E$ over $X$ computing $\mld_P(X,\fa^q)$ satisfies that $\ord_E\fm$ equals one. We close this paper by providing a rough classification of $E$.

\begin{theorem}\label{thm:crepant}
Let $P\in X$ be the germ of a smooth threefold and $\fm$ be the maximal ideal in $\sO_X$ defining $P$. Let $\fa$ be an $\bR$-ideal on $X$ such that $\mld_P(X,\fa)$ equals one. Let $E$ be a divisor over $X$ which computes $\mld_P(X,\fa)$ such that $\ord_E\fm$ equals one. Then there exist a regular system $x_1,x_2,x_3$ of parameters in $\sO_{X,P}$ and positive integers $w_1,w_2$ with $w_1\ge w_2$ such that for the weighted blow-up $Y$ of $X$ with $\wt(x_1,x_2,x_3)=(w_1,w_2,1)$, one of the following cases holds by identifying the exceptional divisor $F$ with $\bP(w_1,w_2,1)$ with weighted homogeneous coordinates $x_1,x_2,x_3$.
\begin{enumerate}[label=\textup{\arabic*.},ref=\arabic*]
\item\label{cas:toric}
$E$ equals $F$ as a divisor over $X$.
\item\label{cas:hypers}
The centre $c_Y(E)$ is the curve on $F$ defined by $x_1x_3^p+x_2^q$ for some positive integers $p$ and $q$ satisfying that $w_1+p=qw_2\le w_1+w_2$.
\item\label{cas:saturated}
The centre $c_Y(E)$ is the curve on $F$ defined by $x_1x_2+x_3^{w_1+w_2}$.
\end{enumerate}
\end{theorem}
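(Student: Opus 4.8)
The plan is to recover the weighted blow-up $Y$ from the valuation $v:=\ord_E$ and then to classify the centre $c_Y(E)$ by localising at its generic point, where the problem becomes essentially two-dimensional and the surface results of the preceding sections apply. First I would reduce to the case that $\fa$ is $\fm$-primary, which is harmless near $\eta_{c_X(E)}$, and choose coordinates. Since $\ord_E\fm=1$ we have $v(\fm)=1$, so a general member $x_3$ of $\fm$ has $v(x_3)=1$; I complete it to a regular system of parameters $x_1,x_2,x_3$ of $\sO_{X,P}$ adapted to $v$, that is, chosen so that the monomial valuation with weights $\bigl(v(x_1),v(x_2),v(x_3)\bigr)$ is as close to $v$ as possible — the same kind of choice made in the proof of Theorem~\ref{thm:wbu}. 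Reorder so that $w_1:=v(x_1)\ge w_2:=v(x_2)\ge w_3:=v(x_3)=1$, and let $\pi\colon Y\to X$ be the weighted blow-up with $\wt(x_1,x_2,x_3)=(w_1,w_2,1)$, with exceptional divisor $F$ identified with $\bP(w_1,w_2,1)$ as in the statement. A direct computation gives $\ord_F x_i=w_i$ and, since $w_3=1$, $\fm\sO_Y=\sO_Y(-F)$; hence $a_F(X)=w_1+w_2+1$ and $\ord_E F=\ord_E\fm=1$. As $v$ dominates the monomial valuation attached to $(w_1,w_2,1)$, the centre $Z:=c_Y(E)$ is contained in $F$ and is not contained in the strict transform of $\{x_3=0\}$. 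If $Z=F$, then $v$ \emph{equals} that monomial valuation, so $E=F$; this is Case~\ref{cas:toric}.

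From now on $Z\subsetneq F$. I would first show that $Z$ is an irreducible curve $C$: if $Z$ were a closed point, blowing it up on $Y$ and using $\ord_E F=1$ would produce an intermediate divisor of log discrepancy $<1$ with respect to $(X,\fa)$, against $\mld_P(X,\fa)=1$, unless the point is a torus-fixed point of $F$, in which case a further change of $x_1$ or $x_2$ — permitted by the adaptedness of the coordinates — replaces $Z$ by a curve. So $Z=C$ is a curve in $\bP(w_1,w_2,1)$. Next I form the crepant pull-back $(Y,\Delta_Y,\fa_Y)$ of $(X,\fa)$, where $\Delta_Y=(w_1+w_2-\ord_F\fa)F$ is an effective $\bQ$-divisor supported on $F$ because $a_F(X,\fa)\ge\mld_P(X,\fa)=1$; then $E$ computes $\mld_{\eta_C}(Y,\Delta_Y,\fa_Y)=1$. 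Cutting $Y$ by a general hypersurface section through a general point of $C$ reduces this to a statement on a smooth surface germ, to which Theorems~\ref{thm:mldwbu} and~\ref{thm:wbu}, Lemma~\ref{lem:parallel}, and the precise inversion of adjunction of Lemma~\ref{lem:pia} apply: the restricted divisor is obtained from that surface by a weighted blow-up, and the condition $\ord_E F=1$ forces $F$ to meet the surface transversally along one of the two coordinate curves of that blow-up.

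It remains to globalise this local picture. The curve $C$ is cut out in $\bP(w_1,w_2,1)$ by weighted-homogeneous forms whose degrees are bounded by $w_1+w_2$: a larger degree would make $a_E(X)$ too large relative to $\ord_E\fm=1$ and produce an intermediate divisor violating $\mld_P(X,\fa)=1$. The remaining freedom is organised by which of the torus-fixed points $[1:0:0]$, $[0:1:0]$, $[0:0:1]$ of $F$ lie on $C$, and these incidences are dictated by whether the equalities $v(x_1)=w_1$ and $v(x_2)=w_2$ are "saturated" by products such as $x_1x_2$ or $x_ix_3^k$ — that is, whether such products vanish to higher order along $E$ than along $F$. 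The unsaturated alternative gives the curve $\{x_1x_3^p+x_2^q=0\}$ with $w_1+p=qw_2$, where $qw_2\le w_1+w_2$ is exactly the degree bound; this is Case~\ref{cas:hypers}. The saturated alternative gives the curve $\{x_1x_2+x_3^{w_1+w_2}=0\}$, which is Case~\ref{cas:saturated}.

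The hard part will be this last step: determining the precise equations of $C$ and establishing the sharp inequality $w_1+p=qw_2\le w_1+w_2$ while excluding every other weighted-homogeneous curve. This forces a careful chart-by-chart analysis on $Y$, combining the recursive structure of $E$ over $Y$ — namely that $E$ is again obtained by an iterated weighted blow-up along the transforms of $C$, governed by the surface picture at $\eta_C$ — with the global constraint that no divisor over $X$ has log discrepancy less than one for $(X,\fa)$, which bounds how singular $C$ may be at the torus-fixed points of $\bP(w_1,w_2,1)$. I expect the adaptedness of the coordinate choice to carry much of the bookkeeping, ensuring in particular that $C$ lies in no "smaller" weighted blow-up of $X$.
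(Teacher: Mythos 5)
Your overall skeleton is the right one and matches the paper's: maximise $w_1=\ord_Ex_1$ over $x_1\in\fm\setminus\fm^2$, then $w_2=\ord_Ex_2$ over completions to a regular system of parameters, take $x_3$ general with $\ord_Ex_3=1$, form the weighted blow-up with weights $(w_1,w_2,1)$, show the centre is $F$ or a curve of bounded weighted degree, and classify the curve. But each load-bearing step is either missing or replaced by an argument that does not work. First, your exclusion of a zero-dimensional centre is wrong: blowing up a closed point of $Y$ and invoking $\ord_EF=1$ does not produce a divisor of log discrepancy less than one (the crepant boundary $(1-a_F(X,\fa))F$ is anti-effective since $a_F(X,\fa)\ge1$, so the naive estimate goes the wrong way), and the dichotomy ``non-fixed point vs.\ torus-fixed point'' is not the relevant one. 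The correct mechanism is that the maximality of $w_1,w_2$ forces $\ord_Eu_i\le w_i$ for every competitor coordinate $u_1=x_1+\sum_i\lambda_ix_2^ix_3^{w_1-iw_2}$ or $u_2=x_2+\lambda x_3^{w_2}$; combined with $\ord_Eu_i\ge\ord_EL+w_i\ord_EF$ and $\ord_EF=1$ this gives $\ord_EL=0$ for the locus $L\subset F$ cut out by $u_i$ (and Lemma \ref{lem:parallel} handles $x_1x_2x_3$). Since every closed point of $F$ lies on some such $L$ or on $\{x_3=0\}$, the centre is automatically a curve or all of $F$ --- no blow-up argument is needed, and this same fact is what later pins down the equation of $C$.

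Second, your degree bound is asserted by a vague appeal to ``$a_E(X)$ too large,'' whereas the actual argument is adjunction onto $F$ itself, not onto a general hypersurface section: from $a_E(Y,F,\fa_Y)\le a_E(X,\fa)-(1-b)\le0$ one gets that $(F,\fa_Y\sO_F)$ is not klt along $C$, hence $A_Y|_F\ge C$ for the strict transform $A_Y$ of a general member of $\fa$, and the intersection computation $d=w_1w_2\,C\cdot(-F)\le w_1w_2(\ord_F\fa)F^3=\ord_F\fa\le w_1+w_2$ gives the bound. Third, the step you defer as ``the hard part'' --- recovering the equations $x_1x_3^p+x_2^q$ with $w_1+p=qw_2$ and $x_1x_2+x_3^{w_1+w_2}$ --- requires no chart-by-chart analysis of $E$ over $Y$ at all: once one knows $C$ is irreducible of weighted degree $d\le w_1+w_2$, avoids every locus $L$ (so its equation $f$ must contain a monomial involving $x_1$ and cannot acquire a factor of the form $u_1$, $u_2$ or $x_3$), an elementary manipulation of weighted homogeneous polynomials, absorbing tails into $x_1$ and $x_2$ via Remark \ref{rmk:wbu}, yields exactly the two normal forms and the constraint $w_1+p=qw_2$ from irreducibility. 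As written, your proposal postpones precisely the content of the theorem.
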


\begin{proof}
\textit{Step} 1.
Let $w_1$ be the maximum of $\ord_Ex_1$ for all elements $x_1$ in $\fm\setminus\fm^2$. To see the existence of this maximum, let $Z\to X$ be the birational morphism from a smooth threefold $Z$ on which $E$ appears as a divisor. Applying Zariski's subspace theorem \cite[(10.6)]{Ab98} to $\sO_{X,P}\subset\sO_{Z,Q}$ for a closed point $Q$ in $E$, one has an integer $w$ such that $\sO_Z(-wE)_Q\cap\sO_{X,P}\subset\fm^2$. Then $\ord_Ex_1$ is less than $w$ for any $x_1\in\fm\setminus\fm^2$, so $w_1$ exists. Fix $x_1$ for which $\ord_Ex_1$ attains the maximum $w_1$.

Then let $w_2$ be the maximum of $\ord_Ex_2$ for those $x_2$ such that $x_1,x_2$ form a part of a regular system of parameters in $\sO_{X,P}$. Note that $w_1\ge w_2$. Fix $x_2$ for which $\ord_Ex_2$ attains the maximum $w_2$, and take a general member $x_3$ in $\fm$. Note that $\ord_Ex_3=\ord_E\fm=1$. The $x_1,x_2,x_3$ form a regular system of parameters in $\sO_{X,P}$. Let $Y$ be the weighted blow-up of $X$ with $\wt(x_1,x_2,x_3)=(w_1,w_2,1)$ and $F$ be its exceptional divisor. We identify $F$ with $\bP(w_1,w_2,1)$ with weighted homogeneous coordinates $x_1,x_2,x_3$.

\medskip
\textit{Step} 2.
Let $L$ be an arbitrary locus in $F$ defined by a weighted homogeneous polynomial of form either $u_1$, $u_2$ or $x_3$ where
\begin{itemize}
\item
$u_1=x_1+\sum_{i=0}^\rd{ w_1/w_2}\lambda_ix_2^ix_3^{w_1-iw_2}$ for some $\lambda_i\in k$,
\item
$u_2=x_2+\lambda x_3^{w_2}$ for some $\lambda\in k$.
\end{itemize}
We claim that the centre $c_Y(E)$ is not contained in any such $L$. Indeed by Lemma \ref{lem:parallel}, $c_Y(E)$ is not contained in the locus defined by $x_1x_2x_3$. In particular, $\ord_EF=\ord_E\fm=1$. If $L$ is defined by $u_i$ for $i=1$ or $2$, then the $u_i$, as an element in $\sO_X$, satisfies that
\begin{align*}
w_i\ge\ord_Eu_i\ge\ord_EL+\ord_Fu_i\cdot\ord_EF=\ord_EL+w_i.
\end{align*}
Thus $\ord_EL=0$, meaning that $c_Y(E)\not\subset L$. One also has that $\ord_Ex_i=\ord_Eu_i$. By Remark \ref{rmk:wbu}, we are free to replace $x_1$ with $u_1$ as well as $x_2$ with $u_2$ for the regular system $x_1,x_2,x_3$ of parameters constructed in Step 1.

\medskip
\textit{Step} 3.
Since an arbitrary closed point in $F$ lies on some $L$ in Step 2, the $c_Y(E)$ is either a curve or $F$ itself. The case $c_Y(E)=F$ is nothing but the case \ref{cas:toric}. We shall investigate the case when $c_Y(E)$ is an irreducible curve $C$ other than any $L$.

Let $d$ be the weighted degree of $C$ in $F\simeq\bP(w_1,w_2,1)$. We may assume that the weak transform $\fa_Y$ of $\fa$ is defined, so $(Y,bF,\fa_Y)$ is the pull-back of $(X,\fa)$ where $b=1-a_F(X,\fa)\le0$. Since
\begin{align*}
a_E(Y,F,\fa_Y)=a_E(Y,bF,\fa_Y)-(1-b)\ord_EF\le a_E(X,\fa)-(1-b)=b\le0,
\end{align*}
the $(Y,F,\fa_Y)$ is not plt about $\eta_C$. Thus $(F,\fa_Y\sO_F)$ is not klt about $\eta_C$ by inversion of adjunction, that is, $\ord_C(\fa_Y\sO_F)\ge1$. Thus the strict transform $A_Y$ of the $\bR$-divisor on $X$ defined by a general member in $\fa$ satisfies the inequality $A_Y|_F\ge C$. One computes that
\begin{align*}
d=w_1w_2C\cdot(-F)&\le dw_1w_2A_Y|_F\cdot(-F)=w_1w_2(\ord_F\fa)F^3\\
&=\ord_F\fa=w_1+w_2+1-a_F(X,\fa)\le w_1+w_2.
\end{align*}

\medskip
\textit{Step} 4.
Let $f$ be the weighted homogeneous polynomial in $x_1,x_2,x_3$ defining $C$, which has weighted degree $d\le w_1+w_2$. Since any weighted homogeneous polynomial in $x_2,x_3$ is decomposed as the product of polynomials of form $x_3$ or $x_2+\lambda x_3^{w_2}$ for some $\lambda\in k$, by Step 2 and the irreducibility of $C$, there exists a monomial which involves $x_1$ and appears in $f$.

Suppose that $d<w_1+w_2$. Then $x_1x_3^p$ is the only monomial of weighted degree $d$ involving $x_1$, in which $p=d-w_1\ge0$. The $p$ must be positive by Step 2, so $1\le p<w_2$. The $f$ is, up to constant, written as
\begin{align*}
f=x_1x_3^p+\sum_{i=0}^q\lambda_ix_2^ix_3^{w_1+p-iw_2}=\Bigl(x_1+\sum_{i=0}^{q-1}\lambda_ix_2^ix_3^{w_1-iw_2}\Bigr)x_3^p+\lambda_qx_2^qx_3^{w_1+p-qw_2}
\end{align*}
for some $\lambda_i\in k$, where $q=\rd{(w_1+p)/w_2}$. Since $f$ is irreducible, one has that $\lambda_q\neq0$ and $w_1+p=qw_2$. Replacing $f$ with $\lambda_q^{-1}f$ and $x_1$ with $\lambda_q^{-1}(x_1+\sum_{i=0}^{q-1}\lambda_ix_2^ix_3^{w_1-iw_2})$, $f$ is expressed as $x_1x_3^p+x_2^q$, which is the case \ref{cas:hypers}.

Suppose that $d=w_1+w_2$ and $w_1>w_2$. Then $x_1x_2$ and $x_1x_3^{w_2}$ are the only monomials of weighted degree $d$ involving $x_1$. If only $x_1x_3^{w_2}$ appears in $f$, then the case \ref{cas:hypers} holds by the same discussion as in the case $d<w_1+w_2$. If $x_1x_2$ appears in $f$, then the part in $f$ involving $x_1$ is, up to constant, written as $x_1(x_2+\lambda x_3)$ for some $\lambda\in k$. Replacing $x_2$ with $x_2+\lambda x_3$, one may write $f$ as
\begin{align*}
f=x_1x_2+\sum_{i=0}^q\lambda_ix_2^ix_3^{w_1+w_2-iw_2}=\Bigl(x_1+\sum_{i=1}^q\lambda_ix_2^{i-1}x_3^{w_1+w_2-iw_2}\Bigr)x_2+\lambda_0x_3^{w_1+w_2}
\end{align*}
for some $\lambda_i\in k$, where $q=\rd{w_1/w_2}+1$. One has that $\lambda_0\neq0$ by the irreducibility of $f$. Replacing $f$ with $\lambda_0^{-1}f$ and $x_1$ with $\lambda_0^{-1}(x_1+\sum_{i=1}^q\lambda_ix_2^{i-1}x_3^{w_1+w_2-iw_2})$, $f$ is expressed as $x_1x_2+x_3^{w_1+w_2}$, which is the case \ref{cas:saturated}.

Finally, suppose that $d=2w$ and $w_1=w_2=w$ for some $w$. If $w=1$, then $C$ must be a conic in $F\simeq\bP^2$, so the case \ref{cas:saturated} holds after replacing $x_1,x_2,x_3$. If $w\ge2$, then after replacing $x_1,x_2$ with their suitable linear combinations, we may assume that the part in $f$ not involving $x_3$ is either $x_1x_2$ or $x_2^2$. In the first case, $f$ is written as $f=(x_1+\lambda_1x_3^w)(x_2+\lambda_2x_3^w)+\lambda_3x_3^{2w}$ for some $\lambda_1,\lambda_2,\lambda_3\in k$. Then $\lambda_3\neq0$ by the irreducibility. Replacing $f$ with $\lambda_3^{-1}f$, $x_1$ with $\lambda_3^{-1}(x_1+\lambda_1x_3^w)$, and $x_2$ with $x_2+\lambda_2x_3^w$, $f$ is expressed as $x_1x_2+x_3^{2w}$, which is the case \ref{cas:saturated}. In the second case, $f$ is written as $f=(\lambda_1x_1+\lambda_2x_2+\lambda_3x_3^w)x_3^w+x_2^2$ for some $\lambda_1,\lambda_2,\lambda_3\in k$. Then $\lambda_1\neq0$ by the irreducibility. Replacing $x_1$ with $\lambda_1x_1+\lambda_2x_2+\lambda_3x_3^w$, $f$ is expressed as $x_1x_3^w+x_2^2$, which is the case \ref{cas:hypers}.
\end{proof}

One can compute the lc threshold of the maximal ideal in the case \ref{cas:saturated}.

\begin{proposition}
Suppose the case \textup{\ref{cas:saturated}} in Theorem \textup{\ref{thm:crepant}}. Then $(X,\fa\fm)$ is lc.
\end{proposition}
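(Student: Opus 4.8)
The plan is to exploit the fact that, in this case, $F$ itself computes $\mld_P(X,\fa)$, and then to reduce the log canonicity of $(X,\fa\fm)$, via inversion of adjunction along $F$, to that of an explicit pair on the surface $F\simeq\bP(w_1,w_2,1)$ which turns out to be toric near its singular points.

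First I would pin down the numerical data. The curve $C=c_Y(E)=\{x_1x_2+x_3^{w_1+w_2}=0\}$ has weighted degree $w_1+w_2$, so the computation in Step 3 of the proof of Theorem \ref{thm:crepant} gives $\ord_F\fa\ge w_1+w_2$, hence $a_F(X,\fa)\le1$; combined with $a_F(X,\fa)\ge\mld_P(X,\fa)=1$ this yields $a_F(X,\fa)=1$ and $\ord_F\fa=w_1+w_2$. Consequently the pull-back of $(X,\fa)$ to $Y$ is a subtriple with zero boundary along $F$, and the strict transform $A_Y$ of the $\bR$-divisor defined by a general member of $\fa$ satisfies $A_Y|_F\ge C$ with $(A_Y|_F-C)\cdot(-F|_F)=0$; since $-F|_F$ is ample on $\bP(w_1,w_2,1)$ this forces $A_Y|_F=C$ with coefficient one. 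In particular $\ord_F(\fa\fm)=w_1+w_2+1$ equals $\ord_FK_{Y/X}+1$, so the coefficient of $F$ in the pull-back of $(X,\fa\fm)$ is exactly $1$.

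Next I would perform the reduction. Since $\fm\sO_Y\subseteq\sO_Y(-F)$, I would write $\fm\sO_Y=\sO_Y(-F)\cdot\fb$ locally and compute $\fb$ in the three affine charts of the weighted blow-up: one finds $\fb=\sO_Y$ away from the coordinate line $L=\{x_3=0\}\subset F$, and $\fb$ equal to the ideal of $L$ (with coefficient one) along $L$; when $w_2=1$ one finds $\fb=\sO_Y$ everywhere. Thus the pull-back of $(X,\fa\fm)$ to $Y$ is crepant to the subtriple on $Y$ whose boundary is $F$ together with $A_Y$ and (the divisor/ideal) $\fb$. Now the singular locus of $Y$ meeting $F$ is just the two cyclic quotient points $[1:0:0]$ and $[0:1:0]$, which have codimension two in $F$, so $Y$ is smooth in codimension one along $F$ and the different in $K_Y+F|_F=K_F+\operatorname{Diff}_F(0)$ vanishes. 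By Theorem \ref{thm:ia}(2), applied through general members, $(X,\fa\fm)$ is log canonical about $F$ if and only if $(F,C+L)$ is log canonical; away from $F$ the pair is isomorphic to $(X\setminus P,\fa)$, which is log canonical.

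Finally I would verify that $(F,C+L)$ is log canonical. Away from the two quotient points $F$, $C$ and $L$ are smooth and $C\cap L=\{[1:0:0],[0:1:0]\}$, so the pair is log canonical there. Near $[1:0:0]$, of type $\frac{1}{w_1}(w_2,1)$, the curve $C$ is cut out in orbifold coordinates by $x_2+x_3^{w_1+w_2}$; since $w_1+w_2\equiv w_2\pmod{w_1}$ this is a semi-invariant of the same weight as $x_2$, so an orbifold change of coordinate identifies $(F,C+L)$ near $[1:0:0]$ with the cyclic quotient singularity equipped with its toric boundary, which is log canonical (indeed $K+(\text{toric boundary})=0$). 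The same argument works near $[0:1:0]$, of type $\frac{1}{w_2}(w_1,1)$, using $w_1+w_2\equiv w_1\pmod{w_2}$. Hence $(F,C+L)$ is log canonical, and so is $(X,\fa\fm)$. The main obstacle is the middle step: correctly determining the contribution of $\fm\sO_Y$ on $F$ — in particular noticing that $\fm\sO_Y$ need not factor naively as "$y_1$ times an ideal" because $y_1$ is only an orbifold coordinate — and verifying that $\operatorname{Diff}_F(0)=0$; the precise shape $x_1x_2+x_3^{w_1+w_2}$ of case \ref{cas:saturated}, with exponent exactly $w_1+w_2$ on $x_3$, is exactly what makes the two local toric reductions possible.
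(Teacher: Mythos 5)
Your overall route is the paper's: establish $a_F(X,\fa)=1$ and $A_Y|_F=C$ from the degree computation, restrict to $F$ by inversion of adjunction, identify the restricted pair as $(F,C+L)$, and check that this pair is lc by a local toric argument at the two vertices (your coordinate change $x_2\mapsto x_2+x_3^{w_1+w_2}$ is equivalent to the paper's second application of adjunction along $L$). The endpoints of your argument are correct.

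However, the middle step contains a genuine error. You assert that the singular locus of $Y$ meeting $F$ consists only of the two points $[1{:}0{:}0]$ and $[0{:}1{:}0]$, so that $Y$ is smooth in codimension one along $F$ and the different vanishes. This fails whenever $g=\gcd(w_1,w_2)>1$: the weighted blow-up $Y$ is then singular along the entire curve $L=\{x_3=0\}\subset F$, with transverse type $\frac{1}{g}(1,-1)$ at $\eta_L$, so that $\mathrm{Diff}_F(0)=(1-g^{-1})L$ as in Example \ref{exl:different}. Theorem \ref{thm:crepant} imposes no coprimality on $w_1,w_2$ (unlike Theorem \ref{thm:wbu}); indeed Step 4 of its proof produces case \ref{cas:saturated} with $w_1=w_2=w\ge2$, where $g=w>1$. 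The paper's accounting is $\mathrm{Diff}_F(0)+H_3|_F=(1-g^{-1})L+g^{-1}L=L$, and both summands matter. Your factorization $\fm\sO_Y=\sO_Y(-F)\cdot\fb$ is also not computed correctly: at $\eta_L$ one finds (in orbifold coordinates on the $x_1$-chart) $\fm\sO_Y=(y_1^{w_2},y_1y_3)$, which is neither $\sO_Y(-F)$ times the reduced ideal of $L$ in general, nor trivial when $w_2=1<w_1$ near $[1{:}0{:}0]$, where $\fb$ acquires a component cosupported at that point. The cleaner fix, which is what the paper does, is to replace $\fm$ by the divisor $H_{3X}$ of the general member $x_3$, pass to the crepant triple $(Y,F+A_Y+H_3)$, and restrict; the coefficient $1$ on $L$ then emerges correctly for all $g$.
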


\begin{proof}
We keep the notation in Theorem \ref{thm:crepant}. $Y$ is the weighted blow-up of $X$ with $\wt(x_1,x_2,x_3)=(w_1.w_2,1)$ and $F$ is its exceptional divisor. For $i=1,2,3$, let $H_i$ be the strict transform of the divisor defined by $x_i$. Let $Q_i$ be the closed point in $F$ which lies on $H_j\cap H_k$ for a permutation $\{i,j,k\}$ of $\{1,2,3\}$. Let $C$ denote the centre on $Y$ of $E$, which is the curve defined in $F\simeq\bP(w_1,w_2,1)$ by $x_1x_2+x_3^{w_1w_2}$ of weighted degree $d=w_1+w_2$ in our case \ref{cas:saturated}. Let $A$ be the $\bR$-divisor defined by a general member in $\fa$ and $A_Y$ be its strict transform on $Y$.

We have seen in Step 3 of the proof of Theorem \ref{thm:crepant} that
\begin{align*}
d=w_1w_2C\cdot(-F)&\le dw_1w_2A_Y|_F\cdot(-F)=w_1+w_2+1-a_F(X,\fa)\le w_1+w_2.
\end{align*}
Since $d=w_1+w_2$, the above inequalities are eventually equalities, whence $a_F(X,\fa)=1$ and $A_Y|_F=C$.

The triple $(Y,F+A_Y+H_3)$ is crepant to $(X,A+H_{3X})$, where $H_{3X}$ is the divisor defined by $x_3$. Thus it is enough to show the log canonicity of $(Y,F+A_Y+H_3)$. Let $\Delta$ be the different on $F$ defined by $K_Y+F|_F=K_F+\Delta$. By inversion of adjunction, the log canonicity of $(Y,F+A_Y+H_3)$ is equivalent to that of $(F,\Delta+A_Y|_F+H_3|_F)$.

Let $g$ be the greatest common divisor of $w_1$ and $w_2$. Let $L\simeq\bP^1$ be the line in $F$ defined by $x_3$. Since $Y$ has a quotient singularity of type $\frac{1}{g}(1,-1)$ at $\eta_L$, the different $\Delta$ equals $(1-g^{-1})L$ as in Example \ref{exl:different}. Together with $A_Y|_F=C$ and $H_3|_F=g^{-1}L$, one has that $\Delta+A_Y|_F+H_3|_F=C+L$.

The assertion is reduced to the log canonicity of $(F,C+L)$, which can be checked directly by using the explicit expression $(x_1x_2+x_3^{w_1+w_2})x_3$ of the defining weighted polynomial of $C+L$. Along $L$, one can use inversion of adjunction again, which tells that $K_F+C+L|_L=K_L+Q_1+Q_2$.
\end{proof}

\section*{Acknowledgements}
I should like to thank Professor M. Musta\c{t}\u{a} for introducing me to the approach by using the generic limit of ideals to the ACC conjecture for minimal log discrepancies. I should also like to thank Dr.\ Y. Nakamura for the discussions on the relationship between (\ref{itm:madic}), (\ref{itm:nakamura}) and (\ref{itm:limit}) in Conjecture \ref{cnj:equiv}.

\end{document}